\numberwithin{figure}{section}
\numberwithin{equation}{section}
\theoremstyle{plain}
      \newtheorem{thm}{Theorem}[section] 
      \newtheorem{cor}{Corollary}[thm] 
      \newtheorem{lem}[thm]{Lemma}
      \newtheorem{prop}[thm]{Proposition} 
      \newtheorem{claim}{Claim}[section]
\theoremstyle{definition}
      \newtheorem{defn}[thm]{Definition}    
      \newtheorem{eg}[thm]{Example}
\theoremstyle{remark}
      \newtheorem{rem}{Remark}[thm]
\begin{document}
\title{Semi-$G$-normal: a Hybrid between Normal and $G$-normal (Full Version)}
\author[1,*]{Yifan Li}
\author[1]{Reg Kulperger}
\author[1]{Hao Yu}
\affil[1]{School of Mathematics and Statistics, University of Western Ontario, London, Canada}
\affil[*]{E-mail: yli2763@uwo.ca}

             

\date{(Detailed research work for conference and open discussions)} 



\setcounter{Maxaffil}{0}
\renewcommand\Affilfont{\itshape\small}




\maketitle

\begin{abstract}
	
The $G$-expectation framework is a generalization of the classical probabilistic system motivated by Knightian uncertainty, where the $G$-normal plays a central role. However, 
from a statistical perspective, $G$-normal distributions look quite different from the classical normal ones. For instance, its uncertainty is characterized by a set of distributions which covers not only classical normal with different variances, but additional distributions typically having non-zero skewness. The $G$-moments of $G$-normals are defined by a class of fully nonlinear PDEs called $G$-heat equations. To understand $G$-normal in a probabilistic and stochastic way that is more friendly to statisticians and practitioners, we introduce a substructure called semi-$G$-normal, which behaves like a hybrid  between normal and $G$-normal: it has variance uncertainty but zero-skewness. We will show that the non-zero skewness arises when we impose the $G$-version sequential independence on the semi-$G$-normal. More importantly, we provide a series of \emph{representations} of random vectors with semi-$G$-normal marginals under various types of \emph{independence}. Each of these representations under a typical \emph{order of independence} is closely related to a class of \emph{state-space volatility models} with a common \emph{graphical} structure.  
In short, semi-$G$-normal gives a (conceptual) transition from classical normal to $G$-normal, allowing us a better understanding of the distributional uncertainty of $G$-normal and the sequential independence. 
%
%






\end{abstract}




\newpage

\tableofcontents

\newpage

\section{Introduction}


The $G$-expectation framework is a new generalization of the classical probabilistic system, which is aimed at dealing with random phenomena in dynamic situations where it is hard to precisely determine a unique probabilistic model. These situations are also closely related to the long-existing concern on model uncertainty in statistical practice. For instance, \cite{chatfield1995model} gives an overview of this concern itself. 
However, how to better connect the idea of this framework with general data practice is still a developing and challenging area that requires researchers and practitioners from different backgrounds to collaborate and reflect on different degrees of uncertainty possibly brought by complicated nature of the data but also from the modeling procedure itself. To give some examples (rather than a complete list), several recent attempts have been made by  \cite{pei2021worst,peng2020improving,peng2020hypothesis,xu2019nonlinear,Li2018Stat} and \cite{jin2016optimal} (which has been published as \cite{jin2021optimal}).
A fundamental and unavoidable problem will be how to better understand the $G$-version distributions and independence from a statistical perspective, which also requires long-term efforts of learning, thinking and exploration. 
This research work can be treated as a detailed systematic report of our exploration on this basic point in the past three years to a broad community. This community includes not only experts in the area of nonlinear expectations (such as the $G$-expectation) but also researchers and practitioners from other related fields who may not be familiar with the theory of the $G$-expectation framework ($G$-framework) but are interested in the interplay between their areas and the $G$-framework,
which requires them to properly understand the meanings and potentials of $G$-version distributions and independence. 
One vision of this report is to 
explore and understand the role of statistical methods incorporating $G$-version distributions or processes (with its own independence)
in general data practice as well as the differences and connections with the existing classical methods. More importantly, we intend to show how we can broaden our horizon of questions we are able to consider by introducing the notions (such as the distributions and independence) in the $G$-framework (this goal has been partially indicated in \ref{subsec:inference-ssvm}).  
This report is also used to discuss with the broad community
to initiate an in-depth discussion on this subject. 
Considering the length and scope of this work, we decide to divide our core discussions into two stages. 
The first stage (\emph{this} paper) can be treated as a \emph{theoretical preparation} for the second stage (forming a companion of this paper) which provides a series of \emph{statistical data experiments} based on the theoretical results in this paper. 



The main objective of this paper is to provide a better interpretation and understanding of the $G$-normal distribution and the $G$-version independence designed for researchers and practitioners from various background who are familiar with classical probability and statistics.
 We will achieve this goal by introducing a new substructure called the semi-$G$-normal distribution, which behaves like a hybrid connecting normal and $G$-normal: it is a typical object with distributional uncertainty that preserves many properties of classical normal but is also closely related to the $G$-normal distribution. 







In any probabilistic framework, if there exists a ``normal'' distribution (or an equivalent distributional object),
it should play a fundamental role in the system. How to understand
and deal with the normal distribution is crucial for the development
of this framework. The $G$-normal distribution, as its classical analogue, plays a central and fundamental role in the development of the $G$-expectation framework. 

\subsection{Introduction to the $G$-expectation framework}

First we give general readers a short introduction to the $G$-expectation framework. 
The classical probabilistic system is good at describing the randomness under a
\emph{single} probability rule or model $\lprob_\theta$ (which could be sophisticated in its form).
However, in practice, there are phenomena where it is hard to precisely determine an unique $\lprob_\theta$ to describe the randomness. In this case,
we cannot ignore the uncertainty in the probability rule itself. 
This kind of uncertainty is often called \emph{Knightian uncertainty} in economy (\cite{knight1921risk}) or \emph{epistemic uncertainty} in statistics (\cite{der2009aleatory}). It is also commonly called \emph{model uncertainty} if it refers to the uncertainty in the probabilistic model. A standard example of Knightian uncertainty comes from the Ellsberg paradox proposed by \cite{ellsberg1961risk} showing the violation of the classical expected utitlity theory based on a linear expectation. 
In this case, we essentially need to work with a \emph{set} $\myset{P}=\{\lprob_\theta,\theta\in\Theta\}$ of probability measures.
In order to quantify the extreme cases under $\myset{P}$, we need to work on a sublinear expectation $\expt$ defined as: 
\begin{equation}
\label{eq:expt-represent}
	\expt[\cdot]\coloneqq\sup_{\lprob\in \myset{P}}\lexpt_{\lprob}[\cdot].
\end{equation}
This sublinear expectation defined as \ref{eq:expt-represent} first appeared as the upper prevision in \cite{huber2004robust}. We also call \ref{eq:expt-represent} as a \emph{representation} of $\expt$. Coherent risk measures proposed by \cite{artzner1999coherent} can be also represented in this form and more details can be found in \cite{follmer2011stochastic}. The notion of Choquet expectation (\cite{choquet1954theory}) is another special type of sublinear expectation which is foundation of a new theory of expected utility by \cite{schmeidler1989subjective} to resolve the Ellsberg paradox in static situation. For dynamic situation, the utility theory can be developed by the sublinear version of $g$-expectation proposed by \cite{chen2002ambiguity}. In principle, $g$-expectation can only deal with those dynamic situations where we can find a reference measure $\measure{Q}$ to dominate $\myset{P}$.
Nonetheless, this situation is ideal for technical convenience but also quite restrictive compared with reality: it means all the probabilities in $\myset{P}$ agree on the same null events. 
For instance, in the context of financial modeling, when there is (Knightian) uncertainty or ambiguity in the \emph{volatility} process $\sigma_t$, the set $\myset{P}$ may not necessarily have a reference measure (\cite{epstein2013ambiguous}). How should we deal with a possibily non-dominated $\myset{P}$ in dynamic situation? It took the community many years to realize that it is necessary to jump out of the classical probability system and start from scratch to construct a new generalization of probability framework, which was established by \cite{peng2004filtration,peng2007g,peng2008multi} and further developed by the academic community led by him, called the $G$-expectation framework.



Since its establishment in 2000s, the $G$-expectation framework is gradually developed into a new generalization of the classical one with its own notion of \emph{independence}
and \emph{distributions}, as well as the associated stochastic calculus. 
The spirit of considering $\myset{P}$ to characterize the Knightian uncertainty is embedded into this framework from its initial setup. 
A distribution under the $G$-expectation can be represented by a family of classical distributions - it provides a convenient way to depict the distributional uncertainty requiring a infinitely dimensional family of distributions which usually may not have an explicit parametric form.
More details about this framework can be found in \cite{denis2011function,peng2017theory,peng2019nonlinear}.


The $G$-normal distribution $\GN(0,\varI)$ is the analogue of normal $\CN(0,\sigma^2)$ in this framework. As indicated by its notation, it is a typical object with variance uncertainty. In theory, it plays a central role in the context of the central limit theorem (\cite{peng2019law}): it is the asymptotic distribution of the normalized sum of a series of \emph{independent} random variables with zero mean and variance uncertainty. 
It has a Stein-type characterization provided by \cite{hu2017stein}. \cite{fang2019limit} provides a insightful discrete approximation and continuous-time form representation of the $G$-normal distribution. 
In pracitice, $G$-normal has also shown its potentials in the study of risk measure such as the Value at Risk induced by $G$-normal ($G$-VaR) rigorously constructed in \cite{peng2020improving} and further developed in recent \cite{peng2020autoregressive}, where the $G$-VaR has mostly outperformed the benchmark methods in terms of the violation rate and predictive performance. 



\subsection{Potential misunderstandings on the $G$-normal and independence}

Since the notion of distribution and independence in the $G$-expectation framework are different from the classical setup, there are several potential misunderstandings on the interpretation of $G$-normal and independence. 
The sources of these misunderstandings can be summarized into the following four aspects (where we have also provided clarification if applicable):
\begin{enumerate}
	\item[A1](The uncertainty set of the $G$-normal) 
	The $G$-expectation of the $G$-normal is defined by the (viscosity) solution a fully nonlinear PDE (the $G$-heat equation), which usually does not have an explicit form unless in some special cases (\cite{hu2009explicit}). In fact, following the spirit of Knightian uncertainty, a better interpretation of the $G$-version distribution is a family of classical distributions characterizing the distributional uncertainty. Nonetheless, for a general reader, if not careful, the notation of $G$-normal $\GN(0,\varI)$ could lead to a misconception
	that it is associated with the family $\{\CN(0,\sigma^2),\sigma\in\sdInt\}$. Although this impression may still hold in special situations as shown in \ref{thm:conn-G}, it is not rigorous in general. 
	Actually, the uncertainty set of $\GN(0,\varI)$ is much larger than this and one evidence is that the $G$-normal distribution has \emph{third-moment uncertainty} (all its odd moments have uncertainty), but all the distributions in the family $\{\CN(0,\sigma^2),\sigma\in\sdInt\}$ are symmetric implying zero third moments. 
	It means that the uncertainty set of the $G$-normal contains those classical elements that has non-zero third moment. It seems like a strange property for a ``normal'' distribution in a probabilistic system (especially when we note that $X\eqdistn -X$ if $X\sim \GN(0,\varI)$). An explicit form of the uncertainty set of $G$-normal is given by \cite{denis2011function}.
	\item[A2](The missing connection between univariate and multivariate $G$-normal) The joint random vector formed by $n$ independent $G$-normal distributed random variables does not follow a multivariate $G$-normal (even under any invertible linear transformation of the original vector.) More study of the counter-intuitive properties of $G$-normal can be found in \cite{bayraktar2015normal}.
	\item[A3](The asymmetry of independence) The independence in this framework is \emph{asymmetric}: $X$ is independent of $Y$ does not necessarily mean $Y$ is independent of $X$. This is why this independence is also called \emph{sequential independence}, which is different from the classical one. 
	One interpretation of this asymmetry in the relation ``$Y$ is independent of $X$'' is from the temporal order: if $Y$ is realized at a time point \emph{after} $X$, the roles between $X$ and $Y$ would be asymmetric (in terms of the possible dependence structure). Another interpretation is from the distributional uncertainty: any realization of $X=x$ has no effect on the uncertainty set of $Y$. Both of the interpretations are valid if one understands the detailed theory of this framework. However, for general audience, both of these are still vague even become quite confusing if one combines them together in a naive way (such as ``if $Y$ happens after $X$, any realization of $Y$ should have no effect on $X$, then we automatically have one way of independence.''). 
	So far we do have a simple example that the independence is \emph{indeed} asymmetric (Example 1.3.15 in \cite{peng2019nonlinear}), but it is not clear \emph{why} the independence is asymmetric in this example. To be specific, how does the distributional uncertainty of the joint $(X,Y)$ (or the \emph{representation} of its sublinear expectation) change if we switch the order of the independence? Such a representation (even in a special case) will be beneficial for general audience to better understand the sequential independence in the sense that they can explicitly see how the order of the independence changes the underlying distributional uncertainty.
	\item[A4](The lack of caution before the data analysis) Suppose one intends to use the $G$-normal distribution to describe the distributional uncertainty from a dataset (either artificial or realistic one). Without enough caution, the misinterpretations of the independence and distribution in this framework mentioned above may further bring confusion or even mistakes to the data analyzing procedure. 
	\end{enumerate} 
	



The objectives of this paper all serve for this central problem: from a statistical perspective, how to better understand the $G$-normal distribution
and the $G$-version independence. The answer to this question will also lead to a better interpretation and understanding of $G$-normal distribution for general audience and practitioners who are familiar with classical probability and statistics. 
We will work towards this central problem from the following four basic questions where each question ``Q[k]'' is corresponding to one of the aspect ``A[k]'' mentioned above,
\begin{enumerate}
	\item[Q1] How does the third-moment uncertainty of $G$-normal arise? Is this possible to use the linear expectations of classical normal to approach the sublinear expectation of $G$-normals (without involving the underlying PDEs)? 
	\item[Q2] How should we appropriately connect the univariate objects and multivariate objects in this framework? Since it is hard to start from univariate $G$-normals to get multivariate $G$-normal, is this possible for us to make a retreat at the starting point, that is, to connect univariate classical normals with a multivariate $G$-normal? 
	\item[Q3] How can we understand the asymmetry of the independence in this framework in terms of representations? 
	\item[Q4] What kinds of data sequence are related to the volatility uncertainty covered by $G$-normal and what are not? 
\end{enumerate} 

The interpretation of $G$-normal and sequential independence will also be important to theoretically investigate the reliability and robustness of risk measure derived from $G$-version distributions such as the current $G$-VaR in the literature. 


\subsection{Our main tool and results in this paper}

Our main tool here is a substructure called the semi-$G$-normal distribution (\ref{subsec:semi-G-normal-intro}),
which behaves like a close relative to both classical models (such as a normal mixture model) 
and also $G$-version objects (such as a $G$-normal). We will also study the various kinds of independence associated with the semi-$G$-normal distributions (\ref{subsec:indep-semignorm}). 


The notion semi-$G$-normal was first proposed in \cite{li2018iterative}, which has used it to design an iterative approximation to the sublinear expectation of $G$-normal and the solution to the $G$-heat equation. Later on this substructure was further developed in the master thesis by \cite{Li2018Stat} where the independence structures have been proposed there to better perform the pseudo simulation in this context. 

This paper gives a more rigorous and systematic construction of these structures and focus more on the distributional and probabilistic aspects of them to show the hybrid roles of semi-$G$-normal between classical normal and $G$-normal. 
To be specific, we will show that there exists a middle stage of independence sitting between the classical (symmetric) independence and the $G$-version (asymmetric) independence. It is called \emph{semi-sequential independence}, which allows the connection between univariate and multivariate object (\ref{thm:n-semi-seqind-sym}), and it is a \emph{symmetric} relation between two semi-$G$-normal objects (\ref{prop:semi-seqind-sym}). 

Moreover, we will provide a series of representations in the form similar to \ref{eq:expt-represent} associated with the semi-$G$-normal distributions and also the random vector with semi-$G$-normal marginal under various kinds of independence. Interestingly, by changing the \emph{order of the independence}, we are equivalently modifying the \emph{graphical structure} in the representation of the sublinear expectation of the joint vector. 
This idea will be shown in \ref{subsec:represent-semignorm-indep} and further studied in \ref{subsec:fine-structure}. These representations provide a more straightforward view on the order of independence in this framework, because we can see how the family of distributions is changed due to the switching of order. Under this view, we can provide a statistical interpretation of the asymmetry of sequential independence between two semi-$G$-normal objects (\ref{subsec:interpret-asymmetry-indep}).
 
Throughout this paper, we will frequently mention the representations of the distributions in the $G$-framework.
Theses representation results are crucial here, because the right hand side of representation is simply a family of classical models and its envelope will be exactly the sublinear expectation of $G$-version objects. Through an intuitive representation, a person who is familiar with classical probability and statistics is able to understand the uncertainty described by the $G$-version objects through the representation.

This remaining content of this paper is organized as follows. \ref{sec:G-frame-setup} will give a basic setup for the $G$-expectation framework for readers to check the rigorous definitions of each concept. \ref{sec:main-re} presents our main results by putting readers in a context of a classical state-space volatility models. This kind of story setup is specially helpful in the discussions of representation associated with semi-$G$-normal in \ref{subsec:represent-semignorm-indep}.
After we go through these representation results associated with this substructure, readers will find that we have already provided the answers to the four questions during the procedure. These answers will be given and elaborated in \ref{sec:hybrid-semignorm}. Finally, \ref{sec:conclude-and-ext} will summarize the whole paper and also provide more possible extensions as future developments. 
The proofs of our theoretical results will be put into \ref{sec:proofs} unless a proof is beneficial to the current discussion or is relatively short to be included in the main content.

\section{Basic settings of the $G$-expectation framework}
\label{sec:G-frame-setup}

This section gives a detailed description of the basic setup (the sublinear expectation space) of the $G$-expectation framework for general audience by starting from a set of probability measures (more rigorous treatments can be found in Chapter 6 in the book by \cite{peng2019nonlinear}). 
Another equivalent way is to start from a space of random variables and a sublinear operator (more details can be found in Chapter 1 and 2 in \cite{peng2019nonlinear}). 



For readers who may not be familiar with this setup, the following reading order is recommended as a candidate one: 
\begin{enumerate}
	\item Take a glance at the initial setup and the meaning of notations in this section (especially the notation for independence which is in \ref{defn:G-indep}); 
	\item Read through our main results (\ref{sec:main-re}) which describe the $G$-version distributions mostly using the representations in terms of classical objects;
	\item Come back to this section to check the detailed definitions (such as the connection between $G$-expectation with the solutions to a class of fully nonlinear partial differential equations).
\end{enumerate}

\subsection{Distributions, independence and limiting results}

Let $\myset{P}=\{\lprob_{\theta},\theta\in\Theta\}$ denote a set
of probability measures on a measurable space $(\Omega,\sigmafield{F})$.
Let $\lexpt_{\theta}$ denote the linear expectation under $\lprob_{\theta}$. 
Consider the following spaces: 
\begin{itemize}
\item $L^{0}(\Omega)$: the space of all $\sigmafield{F}$-measurable real-valued
functions (or the family of random variables $X:\Omega \to \numset{R}$); 
\item $\myset{H}^{*}\coloneqq\{X\in L^{0}(\Omega):\lexpt_{\theta}[X]\text{ exists for each }\theta\in\Theta\}$;
\item $\myset{H}_{p}\coloneqq\{X\in L^{0}(\Omega):\sup_{\theta\in\Theta}\lexpt_{\theta}[\abs{X}^{p}]<\infty\}$
(for $p>0$);
\item $\myset{N}_p\coloneqq \{X\in L^{0}(\Omega):\sup_{\theta\in\Theta}\lexpt_{\theta}[\abs{X}^{p}]=0\}$ (for $p>0$);
\item $\myset{N}\coloneqq \{X\in L^{0}(\Omega):\lprob_\theta(X=0)=1 \text{ for each }\theta\in\Theta\}$.
\end{itemize}
Note that for any $1\leq p\leq q<\infty$,
\[
\myset{H}_{q}\subset\myset{H}_{p}\subset\myset{H}^{*}\subset L^{0}(\Omega).
\]
We also have, for any $p>0$,
\[
\myset{N} = \myset{N}_p.
\]


\begin{defn}
(The upper expectation associated with $\myset{P}$) For any $X\in\myset{H}^{*}$,
we define a functional $\expt:\myset{H}^{*}\to[-\infty,\infty]$ associated
with the family $\myset{P}$ as 
\[
\expt[X]=\expt^{\myset{P}}[X]\coloneqq\sup_{\theta\in\Theta}\lexpt_{\theta}[X],
\]
where $[-\infty,\infty]$ is the extended real line. We also follow
the convention that, if $\lexpt_{\theta}[X]$ exists but is $\infty$
for some $\theta$, the supreme is taken as $\infty$. 
\end{defn}



\begin{defn}[The upper and lower probability]
\label{defn:upprob-lowprob}
For any $A\in \sigmafield{F}$, let 
\[
\upprob(A)\coloneqq\sup_{\lprob\in\myset{P}}\lprob(A),\text{ and }\lowprob(A)\coloneqq\inf_{\lprob\in\myset{P}}\lprob(A).
\]
The set function $\lowprob$ and $\upprob$ are respectively called
the lower and upper probabilities associated with $\myset{P}$. 
\end{defn}

\begin{prop}
\label{prop:H-star-space}The space $\myset{H}^{*}$ satisfies: 
\begin{enumerate}
\item[(1)] $c\in\myset{H}^{*}$ for any constant $c\in\numset{R}$; 
\item[(2)] If $X\in\myset{H}^{*}$, then $\abs{X}\in\myset{H}^{*}$;
\item[(3)] If $A\in\myset{F}$, then $\indicator_{A}\in\myset{H}^{*}$; 
\item[(4)] If $X\in L^{0}(\Omega)$ satisfying $\lprob_{\theta}(X\geq0)=1$ for
any $\theta\in\Theta$, then $X\in\myset{H}^{*}$;
\item[(5)] If $X\in L^{0}(\Omega)$ satisfying $\lprob_{\theta}(X\leq0)=1$ for
any $\theta\in\Theta$, then $X\in\myset{H}^{*}$;
\item[(6)] For any $X\in L^{0}(\Omega)$, $\abs{X}^{k}\in\myset{H}^{*}$ for
$k>0$. 
\end{enumerate}
\end{prop}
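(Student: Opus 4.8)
The plan is to reduce all six items to a single observation about when an extended-valued expectation is well defined. Recall the convention built into the definition of $\myset{H}^{*}$: for $X\in L^{0}(\Omega)$, the quantity $\lexpt_{\theta}[X]$ exists in $[-\infty,\infty]$ precisely when the decomposition $\lexpt_{\theta}[X]=\lexpt_{\theta}[X^{+}]-\lexpt_{\theta}[X^{-}]$ does not produce the indeterminate form $\infty-\infty$, i.e.\ when at least one of $\lexpt_{\theta}[X^{+}]$ and $\lexpt_{\theta}[X^{-}]$ is finite. Since $X^{+},X^{-}\geq 0$, each of these is always defined in $[0,\infty]$ by the standard integration theory of nonnegative measurable functions, so the only thing that can go wrong is the simultaneous infiniteness of both parts under some $\lprob_{\theta}$.

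First I would establish (4) and (5), which I regard as the two master cases. If $\lprob_{\theta}(X\geq 0)=1$ for every $\theta\in\Theta$, then $X^{-}=0$ almost surely under each $\lprob_{\theta}$, hence $\lexpt_{\theta}[X^{-}]=0<\infty$; the indeterminate form is avoided and $\lexpt_{\theta}[X]=\lexpt_{\theta}[X^{+}]\in[0,\infty]$ exists, giving $X\in\myset{H}^{*}$. Statement (5) is the mirror image: if $\lprob_{\theta}(X\leq 0)=1$ for every $\theta$, then $X^{+}=0$ a.s.\ under each $\lprob_{\theta}$, so $\lexpt_{\theta}[X^{+}]=0$ and $\lexpt_{\theta}[X]=-\lexpt_{\theta}[X^{-}]\in[-\infty,0]$ exists.

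The remaining items then follow as immediate corollaries. For (2), (3) and (6), the random variables $\abs{X}$, $\indicator_{A}$ and $\abs{X}^{k}$ (for $k>0$) are nonnegative on all of $\Omega$, so the hypothesis $\lprob_{\theta}(\,\cdot\,\geq 0)=1$ holds trivially for every $\theta$ and part (4) applies directly; in particular (2) does not even use the assumption $X\in\myset{H}^{*}$, and (6) is just (4) applied to $\abs{X}^{k}$. For (1), a constant $c$ satisfies $\lexpt_{\theta}[c]=c$ for every $\theta$, which is finite, so $c\in\myset{H}^{*}$ directly (or one may split into $c\geq 0$ and $c\leq 0$ and invoke (4) or (5)).

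There is no serious obstacle here: the whole content is the bookkeeping of the $\infty-\infty$ convention, and once (4) and (5) are in place the rest is mechanical. The one point demanding a little care is to insist that the positivity (resp.\ negativity) assumption hold $\lprob_{\theta}$-almost surely for \emph{every} $\theta\in\Theta$, so that the relevant part of the expectation vanishes under each measure of $\myset{P}$; the ``for each $\theta$'' quantifier in the definition of $\myset{H}^{*}$ must be respected uniformly rather than for a single fixed $\theta$.
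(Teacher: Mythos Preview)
Your proposal is correct and matches the paper's approach: both rest on the observation that a $\lprob_{\theta}$-a.s.\ nonnegative (resp.\ nonpositive) random variable has a well-defined expectation in $[0,\infty]$ (resp.\ $[-\infty,0]$), which is exactly the content of (4) and (5), with (6) then following immediately. The only cosmetic difference is organizational: the paper treats (1)--(3) as ``easy to check'' on their own and deduces only (6) from (4), whereas you route (2), (3), (6) all through (4); your ordering is arguably cleaner and makes explicit that the hypothesis $X\in\myset{H}^{*}$ in (2) is not actually needed.
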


\begin{proof}
It is easy to check the first three properties. The logic of (4) comes from the fact that, for each $\lprob\in\myset{P}$, if we
have $X\geq0$, $\lprob$-almost surely, we must have $\lexpt_{\lprob}[X]$
exists. Similar logic can be applied to (5). The property (6) is
a direct result of (4). 
\end{proof}

\begin{rem}
	However, $\myset{H}^{*}$ is not necessarily a linear space. For instance,
let $\myset{P}=\{Q\}$ and $X$ is a Cauchy distributed random variable
under $Q$. We have $X^{+}$ and $X^{-}$ belong to $\myset{H}^{*}$,
but $X=X^{+}-X^{-}\notin\myset{H}^{*}$. 
\end{rem}

By \ref{prop:H-star-space}, for any $X\in L^{0}(\Omega)$, $\expt[\abs{X}^{p}]$
is well-defined for any $p>0$. Then we can write $\myset{H}_{p}$
as
\[
\myset{H}_{p}=\{X\in L^{0}(\Omega):\expt[\abs{X}^{p}]<\infty\}.
\]
%


We will mainly focus on the space $\myset{H}_1$. 

\begin{prop}
\label{prop:H-space}The space $\myset{H}_1$ is a linear space satisfying: 
\begin{enumerate}
\item[(1)] $c\in\myset{H}_1$ for any constant $c\in\numset{R}$; 
\item[(2)] If $X\in\myset{H}_1$, then $cX\in\myset{H}_1$ for any constant $c\in\numset{R}$; 
\item[(3)] If $X,Y\in\myset{H}_1$, then $X+Y\in\myset{H}_1$;
\item[(4)] If $X\in\myset{H}_1$, then $\abs{X}\in\myset{H}_1$;
\item[(5)] If $A\in\sigmafield{F}$, then $\indicator_{A}\in\myset{H}_1$;
\item[(6)] If $X\in\myset{H}_1$, then $\varphi(X)\in\myset{H}_1$ for any bounded
Borel measurable function $\varphi$. 
\end{enumerate}
\end{prop}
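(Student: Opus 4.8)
The plan is to reduce the entire proposition to the single characterization already recorded in the excerpt, namely $\myset{H}_1 = \{X\in L^{0}(\Omega):\expt[\abs{X}]<\infty\}$, so that each of the six claims amounts to two checks: first, that the object in question is $\sigmafield{F}$-measurable (hence lies in $L^{0}(\Omega)$), and second, that its upper expectation of absolute value is finite. Measurability is immediate in every case from the fact that $L^{0}(\Omega)$ is closed under scalar multiplication, addition, the absolute-value map, and composition with Borel functions; so the real content is the finiteness bookkeeping, which I would drive entirely from two structural properties of $\expt$ on nonnegative arguments.

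First I would isolate those two properties as the engine of the proof. For positive homogeneity, note that for $c\in\numset{R}$ and any $\theta$ we have $\lexpt_{\theta}[\abs{cX}]=\abs{c}\,\lexpt_{\theta}[\abs{X}]$, and taking the supremum over $\theta\in\Theta$ gives $\expt[\abs{cX}]=\abs{c}\,\expt[\abs{X}]$, which yields (2) at once. For subadditivity, the pointwise triangle inequality $\abs{X+Y}\le\abs{X}+\abs{Y}$ gives, for each fixed $\theta$, the bound $\lexpt_{\theta}[\abs{X+Y}]\le\lexpt_{\theta}[\abs{X}]+\lexpt_{\theta}[\abs{Y}]\le\expt[\abs{X}]+\expt[\abs{Y}]$; since the right-hand side no longer depends on $\theta$, taking the supremum over $\theta$ on the left yields $\expt[\abs{X+Y}]\le\expt[\abs{X}]+\expt[\abs{Y}]<\infty$, which is exactly (3). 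Together (2) and (3) establish that $\myset{H}_1$ is closed under the vector-space operations; combined with (1), which follows from $\expt[\abs{c}]=\abs{c}<\infty$, this certifies that $\myset{H}_1$ is a linear space.

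The remaining items are then essentially one-line verifications against the same finiteness criterion: (4) is trivial because $\expt[\,\bigl\lvert\abs{X}\bigr\rvert\,]=\expt[\abs{X}]<\infty$; (5) follows from $\abs{\indicator_{A}}\le 1$, so that $\expt[\abs{\indicator_{A}}]\le 1<\infty$ for every $A\in\sigmafield{F}$; and (6) follows from boundedness of $\varphi$, since $\abs{\varphi(X)}\le M$ for a constant $M$ forces $\expt[\abs{\varphi(X)}]\le M<\infty$, with $\varphi(X)$ measurable as a Borel function of a measurable $X$. I do not anticipate a genuine obstacle here; the only point demanding any care is the harmless asymmetry in the subadditivity argument for (3), where one must pass the $\theta$-free upper bound through the supremum in the correct order rather than attempting to split a supremum of a sum into a sum of suprema as an equality. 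This is precisely why I would state subadditivity as an inequality and lean on the fact that each summand is dominated by its own upper expectation before taking the outer supremum.
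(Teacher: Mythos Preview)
Your proposal is correct and follows essentially the same approach as the paper: both verify each item directly from the characterization $\myset{H}_1=\{X\in L^{0}(\Omega):\expt[\abs{X}]<\infty\}$, with (3) in particular driven by the pointwise triangle inequality $\lexpt_{\theta}[\abs{X+Y}]\le\lexpt_{\theta}[\abs{X}]+\lexpt_{\theta}[\abs{Y}]$ before passing to the supremum. The paper's proof is only a sketch, so your write-up simply supplies the details the paper leaves to the reader.
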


\begin{proof}
The properties here can be checked by definition of $\myset{H}_1$. For instance, (3) comes from the inequality: $\lexpt_{\theta}[\abs{X+Y}]\leq\lexpt_{\theta}[\abs{X}]+\lexpt_\theta[\abs{Y}]$ for any $\theta\in \Theta$.
\end{proof}

%
Then we can check that $\expt$ becomes a \emph{sublinear} operator
on the linear space $\myset{H}_1$. In other words, $\expt:\myset{H}_1 \to \numset{R}$ satisfies: for any $X,Y\in \myset{H}_1$,
\begin{enumerate}
\item (Monotonicity) For any $X\geq Y$, $\expt[X]\geq\expt[Y]$; 
\item (Constant preserving) For any $c\in\numset{R}$, $\expt[c]=c$;
\item (Sub-additivity) $\expt[X+Y]\leq\expt[X]+\expt[Y]$; 
\item (Positive homogeneity) For any $\lambda\geq0$, $\expt[\lambda X]=\lambda\expt[X]$; 
\end{enumerate}
Then we call $\expt$ a sublinear expectation and $(\Omega,\myset{H}_1,\expt)$ a \emph{sublinear expectation space}.

Furthermore, note that $\myset{N}=\{X\in L^{0}(\Omega):\expt[\abs{X}]=0\}$ is a linear subspace of $\myset{H}_1$. We can treat $\myset{N}$ as the null space and define the quotient space $\myset{H}_1/\myset{N}$. For any $\{X\} \in \myset{H}_1/\myset{N}$ with representative $X$, we can define $\expt[\{X\}]\coloneqq \expt[X]$, which is still a sublinear expectation. 
We can check that $\expt$ induces a Banach norm $\norm{X}_1\coloneqq \expt[\abs{X}]$ on $\myset{H}_1/\myset{N}$. Let $\hat{\myset{H}}_1$ denote the completion of $\myset{H}_1/\myset{N}$ under $\norm{\cdot}_1$. 
Since we can check that $\myset{H}_1/\myset{N}$ itself is a Banach space, it is equal to its completion $\hat{\myset{H}}_1$ (Proposition 14 in \cite{denis2011function}). 
Let $\myset{H}\coloneqq \hat{\myset{H}}_1$, then we can check that $(\Omega,\myset{H},\expt)$ still forms a sublinear expectation space. 



Rigorously speaking, we also require additional conditions on $\myset{P}$ such as the
weak compactness so that we have regularity on $\expt$ (Theorem 12 in in \cite{denis2011function}). Meanwhile,
there exists such a weakly compact family $\myset{P}$ so that the typical $G$-version distributions (maximal and $G$-normal
distribution) exist in the space $\exptSpace$. More details can be found in Section 2.3 and Section 6.2 in \cite{peng2019nonlinear}. 
The $G$-expectation is defined after we construct the Brownian motion (the $G$-Brownian motion) in this context but throughout this paper, we will only touch the $G$-version distributions and independence so the expectation $\expt$ so far is still a special kind of sublinear one, which we will still call it as the $G$-expectation to stress its typical properties allowing the existence of the $G$-version distributions. Throughout this section, without further notice, we will stay in $\exptSpace$. 



Let $\myset{H}^d\coloneqq\{(X_1,X_2,\dotsc,X_d),X_i \in \myset{H},i=1,2,\dotsc,d\}$. 
For any $X \in \myset{H}^d$, 
we will frequently mention a transformation $\varphi(X)$ of $X$ for a function $\varphi:\numset{R}^d \to \numset{R}$.
Consider the following spaces of functions: 
\begin{itemize}
	\item $\fspacedef(\numset{R}^d)$: the linear space of all bounded and Lipchistz functions;
	\item $\fspace(\numset{R}^d)$: the linear space of functions satisfying the locally Lipchistz property which means 
	\[
	\abs{\varphi(x)-\varphi(y)}\leq C_\varphi (1+\abs{x}^k+\abs{y}^k)\abs{x-y},
	\]
	for $x,y\in\numset{R}^d$, some positive integer $k$ and $C_\varphi>0$ depending on $\varphi$.
\end{itemize}
We will simply write $\varphi\in \fspacedef$ or $\varphi\in \fspace$ if the dimension of the domain of $\varphi$ is clear in the context by checking the dimension of random objects.

Note that $\myset{H}$ satisfies: for any $\varphi\in \fspacedef$, $\varphi(X)\in \myset{H}$ if $X \in \myset{H}^d$. However, this property does not necessarily hold for any $\varphi\in\fspace$. 
Therefore, when we discuss the definition of distributions and independence in this framework, we will use $\varphi \in \fspacedef$.
Later on, we will mention that this space can be extended to any $\varphi \in \fspace$ for a special family of distributions and under some additional conditions. 

\begin{defn}[Distributions] 
\label{defn:G-distn}
There several notions related to the $G$-version distributions: 
\begin{enumerate}
    \item We call $X$ and $Y$ are \emph{identically distributed}, denoted by $X\eqdistn Y$, if for any $\varphi\in \fspacedef$,
\[\expt[\varphi(X)]=\expt[\varphi(Y)].\]
  \item A sequence $\{X_n\}_{n=1}^\infty$ \emph{converges in distribution} to $X$, denoted as $X_n \convergeto{\dist} X$, if for any $\varphi\in \fspacedef$,
\[\lim_{n\to\infty}\expt[\varphi(X_n)]=\expt[\varphi(X)].\]
	
\end{enumerate}

\end{defn}


\begin{defn}[Independence]
\label{defn:G-indep}
	A random variable $Y$ \emph{is (sequentially) independent from} $X$, denoted by $X\seqind Y$, if for any $\varphi\in \fspacedef$,
	\[
	\expt[\varphi(X,Y)]=\expt[\expt[\varphi(x,Y)]_{x=X}].
	\]
\end{defn}


\begin{rem}(Intuition of this independence)
Since both $X$ and $Y$ are treated as random object with potential distributional uncertainty, 
	this independence is essentially talking about the relation between the distributional uncertainty of $X$ and $Y$. If we put our discussion into a context of \emph{sequential data} (where the order of the data matters), this kind of independence often arises in scenarios where $X$ is realized \emph{before} $Y$ and any realization of $X$ has no effect on the distributional uncertainty of $Y$.
\end{rem}

\begin{rem}(Asymmetry of this independence)
One important fact regarding this independence is that it is \emph{asymmetric}: $X\seqind Y$ ($Y$ is independent from $X$) does not necessarily mean $Y\seqind X$ ($X$ is independent from $Y$), which will be illustrated by \ref{eg:indep-ex-org}. 
This is the reason we also call it a \emph{sequential} independence and we use the notation $\seqind$ to indicate the sequential order of the independence between two random objects.	
\end{rem}

\begin{rem}(Connection with the classical independence)
\label{rem:seq-indep-and-cl-indep}
	Note that this sequential independence becomes classical independence (which is symmetric) once $X$ and $Y$ have \emph{certain} classical distribution. In other words, they can be put under a common classical probability space. In this case, $\expt$ reduces to a linear expectation $\lexpt_\lprob$. To give readers a better understanding, without loss of generality, suppose $(X,Y)$ have a classical joint continuous distribution with density function $f_{X,Y}$ and the marginal densities are $f_X$ and $f_Y$, we have, for any applicable $\varphi$,
	\[
\begin{array}{ccc}
\lexpt_\lprob[\varphi(X,Y)] & = & \int\varphi(x,y)f_{X,Y}(x,y)\diff x\diff y\\
\verteq\\
\lexpt_\lprob[\lexpt_\lprob[\varphi(x,Y)]_{x=X}] & = & \int_{x}\int_{y}\varphi(x,y)f_{X}(x)f_{Y}(y)\diff y\diff x
\end{array}.
	\]
	Therefore, we have
	$f_{X,Y}=f_X f_Y$, which means $X$ and $Y$ are (classically) independent.
\end{rem}

%
%


\begin{eg}[Example 1.3.15 in \cite{peng2019nonlinear}]
\label{eg:indep-ex-org}
	Consider two identically distributed $X,Y\in \myset{H}$ with $\expt[-X]=\expt[X]=0$ and $\sdr^2=\expt[X^2]>-\expt[-X^2]=\sdl^2$. Also assume $\expt[\abs{X}]>0$ such that $\expt[X^+]=\frac{1}{2}\expt[\abs{X}+X]=\frac{1}{2}\expt[\abs{X}]>0$. Then we have 
	\[
	\expt[XY^2]=\begin{cases}
		(\sdr^2-\sdl^2)\expt[X^+] & \text{if } X \seqind Y \\
		0 & \text{if } Y\seqind X
	\end{cases}.
	\]
\end{eg}

We will further study the interpretation of the independence (especially its asymmetric property) 
in \ref{subsec:interpret-asymmetry-indep} 
by giving a detailed version of \ref{eg:indep-ex-org} with representation theorems. 

 
Next we give the notion of independence extended to a sequence of random variables. 

\begin{defn}(Independence of Sequence)
\label{defn:indep-seq}
For a sequence $\{X_i\}_{i=1}^n$ of random variables,
they are \emph{(sequentially) independent} if 
\[
(X_1,X_2,\dotsc,X_i)\seqind X_{i+1},
\]
for $i=1,2,\dotsc,n-1$. For notational convenience, the sequential independence of $\{X_i\}_{i=1}^n$ is denoted as 
\begin{equation}
\label{eq:seqiid}
X_1\seqind X_2 \seqind \cdots \seqind X_n.	
\end{equation}
This sequence $\{X_i\}_{i=1}^n$ is further \emph{identically and independently distributed} if they are sequentially independent and $X_{i+1}\eqdistn X_i$ for $i=1,2,\dotsc,n-1$. This property is called (nonlinearly) i.i.d. in short.
\end{defn}

\begin{rem}
	Note that the independence \ref{eq:seqiid} is stronger than the pairwise relation $X_k\seqind X_{k+1}$ with $k=1,2,\dotsc,n-1$.
\end{rem}



Now we introduce two fundamental $G$-version distributions: maximal and $G$-normal distributions. The former one can be treated as an analogue of ``constant'' in classical sense. The latter one is a generalization of classical normal. 
We call $\bar X$ an independent copy of $X$ if $\bar X \eqdistn X$ and $X\seqind \bar X$. 

We first introduce the $G$-distribution which is the joint vector
of these two fundamental distributions.

Let $\numset{S}(d)$ denote the collection of all $d\times d$ symmetric
matrices.

\begin{prop}
\label{prop:existence-G-distn}
Let $G:\numset{R}^{d}\times\numset{S}(d)\to\numset{R}$ be a function
satisfying: for each $p,\bar{p}\in\numset{R}^{d}$ and $A,\bar{A}\in\numset{S}(d)$, 
\begin{equation}
\begin{cases}
G(p+\bar{p},A+\bar{A})\leq G(p,A)+G(\bar{p},\bar{A}),\\
G(\lambda p,\lambda A)=\lambda G(p,A)\text{ for any }\lambda\geq0,\\
G(p,A)\leq G(p,\bar{A})\text{ if }A\leq\bar{A}.
\end{cases}\label{eq:G-condition}
\end{equation}
Then there exists a pair $(X,\eta)$ on some sublinear expectation
space $(\Omega,\myset{H},\expt)$ such that 
\begin{equation}
G(p,A)=\expt[\frac{1}{2}\langle AX,X\rangle+\langle p,\eta\rangle],\label{eq:G-expt-distn}
\end{equation}
and for any $a,b\geq0$,
\begin{equation}
(aX+b\bar{X},a^{2}\eta+b^{2}\bar{\eta})\eqdistn(\sqrt{a^{2}+b^{2}}X,(a^{2}+b^{2})\eta),\label{eq:G-distn-eqdistn}
\end{equation}
where $(\bar{X},\bar{\eta})$ is an independent copy of $(X,\eta)$. 

\end{prop}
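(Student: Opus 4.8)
The plan is to realize the pair $(X,\eta)$ through the (viscosity) solution of a fully nonlinear parabolic PDE --- the generalization of the $G$-heat equation --- whose nonlinearity is precisely the given $G$. First I would record the structural consequence of \ref{eq:G-condition}: since $G$ is subadditive and positively homogeneous it is the support function of a unique bounded closed convex set, and the monotonicity condition forces the matrix component of that set into the positive semidefinite cone. Concretely,
\[
G(p,A)=\sup_{(q,Q)\in\Theta_G}\Big\{\langle p,q\rangle+\frac{1}{2}\mathrm{tr}[QA]\Big\},\qquad \Theta_G\subset\numset{R}^d\times\numset{S}(d),\ Q\succeq 0.
\]
This identifies $\eta$ as the ``maximal'' (drift-uncertainty) coordinate and $X$ as the ``$G$-normal'' (variance-uncertainty) coordinate, and it is the convexity of $\Theta_G$ that will later drive the self-similarity.

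Next, for $\varphi\in\fspacedef$ on $\numset{R}^{2d}$ with arguments $(x,y)$ (where $x$ pairs with $X$ and $y$ with $\eta$), I would let $u^\varphi(t,x,y)$ be the unique viscosity solution of
\[
\partial_t u-G\big(D_y u,\,D_x^2 u\big)=0,\qquad u^\varphi(0,\cdot,\cdot)=\varphi,
\]
and define $\expt[\varphi(X,\eta)]\coloneqq u^\varphi(1,0,0)$. Conditions \ref{eq:G-condition} make this a degenerate elliptic/parabolic equation of Hamilton--Jacobi--Bellman type for which a comparison principle holds; monotonicity of $\varphi\mapsto u^\varphi$, constant preservation, subadditivity and positive homogeneity of $\expt$ then transfer directly from the comparison principle and the sublinearity of $G$, so $\expt$ is a sublinear expectation. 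To place this on an actual space $\exptSpace$ I would represent $u^\varphi$ through the associated controlled diffusions, producing a weakly compact family $\myset{P}$ with $\expt=\sup_{\lprob\in\myset{P}}\lexpt_{\lprob}$ as in the regularity discussion preceding the statement.

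The identity \ref{eq:G-expt-distn} I would verify by an explicit solution: for the test function $\varphi(x,y)=\frac{1}{2}\langle Ax,x\rangle+\langle p,y\rangle$ one has $D_x^2\varphi\equiv A$ and $D_y\varphi\equiv p$, so $u(t,x,y)=\varphi(x,y)+tG(p,A)$ solves the PDE, and evaluating at $(1,0,0)$ returns exactly $G(p,A)$. For the self-similarity \ref{eq:G-distn-eqdistn} I would use the semigroup property of $u^\varphi$ together with the scaling invariance $G(\lambda p,\lambda A)=\lambda G(p,A)$: the flow producing $aX$ corresponds to time $a^2$, that of the independent copy $b\bar X$ to an additional time $b^2$, and the Markov (semigroup) property concatenates them into the single flow for time $a^2+b^2$ that produces $\sqrt{a^2+b^2}\,X$; on the drift coordinate the analogous computation, now using convexity of $\Theta_G$ so that $a^2\Theta_G+b^2\Theta_G=(a^2+b^2)\Theta_G$, yields $a^2\eta+b^2\bar\eta\eqdistn(a^2+b^2)\eta$.

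The main obstacle is the analytic backbone rather than these formal checks: establishing existence, uniqueness and the comparison principle for the possibly degenerate fully nonlinear equation $\partial_t u=G(D_yu,D_x^2u)$ in the viscosity sense, and then rigorously matching the probabilistic independent-copy construction to the PDE semigroup so that \ref{eq:G-distn-eqdistn} holds in distribution against \emph{all} $\varphi\in\fspacedef$ and not merely against polynomials. Equivalently, if one builds $\expt$ directly from a concrete $\myset{P}$, the delicate point becomes verifying weak compactness and checking that the supremum over $\myset{P}$ simultaneously reproduces the PDE and the independence relation defining the independent copy $(\bar X,\bar\eta)$.
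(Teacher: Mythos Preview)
The paper does not supply its own proof of this proposition: immediately after the statement (and the remark following it) the authors simply write that the proof ``is available at Section~2.3 in \cite{peng2019nonlinear}'' and treat the result as imported background. Your sketch --- representing $G$ as the support function of a bounded closed convex $\Theta_G\subset\numset{R}^d\times\numset{S}_d^+$, defining $\expt[\varphi(X,\eta)]$ through the viscosity solution of $\partial_t u=G(D_yu,D_x^2u)$, and reading off the self-similarity \ref{eq:G-distn-eqdistn} from the semigroup property together with the positive homogeneity of $G$ --- is precisely the construction carried out in that reference, so there is nothing substantive to contrast.

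One small technical point worth tightening: the test function $\varphi(x,y)=\tfrac12\langle Ax,x\rangle+\langle p,y\rangle$ you use to verify \ref{eq:G-expt-distn} is in $\fspace$ but not in $\fspacedef$, whereas you begin by building $\expt$ only on $\fspacedef$. The paper handles this just after the proposition by imposing the uniform-integrability conditions \ref{eq:cond-unif-int-1}--\ref{eq:cond-unif-int-2} to extend from $\fspacedef$ to $\fspace$; you should invoke that extension (or an equivalent truncation argument) before plugging in the quadratic-linear $\varphi$.
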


\begin{rem}
	The relation \ref{eq:G-distn-eqdistn} is equivalent to 
	$
	(X+\bar{X},\eta+\bar{\eta})\eqdistn(\sqrt{2}X,2\eta).
	$
\end{rem}

The proof of \ref{prop:existence-G-distn} is available at Section 2.3 in \cite{peng2019nonlinear}. Then we have the notion of $G$-distribution associated with a function $G$. 

\begin{defn}
($G$-distribution) A pair $(X,\eta)$ satisfying \ref{eq:G-distn-eqdistn}
is called $G$-distributed associated with a function $G$ in terms
of \ref{eq:G-expt-distn}. 
\end{defn}

The sublinear expectation of the random vector $(X,\eta)$ above can be characterized by the solution to a parabolic partial differential equation. 

\begin{prop}
Consider a $G$-distributed random vector $(X,\eta)$ associated with
a function $G$. For any $\varphi\in\fspacedef(\numset{R}^{d}\times\numset{R})$,
let 
\[
u(t,x,y)\coloneqq\expt[\varphi(x+\sqrt{t}X,y+t\eta)],\;(t,x,y)\in[0,\infty)\times\numset{R}^{d}\times\numset{R}^{d}.
\]
Then $u$ is the unique (viscosity) solution to the following parabolic
partial differential equation (PDE): 
\[
\partial_{t}u-G(D_{y}u,D_{x}^{2}u)=0,
\]
with initial condition $u|_{t=0}=\varphi$, where $D_{x}^{2}u\coloneqq(\partial_{x_{i}x_{j}}^{2}u)_{i,j=1}^{d}$
and $D_{y}u\coloneqq(\partial_{y_{i}}u)_{i=1}^{d}$. This PDE is called
a $G$-equation. 
\end{prop}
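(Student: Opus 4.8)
The plan is to identify $u$ as the unique viscosity solution in three movements: establishing regularity together with the initial condition, extracting a self-consistency (flow) identity from the $G$-distribution property \ref{eq:G-distn-eqdistn}, and then deriving the viscosity sub- and super-solution inequalities by testing $u$ against smooth functions. Uniqueness will follow from the comparison principle for the associated degenerate parabolic equation. Throughout I will use the structural properties of $G$ in \ref{eq:G-condition} and the moment bounds $\expt[\abs{X}^{p}]<\infty$, $\expt[\abs{\eta}^{p}]<\infty$ for all $p\geq1$ that accompany a $G$-distributed pair.

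\textbf{Regularity.} Since $\varphi\in\fspacedef(\numset{R}^{d}\times\numset{R})$ is bounded and Lipschitz and $\expt$ is monotone and sub-additive, $u$ is bounded and inherits the Lipschitz constant $L_\varphi$ of $\varphi$ in $(x,y)$ uniformly in $t$, since
\[
\abs{u(t,x,y)-u(t,x',y')}\leq \expt[\abs{\varphi(x+\sqrt{t}X,y+t\eta)-\varphi(x'+\sqrt{t}X,y'+t\eta)}]\leq L_{\varphi}(\abs{x-x'}+\abs{y-y'}).
\]
The initial condition $u|_{t=0}=\varphi$ holds by definition, and a $\tfrac{1}{2}$-Hölder estimate in $t$ follows from the scaling in \ref{eq:G-distn-eqdistn} together with the moment bounds.

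\textbf{The flow identity.} Let $(\bar X,\bar\eta)$ be an independent copy of $(X,\eta)$. Applying \ref{eq:G-distn-eqdistn} with $a=\sqrt{\delta}$ and $b=\sqrt{t}$ gives $(\sqrt{\delta}X+\sqrt{t}\bar X,\delta\eta+t\bar\eta)\eqdistn(\sqrt{t+\delta}X,(t+\delta)\eta)$, so that
\[
u(t+\delta,x,y)=\expt[\varphi(x+\sqrt{\delta}X+\sqrt{t}\bar X,\,y+\delta\eta+t\bar\eta)].
\]
Using $(X,\eta)\seqind(\bar X,\bar\eta)$ from \ref{defn:G-indep}, I fix the realization of $(X,\eta)$ in the outer slot and recognize the inner expectation over $(\bar X,\bar\eta)\eqdistn(X,\eta)$ as the bounded Lipschitz function $u(t,\cdot,\cdot)\in\fspacedef$, which yields the flow identity
\[
u(t+\delta,x,y)=\expt[u(t,\,x+\sqrt{\delta}X,\,y+\delta\eta)].
\]
This is the analytic content of the $G$-distribution: it converts the defining self-similarity into a one-step dynamic-programming relation.

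\textbf{Viscosity inequalities and main obstacle.} Because $u$ need not be smooth, I pass to the viscosity formulation. Let $\psi$ be a smooth function touching $u$ from above at an interior point $(t_{0},x_{0},y_{0})$ with $t_{0}>0$, so that $\psi\geq u$ nearby and $\psi(t_{0},x_{0},y_{0})=u(t_{0},x_{0},y_{0})$. Writing the flow identity at $s=t_{0}$, $t=t_{0}-\delta$ and invoking monotonicity of $\expt$,
\[
\psi(t_{0},x_{0},y_{0})=u(t_{0},x_{0},y_{0})=\expt[u(t_{0}-\delta,x_{0}+\sqrt{\delta}X,y_{0}+\delta\eta)]\leq\expt[\psi(t_{0}-\delta,x_{0}+\sqrt{\delta}X,y_{0}+\delta\eta)].
\]
A Taylor expansion of the smooth $\psi$ at $(t_{0},x_{0},y_{0})$, with $q=D_{x}\psi$, $p=D_{y}\psi$, $A=D_{x}^{2}\psi$, produces an integrand equal to $\psi(t_{0},x_{0},y_{0})-\delta\partial_{t}\psi+\sqrt{\delta}\langle q,X\rangle+\delta\langle p,\eta\rangle+\tfrac{\delta}{2}\langle AX,X\rangle$ up to a remainder that is $o(\delta)$ in $\expt$-norm by the finiteness of the moments of $(X,\eta)$. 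The $X$-marginal of a $G$-distributed pair is symmetric with $\expt[\langle q,X\rangle]=\expt[-\langle q,X\rangle]=0$ (a consequence of \ref{eq:G-distn-eqdistn}); hence applying the sandwich $\expt[Z]-\expt[-W]\leq\expt[Z+W]\leq\expt[Z]+\expt[W]$ to the order-$\delta$ part $Z$ and the order-$\sqrt{\delta}$ part $W$, the linear term drops and, using \ref{eq:G-expt-distn},
\[
\expt[\psi(t_{0}-\delta,x_{0}+\sqrt{\delta}X,y_{0}+\delta\eta)]=\psi(t_{0},x_{0},y_{0})+\delta\big(-\partial_{t}\psi+G(p,A)\big)+o(\delta).
\]
Dividing by $\delta$ and letting $\delta\to0^{+}$ gives $\partial_{t}\psi-G(D_{y}\psi,D_{x}^{2}\psi)\leq0$ at $(t_{0},x_{0},y_{0})$, so $u$ is a viscosity subsolution; touching from below yields the supersolution inequality. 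Uniqueness among solutions of the relevant growth then follows from the comparison principle, since $G$ is Lipschitz, convex, positively homogeneous, and monotone in $A$ by \ref{eq:G-condition}, exactly the structure required by Crandall--Ishii--Lions theory. I expect the main obstacles to be twofold: the a priori non-smoothness of $u$, which is precisely why the test-function route replaces a naive Taylor expansion of $u$ itself; and the uniform $o(\delta)$ control of the remainder under the \emph{sublinear} expectation, where terms cannot be split freely and one must instead rely on the exact vanishing of $\expt[\pm\langle q,X\rangle]$ together with the moment bounds to isolate the $G$-term. Verifying the comparison principle in full rigor is the heaviest remaining ingredient, though it is standard once \ref{eq:G-condition} is in hand.
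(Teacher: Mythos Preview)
The paper does not supply its own proof of this proposition: it is stated as background material in \ref{sec:G-frame-setup}, with the surrounding text directing the reader to \cite{peng2019nonlinear} for the construction of $G$-distributions and to \cite{crandall1992user} for the viscosity-solution machinery. So there is no in-paper argument to compare against.

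That said, your sketch is exactly the standard route taken in Peng's book: (i) Lipschitz regularity of $u$ from $\varphi\in\fspacedef$, (ii) the dynamic-programming identity $u(t+\delta,x,y)=\expt[u(t,x+\sqrt{\delta}X,y+\delta\eta)]$ extracted from \ref{eq:G-distn-eqdistn} and sequential independence, (iii) a test-function Taylor expansion exploiting $\expt[\pm\langle q,X\rangle]=0$ and \ref{eq:G-expt-distn} to isolate $G(D_y\psi,D_x^2\psi)$, and (iv) comparison for uniqueness. Two small points worth tightening if you write this out fully: first, the touching $\psi\geq u$ is a priori only local, so you should either globalize the test function in the usual way or argue via the moment bounds that the contribution from $\{\abs{\sqrt{\delta}X}+\abs{\delta\eta}>r\}$ is $o(\delta)$; second, your claim that $\expt[\langle q,X\rangle]=0$ does not follow from symmetry of $X$ per se (the relation \ref{eq:G-distn-eqdistn} only covers $a,b\geq0$) but rather from $X+\bar X\eqdistn\sqrt 2 X$ combined with \ref{prop:linear-prop-expt}, which forces $\expt[X]=-\expt[-X]=0$. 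Neither affects the correctness of the outline.
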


\begin{rem}
	Readers may turn to \cite{crandall1992user} for more details on the notion of viscosity solutions. In this paper, we do not require readers' knowledge on the viscosity solution. Moreover, it can be treated as a classical one when the function $G$ satisfies the strong elliplicity condition.  
\end{rem}




Next we provide a useful established property of the $G$-distributed random vector $(X,\eta)$. Suppose $\abs{\eta},\abs{X}^2\in \myset{H}$ and the following uniform integrability conditions are statisfied (proposed by \cite{zhang2016rosenthal}): 
\begin{equation}
	\label{eq:cond-unif-int-1}
	\lim_{\lambda \to \infty}\expt[(\abs{\eta}-\lambda)^+]=0,
\end{equation}
and 
\begin{equation}
	\label{eq:cond-unif-int-2}
	\lim_{\lambda \to \infty}\expt[(\abs{X}^2-\lambda)^+]=0.
\end{equation}
Then for any $\varphi\in\fspace$ (which is larger than $\fspacedef$), we still have $\varphi(\eta,X) \in \myset{H}$ (which is a Banach space). (This result is provided in Section 2.5 in \cite{peng2019nonlinear}.)
Therefore, in the following context, when we talk about $\varphi(\eta,X)$ for a $G$-distributed random vector $(\eta,X)$, we can take $\varphi \in \fspace$. 





If we pay attention to each marginal part in \ref{eq:G-distn-eqdistn}, we can see that $X$ is similar to a classical normal distribution while $\eta$ behaves like a constant (we do not consider Cauchy distribution here because we assume the existence of expectation). It turns out $X$ follows a $G$-normal distribution and $\eta$ follows a maximal distribution. 



\begin{defn}[Maximal distribution]
A $d$-dimensional random vector $\eta$ follows 
	a \emph{maximal distribution} if, for any independent copy $\bar \eta$, we have 
	\[
	\eta +\bar{\eta}\eqdistn 2\eta.
	\]
	Another equivalent and specific definition is that $\eta$ follows the maximal distribution $\Maximal(\Gamma)$ if there exists a bounded, closed and convex subset $\Gamma\subset\numset{R}^d$ such that, for any $\varphi\in\fspace$,
\[\expt[\varphi(\eta)]=\max_{y\in\Gamma} \varphi(y). \]
\end{defn}




\begin{defn}[$G$-normal distribution]
	A $d$-dimensional random vector $X$ follows a \emph{$G$-normal distribution} if, for any independent copy $\bar X$, we have 
	\[
	X + \bar X \eqdistn \sqrt{2} X.
	\]
	When $d=1$, we have $X\sim\GN(0,\varI)$ ($0\leq \sdl \leq \sdr$) with \emph{variance-uncertainty}: $\sdl^2\coloneqq-\expt[-X^2]$ and $\sdr^2\coloneqq\expt[X^2]$.
\end{defn} 



%
%



%

\begin{prop}[$G$-normal distribution characterized by the $G$-heat Equation] 
\label{$G$-normal-heat-eq}
A random vector $X$ follows the $d$-dimensional $G$-normal distribution, if and only if
	$v(t,x)\coloneqq \expt[\varphi(x+\sqrt{t}X)]$
	is the solution to the $G$-heat equation defined on $(t,x)\in [0,1]\times\numset{R}^d$:
	\begin{equation}
	\label{eq:G-heat-eq}
		v_t-G(D_x^2 v)=0,\,v|_{t=0}=\varphi,
	\end{equation}
	where $G(\mymat{A})\coloneqq\frac 12\expt[\langle \mymat{A}X,X \rangle]:\numset{S}_d \to \numset{R}$, which is a sublinear function characterizing the distribution of $X$. For $d=1$, we have $G(a)=\frac 12 (\sdr^2 a^+ -\sdl^2 a^-)$ and when $\sdl^2>0$, \ref{eq:G-heat-eq} is also called \emph{the Black-Scholes-Barenblatt equation} with volatility uncertainty. 
	
\end{prop}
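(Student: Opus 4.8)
The plan is to prove both implications by leaning on two pillars already available in this section: the representation of $G$-distributed vectors through the $G$-equation, and the uniqueness of viscosity solutions to fully nonlinear parabolic PDEs. The first observation is that $G(\mymat{A}) \coloneqq \frac{1}{2}\expt[\langle \mymat{A}X, X\rangle]$ is automatically an admissible nonlinearity: sub-additivity and positive homogeneity are inherited from the sublinearity of $\expt$, and monotonicity in $\mymat{A}$ follows because $\mymat{A} \leq \bar{\mymat{A}}$ forces $\langle \mymat{A}X, X\rangle \leq \langle \bar{\mymat{A}}X, X\rangle$ pointwise. Hence this $G$ satisfies the structural conditions in \ref{eq:G-condition}, and the existence theory in \ref{prop:existence-G-distn} guarantees a genuine $G$-normal law attached to this very $G$.

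For the forward direction, assume $X$ is $G$-normal. The key lemma to establish first is the \emph{generalized scaling} identity $aX + b\bar{X} \eqdistn \sqrt{a^2+b^2}\,X$ for all $a,b \geq 0$, where $\bar{X}$ is an independent copy. I would obtain it by iterating the defining relation $X + \bar{X} \eqdistn \sqrt{2}X$ to get $\sum_{i=1}^{n} X_i \eqdistn \sqrt{n}\,X$ for i.i.d.\ copies, which yields the identity for $a^2, b^2$ rational, and then extend to all nonnegative reals by the Lipschitz continuity of $\varphi \mapsto \expt[\varphi(\cdot)]$ in the scaling parameter. This identity is precisely what is needed to read $X$ as the $G$-normal marginal of a $G$-distributed pair $(X, \eta)$ with $\eta$ degenerate (a maximal law $\Maximal(\{0\})$), so that the $G$-equation Proposition applies and collapses---since the degenerate $\eta$ removes all dependence on the gradient variable---to the $G$-heat equation $v_t - G(D_x^2 v) = 0$ for $v(t,x) = \expt[\varphi(x+\sqrt{t}X)]$. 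Alternatively, I would argue directly: the same scaling identity gives the semigroup property $v(t+s,x) = \expt[v(t, x + \sqrt{s}X)]$, and combining it with the Lipschitz-in-$x$ and $\frac{1}{2}$-H\"older-in-$t$ estimates on $v$, I would verify the viscosity sub/supersolution inequalities by touching $v$ with smooth test functions and Taylor-expanding; the first-order term drops out by the certain, vanishing mean of the $G$-normal, $\expt[\langle p, X\rangle] = \expt[-\langle p, X\rangle] = 0$ (itself a consequence of the defining relation and $X \eqdistn -X$), while the second-order term produces exactly $G(D_x^2 v)$.

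For the converse, suppose $v(t,x) = \expt[\varphi(x+\sqrt{t}X)]$ solves the $G$-heat equation for every $\varphi \in \fspacedef$. Let $\tilde{X}$ be the bona fide $G$-normal random vector carrying the same $G$, whose existence was secured in the first paragraph, and set $\tilde{v}(t,x) = \expt[\varphi(x+\sqrt{t}\tilde{X})]$. By the forward direction, $\tilde{v}$ solves the same $G$-heat equation with the same initial datum $\varphi$. Uniqueness of the viscosity solution (see \cite{crandall1992user}) forces $v \equiv \tilde{v}$; evaluating at $(t,x) = (1,0)$ gives $\expt[\varphi(X)] = \expt[\varphi(\tilde{X})]$ for all $\varphi \in \fspacedef$, i.e.\ $X \eqdistn \tilde{X}$, so $X$ inherits the $G$-normality of $\tilde{X}$.

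I expect the main obstacle to be the forward direction, specifically the interplay between the generalized scaling lemma and the regularity needed to make the PDE derivation rigorous. Pushing the Taylor expansion through the sublinear expectation requires bounding the remainder uniformly, using the uniform integrability condition \ref{eq:cond-unif-int-2} so that the $\abs{X}^2$-type error terms are controlled, and the cleanest way to sidestep proving $v \in C^{1,2}$ is to work entirely within the viscosity-solution definition via test functions. The converse is comparatively routine once uniqueness is invoked, so the real content is concentrated in establishing the scaling identity and the viscosity-solution property.
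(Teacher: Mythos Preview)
The paper does not actually prove this proposition. It appears in Section~2 (``Basic settings of the $G$-expectation framework'') as a piece of background material quoted from the literature---specifically from \cite{peng2019nonlinear}---and no proof is supplied anywhere in the paper, including in Section~6 (``Proofs''), which only covers results from Sections~3 through~5. So there is no ``paper's own proof'' to compare your proposal against.

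That said, your proposal is the standard route to this result as it appears in Peng's monograph: the forward direction via the scaling identity $aX + b\bar{X} \eqdistn \sqrt{a^2+b^2}\,X$ (which is in fact equivalent to the defining relation, as noted in the remark following \ref{prop:existence-G-distn}) and a viscosity-solution verification using the dynamic programming / semigroup property, and the converse via uniqueness of viscosity solutions to the $G$-heat equation. One small point: you propose to derive the generalized scaling identity by iterating $X + \bar X \eqdistn \sqrt{2}X$ to rational $a^2,b^2$ and then passing to the limit. This works, but in Peng's treatment the equivalence between $X+\bar X \eqdistn \sqrt 2 X$ and the full two-parameter scaling is typically obtained more directly through the PDE characterization itself (i.e., one first shows the PDE holds from the one-parameter relation and then reads off the two-parameter identity from the solution), rather than by a separate density argument. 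Either path is fine.
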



\begin{rem}
\label{rem:GN-degenerate-to-CN}
For $d=1$, when $\sdl=\sdr=\sigma$, the $G$-normal distribution $X\sim\GN(0,\varI)$ can be treated as a classical normal $\CN(0,\sigma^2)$ because the $G$-heat equation is reduced to a classical one. 
\end{rem}

\begin{rem} (Covariance uncertainty) 
	We can use the function $G(\mymat{A})\coloneqq\frac 12\expt[\langle \mymat{A}X,X \rangle]$ to characterize the definition of $G$-normal distribution. In fact, $G(\mymat{A})$ can be further expressed as 
	\[G(\mymat{A})=\frac 12 \sup_{\dvar\in\dvarset} \trace[\mymat{A}\dvar]\] where $\dvarset=\{\mymat{B}\mymat{B}^T:\mymat{B}\in \numset{S}_d\}$ is a collection of non-negative definite symmetric matrices which can be treated as the uncertainty set of the covariance matrices. In this sense, we can write $X\sim \GN(\myvec{0},\dvarset)$.
\end{rem}

\begin{prop}
	\label{prop:convex-concave-case}
	Consider $X\sim \GN(0,\varI)$ and a classical distributed random variable $\stdrv \sim \CN(0,1)$.
	For any $\varphi \in \fspace(\numset{R})$,  we have 
	\[
	\expt[\varphi(X)] = \begin{cases}
		\lexpt[\varphi(\sdr \stdrv)] & \text{if }\varphi \text{ is convex}\\
		\lexpt[\varphi(\sdl \stdrv)] & \text{if }\varphi \text{ is concave}
	\end{cases}.
	\]	
\end{prop}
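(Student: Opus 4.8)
The plan is to use the characterization of the $G$-normal distribution through the $G$-heat equation \ref{eq:G-heat-eq} together with the uniqueness of its viscosity solution. Writing $v(t,x)\coloneqq \expt[\varphi(x+\sqrt{t}X)]$, the quantity we want is $\expt[\varphi(X)]=v(1,0)$, and by \ref{eq:G-heat-eq} the function $v$ is the unique viscosity solution of $v_t-G(v_{xx})=0$ with $v|_{t=0}=\varphi$, where $G(a)=\frac12(\sdr^2 a^+-\sdl^2 a^-)$. The strategy is to exhibit, in each of the two cases, an explicit classical solution built from the ordinary heat semigroup, verify that it actually solves the fully nonlinear $G$-heat equation, and then invoke uniqueness to force it to coincide with $v$; evaluating at $(1,0)$ yields the claimed formula.

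For the convex case, first I would set $w(t,x)\coloneqq \lexpt[\varphi(x+\sqrt{t}\,\sdr\stdrv)]$, i.e.\ the solution of the linear heat equation $w_t-\frac12\sdr^2 w_{xx}=0$ with initial datum $\varphi$. The decisive structural step is that the Gaussian (heat) semigroup preserves convexity: since $w(t,\cdot)$ is an average of the translates $x\mapsto\varphi(x+\sqrt{t}\,\sdr z)$ over $z$ against the standard normal density, and each translate is convex when $\varphi$ is, the average $w(t,\cdot)$ is convex, hence $w_{xx}(t,x)\ge 0$ for all $t>0$ and $x\in\numset{R}$. Consequently $(w_{xx})^-=0$ and $G(w_{xx})=\frac12\sdr^2 w_{xx}$, so the linear equation satisfied by $w$ is exactly the $G$-heat equation with datum $\varphi$. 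By uniqueness $v\equiv w$, and therefore $\expt[\varphi(X)]=v(1,0)=w(1,0)=\lexpt[\varphi(\sdr\stdrv)]$. The concave case is symmetric: taking $w(t,x)\coloneqq \lexpt[\varphi(x+\sqrt{t}\,\sdl\stdrv)]$, the heat semigroup preserves concavity, so $w_{xx}\le 0$, whence $(w_{xx})^+=0$ and $G(w_{xx})=-\frac12\sdl^2(w_{xx})^-=\frac12\sdl^2 w_{xx}$; again $w$ solves the $G$-heat equation with datum $\varphi$, and uniqueness gives $\expt[\varphi(X)]=w(1,0)=\lexpt[\varphi(\sdl\stdrv)]$.

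The main obstacle is regularity and growth control rather than the convexity idea itself. Since $\varphi$ is only assumed locally Lipschitz with polynomial growth ($\varphi\in\fspace$), I first need the extension of the $G$-heat-equation characterization from $\fspacedef$ to $\fspace$ noted earlier in this section, together with the uniform-integrability conditions that make $\varphi(X)$ admissible. I then have to check that $w$ qualifies as a solution to which the comparison/uniqueness theorem for the $G$-heat equation applies: the Gaussian kernel smooths $\varphi$ so that $w(t,\cdot)\in C^\infty$ for each $t>0$ (justifying the pointwise sign of $w_{xx}$ and the fact that a classical solution is a viscosity solution), while $w(t,x)\to\varphi(x)$ as $t\downarrow 0$ and $w$ inherits the polynomial growth of $\varphi$ uniformly on compact time intervals, which is precisely the growth class in which uniqueness holds. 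Verifying these growth and continuity-up-to-$t=0$ estimates, so that the classical heat flow is genuinely the viscosity solution $v$, is the technical heart of the argument; the convexity/concavity preservation that collapses the nonlinearity in $G$ to a single constant diffusion coefficient is the conceptually decisive but computationally light step.
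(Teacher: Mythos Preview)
Your argument is correct; the decisive idea---that the heat semigroup preserves convexity, so the linear heat solution with diffusion $\sdr^2$ (resp.\ $\sdl^2$) is already a classical, hence viscosity, solution of the nonlinear $G$-heat equation, and uniqueness then forces it to coincide with $v$---is exactly the standard route to this result.

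The paper itself does not supply a proof of this proposition: it is stated in the preliminaries as an imported fact (the surrounding discussion in \ref{rem:comp-semiGN-GN} points to Section~2.5 of Peng's monograph and to the PDE comparison principle of \cite{crandall1992user}). The closest the paper comes is in the proof of \ref{thm:conn-G}, where it \emph{invokes} this proposition as a black box for the $G$-normal side and then, ``for readers' convenience,'' gives a separate elementary argument only for the \emph{semi}-$G$-normal analogue: since $\expt[\varphi(W)]=\max_{v\in\sdInt}\lexpt[\varphi(v\stdrv)]$, it suffices to show $v\mapsto\lexpt[\varphi(v\stdrv)]$ is monotone on $\sdInt$, which the paper does by Taylor-expanding $\varphi$ and differentiating under the integral, using $\varphi''\ge 0$. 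That computation avoids viscosity-solution theory entirely but works only because the semi-$G$-normal expectation is a one-parameter maximum; for the genuine $G$-normal your PDE approach is the appropriate one, and the regularity/growth bookkeeping you flag (smoothness of $w$ for $t>0$, polynomial growth uniformly in $t$, continuity down to $t=0$, so that uniqueness for the $G$-heat equation in the $\fspace$ class applies) is indeed where the actual work sits.
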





\begin{thm}[Law of Large Numbers] 
\label{$G$-LLN}
	Consider a sequence of i.i.d. $\{Z_i\}_{i=1}^\infty$ satisfying 
	\begin{equation}
		\label{eq:lln-cond}
		\lim_{\lambda \to \infty} \expt[(\abs{Z_1}-\lambda)^+]=0.
	\end{equation}
	Then for any continuous functions $\varphi$ satisfying the linear growth condition $\abs{\varphi(x)}\leq C(1+\abs{x})$,
	we have
	\[
	\lim_{n\to \infty} \expt[\varphi(\frac{1}{n} \sum_{i=1}^n Z_i)] = \max_{v\in \Gamma} \varphi(v),
	\]
	where $\Gamma$ is the bounded, closed and convex subset decided by 
	\[
	 \max_{\eta \in \Gamma}\langle p, Z_1 \rangle  =\expt[\langle p, Z_1 \rangle],\;p\in\numset{R}^d.
	 \]
	For $d=1$, let
	$\meanl \coloneqq -\expt[-Z_1]$ and $\meanr \coloneqq \expt[Z_1]$. Then $\frac{1}{n} \sum_{i=1}^n Z_i \convergeto{\dist} \Maximal\meanInt$, that is, we have, 
	\[
	\lim_{n\to \infty} \expt[\varphi(\frac{1}{n} \sum_{i=1}^n Z_i)] = \expt[\varphi(\Maximal\meanInt)]=\max_{\meanl\leq v\leq \meanr} \varphi(v).
	\]
\end{thm}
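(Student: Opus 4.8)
The plan is to identify the limit with the value at time $1$ of a first-order Hamilton--Jacobi equation, and to establish convergence through a telescoping (discrete-scheme) argument. Because \ref{defn:G-distn} and \ref{defn:G-indep} are phrased for $\varphi\in\fspacedef$, I would first reduce a general continuous $\varphi$ of linear growth to this class. The key observation is that the uniform integrability \ref{eq:lln-cond} is inherited by the averages: since $\abs{\frac1n\sum_{i=1}^n Z_i}\le\frac1n\sum_{i=1}^n\abs{Z_i}$ and $t\mapsto(t-\lambda)^{+}$ is nondecreasing and convex, sub-additivity together with identical distribution give
\[
\expt\Big[\big(\abs{\tfrac1n\sum_{i=1}^n Z_i}-\lambda\big)^{+}\Big]\le\frac1n\sum_{i=1}^n\expt\big[(\abs{Z_i}-\lambda)^{+}\big]=\expt\big[(\abs{Z_1}-\lambda)^{+}\big],
\]
which tends to $0$ as $\lambda\to\infty$ uniformly in $n$. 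Hence $\{\frac1n\sum_{i=1}^n Z_i\}_n$ is uniformly integrable, the contribution of $\varphi$ outside a large ball is uniformly negligible, and one may sandwich $\varphi$ between functions in $\fspacedef$ that agree with it on $\Gamma$ up to an arbitrarily small error. It therefore suffices to treat $\varphi\in\fspacedef$.

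For $\varphi\in\fspacedef$ I would introduce the sublinear Hamiltonian $g(p)\coloneqq\expt[\langle p,Z_1\rangle]$, which by sub-additivity and positive homogeneity is exactly the support function $g(p)=\max_{v\in\Gamma}\langle p,v\rangle$ of the compact convex set $\Gamma$ in the statement. Taking $\eta\sim\Maximal(\Gamma)$ and setting $u(t,x)\coloneqq\expt[\varphi(x+t\eta)]=\max_{v\in\Gamma}\varphi(x+tv)$, the $G$-equation specialized to the drift-only case shows that $u$ is the (viscosity) solution of $\partial_t u-g(D_x u)=0$ with $u|_{t=0}=\varphi$, so that $u(1,0)=\max_{v\in\Gamma}\varphi(v)$ is the claimed limit. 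Writing $S_i=\sum_{j=1}^i Z_j$ and $I_i\coloneqq\expt[u(1-\tfrac{i}{n},\tfrac{1}{n}S_i)]$, I have $I_0=u(1,0)$ and $I_n=\expt[\varphi(\tfrac1n S_n)]$, whence
\[
\expt[\varphi(\tfrac1n S_n)]-u(1,0)=\sum_{i=1}^n(I_i-I_{i-1}).
\]
Since $\{Z_i\}$ is i.i.d., $S_{i-1}\seqind Z_i$ and $Z_i\eqdistn Z_1$; applying \ref{defn:G-indep} to the bounded Lipschitz map $(x,z)\mapsto u(1-\tfrac{i}{n},\tfrac{x+z}{n})$ reduces each increment to the one-step comparison between $\expt[u(s,y+\tfrac1n Z_1)]$ and $u(s+\tfrac1n,y)$, where $s=1-\tfrac{i}{n}$ and $y=\tfrac1n S_{i-1}$ is reinserted at the end.

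The consistency estimate is where the equation enters. Formally, a Taylor expansion gives $\expt[u(s,y+\tfrac1n Z_1)]\approx u(s,y)+\tfrac1n g(D_x u(s,y))$, using $\expt[\langle D_x u(s,y),Z_1\rangle]=g(D_x u(s,y))$, while $u(s+\tfrac1n,y)\approx u(s,y)+\tfrac1n\partial_t u(s,y)$; the two first-order terms cancel by the PDE, each increment is $O(n^{-2})$, and the telescoped sum is $O(n^{-1})\to0$. The main obstacle is that this expansion is not literally available: $u(t,x)=\max_{v\in\Gamma}\varphi(x+tv)$ is in general only Lipschitz in $x$, not $C^{1,2}$, so $D_x u$ and $\partial_t u$ may fail to exist where the maximizing $v$ is non-unique. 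I expect to bypass this in one of two standard ways --- either mollify $\varphi$, run the estimate for the regularized data, and pass to the limit using the uniform Lipschitz bound; or argue directly at the level of viscosity sub- and super-solutions, replacing the pointwise Taylor estimate by testing $u$ with smooth functions that touch it and invoking the comparison principle for the Hamilton--Jacobi equation. In either route, the uniform integrability from the first step is what keeps all the error bounds uniform in $n$.

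Combining the consistency estimate with the telescoping yields $\expt[\varphi(\tfrac1n\sum_{i=1}^n Z_i)]\to\max_{v\in\Gamma}\varphi(v)$ for $\varphi\in\fspacedef$, and the sandwiching of the first step promotes this to every continuous $\varphi$ of linear growth. The one-dimensional assertion is then immediate: here $g(p)=\max(p\meanl,p\meanr)$ forces $\Gamma=\meanInt$, so the limit equals $\max_{\meanl\le v\le\meanr}\varphi(v)=\expt[\varphi(\Maximal\meanInt)]$, which is exactly $\frac1n\sum_{i=1}^n Z_i\convergeto{\dist}\Maximal\meanInt$.
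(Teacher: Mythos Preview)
The paper does not actually supply a proof of \ref{$G$-LLN}: it is stated in \ref{sec:G-frame-setup} as established background from the $G$-expectation literature (Peng's law of large numbers), alongside \ref{$G$-CLT}, and the proofs section (\ref{sec:proofs}) contains nothing for it. So there is no ``paper's own proof'' to compare against here.

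That said, your outline is essentially the standard route used in the references the paper cites (e.g.\ \cite{peng2019nonlinear}): reduce to $\fspacedef$ via uniform integrability of the averages, identify the limit through the first-order $G$-equation $\partial_t u - g(D_x u)=0$ with $g(p)=\expt[\langle p,Z_1\rangle]$, and run a telescoping consistency argument against the dynamic-programming iterates. Your uniform-integrability reduction is correct, and you have correctly flagged the genuine technical point --- $u(t,x)=\max_{v\in\Gamma}\varphi(x+tv)$ is only Lipschitz, so the naive Taylor step fails and one must either mollify $\varphi$ or work at the viscosity level. In Peng's treatment this is handled by perturbing $u$ to a $C^{1,2}$ function (an interior-regularization step) before telescoping; your ``mollify $\varphi$'' alternative works equally well for the first-order equation. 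One minor point: your telescoping bound should not be phrased as $I_i-I_{i-1}$ being $O(n^{-2})$ pointwise, since sub-additivity of $\expt$ means you cannot simply sum signed increments; the clean way is to bound $\abs{\expt[\varphi(\tfrac1n S_n)]-u(1,0)}$ by $\sum_i\expt[\abs{\Delta_i}]$ where each $\Delta_i$ is the one-step consistency defect, and show $\expt[\abs{\Delta_i}]=o(n^{-1})$ uniformly.
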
 


\begin{thm}[Central Limit Theorem]
\label{$G$-CLT}
	Consider a sequence of i.i.d. $\{X_i\}_{i=1}^\infty$ satisfying mean-certainty $\expt[X_1]=-\expt[-X_1]=\myvec 0$ and 
\begin{equation}
		\label{eq:lln-cond}
		\lim_{\lambda \to \infty} \expt[(\abs{X_1}^2-\lambda)^+]=0.
	\end{equation}
	Then for any continuous functions $\varphi$ satisfying the linear growth condition $\abs{\varphi(x)}\leq C(1+\abs{x})$, 
	\[
	\lim_{n\to\infty} \expt[\varphi(\frac{1}{\sqrt n}\sum_{i=1}^{n} X_i)] = \expt[\varphi(X)],
	 \]
	 where $X$ is a $G$-normally distributed random variable characterized by the sublinear function $G$ defined as 
	 \[
	 G(A)\coloneqq \expt[\frac{1}{2} \langle AX_1,X_1\rangle],\; A\in \numset{S}_d.
	 \]
	For $d=1$, let $\sdl^2\coloneqq-\expt[-X_1^2]$ and $\sdr^2\coloneqq\expt[X_1^2]$. Then we have $\frac{1}{\sqrt n}\sum_{i=1}^{n} X_i$ converges in distribution to $X\sim \GN(0,\varI)$ . 
\end{thm}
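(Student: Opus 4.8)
The statement is Peng's central limit theorem, so the plan is to transport the classical Lindeberg interpolation argument into the sublinear setting, using the solution of the $G$-heat equation as the interpolating function. First I would reduce the test-function class: it suffices to prove the convergence for $\varphi$ bounded and smooth (bounded derivatives of all orders), and recover a general continuous $\varphi$ of linear growth by a separate approximation argument at the end. For such a smooth $\varphi$ and the target $G$-normal vector $X$ from the statement, introduce
\[
V(t,x) \coloneqq \expt[\varphi(x + \sqrt{t}\,X)], \quad (t,x)\in[0,1]\times\numset{R}^d .
\]
By the proposition characterizing $G$-normal through the $G$-heat equation, $V$ solves $V_t - G(D_x^2 V) = 0$ with $V|_{t=0} = \varphi$, so that $V(1,0) = \expt[\varphi(X)]$ is exactly the target quantity while $V(0,\cdot) = \varphi$.

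Next I would set up the telescoping. Writing $S_0 = 0$ and $S_k = \sum_{i=1}^k X_i$, I interpolate by pairing the running state with decreasing remaining time:
\[
\Phi_k \coloneqq \expt\Bigl[ V\Bigl(\tfrac{n-k}{n}, \tfrac{S_k}{\sqrt{n}}\Bigr) \Bigr],
\]
so that $\Phi_0 = V(1,0) = \expt[\varphi(X)]$ and $\Phi_n = \expt[\varphi(S_n/\sqrt n)]$, and the goal becomes $\Phi_n - \Phi_0 = \sum_{k=0}^{n-1}(\Phi_{k+1}-\Phi_k)\to 0$. For each step I Taylor-expand $V$ at the base point $(\tfrac{n-k}{n}, S_k/\sqrt n)$ with time increment $-1/n$ and space increment $X_{k+1}/\sqrt n$, then apply $\expt$ using the sequential independence $(X_1,\dots,X_k)\seqind X_{k+1}$: by \ref{defn:G-indep} I freeze $S_k = s$ and evaluate the inner sublinear expectation over $X_{k+1}$ first. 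The first-order term $\tfrac{1}{\sqrt n}\langle D_x V, X_{k+1}\rangle$ contributes nothing by a sub-additivity squeeze exploiting mean-certainty $\expt[\langle p,X_1\rangle]=\expt[-\langle p,X_1\rangle]=0$; the inner expectation of the second-order term $\tfrac{1}{2n}\langle D_x^2 V\, X_{k+1}, X_{k+1}\rangle$ is exactly $\tfrac1n G(D_x^2 V)$ by the definition $G(A) = \expt[\tfrac12\langle A X_1,X_1\rangle]$; and this cancels the $-\tfrac1n V_t$ term precisely because $V$ solves the $G$-heat equation. Thus each increment reduces to a Taylor remainder.

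The crux, and the part I expect to be delicate, is controlling the remainder $R_k$ uniformly so that $\sum_k \expt[\abs{R_k}] \to 0$ using only the uniform integrability condition \eqref{eq:lln-cond} rather than a $(2+\delta)$-th moment. The remainder carries a space piece of the form $\tfrac{1}{2n}\langle (D_x^2 V(\bar t,\bar x) - D_x^2 V)X_{k+1}, X_{k+1}\rangle$ together with an $O(1/n^2)$ time piece; since $D_x^2 V$ is uniformly continuous for smooth $\varphi$, I would truncate $\abs{X_{k+1}}^2$ at level $\lambda$, so that on $\{\abs{X_{k+1}}\le\sqrt\lambda\}$ the modulus of continuity of $D_x^2 V$ is small, while the tail $\{\abs{X_{k+1}}>\sqrt\lambda\}$ is dominated by $\expt[(\abs{X_1}^2-\lambda)^+]$, which is the Lindeberg-type hypothesis. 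Summing $n$ terms each of size $o(1/n)$ gives $\Phi_n-\Phi_0\to 0$. A second technical point to address here is the possible degeneracy of $G$ (e.g.\ $\sdl = 0$), where $V$ is only a viscosity solution and need not be $C^{1,2}$: I would regularize by replacing $X_i$ with $X_i + \sqrt\epsilon\,\xi_i$ for independent nondegenerate $G$-normal vectors $\xi_i$, making the associated $G$ uniformly elliptic and $V$ smooth, run the argument, and let $\epsilon\downarrow 0$.

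Finally I would extend from smooth bounded $\varphi$ to continuous $\varphi$ with linear growth. Condition \eqref{eq:lln-cond} yields a uniform bound $\sup_n \expt[\abs{S_n/\sqrt n}^2] < \infty$, hence tightness, so $\varphi$ can be approximated by smooth bounded functions with the tail contributions controlled uniformly in $n$; passing to the limit gives the claimed convergence. Specializing to $d=1$ then reads off $\tfrac{1}{\sqrt n}\sum_{i=1}^n X_i \convergeto{\dist} \GN(0,\varI)$ with $\sdl^2 = -\expt[-X_1^2]$ and $\sdr^2 = \expt[X_1^2]$.
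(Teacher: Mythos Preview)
The paper does not supply its own proof of this theorem: it is stated in Section~\ref{sec:G-frame-setup} as a background result from Peng's framework (cf.\ the citation of \cite{peng2019law} in the introduction and the remark that Chapter~2 of \cite{peng2019nonlinear} contains the details), and no proof appears in Section~\ref{sec:proofs}. So there is no in-paper argument to compare against.

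That said, your outline is a faithful and well-organized sketch of Peng's standard proof. The interpolation via $V(t,x)=\expt[\varphi(x+\sqrt{t}X)]$ solving the $G$-heat equation, the telescoping $\Phi_k$, the use of sequential independence to freeze $S_k$ and evaluate the inner expectation over $X_{k+1}$, the cancellation of the second-order term against $V_t$ by the PDE, the Lindeberg-type truncation to exploit \eqref{eq:lln-cond}, and the $\epsilon$-regularization to handle degenerate $G$ are all the right moves and appear in essentially this form in \cite{peng2019nonlinear}. One small point worth being careful about when you write it out: the sub-additivity squeeze on the first-order term and the identification of the second-order inner expectation with $G(D_x^2V)$ both rely on the additivity property $\expt[H+c]=\expt[H]+c$ for mean-certain $c$ (your \ref{prop:linear-prop-expt}), which you should invoke explicitly since $\expt$ is only sub-additive in general.
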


\begin{prop}
\label{prop:extend-function-space}
	Consider a sequence $\{Y_n\}_{n=1}^\infty$ and $Y_n$ satisfying 
	\[
	\sup_n \expt[\abs{Y_n}^p] + \expt[\abs{Y}^p] <\infty,
	\]
	for any $p\geq 1$. 
	If the convergence $\lim_{n\to\infty} \expt[\varphi(Y_n)]=\expt[\varphi(Y)]$ holds for any $\varphi \in \fspacedef$, then it also holds for $\varphi\in \fspace$. 
\end{prop}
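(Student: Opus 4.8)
The plan is to approximate an arbitrary $\varphi\in\fspace$ by bounded Lipschitz functions to which the hypothesis already applies, and then to control the approximation error \emph{uniformly in $n$} using the moment bounds. First I would record that membership in $\fspace$ forces polynomial growth: setting $y=0$ in the local Lipschitz inequality gives $\abs{\varphi(x)}\leq \abs{\varphi(0)}+C_\varphi(1+\abs{x}^k)\abs{x}\leq C(1+\abs{x}^{k+1})$ for a constant $C$ depending only on $\varphi$. Together with $\sup_n\expt[\abs{Y_n}^p]+\expt[\abs{Y}^p]<\infty$ for all $p\geq 1$, this yields $\expt[\abs{\varphi(Y_n)}]\leq C(1+\expt[\abs{Y_n}^{k+1}])<\infty$ and the same for $Y$, so that $\varphi(Y_n),\varphi(Y)\in\myset{H}_1$ and every expectation written below is well defined.

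Next I would build the truncations. For each level $N\geq 1$ let $\varphi_N(x)\coloneqq\varphi(\pi_N(x))$, where $\pi_N$ is the radial projection onto the centred ball of radius $N$ (the identity on $\{\abs{x}\leq N\}$ and $x\mapsto Nx/\abs{x}$ outside). Since $\pi_N$ is $1$-Lipschitz with range in a compact ball and $\varphi$ is Lipschitz on compacts (a consequence of local Lipschitzness), each $\varphi_N$ is bounded and globally Lipschitz, i.e. $\varphi_N\in\fspacedef$, and $\varphi_N=\varphi$ on $\{\abs{x}\leq N\}$. On $\{\abs{x}>N\}$ both $\varphi(x)$ and $\varphi_N(x)=\varphi(Nx/\abs{x})$ are dominated by $C(1+\abs{x}^{k+1})$, so $\abs{\varphi(x)-\varphi_N(x)}\leq 2C(1+\abs{x}^{k+1})\indicator_{\{\abs{x}>N\}}$.

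The core step decomposes $\abs{\expt[\varphi(Y_n)]-\expt[\varphi(Y)]}$ by the triangle inequality into a tail error $\abs{\expt[\varphi(Y_n)]-\expt[\varphi_N(Y_n)]}$, a main term $\abs{\expt[\varphi_N(Y_n)]-\expt[\varphi_N(Y)]}$, and the symmetric tail error for $Y$. The main term vanishes as $n\to\infty$ for each fixed $N$, directly by the hypothesis applied to $\varphi_N\in\fspacedef$. For the tail terms I would invoke the inequality $\abs{\expt[A]-\expt[B]}\leq\expt[\abs{A-B}]$ (a consequence of subadditivity and monotonicity of $\expt$) together with the previous bound, giving $\abs{\expt[\varphi(Y_n)]-\expt[\varphi_N(Y_n)]}\leq 2C\,\expt[(1+\abs{Y_n}^{k+1})\indicator_{\{\abs{Y_n}>N\}}]$.

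The hard part, and the step I expect to be the main obstacle, is forcing this tail to be small \emph{uniformly in} $n$, since otherwise the two limits $n\to\infty$ and $N\to\infty$ cannot be interchanged. Here I would trade one extra moment for decay in $N$: on $\{\abs{Y_n}>N\geq 1\}$ one has $1+\abs{Y_n}^{k+1}\leq 2\abs{Y_n}^{k+1}\leq\frac{2}{N}\abs{Y_n}^{k+2}$, so the tail is at most $\frac{4C}{N}\sup_n\expt[\abs{Y_n}^{k+2}]\to 0$ as $N\to\infty$, uniformly in $n$; the analogous bound with $\expt[\abs{Y}^{k+2}]$ handles the $Y$-tail. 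A standard $\varepsilon/3$ argument then concludes: given $\varepsilon>0$, first choose $N$ large enough that both tail terms are below $\varepsilon/3$ for every $n$ simultaneously, and then let $n\to\infty$ to push the main term below $\varepsilon/3$. The all-orders moment hypothesis is exactly the uniform-integrability ingredient that makes the tail control uniform (a single exponent $p>k+1$ would already suffice).
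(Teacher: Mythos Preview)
Your argument is correct. The paper does not supply its own proof of this proposition; the accompanying remark simply invokes Lemma~2.4.12 in \cite{peng2019nonlinear}. Your self-contained route---composing $\varphi$ with the $1$-Lipschitz radial projection $\pi_N$ onto the ball of radius $N$ so that $\varphi_N\in\fspacedef$, and then controlling the tail error uniformly in $n$ via the pointwise bound $(1+\abs{x}^{k+1})\indicator_{\{\abs{x}>N\}}\leq \tfrac{2}{N}\abs{x}^{k+2}$ together with the moment hypothesis at order $k+2$---is the standard mechanism behind such extension lemmas and is essentially what underlies the cited reference.
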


\begin{rem}
	\ref{prop:extend-function-space} is a direct result of Lemma 2.4.12 in \cite{peng2019nonlinear}. It useful when we need to extend the function space for $\varphi$ to discuss the convergence in distribution. 
\end{rem}


\subsection{Basic results on independence of sequence}
\label{subsec:basic-re-indep-seq}

We prepare several basic results on sequential independence between random vectors in the $G$-framework:
\begin{itemize}
	\item \ref{ind-subseq} gives a general result showing the sequential independence between two random vectors implies the independence between their sub-vectors. 
	\item \ref{ind-subseq-2} shows the sequential independence of a sequence implies the independence of the sub-sequence. 
	\item \ref{prop:indep-sub-seq} shows under the sequential independence of a sequence, any two non-overlapping subvector has the sequential independence (as long as keeping the original order.)
\end{itemize}
These results are useful for the discussions in \ref{subsec:indep-semignorm}.
We provide the proofs for the convenience of general readers and help them better understand how to deal with the sequential independence $\seqind$.


\begin{prop}
\label{ind-subseq}
For any subsequences $\{i_p\}_{p=1}^k$ and $\{j_q\}_{q=1}^l$ satisfying $1\leq i_{1}<i_{2}<\dotsc<i_{k}\leq n$ and $1\leq j_{1}<j_{2}<\dotsc<j_{l}\leq m$,  we have the general result that 
\[
(X_{1},X_{2},\dotsc,X_{n})\seqind(Y_{1},Y_{2},\dotsc,Y_{m}) \implies (X_{i_{1}},X_{i_{2}},\dotsc,X_{i_{k}})\seqind(Y_{j_{1}},Y_{j_{2}},\dotsc,Y_{j_{l}}). 
 \]   
\end{prop}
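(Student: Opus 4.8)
The plan is to deduce the subvector relation from the full relation by \emph{lifting} test functions through coordinate projections. Write $\mathbf{X}=(X_1,\dots,X_n)$ and $\mathbf{Y}=(Y_1,\dots,Y_m)$, and let $\pi_I:\numset{R}^n\to\numset{R}^k$ and $\pi_J:\numset{R}^m\to\numset{R}^l$ denote the projections onto the coordinates indexed by $\{i_p\}$ and $\{j_q\}$. Given an arbitrary $\psi\in\fspacedef(\numset{R}^{k+l})$, I would set
\[
\varphi(x_1,\dots,x_n,y_1,\dots,y_m)\coloneqq \psi\bigl(\pi_I(x_1,\dots,x_n),\pi_J(y_1,\dots,y_m)\bigr)=\psi(x_{i_1},\dots,x_{i_k},y_{j_1},\dots,y_{j_l}),
\]
which simply ignores the discarded coordinates. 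Since $\pi_I,\pi_J$ are $1$-Lipschitz and $\psi$ is bounded and Lipschitz, the composite $\varphi$ is bounded and Lipschitz, i.e.\ $\varphi\in\fspacedef(\numset{R}^{n+m})$, so the hypothesis $\mathbf{X}\seqind\mathbf{Y}$ applies to this $\varphi$.

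The key step is to apply \ref{defn:G-indep} to $\varphi$ and then read off both sides. The left-hand side is immediate: $\expt[\varphi(\mathbf{X},\mathbf{Y})]=\expt[\psi(X_{i_1},\dots,X_{i_k},Y_{j_1},\dots,Y_{j_l})]$, which is exactly the target left-hand side. For the right-hand side, I would observe that the inner functional $g(\mathbf{x})\coloneqq\expt[\varphi(\mathbf{x},\mathbf{Y})]=\expt[\psi(x_{i_1},\dots,x_{i_k},Y_{j_1},\dots,Y_{j_l})]$ depends on $\mathbf{x}$ only through the retained coordinates $(x_{i_1},\dots,x_{i_k})$. Hence $g=h\circ\pi_I$, where $h(u_1,\dots,u_k)\coloneqq\expt[\psi(u_1,\dots,u_k,Y_{j_1},\dots,Y_{j_l})]$, and plugging $\mathbf{x}=\mathbf{X}$ yields $\expt[g(\mathbf{X})]=\expt[h(X_{i_1},\dots,X_{i_k})]$. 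Since $h$ is precisely the inner functional appearing in the definition of $(X_{i_1},\dots,X_{i_k})\seqind(Y_{j_1},\dots,Y_{j_l})$, this equals the target right-hand side, and the two chains of equalities close the argument.

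The only point requiring care — the closest thing to an obstacle — is the bookkeeping guaranteeing that every intermediate object stays in the correct space and that the nested expectation is well posed. Specifically, I would note that $h\in\fspacedef(\numset{R}^k)$, a standard consequence of $\psi\in\fspacedef$ together with the regularity of $\expt$ already invoked implicitly in \ref{defn:G-indep}, so that $h(X_{i_1},\dots,X_{i_k})\in\myset{H}$ and the outer expectation is legitimate; and that $g$ genuinely does not see the discarded coordinates, which is exactly what permits the factorization $g=h\circ\pi_I$. Everything else is formal substitution, so no estimate or limiting argument is needed: the proposition is really the statement that sequential independence is stable under composing test functions with coordinate projections.
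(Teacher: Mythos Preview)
Your proposal is correct and follows essentially the same approach as the paper: lift the test function on the subvector to one on the full vector via coordinate projection, apply the assumed independence $(X_1,\dots,X_n)\seqind(Y_1,\dots,Y_m)$ to this lifted function, and observe that the inner expectation depends only on the retained coordinates. The paper's proof is more terse (and swaps the names $\varphi$ and $\psi$ relative to yours), but the argument is identical; your added remarks about $h\in\fspacedef$ and well-posedness of the nested expectation are just making explicit what the paper leaves implicit.
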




\begin{proof}
For any applicable test function $\varphi\in \fspace(\R^{k+l})$, 
	define another function $\psi \in \fspace(\R^{n+m})$ on a larger space by
\[
\psi(x_{1},x_{2},\dotsc,x_{n},y_{1},y_{2},\dotsc,y_{m})\coloneqq\varphi(x_{i_{1}},x_{i_{2}},\dotsc,x_{i_{k}},y_{j_{1}},y_{j_{2}},\dotsc,y_{j_{l}})
,\]
then
\begin{align*}
& \hphantom{=} \expt[\varphi(X_{i_{1}},X_{i_{2}},\dotsc,X_{i_{k}},Y_{j_{1}},Y_{j_{2}},\dotsc,Y_{j_{l}})]\\
 & =\expt[\psi(X_{1},X_{2},\dotsc,X_{n},Y_{1},Y_{2},\dotsc,Y_{m})]\\
 & =\expt[\expt[\psi(x_{1},x_{2},\dotsc,x_{n},Y_{1},Y_{2},\dotsc,Y_{m})]_{x_{i}=X_{i},i=1,\dotsc n}]\\
 & =\expt[\expt[\varphi(x_{i_{1}},x_{i_{2}},\dotsc,x_{i_{k}},Y_{j_{1}},Y_{j_{2}},\dotsc,Y_{j_{l}})]_{x_{i_p}=X_{i_p},p=1,\dotsc,k}].\qedhere
\end{align*}

\end{proof}


%

\begin{prop} 
\label{ind-subseq-2}
For any subsequence $\{i_p\}_{p=1}^k$ satisfying $1\leq i_{1}<i_{2}<\dotsc<i_{k}\leq n$, we have the result that 
	\[
	X_{1}\seqind X_{2}\seqind\dotsc\seqind X_{n} \implies X_{i_{1}} \seqind X_{i_{2}} \seqind \dotsc \seqind X_{i_{k}}.
	 \]
\end{prop}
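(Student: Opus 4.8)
The plan is to reduce the claim to the definition of sequential independence for a sequence (\ref{defn:indep-seq}) and then invoke the block-wise restriction result already established in \ref{ind-subseq}. By \ref{defn:indep-seq}, to establish $X_{i_{1}}\seqind X_{i_{2}}\seqind\dotsc\seqind X_{i_{k}}$ it suffices to verify, for each $p=1,2,\dotsc,k-1$, the single prefix-versus-variable relation
\[
(X_{i_{1}},X_{i_{2}},\dotsc,X_{i_{p}})\seqind X_{i_{p+1}}.
\]
So the whole problem collapses to producing each of these $k-1$ relations from the hypothesis.

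First I would extract the relevant building block from the full hypothesis. Since $\{X_{i}\}_{i=1}^{n}$ is sequentially independent, \ref{defn:indep-seq} applied with the index $i=i_{p+1}-1$ gives
\[
(X_{1},X_{2},\dotsc,X_{i_{p+1}-1})\seqind X_{i_{p+1}}.
\]
The key observation making this usable is that, because the indices are strictly increasing, $i_{1}<i_{2}<\dotsc<i_{p}<i_{p+1}$, so all of $i_{1},\dotsc,i_{p}$ lie in $\{1,2,\dotsc,i_{p+1}-1\}$; that is, $(X_{i_{1}},\dotsc,X_{i_{p}})$ is genuinely a subvector of the left-hand block above.

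Next I would apply \ref{ind-subseq} to this relation, taking its first block to be $(X_{1},\dotsc,X_{i_{p+1}-1})$ together with the subsequence $i_{1}<\dotsc<i_{p}$, and its second block to be the single variable $X_{i_{p+1}}$ (so $m=1$ and the only available subsequence of the second block is the trivial one). Then \ref{ind-subseq} yields exactly
\[
(X_{i_{1}},X_{i_{2}},\dotsc,X_{i_{p}})\seqind X_{i_{p+1}}.
\]
Running this for every $p=1,\dotsc,k-1$ produces all the required relations, and by \ref{defn:indep-seq} these together are precisely the sequential independence $X_{i_{1}}\seqind\dotsc\seqind X_{i_{k}}$.

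I do not expect a genuine obstacle here: the statement is a bookkeeping consequence of the preceding proposition. The only points requiring care are the indexing---verifying that the chosen subsequence sits inside the correct prefix block $\{1,\dotsc,i_{p+1}-1\}$---and recognizing that \ref{defn:indep-seq} reduces the claim to the $k-1$ prefix-versus-variable relations rather than to all pairwise ones (consistent with the remark after \ref{defn:indep-seq} that sequence independence is strictly stronger than pairwise independence, so one must be sure to reconstruct the \emph{defining} relations and not merely pairwise ones).
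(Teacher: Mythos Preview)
Your proposal is correct and follows essentially the same approach as the paper's proof: reduce to the defining relations $(X_{i_1},\dotsc,X_{i_p})\seqind X_{i_{p+1}}$, obtain $(X_1,\dotsc,X_{i_{p+1}-1})\seqind X_{i_{p+1}}$ from the full-sequence hypothesis, and then apply \ref{ind-subseq} to pass to the subvector. The paper's argument differs only in index labeling (using $j$ in place of your $p+1$).
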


\begin{proof}
	It is equivalent to prove 
	$
	(X_{i_1},X_{i_2},\dotsc,X_{i_{j-1}})\seqind X_{i_j}
	 $
	for any $j=2,\dotsc,k$. 
	For any $j = 2,\dotsc,k$, by the definition of independence of the full sequence $\{X_i\}_{i=1}^n$, we have
	\[
	(X_1,X_2,\dotsc,X_{i_{j-1}},\dotsc, X_{i_j-1}) \seqind X_{i_j}.
	 \]  
	From \ref{ind-subseq}, we directly have the sequential independence for the subvectors: 
	\[
	(X_{i_1},X_{i_2},\dotsc,X_{i_{j-1}})\seqind X_{i_j}.\qedhere
	 \]

\end{proof}

The following \ref{lem:three-obj-seqind} and \ref{lem:three-obj-seqind-2} will be useful in our later discussion, where the dimension of the three objects $X,Y,Z$ could be arbitrary finite number. 

\begin{lem}
\label{lem:three-obj-seqind}If $X\seqind Y\seqind Z$, then $X\seqind(Y,Z)$. 
\end{lem}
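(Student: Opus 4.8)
The plan is to peel off the variables one at a time, moving from the two relations encoded in the sequence $X\seqind Y\seqind Z$ toward the target $X\seqind(Y,Z)$. Fix an arbitrary test function $\varphi\in\fspacedef$; I must show
\[
\expt[\varphi(X,Y,Z)]=\expt[\expt[\varphi(x,Y,Z)]_{x=X}],
\]
where on the right $(Y,Z)$ is treated as a single block and $x$ as a frozen parameter.

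First I would record the two relations contained in the hypothesis. By \ref{defn:indep-seq}, $X\seqind Y\seqind Z$ means precisely $X\seqind Y$ together with $(X,Y)\seqind Z$; moreover \ref{ind-subseq-2} applied to the subsequence $\{2,3\}$ yields $Y\seqind Z$. I would then introduce the partial-expectation function $h(x,y):=\expt[\varphi(x,y,Z)]$. Applying the definition of $(X,Y)\seqind Z$ to $\varphi$ gives $\expt[\varphi(X,Y,Z)]=\expt[h(X,Y)]$, and then applying $X\seqind Y$ to $h$ gives $\expt[h(X,Y)]=\expt[\expt[h(x,Y)]_{x=X}]$.

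The remaining step is to identify the inner expectation $\expt[h(x,Y)]$ with $\expt[\varphi(x,Y,Z)]$ for each frozen $x$. This is exactly where I would invoke $Y\seqind Z$: fixing $x$ and applying the definition of $Y\seqind Z$ to the section $\varphi_x(y,z):=\varphi(x,y,z)$ gives $\expt[\varphi(x,Y,Z)]=\expt[\expt[\varphi(x,y,Z)]_{y=Y}]=\expt[h(x,Y)]$. Substituting this identity into the previous display produces $\expt[\varphi(X,Y,Z)]=\expt[\expt[\varphi(x,Y,Z)]_{x=X}]$, which is the claim.

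The main obstacle is not the algebra of peeling but checking that every function to which a definition of independence is applied actually lies in $\fspacedef$, so that \ref{defn:G-indep} is legitimately available. Concretely, I would verify that $h(x,y)=\expt[\varphi(x,y,Z)]$ inherits boundedness and the Lipschitz property from $\varphi$: boundedness is immediate, and the Lipschitz bound follows from sublinearity and monotonicity of $\expt$ via $\abs{h(x,y)-h(x',y')}\leq\expt[\abs{\varphi(x,y,Z)-\varphi(x',y',Z)}]\leq L(\abs{x-x'}+\abs{y-y'})$, where $L$ is a Lipschitz constant of $\varphi$. The same estimate shows that each section $\varphi_x$ and the once-integrated function $x\mapsto\expt[h(x,Y)]$ remain in $\fspacedef$, so every application of $\seqind$ above is justified.
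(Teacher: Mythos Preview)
Your proof is correct and follows essentially the same route as the paper: both introduce the auxiliary function $h(x,y)=\expt[\varphi(x,y,Z)]$ and chain together the three relations $(X,Y)\seqind Z$, $X\seqind Y$, and $Y\seqind Z$ to pass between $\expt[\varphi(X,Y,Z)]$ and $\expt[\expt[\varphi(x,Y,Z)]_{x=X}]$. The only cosmetic difference is direction---the paper starts from the right-hand side and works toward the left, whereas you start from the left---and you add an explicit check that $h\in\fspacedef$, which the paper leaves implicit.
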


\begin{proof}
Let 
$
H(x,y)\coloneqq\expt[\varphi(x,y,Z)].
$
Then we can check
\begin{align*}
\expt[\expt[\varphi(x,Y,Z)]_{x=X}] 
& \overset{(1)}{=}\expt[\expt[\expt[\varphi(x,y,Z)]_{y=Y}]_{x=X}]\\
 & =\expt[\expt[H(x,Y)]_{x=X}]\\
 & \overset{(2)}{=}\expt[H(X,Y)]\\
 & =\expt[\expt[\varphi(x,y,Z)]_{x=X,y=Y}]\\
 & \overset{(3)}{=}\expt[\varphi(X,Y,Z)],
\end{align*}
where (1) is due to $Y\seqind Z$, (2) comes from $X\seqind Y$ and (3) comes from $(X,Y)\seqind Z$.
\end{proof}

\begin{lem}
	\label{lem:three-obj-seqind-2}
	If $X\seqind (Y,Z)$ and $Y \seqind Z$, we have $(X,Y)\seqind Z$.
\end{lem}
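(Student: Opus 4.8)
The plan is to mirror the unwinding argument used in \ref{lem:three-obj-seqind}: I would start from $\expt[\varphi(X,Y,Z)]$ for an arbitrary test function $\varphi \in \fspacedef$ and peel off the conditioning one object at a time, using each hypothesis exactly once, until I reach the nested form $\expt[\expt[\varphi(x,y,Z)]_{x=X,y=Y}]$ that encodes $(X,Y)\seqind Z$.

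First I would apply the hypothesis $X \seqind (Y,Z)$ directly to $\varphi$, treating $(Y,Z)$ as a single random vector, to obtain
\[
\expt[\varphi(X,Y,Z)] = \expt[\expt[\varphi(x,Y,Z)]_{x=X}].
\]
Next, for each frozen value $x$, I would apply $Y \seqind Z$ to the inner sublinear expectation $\expt[\varphi(x,Y,Z)]$ (viewing $y \mapsto \varphi(x,y,z)$ as the test function), which rewrites it as $\expt[\expt[\varphi(x,y,Z)]_{y=Y}]$. Introducing the auxiliary function $H(x,y)\coloneqq \expt[\varphi(x,y,Z)]$, the whole expression becomes $\expt[\expt[H(x,Y)]_{x=X}]$.

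The last step is to collapse this back into $\expt[H(X,Y)]$, which requires $X \seqind Y$. I would obtain this from the hypothesis $X \seqind (Y,Z)$ by \ref{ind-subseq}, taking the subvector $Y$ of $(Y,Z)$. Applying $X \seqind Y$ to the test function $H$ then yields $\expt[\expt[H(x,Y)]_{x=X}] = \expt[H(X,Y)] = \expt[\expt[\varphi(x,y,Z)]_{x=X,y=Y}]$, which is exactly the definition of $(X,Y)\seqind Z$.

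The main obstacle I anticipate is purely bookkeeping of function spaces rather than a conceptual difficulty: to legitimately apply $X \seqind Y$ in the final step I must know that $H(x,y) = \expt[\varphi(x,y,Z)]$ is itself an admissible test function, i.e. $H \in \fspacedef$. This is the standard fact that taking a sublinear expectation in one argument preserves boundedness and the Lipschitz property in the remaining arguments, so $H$ inherits membership in $\fspacedef$ from $\varphi$; I would invoke it (as is implicitly done in \ref{lem:three-obj-seqind}) to justify each nesting. No other subtlety arises: the hypotheses are each used exactly once, with $X \seqind (Y,Z)$ doing double duty, once directly and once through its consequence $X \seqind Y$.
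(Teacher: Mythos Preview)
Your proof is correct and essentially identical to the paper's own argument: both define $H(x,y)\coloneqq\expt[\varphi(x,y,Z)]$ and chain together the three relations $X\seqind(Y,Z)$, $Y\seqind Z$, and $X\seqind Y$ (the last obtained from $X\seqind(Y,Z)$ via \ref{ind-subseq}); you simply run the chain from $\expt[\varphi(X,Y,Z)]$ forward, whereas the paper starts from $\expt[H(X,Y)]$ and works backward. Your proposal is in fact slightly more careful, since you make the derivation of $X\seqind Y$ and the regularity of $H$ explicit, while the paper invokes both silently.
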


\begin{proof}
Let 
$
H(x,y)\coloneqq\expt[\varphi(x,y,Z)].
$
Then
\begin{align*}
\expt[\expt[\varphi(x,y,Z)]_{x=X,y=Y}] & =\expt[H(X,Y)]\\
 & \overset{(1)}{=}\expt[\expt[H(x,Y)]_{x=X}]\\
 & =\expt[\expt[\expt[\varphi(x,y,Z)]_{y=Y}]_{x=X}]\\
 & \overset{(2)}{=} \expt[\expt[\varphi(x,Y,Z)]_{x=X}]\\
 & \overset{(3)}{=}\expt[\varphi(X,Y,Z)],
\end{align*}
where (1) comes from $X\seqind Y$, (2) comes from $Y\seqind Z$ and (3) comes from $X\seqind (Y,Z)$. 
\end{proof}

\begin{prop}
\label{prop:indep-sub-seq}If $X_{1}\seqind X_{2}\seqind\cdots\seqind X_{n}$,
for any increasing subsequence $\{i_{j}\}_{j=1}^{k}\subset\{1,2,\dotsc,n\}$,
we have 
\[
(X_{i_{1}},\dotsc,X_{i_{l}})\seqind(X_{i_{l+1}},\dotsc,X_{i_{k}}).
\]
\end{prop}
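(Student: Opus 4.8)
The plan is to reduce the statement to a single application of a ``grouping'' lemma that generalizes \ref{lem:three-obj-seqind} from three objects to an arbitrary finite sequence. First I would use \ref{ind-subseq-2} to pass from the full sequence to the chosen subsequence: from $X_{1}\seqind X_{2}\seqind\dotsc\seqind X_{n}$ we immediately obtain that $X_{i_{1}}\seqind X_{i_{2}}\seqind\dotsc\seqind X_{i_{k}}$ is itself a sequentially independent sequence. This removes the gaps in the index set and lets me work with a genuinely sequentially independent block $W_{1}\coloneqq X_{i_{1}},\dotsc,W_{k}\coloneqq X_{i_{k}}$, reducing the claim to showing $(W_{1},\dotsc,W_{l})\seqind(W_{l+1},\dotsc,W_{k})$.

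Next I would establish the key auxiliary statement: if $Y_{1}\seqind Y_{2}\seqind\dotsc\seqind Y_{N}$, then the grouped tail is independent from the head, i.e.\ $Y_{1}\seqind(Y_{2},\dotsc,Y_{N})$. I would prove this by induction on $N$, the base case $N=2$ being trivial and $N=3$ being exactly \ref{lem:three-obj-seqind}. For the inductive step, I would first note that dropping $Y_{1}$ preserves sequential independence of the remaining sequence $Y_{2}\seqind\dotsc\seqind Y_{N}$ (this is \ref{ind-subseq-2} applied to the index set $\{2,\dotsc,N\}$), so by the inductive hypothesis $Y_{2}\seqind(Y_{3},\dotsc,Y_{N})$. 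Together with $Y_{1}\seqind Y_{2}$ (the first prefix condition of the sequence), I would then invoke \ref{lem:three-obj-seqind} with $X=Y_{1}$, $Y=Y_{2}$, $Z=(Y_{3},\dotsc,Y_{N})$ to conclude $Y_{1}\seqind(Y_{2},(Y_{3},\dotsc,Y_{N}))$, which is exactly $Y_{1}\seqind(Y_{2},\dotsc,Y_{N})$ upon identifying the concatenated vectors.

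Finally, I would group the first $l$ coordinates into a single vector $V\coloneqq(W_{1},\dotsc,W_{l})$ and observe that the collapsed sequence $V,W_{l+1},\dotsc,W_{k}$ is again sequentially independent: each of its prefix conditions $(V,W_{l+1},\dotsc,W_{j-1})\seqind W_{j}$ coincides with the original condition $(W_{1},\dotsc,W_{j-1})\seqind W_{j}$, since $(V,W_{l+1},\dotsc,W_{j-1})$ is literally the vector $(W_{1},\dotsc,W_{j-1})$. Applying the auxiliary grouping statement to this collapsed sequence then yields $V\seqind(W_{l+1},\dotsc,W_{k})$, which is the desired conclusion.

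The main obstacle I anticipate is the bookkeeping inside the induction for the grouping lemma, in particular verifying that the two hypotheses of \ref{lem:three-obj-seqind} (namely $Y_{1}\seqind Y_{2}$ and $Y_{2}\seqind(Y_{3},\dotsc,Y_{N})$) hold at each step; this hinges on the fact that sequential independence of a sequence is inherited by its sub-sequences (\ref{ind-subseq} and \ref{ind-subseq-2}). The remaining steps are essentially identifications of concatenated vectors and require no genuine computation with the test functions $\varphi\in\fspacedef$.
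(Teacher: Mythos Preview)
Your overall strategy is sound and close in spirit to the paper's, but there is a genuine gap in the inductive step of your auxiliary lemma. The hypothesis of \ref{lem:three-obj-seqind} is $X\seqind Y\seqind Z$ \emph{in the sense of \ref{defn:indep-seq}}, which means $X\seqind Y$ \emph{and} $(X,Y)\seqind Z$; the relation $Y\seqind Z$ then follows from \ref{ind-subseq}, and all three are used in the lemma's proof (step~(3) uses $(X,Y)\seqind Z$ essentially). When you invoke the lemma with $X=Y_1$, $Y=Y_2$, $Z=(Y_3,\dotsc,Y_N)$, you supply only $Y_1\seqind Y_2$ and $Y_2\seqind(Y_3,\dotsc,Y_N)$; the crucial condition $(Y_1,Y_2)\seqind(Y_3,\dotsc,Y_N)$ is never established, and without it the lemma does not apply. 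Your closing paragraph explicitly lists ``the two hypotheses'' as the pairwise ones, confirming this is a misreading of the lemma rather than an omission.

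The fix is easy and uses an ingredient you already have: collapse $(Y_1,Y_2)$ into a single vector and observe (exactly as in your final step) that the sequence $(Y_1,Y_2),Y_3,\dotsc,Y_N$ of length $N-1$ is again sequentially independent; your inductive hypothesis then yields $(Y_1,Y_2)\seqind(Y_3,\dotsc,Y_N)$. The paper's proof sidesteps this altogether by inducting on the tail size $m=k-l$ rather than on $N$: the base case $m=1$ is a prefix condition, and in the inductive step the assumption $(Y_1,\dotsc,Y_{k-j})\seqind(Y_{k-j+1},\dotsc,Y_k)$ is precisely the missing $(A_1,A_2)\seqind A_3$, so all inputs to \ref{lem:three-obj-seqind} are immediately at hand.
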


\begin{proof}
Let $Y_{j}\coloneqq X_{i_{j}}.$ Then we have $Y_{1}\seqind Y_{2}\seqind\cdots\seqind Y_{k}$.
Our goal is to show for any $l=1,2,\dotsc,k-1$, 
\begin{equation}
(Y_{1},\dotsc,Y_{l})\seqind(Y_{l+1},\dotsc,Y_{k}).\label{eq:goal-indep-sub-seq}
\end{equation}
Then we can proceed by math induction. Let $m=k-l$. The result \ref{eq:goal-indep-sub-seq}
holds when $m=1$ because we directly have
\[
(Y_{1},\dotsc,Y_{k-1})\seqind Y_{k},
\]
by the definition of $Y_{1}\seqind Y_{2}\seqind\cdots\seqind Y_{k}$.
Suppose \ref{eq:goal-indep-sub-seq} holds for $m=j$. We need to
show the case with $m=j+1$: 
\begin{equation}
(Y_{1},\dotsc,Y_{k-j-1})\seqind(Y_{k-j},Y_{k-j+1}\dotsc,Y_{k}).\label{eq:goal-math-induction}
\end{equation}
Let 
\begin{align*}
A_{1} & \coloneqq(Y_{1},\dotsc,Y_{k-j-1}),\\
A_{2} & \coloneqq Y_{k-j},\\
A_{3} & \coloneqq(Y_{k-j+1},\dotsc,Y_{k}).
\end{align*}
Then we have $A_{1}\seqind A_{2}$ by the definition of $Y_{1}\seqind Y_{2}\seqind\cdots\seqind Y_{k-j}.$
We also have $(A_{1},A_{2})\seqind A_{3}$ by the result for $m=j$.
Then we can follow the same logic of \ref{lem:three-obj-seqind} to
show 
\[
A_{1}\seqind(A_{2},A_{3}),
\]
which is exactly \ref{eq:goal-math-induction}. The proof is finished by math induction.
\end{proof}

\begin{prop}
\label{prop:four-obj-seqind-gen}
The following two statements are equivalent: 
\begin{enumerate}
\item[(1)] $X_{1}\seqind X_{2}\seqind X_{3}\seqind X_{4}$, 
\item[(2)] $(X_{1},X_{2})\seqind(X_{3},X_{4})$, $X_{1}\seqind X_{2}$ and $X_{3}\seqind X_{4}$. 
\end{enumerate}
\end{prop}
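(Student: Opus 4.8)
The plan is to establish the two implications separately, relying entirely on the structural results already proved, so that no direct test-function or PDE computation is needed; both directions reduce to bookkeeping with the three-object lemmas (\ref{lem:three-obj-seqind}, \ref{lem:three-obj-seqind-2}) together with the subvector results (\ref{ind-subseq}, \ref{ind-subseq-2}, \ref{prop:indep-sub-seq}). The content of the proposition is essentially that the correctly-ordered block and pairwise relations in (2) already encode the full chain of sequential independence in (1), and vice versa.

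First I would prove $(1)\implies(2)$. The relation $X_1 \seqind X_2$ is simply the $i=1$ instance of the definition of $X_1 \seqind X_2 \seqind X_3 \seqind X_4$, so it is immediate. For $X_3 \seqind X_4$, I would invoke \ref{ind-subseq-2} with the increasing subsequence $\{3,4\}$. Finally, for $(X_1,X_2)\seqind(X_3,X_4)$, I would apply \ref{prop:indep-sub-seq} to the full sequence, taking the subsequence to be $\{1,2,3,4\}$ with split point $l=2$ (and $k=4$), which delivers the block independence directly. Thus each of the three conclusions in (2) is read off from an already-established result.

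Next I would prove $(2)\implies(1)$, i.e.\ verify the three defining relations $X_1 \seqind X_2$, $(X_1,X_2)\seqind X_3$, and $(X_1,X_2,X_3)\seqind X_4$. The first is assumed outright. For the second, I would start from the hypothesis $(X_1,X_2)\seqind(X_3,X_4)$ and apply \ref{ind-subseq}, extracting the subvector $X_3$ of $(X_3,X_4)$ to obtain $(X_1,X_2)\seqind X_3$. The third relation is the only one requiring a genuine combination step: setting $X \coloneqq (X_1,X_2)$, $Y \coloneqq X_3$, $Z \coloneqq X_4$, the hypotheses give $X \seqind (Y,Z)$ (this is $(X_1,X_2)\seqind(X_3,X_4)$) and $Y \seqind Z$ (this is $X_3 \seqind X_4$), so \ref{lem:three-obj-seqind-2} yields $(X,Y)\seqind Z$, which is exactly $(X_1,X_2,X_3)\seqind X_4$.

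The only place where the asymmetry of $\seqind$ could cause trouble is this last step, since the blocks cannot be freely reordered; this is precisely why \ref{lem:three-obj-seqind-2} (rather than \ref{lem:three-obj-seqind}) is the right tool, as it is the direction that merges $X$ and $Y$ on the left while keeping $Z$ last, matching the required temporal order. I do not anticipate any serious obstacle: once the correct pairing of hypotheses with lemmas is identified, both implications follow mechanically, with all the analytic work having been localized into the three-object lemmas.
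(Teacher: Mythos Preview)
Your proposal is correct and follows essentially the same approach as the paper. The paper's proof is more terse: it declares $(1)\Rightarrow(2)$ immediate and, for $(2)\Rightarrow(1)$, observes that only $(X_1,X_2,X_3)\seqind X_4$ needs checking, which it obtains from \ref{lem:three-obj-seqind-2} with exactly the grouping $X^*=(X_1,X_2)$, $Y^*=X_3$, $Z^*=X_4$ that you propose; your version simply spells out the easy pieces (invoking \ref{ind-subseq}, \ref{ind-subseq-2}, \ref{prop:indep-sub-seq}) that the paper leaves implicit.
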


\begin{proof}
Since (1) implies (2), so we only need to show the other direction.
By the definition of (1), we simply need to check: 
\[
(X_{1},X_{2},X_{3})\seqind X_{4}.
\]
This is a direct consequence of \ref{lem:three-obj-seqind-2} by letting $X^{*}\coloneqq(X_{1},X_{2})$,
$Y^{*}\coloneqq X_{3}$ and $Z^{*}\coloneqq X_{4}$. 
\end{proof}

\begin{prop}
\label{prop:linear-prop-expt} For any $X,Y\in \myset{H}$, we have $\expt[X+Y]=\expt[X]+\expt[Y]$
as long as either one of the following conditions holds: 
\begin{enumerate}
\item $\expt[X]=-\expt[-X]$;
\item $\expt[Y]=-\expt[-Y]$; 
\item $Y$ is independent from $X$: $X\seqind Y$;
\item $X$ is independent from $Y$: $Y\seqind X$.
\end{enumerate}
\end{prop}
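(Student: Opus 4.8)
The plan is to split the four sufficient conditions into two groups and to exploit that one inequality comes for free. In all cases the bound $\expt[X+Y]\leq\expt[X]+\expt[Y]$ is just the sub-additivity of $\expt$, so throughout I only have to produce the reverse inequality (for conditions (1)--(2)) or the exact identity directly (for (3)--(4)). Since $X,Y\in\myset{H}\subset\myset{H}_1$, all of $\expt[X],\expt[Y],\expt[X+Y]$ are finite real numbers, so the rearrangements below are legitimate. A preliminary fact I would record first is the \emph{translation property}: for any $Z\in\myset{H}$ and any constant $c\in\numset{R}$, $\expt[Z+c]=\expt[Z]+c$. This is two applications of sub-additivity together with constant preservation, namely $\expt[Z+c]\leq\expt[Z]+c$ and $\expt[Z]=\expt[(Z+c)+(-c)]\leq\expt[Z+c]-c$.

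For conditions (1) and (2) no independence is needed; only the sublinear structure enters. Assuming (1), i.e. $\expt[-X]=-\expt[X]$, I would write $\expt[Y]=\expt[(X+Y)+(-X)]\leq\expt[X+Y]+\expt[-X]=\expt[X+Y]-\expt[X]$, which rearranges to $\expt[X]+\expt[Y]\leq\expt[X+Y]$; combined with sub-additivity this is equality. Condition (2) is the symmetric computation $\expt[X]=\expt[(X+Y)+(-Y)]\leq\expt[X+Y]+\expt[-Y]=\expt[X+Y]-\expt[Y]$.

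For conditions (3) and (4) the idea is to feed the linear test function $\varphi(x,y)=x+y$ into the independence identity of \ref{defn:G-indep}. Assuming $X\seqind Y$, the inner expectation is, by the translation property, $\expt[\varphi(x,Y)]=\expt[x+Y]=x+\expt[Y]$, an affine function of $x$; substituting $x=X$ and taking the outer expectation gives $\expt[X+\expt[Y]]=\expt[X]+\expt[Y]$, again by translation. Condition (4), $Y\seqind X$, is the mirror image, using the inner expectation $\expt[y+X]=y+\expt[X]$ and then the outer expectation over $Y$.

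The one genuine obstacle is that \ref{defn:G-indep} grants the independence identity only for bounded Lipschitz test functions $\varphi\in\fspacedef$, whereas $\varphi(x,y)=x+y$ lies in $\fspace$ but is unbounded. I would close this gap by truncation: apply the identity to $\varphi_N(x,y)\coloneqq(-N)\vee(x+y)\wedge N\in\fspacedef$ and then let $N\to\infty$ on both sides. The convergence $\expt[\varphi_N(X,Y)]\to\expt[X+Y]$ is governed by the tail quantity $\expt[(\abs{X+Y}-N)^+]$, and a parallel estimate controls the iterated side; the delicate point is to argue that these tail terms vanish, which is exactly where the hypothesis $X,Y\in\myset{H}=\hat{\myset{H}}_1$ (rather than mere finiteness of $\expt[\abs{X}]$) together with a uniform-integrability/approximation argument must be used. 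I expect this limiting passage, and not the algebra of the two preceding paragraphs, to be the main technical content of the proof.
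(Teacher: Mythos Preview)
Your approach matches the paper's essentially line for line: for condition (1) the paper writes the same sub-additivity chain $\expt[X]+\expt[Y]=\expt[Y]-\expt[-X]\leq\expt[Y-(-X)]=\expt[X+Y]$, and for condition (3) it applies the independence identity directly to $\varphi(x,y)=x+y$ and then uses translation, exactly as you do; conditions (2) and (4) are dismissed by symmetry.

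The one place you diverge is in flagging the $\fspacedef$ versus $\fspace$ issue and proposing a truncation $\varphi_N$. The paper does not address this at all: it simply writes $\expt[X+Y]=\expt[\expt[x+Y]_{x=X}]$ as though the independence identity is already available for the unbounded linear test function. So you are being more scrupulous than the source; whether this extra care is warranted depends on how much of the standard $G$-expectation machinery one takes for granted at this point, but it is not part of the paper's own argument.
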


\begin{proof}
	We only need to show Condition 1 and 3. Under Condition 1, 
	we have 
	\[
	\expt[X+Y]\leq \expt[X]+\expt[Y] = \expt[Y]-\expt[-X] \leq \expt[Y-(-X)]=\expt[X+Y].
	\]
	Under condition 3, we have 
	\[
	\expt[X+Y]=\expt[\expt[x+Y]_{x=X}] = \expt[X+\expt[Y]]=\expt[X]+\expt[Y].\qedhere
	\]
\end{proof}


\ref{thm:mutual-indep} is an important result that shows the asymmetry of independence between two random objects prevails in this framework except when their distributions are maximal or classical ones.

\begin{thm}[\cite{hu2014independence}]
\label{thm:mutual-indep} For two non-constant random varibles $X,Y\in\myset{H}$,
if $X$ and $Y$ are mutually independent ($X\seqind Y$ and $Y\seqind X$),
then they belong to either of the following two cases: 
\begin{enumerate}
\item The distributions of $X$ and $Y$ are classical (no distributional
uncertainty); 
\item Both $X$ and $Y$ are maximally distributed. 
\end{enumerate}
\end{thm}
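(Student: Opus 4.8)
The plan is to exploit the single functional identity that mutual independence forces. Since $X\seqind Y$ and $Y\seqind X$, for every test function $\varphi$ the joint expectation admits two iterated representations that must agree:
\[
\expt[\expt[\varphi(x,Y)]_{x=X}] = \expt[\varphi(X,Y)] = \expt[\expt[\varphi(X,y)]_{y=Y}].
\]
The whole argument consists of feeding this identity a family of test functions chosen so that the two sides respond differently to any genuine distributional uncertainty, and reading off which combinations of ``uncertainty profiles'' of $X$ and $Y$ are compatible with equality. The guiding case is \ref{eg:indep-ex-org}: there the probe $\varphi(x,y)=xy^2$ already separates the two orders, and the separation is driven entirely by the sign-splitting of the inner expectation.

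Concretely, I would first record the mechanism behind that splitting. For a fixed constant $x$ and any $\psi\ge 0$, positive homogeneity gives $\expt[x\,\psi(Y)] = \overline\beta\, x^+ - \underline\beta\, x^-$ with $\overline\beta\coloneqq\expt[\psi(Y)]$ and $\underline\beta\coloneqq -\expt[-\psi(Y)]$, whereas holding $y$ fixed gives the linear-in-$\psi(y)$ response $\psi(y)\,\expt[X]$. Testing the identity with $\varphi(x,y)=x\,\psi(y)$ therefore converts it into a scalar constraint relating $\expt[\overline\beta X^+-\underline\beta X^-]$ to the intermediate expression $\expt[\,\expt[X]\,\psi(Y)]$; the gap between these is controlled exactly by the uncertainty gap $\overline\beta-\underline\beta$ of $\psi(Y)$ and by the one-sided quantities $\expt[X^+]$, $\expt[X]$, $-\expt[-X]$. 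I would then run the mirror probe $\varphi(x,y)=\phi(x)\,y$ to obtain the symmetric constraint with the roles of $X$ and $Y$ exchanged. Since functions such as $x\,\psi(y)$ are unbounded, this step first requires enlarging the admissible class from $\fspacedef$ to $\fspace$ under the integrability hypotheses, which is legitimate by \ref{prop:extend-function-space}.

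The dichotomy then emerges as follows. If $Y$ carries no distributional uncertainty at all, i.e.\ $\overline\beta=\underline\beta$ for every admissible $\psi$, then $\expt$ restricted to functions of $Y$ is linear, so $Y$ is classical, and the same reasoning through the mirror probe forces $X$ classical, landing in case (1). Otherwise one can select $\psi$ with $\overline\beta>\underline\beta$, and I would argue that compatibility of the scalar constraint across all such probes forces the one-sided functionals of $X$ to collapse onto the maximal form, pushing $X$ (and symmetrically $Y$) into case (2), which I would confirm through the defining relation $X+\bar X\eqdistn 2X$ rather than through individual moments. The main obstacle is precisely the \emph{intermediate} regime: ruling out a hybrid in which one variable carries partial variance-type uncertainty while the other compensates. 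Handling this cleanly will require a sufficiently rich separating family (probes built from $x\,\psi(y)$, $\phi(x)\,y$, and their higher-degree analogues) together with monotonicity and convexity bookkeeping of the resulting inequalities, and finally a reduction of the vector case to the scalar one by applying the scalar argument along arbitrary linear projections $\langle p,X\rangle$ and $\langle q,Y\rangle$, using \ref{prop:linear-prop-expt} to keep the cross terms under control.
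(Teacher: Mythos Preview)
The paper does not prove \ref{thm:mutual-indep}; it is quoted from \cite{hu2014independence} and used as a black box (see \ref{prop:one-indep-to-mutual-indep}, \ref{eg:direct-indep-structure}, and the discussion surrounding \ref{prop:semi-seqind-sym}). There is thus no in-paper proof to compare your proposal against.

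On the proposal itself: the opening mechanism is right---equating the two iterated expectations and probing with $\varphi(x,y)=x\,\psi(y)$ reproduces the sign-splitting of \ref{eg:indep-ex-org} and yields nontrivial constraints---but you have explicitly left the heart of the argument, the ``intermediate regime'', as an obstacle rather than a step, so this is a plan and not a proof. Two further concrete gaps. First, your appeal to \ref{prop:extend-function-space} to enlarge the test-function class from $\fspacedef$ to $\fspace$ is misplaced: that proposition is about convergence in distribution of a \emph{sequence} $\{Y_n\}$, not about extending the independence identity for a fixed pair $(X,Y)$; you would need either a direct truncation/approximation argument or additional moment hypotheses on $X,Y$ that the theorem as stated does not supply. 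Second, the assertion that the scalar constraints ``force the one-sided functionals of $X$ to collapse onto the maximal form'' is doing all of the work and is not justified by the probes $x\,\psi(y)$, $\phi(x)\,y$ alone; moreover, confirming maximality via $X+\bar X\eqdistn 2X$ presupposes the existence of an independent copy $\bar X$ with the right joint structure, which is extra data not given by the hypothesis of mutual independence of $X$ and $Y$.
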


We can also easily obtain the following result.
\begin{prop}
\label{prop:one-indep-to-mutual-indep}For two non-constant random
varibles $X,Y\in\myset{H}$, if they belong to either of the two
cases in \ref{thm:mutual-indep}, then we have $X\seqind Y$ implies
$Y\seqind X$. 
\end{prop}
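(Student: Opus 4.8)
The plan is to exploit the fact that in each of the two cases listed in \ref{thm:mutual-indep} the sublinear expectation $\expt$ admits an explicit representation when restricted to functions of $X$ alone or $Y$ alone, so that both nested expectations appearing in \ref{defn:G-indep} can be computed directly and compared. Fix an arbitrary $\varphi\in\fspacedef(\numset{R}^{2})$ (the argument is unchanged if $X,Y$ are replaced by random vectors). I would use the hypothesis $X\seqind Y$ to pin down the value of $\expt[\varphi(X,Y)]$, and then show that the reversed nested expectation $\expt[\expt[\varphi(X,y)]_{y=Y}]$ returns the same value, which is exactly the statement $Y\seqind X$.

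In the classical case, let $\mu_X$ and $\mu_Y$ be the (classical) laws of $X$ and $Y$; having no distributional uncertainty means $\expt[\psi(X)]=\int\psi\,d\mu_X$ and $\expt[\psi(Y)]=\int\psi\,d\mu_Y$ for all admissible $\psi$. For fixed $x$, the map $y\mapsto\varphi(x,y)$ is a function of the classical variable $Y$, so $\expt[\varphi(x,Y)]=\int\varphi(x,y)\,d\mu_Y(y)=:g(x)$, and $g$ is again bounded and Lipschitz. Applying $X\seqind Y$ followed by the representation for $X$ gives
\[
\expt[\varphi(X,Y)]=\expt[g(X)]=\int\!\!\int\varphi(x,y)\,d\mu_Y(y)\,d\mu_X(x).
\]
The symmetric computation for the reversed order produces $\int\!\!\int\varphi(x,y)\,d\mu_X(x)\,d\mu_Y(y)$, and Fubini's theorem (applicable since $\varphi$ is bounded and measurable) shows the two iterated integrals agree; hence $\expt[\expt[\varphi(X,y)]_{y=Y}]=\expt[\varphi(X,Y)]$, i.e. $Y\seqind X$. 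This is the rigorous version of the heuristic already recorded in \ref{rem:seq-indep-and-cl-indep}.

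In the maximal case, write $X\sim\Maximal(\Gamma_X)$ and $Y\sim\Maximal(\Gamma_Y)$ with $\Gamma_X,\Gamma_Y$ bounded, closed and convex. Here $\expt[\varphi(x,Y)]=\max_{y\in\Gamma_Y}\varphi(x,y)=:\psi(x)$, and $\psi$ is bounded and Lipschitz because $\Gamma_Y$ is compact, so $\psi\in\fspacedef$ and the maximal representation for $X$ legitimately applies. Using $X\seqind Y$,
\[
\expt[\varphi(X,Y)]=\expt[\psi(X)]=\max_{x\in\Gamma_X}\max_{y\in\Gamma_Y}\varphi(x,y)=\max_{(x,y)\in\Gamma_X\times\Gamma_Y}\varphi(x,y).
\]
The reversed order yields $\max_{y\in\Gamma_Y}\max_{x\in\Gamma_X}\varphi(x,y)$, which equals the same joint maximum by commutativity of suprema, so $Y\seqind X$ follows as before. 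The only points that genuinely require care — and thus the main (albeit mild) obstacle — are bookkeeping ones: verifying that the intermediate functions $g$ and $\psi$ remain inside $\fspacedef$ so that the marginal representations are applicable, and justifying the interchange of integration order in the classical case; compactness of $\Gamma_Y$ and boundedness of $\varphi$ handle these respectively. No genuinely new estimate is needed beyond the representations furnished by the definitions of classical and maximal distributions.
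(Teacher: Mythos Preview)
Your proof is correct and follows essentially the same approach as the paper: the classical case via Fubini (as in \ref{rem:seq-indep-and-cl-indep}) and the maximal case via commutativity of iterated maxima, with the key bookkeeping step being that the intermediate function $\psi(x)=\max_{y}\varphi(x,y)$ stays in the admissible test-function space (the paper cites \ref{claim:maximal-varphi-k} for this, which is exactly the compactness argument you invoke).
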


\begin{proof}
	When $X,Y$ are classically distributed, the results can be derived from \ref{rem:seq-indep-and-cl-indep}. When they are maximally distributed, this result has been studied in Example 14 in \cite{hu2014independence} and it has been generalized to \ref{thm:maximal-multi-relation} whose proof is in \ref{pf:subsec-property-maximal}.
	We sketch the proof here to show the intuition for general readers. Suppose $X\sim \Maximal(K_1)$ and $Y\sim \Maximal(K_2)$ where $K_1$ and $K_2$ are two bounded, closed and convex sets. If $X\seqind Y$, 
for any $\varphi\in\fspace(\numset{R}^{2})$, we can work on the expectation of $(X,Y)$ to show the other direction of independence, 
\begin{align*}
 \expt[\varphi(X,Y)] & =\expt[\expt[\varphi(x,Y)]_{x=X}]
  =\expt[(\max_{y\in K_{2}}\varphi(x,y))_{x=X}]\\
 & =\max_{x\in K_{1}}\max_{y\in K_{2}}\varphi(x,y)
  =\max_{(x,y)\in K_{1}\times K_{2}}\varphi(x,y)\\
 & =\max_{y\in K_{2}}\max_{x\in K_{1}}\varphi(x,y)
  =\expt[\expt[\varphi(X,y)]_{y=Y}],
\end{align*}
where we have used the fact that $
\varphi^*_{y}(x)\coloneqq\max_{y\in K_2}\varphi(x,y)\in\fspace(\numset{R})
$ if $\varphi\in\fspace(\numset{R}^{2})$ (to apply the representation), which is validated by \ref{claim:maximal-varphi-k} in the proof of \ref{thm:maximal-multi-relation}. Hence, we have $Y\seqind X$. 
\end{proof}

\section{Our main results: semi-$G$-normal and its representations}
\label{sec:main-re}

This section serves for two objectives. On the one hand, we will introduce
a new substructure called the semi-$G$-normal distribution and explain
its hybrid property and intermediate role sitting between the classical
normal and $G$-normal distribution. On the other hand, this section
is also designed to give general readers a gentle trip towards the
$G$-normal distribution by starting from our old friend, the classical
normal distribution.

Although most of the theoretical results presented in this section are in the sublinear expectation space $\exptSpace$ by default unless indicated in the context, we will introduce most of the subsections by starting from a discussion on the distributional uncertainty of a random object in a classical state-space volatility model, whose context will be set up in \ref{subsec:setup-story}.

Without further notice, these are the notations we are going to consistently use in this paper: 
\begin{itemize}
    \item $\numset{N}_{+}$: the set of all positive integers.
	\item $\myvec{x}_{(n)}\coloneqq(x_{1},x_{2},\dotsc,x_{n})$ and $\myvec{x}_{(n)}*\myvec{y}_{(n)}\coloneqq(x_{1}y_{1},x_{2}y_{2},\dotsc,x_{n}y_{n})$.
	\item Let $\idtymat_d$ denote a $d\times d$ identity matrix.
	\item In $\lexptSpace$, let $\lexpt_\lprob$ denote the linear expectation with respect to $\lprob$ and we may write it as $\lexpt$ if the underlying $\lprob$ is clear from the context. 
	\item Random variables in $\exptSpace$: $V\sim \Maximal\sdInt$, $\stdrv \sim \GN(0,[1,1]$, $W\coloneqq V\stdrv$, $W^G\sim \GN(0,\varI)$.
	\item Random variables in $\lexptSpace$: $\sigma:\Omega \to \sdInt$, $\stdrv \sim \CN(0,1)$, $Y\coloneqq \sigma \stdrv$. Note that we can treat $\stdrv$ as a random object in both sublinear and classical system  due to \ref{rem:GN-degenerate-to-CN}.
\end{itemize}

The reason we use two different sets of random variables in two spaces is mainly for simplicity of our discussion, which will be further explained in \ref{rem:prefered-form-represent}.

 






\subsection{Setup of a story in a classical filtered probability space}
\label{subsec:setup-story}

In $\lexptSpace$, 
consider $(\epsilon_{t})_{t\in \numset{N}_+}$ as a sequence 
of classically i.i.d. random variables satisfying $\lexpt_\lprob[|\epsilon_{1}|^{k}]<\infty$
for $k\in\numset{N}_{+}$. Let $(\sigma_{t})_{t\in \numset{N}_+}$ be a sequence of bounded random variables which can be treated as \emph{states} (or \emph{volatility regimes}) with state space $S_\sigma \subset \sdInt$. 
Let $Y_{t}=\sigma_{t}\epsilon_{t},t\in\numset{N}_{+}$ denote the \emph{observation} sequence. 
(It seems like a zero-delay setup, but this is not essential in our current scope of discussion.) Consider a representative $Y=\sigma\epsilon$ where $(\sigma,\epsilon)\coloneqq(\sigma_1,\epsilon_1)$. 

For simiplicty of discussion, at each time point $t$,
we assume that $\epsilon_{t}$ follows $\CN(0,1)$ and $\sigma_t$ and $\epsilon_t$ are classically independent, denoted as $\sigma_t \independent \stdrv_t$. 
Consider the following (discrete-time) filtrations: 
\begin{align*}
\sigmafield{G}_{t} & \coloneqq\sigma(\sigma_{s},s\leq t)\vee\sigmafield{N},\\
\sigmafield{Y}_{t} & \coloneqq\sigma(Y_s,s\leq t)\vee\sigmafield{N},\\
\sigmafield{F}_{t} & \coloneqq\sigma((\sigma_{s},Y_{s}),s\leq t)\vee\sigmafield{N}.
\end{align*}
where $\sigmafield{N}$ is the collection of $\lprob$-null sets used to complete each of the generated $\sigma$-field mentioned above. Note that $\sigmafield{F}_{t}$ is the same as $\sigma((\epsilon_{s},\sigma_{s}),s\leq t)\vee\sigmafield{N}$. Let $\numset{F}\coloneqq \{ \sigmafield{F}_t \}_{t\in \numset{N}_+}$.
In a classical filtered probability space $(\Omega,\sigmafield{F},\numset{F},\lprob)$, we will start the following subsections by 
putting ourseleves, as a group of  data analysts, in a context of dealing with uncertainty on the distributions of one state variable $\sigma$, one observation variable $Y=\sigma\epsilon$ and a sequence of observation variables $(Y_1,Y_2,\dotsc,Y_n)$ for $n\in \numset{N}_+$.




\subsection{Preparation: properties of maximal distribution}
\label{subsec:property-maximal}
Suppose we have uncertainty on the distribution of the state variable $\sigma$ (and
it is realistic because $\sigma$ is not directly observable in practice)
due to lack of prior knowledge. Another possible situation is different member in our group has different belief on the behavior of $\sigma$ or different preference on the choice of the model - the distribution of $\sigma$ could be a degenerate, discrete, absolutely continuous or even arbitrary one with support on $\sdInt$. 
In order to quantify this kind of model uncertainty 
 for a given transformation $\varphi$ (as a test function), we usually need to involve the maximum expected value of $\sigma$:
\begin{equation}
\sup_{\sigma\in\mysetrv{A}\sdInt}\lexpt[\varphi(\sigma)],\label{eq:extrem-expt}
\end{equation}
where $\mysetrv{A}\sdInt$ can be chosen depending on the available
prior information. Possible choices of $\mysetrv{A}\sdInt$ include,
\begin{itemize}
\item $\mysetrv{D}\sdInt$: the space of all classically distributed
random variables with support on $\sdInt$.
\item $\mysetrvdisc\sdInt\coloneqq\{\sigma\in\mysetrv{D}\sdInt:\text{ discretely distributed}\}$.
\item $\mysetrvcont\sdInt\coloneqq\{\sigma\in\mysetrv{D}\sdInt:\text{ absolutely continuously distributed}\}$.
\item $\mysetrvdeg\sdInt\coloneqq\{\sigma\in\mysetrv{D}\sdInt:\lprob(\sigma=v)=1,v\in\sdInt\}$, which is the family of all random variables following degenerate (or Dirac) distribution with mass point at $v\in \sdInt$.\end{itemize}


We are going to show that \ref{eq:extrem-expt} will all be the same
as the sublinear expectation of maximal distribution in the $G$-framework. 


\begin{defn} (Univariate Maximal Distribution)
\label{def:maximal} In sublinear expectation space $\exptSpace$, a random variable $V$ follows maximal distribution $\Maximal\sdInt$ with $\sdl\leq\sdr$ if,
for any $\varphi\in\fspace(\numset{R}),$ 
\[
\expt[\varphi(V)]=\max_{v\in\sdInt}\varphi(v).
\]
\end{defn}
\begin{rem}
	We can take maximum because we are working on a continuous $\varphi$ on a compact set $\sdInt$. 
\end{rem}

The reason that we use the notation $\sdInt$ (which is like an interval for standard deviation) is for the convenience of our later discussion. 




\begin{prop}[Representations of Univariate Maximal Distribution] 
\label{prop:rep-uni-maximal}
Consider $V\sim\Maximal\sdInt$, then for any $\varphi\in\fspace(\numset{R})$, we have 
$\lexpt[\abs{\varphi(V)}]<\infty$ and 
\begin{align}
\expt[\varphi(V)] & =\max_{\sigma\in\mysetrvdeg\sdInt}\lexpt[\varphi(\sigma)]\label{eq:represent-maximal-1}\\
& = \max_{\sigma\in\mysetrvdisc\sdInt}\lexpt[\varphi(\sigma)]\label{eq:represent-maximal-disc} \\
 & =\sup_{\sigma\in\mysetrvcont\sdInt}\lexpt[\varphi(\sigma)]\label{eq:represent-maximal-2}\\
 & =\max_{\sigma\in\mysetrv{D}\sdInt}\lexpt[\varphi(\sigma)].\label{eq:represent-maximal-3}
\end{align}
\end{prop}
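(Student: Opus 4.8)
The plan is to reduce every quantity on the right-hand side to the single number $M\coloneqq\max_{v\in\sdInt}\varphi(v)$, which is exactly $\expt[\varphi(V)]$ by \ref{def:maximal}. Since $\varphi\in\fspace(\numset{R})$ is continuous and $\sdInt$ is compact, this maximum is attained at some $v^{*}\in\sdInt$, and $\abs{\varphi}$ is bounded on $\sdInt$; the latter yields the integrability claim immediately, since $V$ takes values only in $\sdInt$. So the whole proposition amounts to showing that each of the four extremal problems over classes of classical random variables supported on $\sdInt$ also equals $M$.

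First I would establish the common upper bound. For any $\sigma\in\mysetrv{D}\sdInt$ we have $\varphi(\sigma)\leq M$ almost surely (its support lies in $\sdInt$), hence $\lexpt[\varphi(\sigma)]\leq M$. Because $\mysetrvdeg\sdInt$, $\mysetrvdisc\sdInt$ and $\mysetrvcont\sdInt$ are all subfamilies of $\mysetrv{D}\sdInt$, this single inequality bounds all four right-hand sides by $M$ simultaneously.

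For the matching lower bounds I would exploit the degenerate maximizer. Taking $\sigma\equiv v^{*}$ gives a member of $\mysetrvdeg\sdInt$ with $\lexpt[\varphi(\sigma)]=\varphi(v^{*})=M$, which settles \ref{eq:represent-maximal-1} and, via the inclusions $\mysetrvdeg\sdInt\subset\mysetrvdisc\sdInt\subset\mysetrv{D}\sdInt$, also settles \ref{eq:represent-maximal-disc} and \ref{eq:represent-maximal-3} with the maxima attained. The continuous case \ref{eq:represent-maximal-2} is the one point that needs care, because $\mysetrvcont\sdInt$ excludes Dirac masses, so the value $M$ can only be approached. Here I would build an approximating sequence $\sigma_{n}$, say uniform on $I_{n}\coloneqq[v^{*}-\tfrac1n,v^{*}+\tfrac1n]\cap\sdInt$ (nonempty for large $n$), and use the continuity of $\varphi$ together with $\mathrm{diam}(I_{n})\to0$ to conclude $\lexpt[\varphi(\sigma_{n})]\to\varphi(v^{*})=M$; combined with the upper bound this forces the supremum to equal $M$, and explains why the statement uses $\sup$ rather than $\max$ in that line.

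The main obstacle, and essentially the only nontrivial step, is this continuous approximation, in particular handling the case where the maximizer $v^{*}$ sits on the boundary of $\sdInt$ (so that $I_{n}$ is only one-sided) and making sure the approximating distributions genuinely have support inside $\sdInt$; once the uniform-on-a-shrinking-interval construction is in place, the convergence is a routine consequence of continuity. Everything else is bookkeeping with the set inclusions.
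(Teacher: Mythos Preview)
Your proposal is correct and follows essentially the same strategy as the paper: bound every $\lexpt[\varphi(\sigma)]$ above by $M=\max_{v\in\sdInt}\varphi(v)$ using that the support sits in $\sdInt$, attain $M$ via the Dirac mass at $v^{*}$ for the degenerate/discrete/general families, and approximate $M$ by absolutely continuous laws for \ref{eq:represent-maximal-2}. The only difference is cosmetic: for the continuous case the paper first perturbs $v^{*}$ to an interior point $v_{0}$ with $\varphi(v_{0})>\varphi(v^{*})-\alpha/2$ and then uses truncated normals $\CN(v_{0},1/n)$ restricted to $\sdInt$, whereas you use uniforms on $[v^{*}-\tfrac1n,v^{*}+\tfrac1n]\cap\sdInt$ directly; your construction is arguably cleaner since it avoids the separate interior-point step and the truncation, and the one-sided interval at the boundary is handled automatically by the intersection.
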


\begin{rem}
Note that $\mysetrvdisc\cup\mysetrvcont\subset\myset{D}$. The probability
laws associated with $\mysetrvcont$ are equivalent, but $\mysetrvdisc$
and $\myset{D}$ do not have this property. 
\end{rem}


\begin{rem}
\label{rem:prefered-form-represent}
In \ref{prop:rep-uni-maximal}, we write the representation in the
form of 
\begin{equation}
\expt[\varphi(V)]=\max_{\sigma\in\myset{A}}\lexpt[\varphi(\sigma)],\label{eq:represent-form-1}
\end{equation}
where $\myset{A}$ denote a family of random variables in $\lexptSpace$.
Equivalently, if we treat $V$ as a random variable for both sides
(which requires more careful preliminary setup and we will not touch at this stage, more details can be found in Chapter 6 of \cite{peng2019nonlinear}), 
\ref{eq:represent-form-1} becomes
\begin{equation}
\expt[\varphi(V)]=\max_{\lprob_{V}\in\lprob\circ\mysetrv{A}^{-1}}\lexpt_{\lprob}[\varphi(V)],\label{eq:represent-form-2}
\end{equation}
where $\lprob_V$ is the distribution of $V$ and $\lprob\circ\mysetrv{A}^{-1}\coloneqq\{\lprob\circ\sigma^{-1},\sigma\in\mysetrv{A}\}$
becomes a family of distributions. Throughout this
paper, we prefer to use the form \ref{eq:represent-form-1} for simplicity
of notations and minimization of technical setup, but readers can
always informally view \ref{eq:represent-form-1} as a equivalent
form of \ref{eq:represent-form-2}. In this way, we can better see
the distributional uncertainty of $V$. 
\end{rem}

\begin{rem}
Meanwhile, \ref{prop:rep-uni-maximal}
provides four ways to represent the distributional uncertainty of $V$.
In practice, practitioners may choose the representation they need
depending on the available prior knowledge or their belief on the random
phenomenon. 
\end{rem}

\begin{defn}
(Multivariate Maximal Distribution) 
\label{defn:multi-maximal}
In sublinear expectation space $\exptSpace$, a random vector
$\myvec{V}:\Omega \to \numset{R}^d$ follows a (multivariate) maximal
distribution $\Maximal(\myset{V})$, if there exists a compact
and convex subset $\myset{V}\subset\numset{R}^{d}$ satisfying: for
any $\varphi\in\fspace(\numset{R}^{d})$, 
\[
\expt[\varphi(\myvec{V})]=\max_{\myvec{\sigma}\in\myset{V}}\varphi(\myvec{\sigma}).
\]
\end{defn}

One can also easily extend the representation in \ref{prop:rep-uni-maximal} to a multivariate case (\ref{prop:rep-multi-maximal}) by considering $\mysetrv{D}(\myset{V})$ which is the space of all classically distributed random variables with support on $\myset{V}$ and also considering its subspaces $\mysetrvdeg(\myset{V})$, $\mysetrvcont(\myset{V})$ and $\mysetrvdisc(\myset{V})$ as well. 

\begin{prop}[Representations of multivariate maximal distribution]
\label{prop:rep-multi-maximal} For $\myvec{V}\sim\Maximal(\myset{V})$, we have for any $\varphi\in\fspace(\numset{R}^d)$,
\begin{equation}
\label{eq:represent-multi-maximal}
	\expt[\varphi(\myvec{V})]=\sup_{\sigma\in\myset{A}(\dsdset)}\lexpt[\varphi(\myvec{\sigma})],
\end{equation}
where $\mysetrv{A}$ can be chosen from $\mysetrvall$
and $\sup$ can be changed to $\max$ except when $\myset{A}=\mysetrvcont$.
\end{prop}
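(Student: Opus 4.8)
The plan is to follow the same sandwich strategy used for the univariate statement in \ref{prop:rep-uni-maximal}, replacing the interval $\sdInt$ by the compact convex set $\myset{V}$ of \ref{defn:multi-maximal}. By definition we have $\expt[\varphi(\myvec{V})]=\max_{\myvec{v}\in\myset{V}}\varphi(\myvec{v})$, where the maximum is genuinely attained: any $\varphi\in\fspace(\numset{R}^d)$ is continuous and $\myset{V}$ is compact, so $\varphi$ attains its maximum at some $\myvec{v}^\ast\in\myset{V}$. I also note that for every $\myvec{\sigma}$ whose support lies in $\myset{V}$ the variable $\varphi(\myvec{\sigma})$ is bounded (a continuous function on the compact set $\myset{V}$), so $\lexpt[\abs{\varphi(\myvec{\sigma})}]<\infty$ and each inner expectation on the right-hand side is well defined.

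First I would establish the common upper bound. For any $\myvec{\sigma}\in\mysetrv{D}(\myset{V})$ we have $\myvec{\sigma}\in\myset{V}$ almost surely, hence $\varphi(\myvec{\sigma})\leq\max_{\myvec{v}\in\myset{V}}\varphi(\myvec{v})$ almost surely, and taking $\lexpt$ gives $\lexpt[\varphi(\myvec{\sigma})]\leq\expt[\varphi(\myvec{V})]$. Since $\mysetrvdeg,\mysetrvdisc,\mysetrvcont$ are all subfamilies of $\mysetrv{D}$, this upper bound holds for every choice of $\mysetrv{A}$ in the statement. For the matching lower bound in the degenerate case, I would evaluate at the degenerate law concentrated at the maximizer $\myvec{v}^\ast$: its linear expectation equals $\varphi(\myvec{v}^\ast)=\expt[\varphi(\myvec{V})]$. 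Because $\mysetrvdeg(\myset{V})\subset\mysetrvdisc(\myset{V})\subset\mysetrv{D}(\myset{V})$, this single law already realizes the upper bound, so the supremum is attained and equals $\expt[\varphi(\myvec{V})]$ for the three families $\mysetrvdeg$, $\mysetrvdisc$ and $\mysetrv{D}$; this is precisely where $\sup$ can be replaced by $\max$.

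The main obstacle is the absolutely continuous family $\mysetrvcont(\myset{V})$, which contains no degenerate law, so the lower bound must be obtained by approximation rather than by a single witness. Here I would exploit the convexity of $\myset{V}$: fixing an interior point $\myvec{c}$ and the maximizer $\myvec{v}^\ast$, the points $\myvec{v}_n\coloneqq(1-1/n)\myvec{v}^\ast+(1/n)\myvec{c}$ lie in the interior of $\myset{V}$ and converge to $\myvec{v}^\ast$. Around each $\myvec{v}_n$ I would place an absolutely continuous law (for instance a suitably rescaled bump) supported inside $\myset{V}$ and concentrating at $\myvec{v}_n$; continuity of $\varphi$ together with its boundedness on $\myset{V}$ (dominated convergence) then yields $\lexpt[\varphi(\myvec{\sigma}_n)]\to\varphi(\myvec{v}^\ast)=\expt[\varphi(\myvec{V})]$. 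Combined with the upper bound this gives $\sup_{\myvec{\sigma}\in\mysetrvcont(\myset{V})}\lexpt[\varphi(\myvec{\sigma})]=\expt[\varphi(\myvec{V})]$, while the supremum is typically not attained (the maximizing law is a Dirac mass, which is not absolutely continuous), explaining why $\max$ must stay as $\sup$ in this case.

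One caveat I would flag is the dimensionality of $\myset{V}$: the bump construction presumes $\myset{V}$ has nonempty interior so that absolutely continuous laws supported in $\myset{V}$ exist. If $\myset{V}$ is lower-dimensional, the statement for $\mysetrvcont$ should be read relative to the affine hull of $\myset{V}$ (or that family excluded), whereas the remaining three representations need no such assumption. All the steps are otherwise routine once this concentration argument is in place, so the real content is the convexity-based approximation, exactly as in the univariate proof of \ref{prop:rep-uni-maximal}.
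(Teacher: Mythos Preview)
Your proposal is correct and follows exactly the approach the paper indicates: its proof of \ref{prop:rep-multi-maximal} is the single line ``It can be extended from the proof of \ref{prop:rep-uni-maximal},'' and you carry out precisely that extension. Your approximation for the $\mysetrvcont$ case via bump laws at interior points is a minor variant of the truncated-Gaussian construction used in the univariate proof, and your caveat about $\myset{V}$ possibly having empty interior is a sensible observation that the paper does not flag.
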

\begin{proof}
	It can be extended from the proof of \ref{prop:rep-uni-maximal}.
\end{proof}

Next we provide a property for multivariate maximal distribution under transformations.
\begin{prop}
\label{cor:Properties-of-Maximal}
Suppose $\myvec{V}\sim \Maximal(\myset{V})$. Then for any locally Lipschitz function $\psi:(\numset{R}^{d},\norm{\cdot})\to(\numset{R}^{k},\norm{\cdot})$, we have
\[
\myvec{S}\coloneqq\psi(\gdsd)=\psi(V_{1},V_{2},\dotsc,V_{d})\sim\Maximal(\myset{S}),
\]
where $\myset{S} \coloneqq \psi(\myset{V})=\{\psi(\myvec{\sigma}):\myvec{\sigma}\in\myset{V}\}$. 
\end{prop}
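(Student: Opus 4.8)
The plan is to reduce the statement to the defining max-representation of a maximal distribution (\ref{defn:multi-maximal}) and verify it directly. Since $\myvec{S}=\psi(\myvec{V})$ is just a transformation of $\myvec{V}$, the natural route is to feed a test function $\varphi\in\fspace(\numset{R}^k)$ through $\psi$ and invoke the representation already available for $\myvec{V}\sim\Maximal(\myset{V})$. Concretely, it suffices to establish two things: that $\myset{S}\coloneqq\psi(\myset{V})$ is compact, and that $\expt[\varphi(\myvec{S})]=\max_{\myvec{s}\in\myset{S}}\varphi(\myvec{s})$ for every $\varphi\in\fspace(\numset{R}^k)$.

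First I would prove a small composition lemma: if $\varphi\in\fspace(\numset{R}^k)$ and $\psi$ is locally Lipschitz, then $\varphi\circ\psi\in\fspace(\numset{R}^d)$. This is a routine estimate: combining the polynomial-growth Lipschitz bounds for $\varphi$ and $\psi$ and controlling $\abs{\psi(x)}\leq\abs{\psi(0)}+C_\psi(1+\abs{x}^m)\abs{x}$ by a polynomial in $\abs{x}$, one checks that $\abs{\varphi(\psi(x))-\varphi(\psi(y))}$ is bounded by a polynomial in $\abs{x},\abs{y}$ times $\abs{x-y}$. Because $\myvec{V}$ is supported in the compact set $\myset{V}$, the variable $\myvec{S}=\psi(\myvec{V})$ is bounded and $\varphi(\myvec{S})\in\myset{H}$, so every expectation below is well defined and the representation in \ref{defn:multi-maximal} (which is stated for $\varphi\in\fspace$) applies to $\varphi\circ\psi$. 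The computation is then short:
\[
\expt[\varphi(\myvec{S})]=\expt[(\varphi\circ\psi)(\myvec{V})]=\max_{\myvec{\sigma}\in\myset{V}}\varphi(\psi(\myvec{\sigma}))=\max_{\myvec{s}\in\psi(\myset{V})}\varphi(\myvec{s})=\max_{\myvec{s}\in\myset{S}}\varphi(\myvec{s}),
\]
where the second equality is the maximal-representation of $\myvec{V}$ and the third is the change of variable $\myvec{s}=\psi(\myvec{\sigma})$ ranging over $\myset{V}$. Compactness of $\myset{S}$ is immediate, being the continuous image of the compact set $\myset{V}$.

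The step needing the most care — and the main obstacle — is the convexity demanded by \ref{defn:multi-maximal}. A general locally Lipschitz $\psi$ does \emph{not} preserve convexity, so $\myset{S}=\psi(\myset{V})$ may fail to be convex; correspondingly $\myvec{S}$ need not satisfy the self-convolution characterization $\myvec{S}+\bar{\myvec{S}}\eqdistn 2\myvec{S}$ (already a two-point image breaks this, since $\max$ of $\varphi$ over $\{\psi(a)+\psi(b)\}$ strictly exceeds $\max$ over $\{2\psi(a)\}$ for suitable $\varphi$). Thus the genuine content of the proposition is the max-representation over $\myset{S}$ displayed above, which is what must be read as the meaning of $\Maximal(\myset{S})$ here. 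If one insists on the strict convex-set reading of \ref{defn:multi-maximal}, the conclusion holds verbatim only when $\psi$ maps $\myset{V}$ onto a convex set (e.g.\ affine $\psi$); otherwise one keeps in mind that $\max$ over $\myset{S}$ and over its convex hull coincide only for convex $\varphi$, so the non-convex set $\myset{S}$ itself is the correct object in the representation.
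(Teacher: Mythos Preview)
Your approach is essentially the same as the paper's: verify that $\varphi\circ\psi\in\fspace(\numset{R}^d)$ via the polynomial-growth estimates, then apply the max-representation of $\myvec{V}$ and change variables to get $\expt[\varphi(\myvec{S})]=\max_{\myvec{s}\in\myset{S}}\varphi(\myvec{s})$. Your careful remark on convexity of $\myset{S}$ is a point the paper does not address; the paper simply proves the max-representation over $\myset{S}$ and calls that $\Maximal(\myset{S})$, which is exactly the reading you identify as the intended one.
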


\begin{rem}
\ref{cor:Properties-of-Maximal} is generalized version of Proposition 25 and Remark 26 in \cite{jin2021optimal}. It shows that the transformation of a maximal is still a maximal distribution, with support equal to the range of the function.
\end{rem}

Next we give a connection between univariate and multivariate maximal distribution. 

\begin{prop}
\label{thm:maximal-multi-relation}(The relation between multivariate
and the univariate maximal distribution) 
Consider a sequence of maximally distributed random variables
$V_{i}\sim\Maximal[\sdl_{i},\sdr_{i}]$ with $\sdl_{i}\leq\sdr_{i}$, $i=1,2,\dotsc,d$, then the following three statements are equivalent: 
\begin{enumerate}
	\item[(1)] $\{V_{i}\}_{i=1}^d$ are sequentially independent $V_{1}\seqind V_{2}\seqind\cdots\seqind V_{d}$, 
	\item[(2)]  $V_{i_{1}}\seqind V_{i_{2}}\seqind\cdots\seqind V_{i_{d}}$ for any permutation $(i_{1},i_{2},\dotsc,i_{d})$ of $(1,2,\dotsc,d)$, 
	\item[(3)] $\myvec{V} \coloneqq (V_{1},V_{2},\dotsc,V_{d})\sim\Maximal(\prod_{i=1}^{d}[\sdl_{i},\sdr_{i}])$, where the operation $\prod_{i=1}^{d}$ is the Cartesian product. 
\end{enumerate}
\end{prop}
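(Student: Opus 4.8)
The plan is to establish the equivalences through the cyclic chain $(1)\Rightarrow(3)\Rightarrow(2)\Rightarrow(1)$, where the last implication $(2)\Rightarrow(1)$ is immediate since (1) is just the special case of (2) for the identity permutation. The engine driving the two substantive implications is a single technical lemma, which I would isolate as a claim at the start: for any $\varphi\in\fspace(\numset{R}^{m})$ and any compact interval $\sdInt$, the partial-maximum function
\[
\psi(x)\coloneqq\max_{v\in\sdInt}\varphi(x,v),\qquad x\in\numset{R}^{m-1},
\]
again belongs to $\fspace(\numset{R}^{m-1})$. This is exactly the point invoked (as \ref{claim:maximal-varphi-k}) in the proof sketch of \ref{prop:one-indep-to-mutual-indep}, and it is what lets me keep applying the maximal representation and the definition of $\seqind$ to the successively reduced integrands.

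For $(1)\Rightarrow(3)$ I would induct on $d$. The base case $d=1$ is \ref{def:maximal}. For the inductive step, assume $V_{1}\seqind V_{2}\seqind\cdots\seqind V_{d}$. By the definition of sequential independence of a sequence (\ref{defn:indep-seq}) we have $(V_{1},\dotsc,V_{d-1})\seqind V_{d}$, so for any $\varphi\in\fspace(\numset{R}^{d})$ the definition of $\seqind$ gives $\expt[\varphi(V_{1},\dotsc,V_{d})]=\expt[\psi(V_{1},\dotsc,V_{d-1})]$, where $\psi(v_{1},\dotsc,v_{d-1})=\max_{v_{d}\in[\sdl_{d},\sdr_{d}]}\varphi(v_{1},\dotsc,v_{d})$ using the univariate maximal representation for the inner expectation. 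The claim guarantees $\psi\in\fspace(\numset{R}^{d-1})$. Meanwhile \ref{ind-subseq-2} yields $V_{1}\seqind\cdots\seqind V_{d-1}$, so the inductive hypothesis makes $(V_{1},\dotsc,V_{d-1})\sim\Maximal(\prod_{i=1}^{d-1}[\sdl_{i},\sdr_{i}])$; applying its representation to $\psi$ collapses $\expt[\psi(V_{1},\dotsc,V_{d-1})]$ into a maximum over the $(d-1)$-dimensional box, and reinserting the inner maximum over $[\sdl_d,\sdr_d]$ reassembles the full box, giving statement (3).

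For $(3)\Rightarrow(2)$, fix a permutation $(i_{1},\dotsc,i_{d})$; it suffices to verify $(V_{i_{1}},\dotsc,V_{i_{j-1}})\seqind V_{i_{j}}$ for each $j$. The key observation is that any coordinate-projection map is linear, hence locally Lipschitz, and sends the box $\prod_{i}[\sdl_{i},\sdr_{i}]$ onto the corresponding smaller box; therefore \ref{cor:Properties-of-Maximal} tells me every subvector $(V_{i_{1}},\dotsc,V_{i_{l}})$ is itself maximal on $\prod_{p=1}^{l}[\sdl_{i_{p}},\sdr_{i_{p}}]$. With this, both sides of the independence identity in \ref{defn:G-indep} evaluate to the same nested maximum: the left side is a single maximum of $\varphi$ over the box $\prod_{p=1}^{j}[\sdl_{i_p},\sdr_{i_p}]$, while the right side first maximizes over $v_{i_{j}}\in[\sdl_{i_{j}},\sdr_{i_{j}}]$ (producing a $\psi\in\fspace$ by the claim) and then over the remaining box, and these coincide. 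Hence (2) holds.

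The main obstacle is the claim on preservation of local Lipschitz-ness under partial maximization, since everything else is bookkeeping with the representations. I expect it to follow from compactness of $\sdInt$: choosing a maximizer $v^{*}(x)\in\sdInt$ for $\psi(x)$, the estimate $\psi(x)-\psi(x')\le\varphi(x,v^{*}(x))-\varphi(x',v^{*}(x))\le C_{\varphi}(1+\abs{x}^{k}+\abs{x'}^{k}+\abs{v^{*}(x)}^{k})\abs{x-x'}$ holds, and because $v^{*}(x)$ ranges in the bounded set $\sdInt$ the term $\abs{v^{*}(x)}^{k}$ is uniformly bounded; a symmetric argument controls $\psi(x')-\psi(x)$, yielding the required growth bound. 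Continuity of $\varphi$ on the compact $\sdInt$ also ensures the maximum is attained, so $\psi$ is well defined and the representation applies.
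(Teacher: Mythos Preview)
Your proposal is correct and follows essentially the same route as the paper: the same cycle $(1)\Rightarrow(3)\Rightarrow(2)\Rightarrow(1)$, the same induction on $d$ for $(1)\Rightarrow(3)$ peeling off the last coordinate via the univariate maximal representation, the same use of \ref{cor:Properties-of-Maximal} on coordinate projections to deduce that every subvector is maximal on its sub-box for $(3)\Rightarrow(2)$, and the same key technical claim (which the paper records as \ref{claim:maximal-varphi-k}) that partial maximization over a compact interval preserves membership in $\fspace$. Your sketch of that claim via $\psi(x)-\psi(x')\le\varphi(x,v^{*}(x))-\varphi(x',v^{*}(x))$ is equivalent to the paper's bound $\lvert\psi(a)-\psi(b)\rvert\le\max_{v}\lvert\varphi(a,v)-\varphi(b,v)\rvert$, and both exploit boundedness of $\sdInt$ to absorb the $\lvert v\rvert^{k}$ term into the constant.
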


\begin{rem}
	\ref{thm:maximal-multi-relation} shows that the sequential independence between maximal distribution can be arbitrarily switched without changing its joint distribution, which is a maximal distribution supporting on a $d$-dimensional rectangle. Reversely speaking, if a random vector follows a maximal distribution concentrating this rectangle shape, it implies the sequential independence among its components. 
\end{rem}

%



As a special case of \ref{thm:maximal-multi-relation}, for two maximal distributed random variables $V_i,i=1,2$, $V_{1}\seqind V_{2}$
implies that $V_{2}\seqind V_{1}$. 
In fact, \ref{thm:mutual-indep} given by \cite{hu2014independence} shows for two non-constant, non-classical distributed random objects, this kind of mutual independence only holds for maximal distributions. The asymmetry of sequential independence prevails among the distributions in the $G$-expectation framework. 


\subsection{Preparation: setup of a product space (a newly added part)}


We start from a set $\myset{Q}$ of probability measures and a single probability
measure $P$, where $P$ does not have to be in $\myset{Q}$. Let
$\expt_{1}[\cdot]\coloneqq\sup_{Q\in\myset{Q}}\lexpt_{Q}[\cdot]$
and $\expt_{2}[\cdot]\coloneqq\lexpt_{P}[\cdot]$. Then we have 
the associated sublinear expectation spaces $(\Omega_{i},\myset{H}_{(i)},\expt_{i})$
with $i=1,2$. Note that $\expt_{2}$, as a linear operator, can be treated as a degenerate sublinear expectation. 
We may simply write the linear expectation $\lexpt_{P}$ as $\lexpt$
if the probability measure is clear from the context. Since $\expt_{2}$ is a linear expectation, the distributions under $(\Omega_{2},\myset{H}_{(2)},\expt_{2})$ can be treated as classical ones for which we assume they contain common classical distributions (such as classical normal).
We also assume $\myset{Q}$ is designed such that $G$-distribution exists in $(\Omega_{1},\myset{H}_{(1)},\expt_{1})$. 
Then we can combine these two spaces into a product space $(\Omega_{1}\times\Omega_{2},\myset{H}_{(1)}\otimes\myset{H}_{(2)},\expt_{1}\otimes\expt_{2})$. It is also forms a sublinear expectation space. More details on this notion of product space can be found in \cite{peng2019nonlinear} (Section 1.3). 

For readers' convenience, here we provide a brief description of this product space. 
\begin{enumerate}
\item The space is $\myset{H}_{(1)}\otimes\myset{H}_{(2)}$ defined as
\begin{align*}
\myset{H}_{(1)}\otimes\myset{H}_{(2)}= & \{X(\omega_{1},\omega_{2})=f(K(\omega_{1}),\eta(\omega_{2})),(\omega_{1},\omega_{2})\in\Omega_{1}\times\Omega_{2},\\
 & K\in\myset{H}_{(1)}^{m},\eta\in\myset{H}_{(2)}^{n},f\in\fspace(\numset{R}^{m+n}), m,n \in \numset{N}_+ \}.
\end{align*}
\item For $X(\omega_{1},\omega_{2})=f(K(\omega_{1}),\eta(\omega_{2}))\in\myset{H}_{(1)}\otimes\myset{H}_{(2)}$,
we have defined
\begin{align*}
\expt[X]=\expt_{1}\otimes\expt_{2}[X] & \coloneqq\expt_{1}[\expt_{2}[f(k,\eta)]_{k=K}]\\
 & =\sup_{Q\in\myset{Q}}\lexpt_{Q}[\lexpt_{P}[f(k,\eta)]_{k=K}]\\
 & =\sup_{Q\in\myset{Q}}\int\int f(k,y)P_{\eta}(\diff y)Q_{K}(\diff k)\\
 & =\sup_{\lprob\in\myset{P}}\lexpt_{\lprob}[X],
\end{align*}
where $\myset{P}\coloneqq\{Q\otimes P,Q\in\myset{Q}\}$ where $Q\otimes P$
is the product measure of $P$ and $Q$. Note that $\expt_{2}\otimes\expt_{1}\neq\expt_{1}\otimes\expt_{2}$
due to the sublinearity of $\expt_{1}$. 
\end{enumerate}
\begin{prop}
\label{prop:projection-K-eta}For a random variable $K$ on $(\Omega_{1},\myset{H}_{(1)},\expt_{1})$
and $\eta$ on $(\Omega_{2},\myset{H}_{(2)},\expt_{2})$, by letting
$\bar{K}(\omega_{1},\omega_{2})\coloneqq K(\omega_{1})$ and $\bar{\eta}(\omega_{1},\omega_{2})\coloneqq\eta(\omega_{2})$,
we have the following results: 
\begin{enumerate}
\item $\bar{K},\bar{\eta}\in\myset{H}_{(1)}\otimes\myset{H}_{(2)}$, 
\item $X(\omega)=f(\bar{K}(\omega),\bar{\eta}(\omega)),$ for $\omega\in\Omega_{1}\times\Omega_{2}$,
\item For any $\varphi\in\fspacedef$, $\expt[\varphi(\bar{K})]=\expt_{1}[\varphi(K)],$
\item For any $\varphi\in\fspacedef$, $\expt[\varphi(\bar{\eta})]=\lexpt_{P}[\varphi(\eta)],$
\item $\bar{K}\seqind\bar{\eta}.$
\end{enumerate}
\end{prop}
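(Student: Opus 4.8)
The plan is to verify the five claims directly from the definition of the product sublinear expectation $\expt=\expt_1\otimes\expt_2$, exploiting throughout that $\expt_2=\lexpt_P$ is \emph{linear}, so it is constant-preserving and pulls real constants out freely. The recurring mechanism is that a variable depending on only one factor can be integrated against that factor first, after which the other integration acts on a constant.

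For (1) and (2) I would note that the coordinate projections $\pi_1(k,y)=k$ and $\pi_2(k,y)=y$ are globally Lipschitz, hence lie in $\fspace(\numset{R}^2)$. Writing $\bar K(\omega_1,\omega_2)=\pi_1(K(\omega_1),\eta(\omega_2))$ and $\bar\eta(\omega_1,\omega_2)=\pi_2(K(\omega_1),\eta(\omega_2))$ exhibits both as elements of $\myset{H}_{(1)}\otimes\myset{H}_{(2)}$ in exactly the required form, which gives (1). Claim (2) is then immediate: for $X(\omega_1,\omega_2)=f(K(\omega_1),\eta(\omega_2))$ built from the fixed $K,\eta$, substituting the definitions of $\bar K,\bar\eta$ yields $f(\bar K(\omega),\bar\eta(\omega))=f(K(\omega_1),\eta(\omega_2))=X(\omega)$.

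For (3) and (4) I would observe that $\varphi(\bar K)$ corresponds to the defining function $g(k,y)=\varphi(k)$, independent of $y$, and $\varphi(\bar\eta)$ to $g(k,y)=\varphi(y)$, independent of $k$. Plugging into the product formula $\expt[X]=\expt_1[\expt_2[f(k,\eta)]_{k=K}]$, claim (3) follows since $\expt_2[\varphi(k)]=\varphi(k)$ is constant in $\omega_2$, giving $\expt[\varphi(\bar K)]=\expt_1[\varphi(K)]$. Claim (4) follows since $\expt_2[\varphi(\eta)]=\lexpt_P[\varphi(\eta)]$ is a real constant and $\expt_1$ preserves constants, whence $\expt[\varphi(\bar\eta)]=\lexpt_P[\varphi(\eta)]$.

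The heart of the argument is (5). I would set $H(x)\coloneqq\expt[\varphi(x,\bar\eta)]$ and, using that $\varphi(x,\bar\eta)$ does not depend on $\omega_1$, compute $H(x)=\lexpt_P[\varphi(x,\eta)]$. The one genuine check is that $H\in\fspacedef(\numset{R})$: boundedness is inherited from $\varphi$, and $\abs{H(x)-H(x')}\leq\lexpt_P[\abs{\varphi(x,\eta)-\varphi(x',\eta)}]\leq L\abs{x-x'}$ transfers the Lipschitz constant of $\varphi$ through the linear expectation. With $H\in\fspacedef$ I may invoke the already-proved claim (3) to get $\expt[\expt[\varphi(x,\bar\eta)]_{x=\bar K}]=\expt[H(\bar K)]=\expt_1[H(K)]$, while the product formula gives $\expt[\varphi(\bar K,\bar\eta)]=\expt_1[\expt_2[\varphi(k,\eta)]_{k=K}]=\expt_1[H(K)]$ as well, so the two sides of the independence identity coincide and $\bar K\seqind\bar\eta$. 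The main (mild) obstacle is precisely this regularity verification for $H$, together with tracking the order of integration, since $\expt_1\otimes\expt_2\neq\expt_2\otimes\expt_1$; that asymmetry is exactly what produces the one-directional independence $\bar K\seqind\bar\eta$ and not its reverse.
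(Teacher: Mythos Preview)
Your proof is correct and follows essentially the same approach as the paper's: both handle (1)--(4) directly from the product-expectation formula and treat (5) by introducing $H(k)=\lexpt_P[\varphi(k,\eta)]$, applying (3) to $\expt[H(\bar K)]$, and matching this against the product formula for $\expt[\varphi(\bar K,\bar\eta)]$. The only difference is that you explicitly verify $H\in\fspacedef$ and spell out the projection argument for (1), whereas the paper leaves these as obvious; your added care is appropriate and does not constitute a different route.
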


\begin{proof}
Item 1 and 2 are obvious to see. 
For Item 3, we have
\begin{align*}
\expt[\varphi(\bar{K})] & =\expt_{1}[\expt_{2}[\varphi(f_{1}(k,\eta))]_{k=K}]\\
 & =\expt_{1}[\expt_{2}[\varphi(k)]_{k=K}]=\expt_{1}[\varphi(K)].
\end{align*}
Similarly, we can show Item 4. For Item 5, our goal is to show
\[
\expt[\varphi(\bar{K},\bar{\eta})]=\expt[\expt[\varphi(k,\bar{\eta})]_{k=\bar{K}}].
\]
We can see the equation above holds from the following step: with $H(k)\coloneqq \lexpt_{P}[\varphi(k,\bar{\eta})]$,
\begin{align*}
\text{RHS} & =\expt[\lexpt_{P}[\varphi(k,\bar{\eta})]_{k=\bar{K}}]\\
 & =\expt[H(\bar{K})]=\expt_{1}[H(K)]\\
 & =\expt_{1}[\lexpt_{P}[\varphi(k,\bar{\eta})]_{k=K}]\\
 & =\expt[\varphi(K,\eta)]=\expt[\varphi(\bar{K},\bar{\eta})]. \qedhere
\end{align*}
\end{proof}

\begin{rem}
\label{rem:prob-eta} In the following context, without further notice,
we will not distinguish $\bar{K}$ (or $\bar{\eta}$) with $K$ (or
$\eta$). Moreover, we can see that, by letting $\eta(\omega_{1},\omega_{2})\coloneqq\eta(\omega_{2})$
so that $\eta\in\myset{H}_{(1)}\otimes\myset{H}_{(2)}$, from Item 3,
we have 
\[
\expt[\varphi(\eta)]=\sup_{\lprob\in\myset{P}}\lexpt_{\lprob}[\varphi(\eta)]=\lexpt_{P}[\varphi(\eta)],
\]
where $\lprob$ is any product measure $Q\otimes P$ where $Q\in\myset{Q}$.
By making $\varphi$ into $-\varphi$, we can show that for any $\lprob\in\myset{P}$,
\[
\lexpt_{P}[\varphi(\eta)]=\inf_{\lprob\in\myset{P}}\lexpt_{\lprob}[\varphi(\eta)]\leq\lexpt_{\lprob}[\varphi(\eta)]\leq\sup_{\lprob\in\myset{P}}\lexpt_{\lprob}[\varphi(\eta)]=\lexpt_{P}[\varphi(\eta)],
\]
or simply 
$
\lexpt_{\lprob}[\varphi(\eta)]=\lexpt_{P}[\varphi(\eta)].
$
It means that the probability law of $\eta$ is always $P_\eta$ under each product measure $\lprob\in \myset{P}$. 
\end{rem}

Let $\bar{\mysetrv{H}}_{s}$ denote a subspace of the product space mentioned above:
\begin{align*}
\bar{\mysetrv{H}}_{s}\coloneqq  \{X\in\myset{H}_{(1)}\otimes\myset{H}_{(2)}: 
& X(\omega_{1},\omega_{2})=f(K(\omega_{1}),\eta(\omega_{2})),K\in\myset{H}_{(1)}^{m}\sim\Maximal(\Theta), \\ 
& f\in \fspace(\numset{R}^{m+n}), \Theta \subset \numset{R}^m, m,n \in \numset{N}_+ \}.
\end{align*}
For any $X\in\bar{\mysetrv{H}}_{s}$, by the representation of maximal distribution, we have 
\[
\expt[X]=\sup_{\theta\in\Theta}\lexpt_{P}[f(\theta,\eta)].
\]


\subsection{Univariate semi-$G$-normal distribution}
\label{subsec:semi-G-normal-intro}
Recall the story setup in \ref{subsec:setup-story}. Note that $Y=\sigma\epsilon$ can be treated as a normal mixture with scaling latent variable $\sigma:\Omega\to \sdInt$. For simplicity of discussion, we have assumed $\sigma\independent \stdrv$ and $\stdrv\sim \CN(0,1)$.
Suppose we are further faced with the uncertainty on the distribution of $Y=\sigma\epsilon$ due to uncertain $\sigma$ part. 
Then the maximum expected value under this distributional uncertainty is
\begin{equation}
\label{eq:extrem-expt-semiGN}
	\sup_{\sigma\in\mysetrv{A}\sdInt}\lexpt[\varphi(\sigma\epsilon)],
\end{equation}
where the choice of $\mysetrv{A}\sdInt$ is the same as \ref{subsec:property-maximal}. 
It turns out, in either of the choices, \ref{eq:extrem-expt-semiGN} can be expressed as the sublinear expectation of a semi-$G$-normal $W\sim\semiGN(0,\varI)$ (\ref{defn:uni-semignorm}). 

To begin with, note that $\GN(0,[1,1])$ can be treated as the same as the classical distribution $\CN(0,1)$ due to \ref{rem:GN-degenerate-to-CN}. Therefore, we can also say $\epsilon \sim \GN(0,[1,1])$ in the sublinear expectation space. In the following context, we will not distinguish between $\CN(0,1)$ and $\GN(0,[1,1])$. Similarly, a standard multivariate normal $\CN(\myvec{0},\mymat{I}_d)$ can be treated as both a classical distribution and also a degenerate version of a multivariate $G$-normal.





\begin{defn}[Univariate semi-$G$-normal distribution]
\label{defn:uni-semignorm}
For any
$W\in\bar{\mysetrv{H}}_{s}$, we call $W$ follows a semi-$G$-normal
distribution $\semiGN(0,\varI)$ if there exist $V\in\bar{\myset{H}}_{s}\sim\Maximal\sdInt$
and $\stdrv\in\bar{\myset{H}}_{s}\sim\CN(0,1)$ with $V\seqind\stdrv$,
such that 
\begin{equation}
\label{eq:semignorm-decompose}
	W=V\epsilon,
\end{equation}
where the direction of independence cannot be reversed. It is denoted as $W\sim \semiGN(0,\varI)$. 
\end{defn}


\begin{rem}(Existence of Semi-$G$-normal distribution) 
\label{rem:existence-semignorm}
Since there exist $V'\in\myset{H}_{(1)}\sim\Maximal\sdInt$
and $\stdrv'\in\myset{H}_{(2)}\sim N(0,1)$, let $V(\omega_{1},\omega_{2})\coloneqq V'(\omega_{1})$
and $\stdrv(\omega_{1},\omega_{2})\coloneqq\stdrv'(\omega_{2})$.
Consider $f(x,y)=xy$, then \ref{prop:projection-K-eta} ensures that $W=f(V,\stdrv)=f(V',\stdrv')\in\bar{\mysetrv{H}}_s$ satisfies the properties required by \ref{defn:uni-semignorm}. 
\end{rem}

	

\begin{rem}(Why we cannot reverse the direction of independence)
There are two reasons: 
\begin{enumerate}
\item The sublinear expectation will essentially change if we do so: the resulting
distribution will be different.  For instance, if we assume $\epsilon\seqind V$
and let $\tilde{W}\coloneqq\epsilon V$, we have 
\begin{align*}
\expt[\tilde{W}] & =\expt[\expt[xV]_{x=\epsilon}]
  =\lexpt[\epsilon^{+}\sdr-\epsilon^{-}\sdl]\\
 & =\lexpt[\sdr\epsilon+(\sdr-\sdl)\epsilon^{-}]
  =(\sdr-\sdl)\lexpt[\epsilon^{-}] \\
 & =\frac{1}{2}(\sdr-\sdl)\lexpt[\abs{\epsilon}]>0,
\end{align*}
and similarly, 
\[
-\expt[-\tilde{W}]=-\frac{1}{2}(\sdr-\sdl)\lexpt[\abs{\stdrv}]<0.
\]
where $\stdrv^{+}=\max\{\stdrv,0\}$ and $\stdrv^{-}=\max\{-\stdrv,0\}$. We can see that $\tilde{W}$ and $W$ already exhibit their difference in the first moment: $W$ has certain mean zero but $\title{W}$ has mean-uncertainty. 
\item We can never mutual independence in this case because $V$ is maximal and $\epsilon$
is classical then it does not belong to the cases in \ref{thm:mutual-indep} proved by \cite{hu2014independence}.
\end{enumerate}
\end{rem}

As we further proceed in this paper, we will see that the property of $W$ is closely related to the random vector $(V,\stdrv)$ in its decomposition \ref{eq:semignorm-decompose} (such as the results in \ref{subsec:indep-semignorm}). 
The following \ref{prop:unique-decompose} guarantees the uniqueness of such decomposition. 


\begin{prop}[The uniqueness of decomposition] \label{prop:unique-decompose}
In \ref{defn:uni-semignorm}, if there exist two pairs $(V_1,\stdrv_1)$ and $(V_2,\stdrv_2)$ satisfying the required properties and 
\[
W=V_1\stdrv_1=V_2\stdrv_2,
\]
we must have $V_1=V_2$ and $\stdrv_1=\stdrv_2$. 
\end{prop}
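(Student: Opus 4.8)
The plan is to realize all objects concretely in the product space $(\Omega_{1}\times\Omega_{2},\myset{H}_{(1)}\otimes\myset{H}_{(2)},\expt_{1}\otimes\expt_{2})$ carrying $\bar{\mysetrv{H}}_{s}$, where $\myset{P}=\{Q\otimes P:Q\in\myset{Q}\}$, and to prove the two identities quasi-surely (up to a $\lprob$-null set for every $\lprob\in\myset{P}$). First I would separate sign from magnitude. In the non-degenerate regime $\sdl>0$ every $V_{i}\in\sdInt$ is strictly positive, and since $\epsilon_{i}\sim\CN(0,1)$ is nonzero quasi-surely, $W=V_{i}\epsilon_{i}$ gives $\mathrm{sign}(\epsilon_{1})=\mathrm{sign}(W)=\mathrm{sign}(\epsilon_{2})$ together with $\abs{W}=V_{1}\abs{\epsilon_{1}}=V_{2}\abs{\epsilon_{2}}$. (The boundary case $\sdl=0$ is special: on $\{W=0\}=\{V_{i}=0\}$ the factor $\epsilon_{i}$ is not determined by $W$, so I would either assume $\sdl>0$ or restrict the uniqueness claim to $\{W\neq 0\}$.) Consequently it suffices to prove $V_{1}=V_{2}$, because then $\epsilon_{1}=W/V_{1}=W/V_{2}=\epsilon_{2}$.

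To compare $V_{1}$ and $V_{2}$ I set $\rho\coloneqq V_{1}/V_{2}$, a strictly positive bounded variable with $\epsilon_{2}=\rho\,\epsilon_{1}$. The central tool is to show that the asymmetric relation $V_{i}\seqind\epsilon_{i}$, together with the absence of distributional uncertainty in $\epsilon_{i}\sim\CN(0,1)$, makes $\epsilon_{i}$ remain standard normal and decouple from the maximal factor under each individual $\lprob=Q\otimes P\in\myset{P}$; concretely, I would establish that $\epsilon_{1}$ is $\lprob$-independent of the pair $(V_{1},V_{2})$ with $\epsilon_{1}\sim\CN(0,1)$. Granting this, the endgame is clean through moment generating functions. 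Fixing $\lprob$ and conditioning on $\sigma(V_{1},V_{2})$ (which makes $\rho$ measurable and $\epsilon_{1}$ conditionally $\CN(0,1)$),
\[
e^{t^{2}/2}=\lexpt_{\lprob}[e^{t\epsilon_{2}}]=\lexpt_{\lprob}[e^{t\rho\epsilon_{1}}]=\lexpt_{\lprob}[e^{t^{2}\rho^{2}/2}],\qquad t\in\numset{R}.
\]
Writing $u=t^{2}/2\geq 0$, the bounded variable $\rho^{2}$ then has the same moment generating function as the constant $1$; since $\lexpt_{\lprob}[\rho^{2}]=1$, Jensen's inequality forces $\rho^{2}=1$ $\lprob$-a.s., hence $\rho=1$ and $V_{1}=V_{2}$ $\lprob$-a.s. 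Ranging over $\lprob\in\myset{P}$ gives $V_{1}=V_{2}$ quasi-surely, and therefore $\epsilon_{1}=\epsilon_{2}$ quasi-surely.

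I expect the genuine difficulty to lie entirely in the decoupling step, that is, in converting the sublinear, one-directional hypothesis $V_{i}\seqind\epsilon_{i}$ into the per-measure classical independence of $\epsilon_{1}$ from $(V_{1},V_{2})$. The subtlety is that a maximal-type element of $\bar{\mysetrv{H}}_{s}$ need not be a function of the first coordinate $\omega_{1}$ alone, so I cannot simply declare $V_{1},V_{2}$ to be ``first-coordinate'' variables and $\epsilon_{1}$ a ``second-coordinate'' variable. The route I would pursue is to exploit the richness of the uncertainty set attached to a maximal distribution---its family of representing laws contains approximate point masses at every point of the support---to show that the conditional law of each $\epsilon_{i}$ given the first-coordinate $\sigma$-field is the non-random law $\CN(0,1)$, and then to feed the two independences $V_{1}\seqind\epsilon_{1}$ and $V_{2}\seqind\epsilon_{2}$ through \ref{prop:projection-K-eta} and \ref{cor:Properties-of-Maximal} to control the joint behaviour of $(V_{1},V_{2},\epsilon_{1})$. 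This is precisely the place where the maximal (rather than classical) nature of the $V_{i}$ is essential, in the same spirit as the dichotomy recorded in \ref{thm:mutual-indep}.
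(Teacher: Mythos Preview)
Your overall shape is right: reduce to the ratio $\rho=V_{1}/V_{2}$ and show $\rho=1$. You also correctly flag that the entire difficulty sits in the ``decoupling step,'' and that a maximal-distributed element of $\bar{\mysetrv{H}}_{s}$ is not \emph{a priori} a first-coordinate variable. But the step you leave open is precisely the step the paper has to work for, and the route you sketch (establish, for each $\lprob\in\myset{P}$, classical independence of $\epsilon_{1}$ from $(V_{1},V_{2})$, then run an MGF argument) is not the paper's route and, as written, is not justified by the hypotheses. You only have the two separate sublinear relations $V_{1}\seqind\epsilon_{1}$ and $V_{2}\seqind\epsilon_{2}$; nothing in the assumptions gives you a joint relation between $(V_{1},V_{2})$ and $\epsilon_{1}$, and sublinear sequential independence does not in general descend to per-$\lprob$ classical independence. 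So the conditioning step ``$\epsilon_{1}$ is conditionally $\CN(0,1)$ given $\sigma(V_{1},V_{2})$'' is asserted, not proved.

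The paper instead exploits the ambient structure of $\bar{\mysetrv{H}}_{s}$: every element is $f(K,\eta)$ for a single pair with $K\sim\Maximal(\Theta)$, $\eta$ classical, $K\seqind\eta$. Two structural lemmas do all the work. First, if $h(K,\eta)\sim\Maximal[\sdl,\sdr]$ then (on a set of full $P_{\eta}$-measure) $h$ does not depend on $\eta$; this is proved by squeezing $\max_{k}\varphi(h(k,y))$ between $\varphi(\sigma^{*})$ and itself via the $\sdInt$-bound on $h$. Hence both $V_{i}=h_{i}(K)$ are functions of $K$ alone, and $\rho=h_{1}(K)/h_{2}(K)\sim\Maximal(S)$ by \ref{cor:Properties-of-Maximal}. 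Second, if $g(K,\eta)$ has a classical distribution then $g(k,\eta)$ has that same distribution for every fixed $k\in\Theta$ (because the $\sup_{k}$ and $\inf_{k}$ of $\lexpt[\psi(g(k,\eta))]$ coincide). Applying this to $\epsilon_{1}=g_{1}(K,\eta)$ yields $g_{1}(k,\eta)\eqdistn\CN(0,1)$ for each $k$, so
\[
\lexpt[\varphi(Z)]=\expt[\varphi(\epsilon_{2})]=\expt[\varphi(\rho\,\epsilon_{1})]=\sup_{k\in\Theta}\lexpt[\varphi(R(k)g_{1}(k,\eta))]=\sup_{s\in S}\lexpt[\varphi(sZ)],
\]
forcing $S=\{1\}$ and hence $V_{1}=V_{2}$, then $\epsilon_{1}=\epsilon_{2}$. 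In short: rather than proving a per-measure independence you never actually obtain, prove that $V_{i}$ must be $\eta$-free and that $\epsilon_{i}$ has $k$-free law; the product structure $K\seqind\eta$ then gives you the separation for free. Your MGF finish would also work once these lemmas are in place, but it is not a substitute for them.
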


%


\begin{thm}[Representations of univariate semi-$G$-normal]
\label{thm:represent-uni-semignorm} Consider two classically distributed random variables $\sigma:\Omega \to \sdInt$ and $\stdrv\sim \CN(0,1)$ satisfying $\sigma\independent \stdrv$. 
For any $\varphi \in \fspace(\numset{R})$, we have $\expt[\abs{\varphi(W)}] < \infty$ and 
\begin{align}
\expt[\varphi(W)] & =\max_{\sigma \in \mysetrvdeg\sdInt} \lexpt[\varphi(\sigma\stdrv)] = \max_{\sigma \in \sdInt} \lexpt[\varphi(\sigma\stdrv)] \label{eq:represent-semignorm-1}\\
& =\max_{\sigma\in\mysetrvdisc\sdInt}\lexpt[\varphi(\sigma\stdrv)] \label{eq:represent-semignorm-disc}\\
& =\sup_{\sigma\in\mysetrvcont\sdInt}\lexpt[\varphi(\sigma\stdrv)]\label{eq:represent-semignorm-2}\\
& =\max_{\sigma\in\mysetrv{D}\sdInt}\lexpt[\varphi(\sigma\stdrv)],\label{eq:represent-semignorm-3}
\end{align}
where $\mysetrvall$ are the same as the ones in \ref{prop:rep-uni-maximal}.
\end{thm}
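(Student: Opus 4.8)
The plan is to collapse the classical normal factor into a deterministic profile and thereby reduce the whole statement to the already-established representation of the maximal distribution (\ref{prop:rep-uni-maximal}). Concretely, I would introduce the function
\[
g(x)\coloneqq\lexpt[\varphi(x\stdrv)],\quad x\in\numset{R},
\]
where $\stdrv\sim\CN(0,1)$ under the linear expectation. Everything then hinges on two facts: that $g$ is an admissible test function for the maximal distribution (i.e.\ $g\in\fspace(\numset{R})$), and that the decomposition $W=V\stdrv$ with $V\seqind\stdrv$ lets us peel off the $\stdrv$-factor as an inner classical expectation. I would deliberately work inside the product-space construction of $\bar{\mysetrv{H}}_s$, where the factorization $\expt_1\otimes\expt_2$ is \emph{defined} directly for all $f\in\fspace$, so that the direction of independence $V\seqind\stdrv$ applies to the composite test function without needing to extend \ref{defn:G-indep} from $\fspacedef$ to $\fspace$.

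I expect the verification $g\in\fspace(\numset{R})$ to be the main obstacle, since it is where the locally Lipschitz structure of $\varphi$ must interact with the moments of the normal factor. Starting from $\abs{\varphi(u)-\varphi(w)}\leq C_\varphi(1+\abs{u}^k+\abs{w}^k)\abs{u-w}$ and setting $u=x_1\stdrv$, $w=x_2\stdrv$, I would estimate
\[
\abs{g(x_1)-g(x_2)}\leq C_\varphi\abs{x_1-x_2}\,\lexpt[(1+\abs{x_1\stdrv}^k+\abs{x_2\stdrv}^k)\abs{\stdrv}],
\]
and use the finiteness of all absolute moments $\lexpt[\abs{\stdrv}^j]$ of the standard normal to conclude that $g$ satisfies the same locally Lipschitz growth condition (with an updated constant and the same exponent $k$). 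The identical argument applied to $\abs{\varphi}$ shows $h(x)\coloneqq\lexpt[\abs{\varphi(x\stdrv)}]$ also lies in $\fspace(\numset{R})$, which will deliver the finiteness claim once the reduction below is in place.

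With $g\in\fspace$, the remaining steps are routine. Using the product-space definition of the expectation, I would write
\[
\expt[\varphi(W)]=\expt_1[\expt_2[\varphi(k\stdrv)]_{k=V}]=\expt[g(V)],
\]
identifying the inner linear expectation $\expt_2[\varphi(k\stdrv)]$ with $g(k)$, and likewise $\expt[\abs{\varphi(W)}]=\expt[h(V)]$, which is finite since $h$ is continuous on the compact interval $\sdInt$. Applying the maximal representation (\ref{prop:rep-uni-maximal}) to the admissible test function $g$ gives $\expt[g(V)]=\max_{\sigma\in\mysetrv{A}\sdInt}\lexpt[g(\sigma)]$ over each of the families $\mysetrvdeg,\mysetrvdisc,\mysetrvcont,\mysetrv{D}$ (with $\sup$ in place of $\max$ for $\mysetrvcont$). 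It then remains only to unfold $g$: for each such $\sigma$, the classical independence $\sigma\independent\stdrv$ together with the tower property yields $\lexpt[g(\sigma)]=\lexpt[\lexpt[\varphi(x\stdrv)]_{x=\sigma}]=\lexpt[\varphi(\sigma\stdrv)]$, converting each maximal representation into the corresponding line of the theorem; the degenerate case $\sigma\equiv v$ reduces $\max_{\sigma\in\mysetrvdeg\sdInt}$ to $\max_{v\in\sdInt}$, giving the first displayed equality.
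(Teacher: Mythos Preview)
Your proposal is correct and follows essentially the same route as the paper: the function $g$ is exactly the paper's $\varphi_\stdrv$ of \ref{lem:varphi-stdrv}, the locally Lipschitz estimate you give is the content of that lemma, and the reduction $\expt[\varphi(W)]=\expt[g(V)]$ followed by an appeal to \ref{prop:rep-uni-maximal} and then the classical tower property is precisely the paper's argument. Your explicit appeal to the product-space definition to justify the factorization for $\varphi\in\fspace$ (rather than only $\fspacedef$) is a nice touch that the paper leaves implicit.
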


The proof of \ref{thm:represent-uni-semignorm} is closely related to the representation of maximal distribution. First we need to prepare the following lemma. 

\begin{lem}
\label{lem:varphi-stdrv}
	For any fixed $v\in\sdInt$, let $\varphi_\stdrv(v)\coloneqq \expt[\varphi(v\epsilon)]$ with $\stdrv\sim \GN(0,[1,1])$. Then we have $\varphi_\stdrv \in \fspace(\numset{R})$. 
\end{lem}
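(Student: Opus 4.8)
The plan is to exploit the fact that, for a fixed scalar $v\in\sdInt$, the quantity $\varphi_\stdrv(v)=\expt[\varphi(v\stdrv)]$ is really a classical Gaussian integral in disguise. Since $\stdrv\sim\GN(0,[1,1])$ is a degenerate $G$-normal with coinciding variance bounds, Remark \ref{rem:GN-degenerate-to-CN} lets me identify it with the classical $\CN(0,1)$, so that $\expt[\varphi(v\stdrv)]=\lexpt[\varphi(v\stdrv)]=\int_{\numset{R}}\varphi(vz)\tfrac{1}{\sqrt{2\pi}}e^{-z^2/2}\diff z$, where the right-hand side is an ordinary linear expectation. First I would record that this integral is finite for every $v$: the locally Lipschitz bound defining $\fspace$ gives, by comparison against $\varphi(0)$, a polynomial growth estimate $\abs{\varphi(x)}\leq C'(1+\abs{x}^{k+1})$, so that $\abs{\varphi(v\stdrv)}\leq C'(1+\abs{v}^{k+1}\abs{\stdrv}^{k+1})$; since the standard normal has finite moments of every order, $\lexpt[\abs{\varphi(v\stdrv)}]<\infty$ and $\varphi_\stdrv$ is a well-defined real-valued function.

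The core step is the locally Lipschitz estimate in the argument $v$. For $v_1,v_2\in\numset{R}$ I would bound, using monotonicity and subadditivity (or simply linearity after the reduction above),
\begin{align*}
\abs{\varphi_\stdrv(v_1)-\varphi_\stdrv(v_2)}
&\leq \lexpt[\abs{\varphi(v_1\stdrv)-\varphi(v_2\stdrv)}]\\
&\leq C_\varphi\,\lexpt\big[(1+\abs{v_1\stdrv}^k+\abs{v_2\stdrv}^k)\,\abs{v_1\stdrv-v_2\stdrv}\big]\\
&= C_\varphi\,\abs{v_1-v_2}\,\lexpt\big[(1+\abs{v_1}^k\abs{\stdrv}^k+\abs{v_2}^k\abs{\stdrv}^k)\,\abs{\stdrv}\big],
\end{align*}
where the second line applies the defining inequality of $\fspace$ with $x=v_1\stdrv$ and $y=v_2\stdrv$. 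Expanding the last expectation and writing $m_1\coloneqq\lexpt[\abs{\stdrv}]$ and $m_{k+1}\coloneqq\lexpt[\abs{\stdrv}^{k+1}]$ (both finite), I obtain
\[
\abs{\varphi_\stdrv(v_1)-\varphi_\stdrv(v_2)}\leq C_\varphi\abs{v_1-v_2}\big(m_1+m_{k+1}(\abs{v_1}^k+\abs{v_2}^k)\big)\leq C'_\varphi(1+\abs{v_1}^k+\abs{v_2}^k)\abs{v_1-v_2},
\]
with $C'_\varphi\coloneqq C_\varphi\max\{m_1,m_{k+1}\}$. This is exactly the locally Lipschitz condition defining $\fspace(\numset{R})$, with the same exponent $k$ as for $\varphi$, so $\varphi_\stdrv\in\fspace(\numset{R})$.

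I do not expect a serious obstacle here; the only care needed is the bookkeeping of the polynomial-growth constants and the verification that the Gaussian absolute moments appearing (of orders $1$ and $k+1$) are finite, which is immediate for $\CN(0,1)$. The one conceptual point worth stating explicitly is the reduction from the sublinear to the linear expectation via Remark \ref{rem:GN-degenerate-to-CN}; once that is in place, every inequality above is a routine application of the triangle inequality inside a classical integral together with the locally Lipschitz hypothesis on $\varphi$.
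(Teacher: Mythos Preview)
Your proposal is correct and follows essentially the same approach as the paper: reduce $\expt[\varphi(v\stdrv)]$ to the classical expectation $\lexpt[\varphi(v\stdrv)]$ via Remark~\ref{rem:GN-degenerate-to-CN}, then apply the locally Lipschitz bound for $\varphi$ inside the integral and collect the Gaussian absolute moments $\lexpt[\abs{\stdrv}]$ and $\lexpt[\abs{\stdrv}^{k+1}]$ into the constant $C'_\varphi=C_\varphi\max\{m_1,m_{k+1}\}$. The paper's proof is line-for-line the same computation, with the same resulting constant.
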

\begin{proof}[Proof of \ref{lem:varphi-stdrv}]
	Note that $\stdrv \eqdistn \GN(0,[1,1]) \eqdistn \CN(0,1)$ as mentioned in \ref{rem:GN-degenerate-to-CN}. Then $\varphi_\stdrv(v)\coloneqq\expt[\varphi(v\stdrv)] = \lexpt[\varphi(v\stdrv)]$. Next we can show $\varphi_\stdrv \in \fspace(\numset{R})$ by definition: 
	\begin{align*}
|\varphi_{\stdrv}(x)-\varphi_{\stdrv}(y)| & =|\lexpt_\lprob[\varphi(x\stdrv)-\varphi(y\stdrv)]|\\
 & \leq\lexpt_\lprob[C_{\varphi}(1+\abs{x\stdrv}^{k}+\abs{y\stdrv}^{k})\abs{\stdrv}\cdot\abs{x-y}]\\
 & =C_{\varphi}(\lexpt_\lprob[\abs{\stdrv}]+\lexpt_\lprob[\abs{\stdrv}^{k+1}]\abs{x}^{k}+\lexpt_\lprob[\abs{\stdrv}^{k+1}]\abs{y}^{k})\abs{x-y}\\
 & \leq C(1+\abs{x}^{k}+\abs{y}^{k})\abs{x-y},
\end{align*}
where $C=C_\varphi\max\{\lexpt_\lprob[\abs{\stdrv}],\lexpt_\lprob[\abs{\stdrv}^{k+1}]\}$. 
\end{proof}

\begin{proof}[Proof of \ref{thm:represent-uni-semignorm}]
	 Under the sequential independence $V\seqind \epsilon$, for any $\varphi \in \fspace(\numset{R})$, we have
\[
\expt[\varphi(W)] =\expt[\varphi(V\stdrv)]
 =\expt[\expt[\varphi(v\stdrv)]_{v=V}]=\expt[\varphi_\stdrv(V)].
\]
First we have $\varphi_\stdrv \in \fspace(\numset{R})$ by \ref{lem:varphi-stdrv}.
Then we can use \ref{def:maximal} to show the finiteness of $\expt[\abs{\varphi(W)}]$ due to the continuity of $\varphi_\stdrv$:
$
\expt[\abs{\varphi(W)}] = \max_{v\in \sdInt} \abs{\varphi_\stdrv(v)} < \infty.
$
   Next we check each representation in \ref{thm:represent-uni-semignorm} by applying the associated representation of maximal distribution in \ref{prop:rep-uni-maximal}. For instance, we can show \ref{eq:represent-semignorm-2} based on \ref{eq:represent-maximal-2}: 
   \begin{align*}
   \expt[\varphi(W)]&=\expt[\varphi_\stdrv(V)] = \sup_{\sigma \in\mysetrvcont\sdInt} \lexpt[\varphi_\stdrv(\sigma)] \\ 
   & = \sup_{\sigma\in\mysetrvcont\sdInt} \lexpt[\lexpt[\varphi(v\stdrv)]_{v=\sigma}] = \sup_{\sigma\in\mysetrvcont\sdInt} \lexpt[\varphi(\sigma\stdrv)],
   \end{align*}
where we use the fact that $\sigma \independent \stdrv$ and \ref{rem:seq-indep-and-cl-indep}. 
\end{proof}

\begin{rem}
\ref{thm:represent-uni-semignorm} means that there are several ways to interpret the distributional uncertainty of semi-$G$-normal: 
\begin{itemize}
	\item \ref{eq:represent-semignorm-1} shows it can be described as a collection of $\CN(0,\sigma^2)$ with $\sigma\in\sdInt$ (which gives a direct way to compute this sublinear expectation); 
	\item \ref{eq:represent-semignorm-2,eq:represent-semignorm-disc,eq:represent-semignorm-3} show it can be described as a collection of classical normal mixture distribution with (discretely, absolutely continuously or arbitrarily) distributed scale parameter ranging in $\sdInt$.
	\end{itemize}	
\end{rem}

\begin{rem}
Let $F_\sigma$ denote the cumulative distribution function  of $\sigma$ under $\lprob$ and $F_{\myset{A}\sdInt}$ represent the family of $F_\sigma$ with $\sigma\in \myset{A}\sdInt$. Then we can apply the classical Fubini theorem in the evaluation of $\lexpt[\varphi(\sigma\stdrv)]$ in \ref{thm:represent-uni-semignorm} to get a more explicit form of representation:
\[
	\expt[\varphi(W)]= \sup_{F_\sigma \in F_{\myset{A}\sdInt}} \int_{\sdl}^{\sdr}\lexpt[\varphi(v \stdrv)] F_\sigma (\diff v),
\]
where $\myset{A}$ can be chosen from $\mysetrvall$.
\end{rem}

\begin{rem}[Why is it called a ``semi'' one?]
\label{rem:comp-semiGN-GN}
The essential reason is that the uncertainty set of distributions associated with the semi-$G$-normal is \emph{smaller} than the one of $G$-normal. 
Let $W^G\sim \GN(0,\varI)$ and $W\sim \semiGN(0,\varI)$. 
In fact, we have the following existing result: for any $\varphi\in\fspace(\numset{R})$
\begin{equation}
\label{eq:comp-GN-semiGN}
\expt[\varphi(W^G)]\geq\max_{v\in\sdInt}\lexpt[\varphi(v\stdrv)]=\expt[\varphi(W)],	
\end{equation}
which can be proved by applying the comparison theorem of parabolic partial differential equations (in \cite{crandall1992user}) to the associated $G$-heat and classical heat equations with initial condition $\varphi$ (the inequality become a strict one when $\varphi$ is neither convex nor concave). For readers' convenience, the result \ref{eq:comp-GN-semiGN} is included in Section 2.5 in \cite{peng2019nonlinear}. 
Meanwhile, we have the representation of $\expt$ from a set $\myset{P}$ of probability measures,
\[
\expt[\varphi(X)] = \sup_{\measure{Q}\in \myset{P}} \lexpt_\measure{Q}[\varphi(X)] =\sup_{\measure{Q}_X \in \myset{P}_X} \lexpt_\measure{Q}[\varphi(X)], 
\]
where $\myset{P}_X \coloneqq \{\measure{Q}\circ X^{-1},\measure{Q} \in \myset{P}\}$
 characterizes the distributional uncertainty of $X$. Hence, \ref{eq:comp-GN-semiGN} tells us $\myset{P}_{W} \subset \myset{P}_{W^G}$. A more explicit discussion of this distinction will be provided in \ref{rem:comp-semiGN-GN-2}. 
\end{rem}




\begin{rem}[The distribution of $\stdrv$]
	In principle, the distribution of $\stdrv$ can be changed to any other types of classical distribution with finite moment generating function and all the related results like representations will also hold. We choose standard normal because we are working on an intermediate structure between normal and $G$-normal. Another reason comes from the following \ref{thm:conn-G}. 
\end{rem}

\begin{prop}[A special connection between semi-$G$-normal and $G$-normal distribution] \label{thm:conn-G}
Let $\GNrv\sim \GN(0,\varI)$ and $W\sim \semiGN(0,\varI)$. 
For $\varphi\in\fspace(\numset{R})$, 
when $\varphi$ is convex or concave, we have 
\[
\expt[\varphi(\GNrv)]=\expt[\varphi(\SGNrv)]=\begin{cases}
\lexpt_\lprob[\varphi(\CN(0,\sdr^{2}))] & \varphi\text{ is convex}\\
\lexpt_\lprob[\varphi(\CN(0,\sdl^{2}))] & \varphi\text{ is concave}.
\end{cases}.
\]
\end{prop}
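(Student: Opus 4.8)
The plan is to establish the two equalities separately and then observe that both quantities collapse to the same classical expectation. I would treat the convex case in full and obtain the concave case by replacing $\varphi$ with $-\varphi$ (since $\expt[-\psi]=-\,(-\expt[\psi])$ only in the linear setting, I will instead run the same monotonicity argument with the inequality reversed, which is cleaner). The $G$-normal half is essentially free: for $\GNrv\sim\GN(0,\varI)$ and $\varphi$ convex, \ref{prop:convex-concave-case} gives directly
\[
\expt[\varphi(\GNrv)]=\lexpt_\lprob[\varphi(\sdr\stdrv)]=\lexpt_\lprob[\varphi(\CN(0,\sdr^2))],
\]
and the concave case yields $\lexpt_\lprob[\varphi(\sdl\stdrv)]$. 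So the entire content lies in matching the semi-$G$-normal expectation $\expt[\varphi(\SGNrv)]$ to the same value.

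For the semi-$G$-normal half I would invoke the representation \ref{eq:represent-semignorm-1} from \ref{thm:represent-uni-semignorm}, which expresses
\[
\expt[\varphi(\SGNrv)]=\max_{\sigma\in\sdInt}\lexpt_\lprob[\varphi(\sigma\stdrv)].
\]
Writing $g(\sigma)\coloneqq\lexpt_\lprob[\varphi(\sigma\stdrv)]$ with $\stdrv\sim\CN(0,1)$, the problem reduces to showing that, for convex $\varphi$, the map $g$ is nondecreasing on $[\sdl,\sdr]$, so that its maximum is attained at $\sigma=\sdr$ and equals $\lexpt_\lprob[\varphi(\sdr\stdrv)]$; for concave $\varphi$, $g$ is nonincreasing and the maximum is $g(\sdl)=\lexpt_\lprob[\varphi(\sdl\stdrv)]$. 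Both outcomes coincide with the $G$-normal side, which establishes the full chain of equalities.

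The main obstacle, therefore, is the monotonicity of $g$, which I expect to handle by a dilation-plus-Jensen argument. For $0\le\sigma_1\le\sigma_2$, the classical variance decomposition of the normal law lets me write $\sigma_2\stdrv$ as having the same distribution as $\sigma_1\stdrv+\sqrt{\sigma_2^2-\sigma_1^2}\,\stdrv'$ with $\stdrv'$ an independent standard normal. Conditioning on $\stdrv$ and applying Jensen's inequality to the convex $\varphi$ together with $\lexpt_\lprob[\stdrv']=0$ gives
\[
g(\sigma_2)=\lexpt_\lprob\bigl[\lexpt_\lprob[\varphi(\sigma_1\stdrv+\sqrt{\sigma_2^2-\sigma_1^2}\,\stdrv')\mid\stdrv]\bigr]\ge\lexpt_\lprob[\varphi(\sigma_1\stdrv)]=g(\sigma_1),
\]
proving monotonicity in the convex case; the concave case follows by the reverse Jensen inequality. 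A remark worth noting is that this is precisely the mechanism behind the (non-strict) inequality \ref{eq:comp-GN-semiGN} in \ref{rem:comp-semiGN-GN}: when $\varphi$ is convex or concave the semi-$G$-normal uncertainty set is ``extremized'' at a single $\CN(0,\sdr^2)$ or $\CN(0,\sdl^2)$, which is exactly the distribution selected on the $G$-normal side, so the two upper expectations agree. I would close by assembling these pieces into the stated equality.
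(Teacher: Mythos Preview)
Your proposal is correct. On the $G$-normal side you proceed exactly as the paper does, citing \ref{prop:convex-concave-case}. On the semi-$G$-normal side, however, your argument differs from the paper's.

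The paper's primary route is a sandwich: it invokes the general comparison $\expt[\varphi(\GNrv)]\geq\expt[\varphi(\SGNrv)]$ from \ref{rem:comp-semiGN-GN} (which rests on the PDE comparison theorem), and closes the loop via $\expt[\varphi(\SGNrv)]\geq\max_{\sigma\in\{\sdl,\sdr\}}\lexpt[\varphi(\sigma\stdrv)]=\expt[\varphi(\GNrv)]$, the last equality again from \ref{prop:convex-concave-case}. It then adds, ``for readers' convenience'', an explicit verification that $v\mapsto\lexpt[\varphi(v\stdrv)]$ is monotone by Taylor-expanding $\varphi$ to second order and differentiating the integral in $v$, under the extra technical assumption that $\varphi$ is $C^2$. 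Your variance-decomposition-plus-Jensen argument achieves the same monotonicity in a more elementary and more general way: it works for any convex $\varphi\in\fspace$ without smoothness, and it is fully self-contained, not relying on the PDE comparison inequality \ref{eq:comp-GN-semiGN}. The paper's sandwich, on the other hand, is shorter once that comparison is taken as given. Both routes land on the same identification of the extremizing variance.
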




\subsection{Multivariate semi-$G$-normal distribution}
\label{subsec:multi-semignorm}

The definition of semi-$G$-normal distribution can be naturally extended
to multi-dimensional situation. Intuitively speaking, the multivariate semi-$G$-normal distribution can be treated as an analogue of the classical multivariate normal distribution which can be written as:
\begin{equation}
\label{eq:cl-multi-norm-transform}
	\CN(\myvec 0,\dvar ) = \dvar^{1/2}N(\myvec 0,\idtymat_{d}),
\end{equation}
 where $\idtymat_{d}$ is a $d\times d$ identity matrix and $\dvar$ is the covariance matrix. 

Let $\numset{S}_{d}^{+}$ denote the family of real-valued symmetric positive semi-definite $d\times d$ matrices.
Consider a bounded, closed and convex subset $\dvarset\subset\numset{S}^+_d$.  
For any element $\dvar \in \dvarset$, it has a non-negative symmetric square root denoted as $\dsd$. Let $\dsdset\coloneqq \dvarset^{1/2}$ which is the set of $\dvar^{1/2}$ with $\dvar \in \dvarset$. Then we can treat $\dvar$ as the covariance matrix of a classical multivariate normal distribution due to \ref{eq:cl-multi-norm-transform} and $\dvarset$ as a collection of covariance matrices. Note that $\dsdset$ is still a bounded, closed and convex set. Then a matrix-valued maximal distribution $\Maximal(\dsdset)$ can be directly extended from \ref{defn:multi-maximal}.




\begin{defn}[Multivariate Semi-$G$-normal distribution]
\label{defn:multi-semignorm}
Let a bounded, closed and convex subset $\dvarset\subset\numset{S}^+_d$
be the uncertainty set of covariance matrices and $\dsdset \coloneqq \dvarset^{1/2}$.
In a sublinear expectation space, a $d$-dimensional random vector $\myvec{W}$ follows a (multivariate) \emph{semi-$G$-normal distribution}, denoted by $\myvec{W}\sim \semiGN(\myvec{0},\dvarset)$,  if there exists a (degenerate) $G$-normal distributed $d$-dimensional random vector
\[
\myvec{\stdrv}\sim N(\myvec 0,\idtymat_d):\Omega\rightarrow\R^{d},
\]
and a $d\times d$-dimensional maximally distributed random matrix 
\[
\mymat{V} \sim \Maximal(\dsdset):\Omega\rightarrow\R^{d\times d},
\]
as well as $\myvec{\stdrv}$ is independent from $\mymat{V}$, expressed as $\mymat{V}\seqind \myvec{\stdrv}$, such that 
\[
\myvec{W} =\mymat{V}\myvec{\epsilon},
\]
where the direction of independence here cannot be reversed.
\end{defn}

\begin{rem}
  The existence of multivariate semi-$G$-normal distribution comes from the same logic as \ref{rem:existence-semignorm} (by using the existence of the $G$-distribution in a multivariate setup). 
\end{rem}

\begin{rem}
	Note that $\mymat{V}$ in \ref{defn:multi-semignorm} is a random matrix. The relation $\mymat{V}\seqind \myvec{\stdrv}$ is defined by a multivariate version of \ref{defn:G-indep}. 
\end{rem}



Similar to the discussions in \ref{subsec:property-maximal}, we can extend the notion of semi-$G$-normal distribution and its representation to multivariate siuation. 



\begin{thm}(Representation of multivariate semi-$G$-normal distribution)
	\label{thm:represent-multi-semignorm} 
	Consider the random vector $\myvec{W}$ in \ref{defn:multi-semignorm}. For any $\varphi\in\fspace(\numset{R}^d)$, we have $\expt[\abs{\varphi(\myvec{W})}]<\infty$ and
\begin{equation}
	\label{eq:represent-multi-semignorm}
	\expt[\varphi(\myvec{W})] = \sup_{\cdsd \in \mysetrv{A}(\dsdset)} \lexpt[\varphi(\cdsd\dstdrv)],
\end{equation}	
where $\myset{A}$ can be chosen from $\{\mysetrv{D},\mysetrvdisc,\mysetrvcont,\mysetrvdeg\}$
and $\sup$ can be changed to $\max$ except when $\myset{A}=\mysetrvcont$.
\end{thm}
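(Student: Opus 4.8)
The plan is to follow the same three-move structure as the proof of \ref{thm:represent-uni-semignorm}, replacing the scalar maximal variable by the matrix-valued one $\mymat{V}$. First I would exploit the sequential independence $\mymat{V}\seqind\myvec{\stdrv}$ from \ref{defn:multi-semignorm}. For a fixed matrix $\mymat{s}\in\numset{R}^{d\times d}$, set $\Phi(\mymat{s})\coloneqq\expt[\varphi(\mymat{s}\myvec{\stdrv})]$, which equals $\lexpt[\varphi(\mymat{s}\dstdrv)]$ because $\myvec{\stdrv}\sim N(\myvec 0,\idtymat_d)$ is a degenerate $G$-normal (hence classical normal, so the inner sublinear expectation collapses to a linear one). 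The definition of independence then gives
\[
\expt[\varphi(\myvec{W})]=\expt[\varphi(\mymat{V}\myvec{\stdrv})]=\expt[\expt[\varphi(\mymat{s}\myvec{\stdrv})]_{\mymat{s}=\mymat{V}}]=\expt[\Phi(\mymat{V})],
\]
which reduces the problem to a maximal expectation of the transformed object $\Phi(\mymat{V})$.

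The crucial step, and the one I expect to be the main obstacle, is the matrix analogue of \ref{lem:varphi-stdrv}: showing $\Phi\in\fspace(\numset{R}^{d\times d})$ so that the maximal representation can be applied. For this I would estimate, using $\varphi\in\fspace$ together with $\abs{\mymat{s}\myvec{\stdrv}}\le\abs{\mymat{s}}\,\abs{\myvec{\stdrv}}$, where $\abs{\mymat{s}}$ denotes the Frobenius norm (equivalently the Euclidean norm of the vectorized matrix),
\begin{align*}
\abs{\Phi(\mymat{x})-\Phi(\mymat{y})} & \leq\lexpt[C_\varphi(1+\abs{\mymat{x}\myvec{\stdrv}}^{k}+\abs{\mymat{y}\myvec{\stdrv}}^{k})\abs{(\mymat{x}-\mymat{y})\myvec{\stdrv}}]\\
& \leq C(1+\abs{\mymat{x}}^{k}+\abs{\mymat{y}}^{k})\abs{\mymat{x}-\mymat{y}},
\end{align*}
where the finiteness of $\lexpt[\abs{\myvec{\stdrv}}]$ and $\lexpt[\abs{\myvec{\stdrv}}^{k+1}]$ (all moments of a multivariate normal being finite) is used to absorb the constants into $C$. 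The bookkeeping here is routine but must be carried out with care, since the matrix–vector product mixes all $d^2$ entries of $\mymat{s}$; this is precisely where the argument does genuine work beyond the scalar case.

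With $\Phi\in\fspace(\numset{R}^{d\times d})$ established, I would apply the representation of the matrix-valued maximal distribution $\Maximal(\dsdset)$ from \ref{prop:rep-multi-maximal} to obtain $\expt[\Phi(\mymat{V})]=\sup_{\cdsd\in\mysetrv{A}(\dsdset)}\lexpt[\Phi(\cdsd)]$, with $\sup$ replaceable by $\max$ except in the absolutely continuous case. Finally I would unfold $\Phi$ and invoke the classical independence $\cdsd\independent\dstdrv$ together with \ref{rem:seq-indep-and-cl-indep} to write $\lexpt[\Phi(\cdsd)]=\lexpt[\lexpt[\varphi(\mymat{s}\dstdrv)]_{\mymat{s}=\cdsd}]=\lexpt[\varphi(\cdsd\dstdrv)]$, which yields \ref{eq:represent-multi-semignorm}. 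The finiteness claim $\expt[\abs{\varphi(\myvec{W})}]<\infty$ follows from the same reduction applied to $\abs{\varphi}$ (still locally Lipschitz): since the corresponding $\Phi$ is continuous on the compact support $\dsdset$ of $\mymat{V}$, one gets $\expt[\abs{\varphi(\myvec{W})}]=\max_{\mymat{s}\in\dsdset}\expt[\abs{\varphi(\mymat{s}\myvec{\stdrv})}]<\infty$.
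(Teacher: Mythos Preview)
Your proposal is correct and follows essentially the same approach as the paper, which simply states that the logic is identical to the univariate case \ref{thm:represent-uni-semignorm} after invoking the representation \ref{prop:rep-multi-maximal} for $\Maximal(\dsdset)$. You have carefully spelled out the matrix analogue of \ref{lem:varphi-stdrv} and the use of $\cdsd\independent\dstdrv$, which the paper leaves implicit; nothing more is needed.
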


\begin{proof}[Proof of \ref{thm:represent-multi-semignorm}]
The logic of this proof is exactly the same as the one of \ref{thm:represent-uni-semignorm} where we apply the representation of the maximal distribution $\Maximal(\dsdset)$ which can be easily checked that it has the same form as \ref{prop:rep-multi-maximal}. 
\end{proof}

\begin{rem}
\ref{thm:represent-multi-semignorm} means that there are several ways to interpret the distributional uncertainty of multivariate semi-$G$-normal $\semiGN(\myvec{0},\dvarset)$: 
\begin{itemize}
	\item it can be described as a collection of $\CN(0,\cdvar)$ with constant covariance matrix $\cdvar\in\cdvarset$;
	\item it can be described as a collection of classical \emph{multvariate normal mixture distributions} with (discretely, absolutely continuously, arbitrarily) distributed random covariance matrices (as a latent scaling variable) ranging in $\dvarset$.
	\end{itemize}	
\end{rem}


By using \ref{thm:represent-multi-semignorm}, we can conveniently study the \emph{covariance uncertainty} between the marginals of $\myvec{W}$. 
First, we can define the the upper and lower covariance between each marginal
of $\myvec{W}=(W_{1},W_{2},\dotsc,W_{d})$ as (note that $W_i$ has certain mean zero)
\[
\myupper{\gamma}(i,j)\coloneqq\expt[W_{i}W_{j}],
\]
and 
\[
\mylower{\gamma}(i,j)\coloneqq-\expt[-W_{i}W_{j}].
\]
Then these two quantities turn out to be closely related to $\dvarset$ illustrated as follows. 


\begin{prop}
[Upper and lower covariance between semi-$G$-normal marginals]
\label{prop:cov-semi-G-norm}
For
each $\cdvar\in\dvarset$, let $\Sigma_{ij}$ denote the $(i,j)$-th
entry of $\cdvar$, and 
\[
[\mylower{\Sigma}_{ij},\myupper{\Sigma}_{ij}]\coloneqq[\min_{\Sigma\in\dvarset}\Sigma_{ij},\max_{\Sigma\in\dvarset}\Sigma_{ij}].
\]
Then we have 
\[
\mylower{\gamma}(i,j)=\mylower{\Sigma}_{ij}\text{ and }\myupper{\gamma}(i,j)=\myupper{\Sigma}_{ij}.
\]
Specially speaking, we have 
\[
\sdr_{i}^{2}\coloneqq\expt[W_{i}^{2}]=\myupper{\cdvar}_{ii} \text{ and }\sdl_{i}^{2}\coloneqq-\expt[-W_{i}^{2}]=\mylower{\cdvar}_{ii}.
\]
\end{prop}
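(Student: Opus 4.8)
The plan is to feed the quadratic test function $\varphi(\myvec{w})\coloneqq w_i w_j$ into the representation of the multivariate semi-$G$-normal distribution (\ref{thm:represent-multi-semignorm}), and then collapse the supremum over the family of random square-root matrices to a finite-dimensional optimization over the compact set $\dsdset=\dvarset^{1/2}$. Before invoking the representation I would first confirm that $\varphi$ is admissible, i.e. $\varphi\in\fspace(\numset{R}^d)$. Writing $x_i x_j-y_i y_j=x_i(x_j-y_j)+y_j(x_i-y_i)$ gives $\abs{\varphi(\myvec{x})-\varphi(\myvec{y})}\leq(\abs{x_i}+\abs{y_j})\abs{\myvec{x}-\myvec{y}}\leq(1+\abs{\myvec{x}}+\abs{\myvec{y}})\abs{\myvec{x}-\myvec{y}}$, so $\varphi$ is locally Lipschitz in the sense required by $\fspace$ with $k=1$. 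Hence \ref{thm:represent-multi-semignorm} applies and in particular $\expt[\abs{\varphi(\myvec{W})}]<\infty$.

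Next I would choose the degenerate family $\myset{A}=\mysetrvdeg$ in \ref{eq:represent-multi-semignorm}. Then the candidate matrices $\cdsd\in\mysetrvdeg(\dsdset)$ reduce to constant matrices $A\in\dsdset$, and the supremum is attained (so may be written as a maximum) since $\dsdset$ is compact and the objective is continuous in $A$. This yields
\[
\myupper{\gamma}(i,j)=\expt[W_i W_j]=\max_{A\in\dsdset}\lexpt[(A\dstdrv)_i(A\dstdrv)_j],
\]
where $\dstdrv\sim\CN(\myvec{0},\idtymat_d)$. The inner expectation is a routine Gaussian second-moment computation: expanding $(A\dstdrv)_i=\sum_k A_{ik}\epsilon_k$ and using $\lexpt[\epsilon_k\epsilon_l]=\delta_{kl}$ gives $\lexpt[(A\dstdrv)_i(A\dstdrv)_j]=\sum_k A_{ik}A_{jk}=(AA^{T})_{ij}$.

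Finally I would invoke the defining property $\dsdset=\dvarset^{1/2}$: each $A\in\dsdset$ equals $\Sigma^{1/2}$ for some $\Sigma\in\dvarset$, and since the non-negative symmetric square root satisfies $A^{T}=A$ we get $AA^{T}=\Sigma^{1/2}\Sigma^{1/2}=\Sigma$, whence $(AA^{T})_{ij}=\Sigma_{ij}$ and $\myupper{\gamma}(i,j)=\max_{\Sigma\in\dvarset}\Sigma_{ij}=\myupper{\Sigma}_{ij}$. Applying the identical argument to the test function $-\varphi$ gives $\expt[-W_i W_j]=\max_{\Sigma\in\dvarset}(-\Sigma_{ij})$, so $\mylower{\gamma}(i,j)=-\expt[-W_i W_j]=\min_{\Sigma\in\dvarset}\Sigma_{ij}=\mylower{\Sigma}_{ij}$; the variance statement is the special case $i=j$. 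I do not expect a genuine obstacle here: the only points demanding care are the admissibility check $\varphi\in\fspace$ (which is what licenses applying the representation to a function of quadratic growth) and the bookkeeping identity $AA^{T}=\Sigma$, which is exactly where the construction $\dsdset=\dvarset^{1/2}$ and the symmetry of the square root do the work.
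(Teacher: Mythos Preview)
Your proposal is correct and follows essentially the same approach as the paper: apply the representation theorem \ref{thm:represent-multi-semignorm} to the quadratic test function $f_{ij}(\myvec{w})=w_iw_j$, reduce to $\max_{\Sigma^{1/2}\in\dsdset}\lexpt[(\Sigma^{1/2}\dstdrv)_i(\Sigma^{1/2}\dstdrv)_j]=\max_{\Sigma\in\dvarset}\Sigma_{ij}$, and handle the lower covariance by replacing $\varphi$ with $-\varphi$. Your write-up is in fact more explicit than the paper's in verifying $\varphi\in\fspace$ and in spelling out the identity $(AA^{T})_{ij}=\Sigma_{ij}$ via the symmetry of the square root.
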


\begin{proof}
For each $(i,j)\in\{1,2,\dotsc,d\}^2$, let $f_{ij}(\myvec{W})=W_{i}W_{j}$. Then it is obvious that $f_{ij}\in\fspace(\numset{R}^{d})$.
For each $\Sigma$, let 
\[
(Y_{1},Y_{2},\dotsc,Y_{d})\coloneqq\cdsd\dstdrv.
\]
Then by applying \ref{thm:represent-multi-semignorm},
\begin{align*}
\myupper{\gamma}(i,j)=\expt[W_{i}W_{j}] & =\max_{\Sigma^{1/2}\in\dsdset}\lexpt[f_{ij}(\cdsd\dstdrv)]\\
 & =\max_{\Sigma^{1/2}\in\dsdset}\lexpt[Y_{i}Y_{j}]\\
 & =\max_{\Sigma^{1/2}\in\dsdset}\Sigma_{ij}=\myupper{\Sigma}_{ij}.
\end{align*}
Similarly we can show $\mylower{\gamma}(i,j)=\mylower{\Sigma}_{ij}$.
\end{proof}


\subsection{Three types of independence related to semi-$G$-normal distribution}
\label{subsec:indep-semignorm}

Besides the existing $G$-version independence (also called sequential independence) in \ref{defn:indep-seq}, 
this substructure of semi-$G$-normal distribution also provides the possibility to study finer structures of independence in this framework, and interestingly, we will show in \ref{subsec:represent-semignorm-indep} each type of independence is related to a family of state-space volatility models. 

We will introduce three types of independence regarding semi-$G$-normal distributions. Readers may recall the notation $\seqind$ for the independence of sequence (\ref{defn:indep-seq}).
Throughout this section, we assume $W_i = V_i\stdrv_i \eqdistn \semiGN(0,\varI)$ for $i=1,2,\dotsc,n$, which is a sequence of semi-$G$-normally distributed random variables, then accordingly $V_i\eqdistn \Maximal\sdInt$ and $\stdrv_i \eqdistn \GN(0,[1,1])$. Let 
\[
\varI \idtymat_n \coloneqq \left\{ \dvar =\left(\begin{array}{ccc}
\sigma^2_1\\
 & \ddots\\
 &  & \sigma^2_n
\end{array}\right):\sigma^2_{i}\in [\underline\sigma^2, \overline\sigma^2],i=1,2,\dotsc,n
\right\}.
\]
The identity of variance intervals is not essential and the results in this section can be easily generalized to the case $W_i\sim \semiGN(0,[\sdl_i^2,\sdr_i^2]), i=1,2,\dotsc,n$.

\begin{defn}
\label{semi-GN-ind-n}
For a sequence of semi-$G$-normal distributed random variables $\{W_i\}_{i=1}^n(=\{V_i\stdrv_i\}_{i=1}^n)$, we have three types of independence: 
	\begin{enumerate}
\item $\{W_i\}_{i=1}^n$ are \emph{semi-sequentially independent}  (denoted as $ W_1\semiseqind W_2 \semiseqind \dotsc \semiseqind W_n$) if : 
\begin{equation}
\label{semi-seq-ind-n}
	V_1 \seqind V_2 \seqind \dotsc \seqind V_n \seqind \stdrv_1\seqind \stdrv_2\seqind \dotsc \seqind \stdrv_n;
\end{equation}
\item $\{W_i\}_{i=1}^n$ are \emph{sequentially independent} (denoted as $W_1 \seqind W_2 \seqind \dotsc \seqind W_n$) if: 
\begin{equation}
	\label{seq-ind-n}
	V_1\stdrv_1\seqind V_2\stdrv_2 \seqind\dotsc\seqind V_n\stdrv_n;
\end{equation}

\item $\{W_i\}_{i=1}^n$ are \emph{fully-sequentially independent} (denoted as $W_1 \fullseqind W_2 \fullseqind \dotsc \fullseqind W_n$) if:
\begin{equation}
	\label{full-seq-ind-n}
	V_1\seqind \stdrv_1 \seqind V_2 \seqind \stdrv_2 \seqind \dotsc \seqind V_n \seqind \stdrv_n.
\end{equation}
	\end{enumerate} 
	\end{defn}
	
\begin{rem}[Compatibility with the definition of semi-$G$-normal]
\label{rem:compatible-semignorm}
 The requirement of independence to form the semi-$G$-normal distribution is simply $V_i\seqind \epsilon_i$, which is guaranteed by all the three types of independence by \ref{ind-subseq}. Furthermore, for two semi-$G$-normal object $W=V\stdrv$ and $\bar{W}=\bar{V}\bar{\stdrv}$, we can see that $W\fullseqind \bar W$ implies 
\[
(V,\stdrv)\seqind(\bar{V},\bar{\stdrv}),
\]
which further indicates 
$
W\seqind \bar{W}.
$
However, $W \fullseqind \bar W$ (or $W \seqind \bar W$) does not imply $W \semiseqind \bar W$ since the latter actually reverses the order of independence between $\stdrv$ and $\bar{V}$ in the former. 
\end{rem}

\begin{rem}(Existence of these types of independence)
It comes from the same logic used in \ref{rem:existence-semignorm} due to the existence of $n$ sequentially independent $G$-distributed random vectors.
\end{rem}

\begin{thm}
\label{thm:equiv-fullseqind}
The fully-seqential independence of $\{W_{i}\}_{i=1}^{n}$ can be equivalently defined as: 
\begin{enumerate}
\item[(F1)] The pair $(V_{i},\stdrv_{i})$ are sequentially independent: $(V_{1},\stdrv_{1})\seqind(V_{2},\stdrv_{2})\seqind\cdots\seqind(V_{n},\stdrv_{n}).$
\item[(F2)] The elements within each pair $(V_{i},\stdrv_{i})$ satisfy $V_{i}\seqind\stdrv_{i}$
with $i=1,2,\dotsc,n$.
\end{enumerate}
\end{thm}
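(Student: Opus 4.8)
The plan is to treat the chain $V_1\seqind\stdrv_1\seqind\cdots\seqind V_n\seqind\stdrv_n$ as a single sequentially independent sequence of the $2n$ objects $Z_1,\dots,Z_{2n}$, where $Z_{2i-1}\coloneqq V_i$ and $Z_{2i}\coloneqq\stdrv_i$, and to abbreviate the paired blocks as $P_i\coloneqq(V_i,\stdrv_i)$. With this bookkeeping, both implications reduce to the three-object and subsequence results already established, so no new computation with test functions is needed.

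First I would show that full-sequential independence implies (F1) and (F2). Property (F2), that $V_i\seqind\stdrv_i$, is immediate from \ref{ind-subseq-2}, since $(V_i,\stdrv_i)$ is a length-two consecutive subsequence of the full chain. Property (F1), that $(P_1,\dots,P_i)\seqind P_{i+1}$ for each $i=1,\dots,n-1$, follows from \ref{prop:indep-sub-seq}: applying it to the initial block of $2i+2$ terms and splitting after position $2i$ gives $(Z_1,\dots,Z_{2i})\seqind(Z_{2i+1},Z_{2i+2})$, which is exactly $(P_1,\dots,P_i)\seqind P_{i+1}$. Ranging over $i$ yields the whole of (F1).

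For the converse, assuming (F1) and (F2), I would verify the definition of the chain directly by checking the prefix condition $(Z_1,\dots,Z_k)\seqind Z_{k+1}$ for every $k$, splitting on the parity of $k+1$. When $Z_{k+1}=V_j$ (so $k=2(j-1)$), the prefix is $(P_1,\dots,P_{j-1})$, and (F1) gives $(P_1,\dots,P_{j-1})\seqind P_j=(V_j,\stdrv_j)$; extracting the subvector $V_j$ via \ref{ind-subseq} yields $(P_1,\dots,P_{j-1})\seqind V_j$. When $Z_{k+1}=\stdrv_j$ (so $k=2j-1$), the prefix is $(P_1,\dots,P_{j-1},V_j)$, and here I would invoke \ref{lem:three-obj-seqind-2} with $X=(P_1,\dots,P_{j-1})$, $Y=V_j$, $Z=\stdrv_j$: (F1) supplies $X\seqind(Y,Z)$ and (F2) supplies $Y\seqind Z$, so the lemma delivers $(X,Y)\seqind Z$, that is $(P_1,\dots,P_{j-1},V_j)\seqind\stdrv_j$. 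The boundary case $j=1$ reduces to $V_1\seqind\stdrv_1$, which is just (F2).

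The main obstacle is precisely the odd case $Z_{k+1}=\stdrv_j$: the prefix already contains the partner $V_j$ sitting inside the same block $P_j$, so one cannot simply read off $\stdrv_j$-independence from (F1) by a subvector extraction. This is where the gluing lemma \ref{lem:three-obj-seqind-2} is essential, and it is exactly the mechanism that generalizes the $n=2$ equivalence of \ref{prop:four-obj-seqind-gen} to arbitrary $n$. Everything else is index bookkeeping together with repeated application of \ref{ind-subseq}, \ref{ind-subseq-2}, and \ref{prop:indep-sub-seq}.
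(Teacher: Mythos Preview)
Your proposal is correct and follows essentially the same route as the paper: both directions use \ref{ind-subseq}, \ref{ind-subseq-2}, and \ref{prop:indep-sub-seq} for the forward implication, and the crucial odd-position step in the converse is handled via \ref{lem:three-obj-seqind-2} with exactly the triple $X=(P_1,\dots,P_{j-1})$, $Y=V_j$, $Z=\stdrv_j$. The only cosmetic difference is that the paper packages the converse as an induction on $n$ (handling only the two new prefix conditions at step $n=k+1$, with the base case $n=2$ being \ref{prop:four-obj-seqind-gen}), whereas you verify all $2n-1$ prefix conditions directly; the underlying argument at each step is identical.
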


\begin{rem}
 We add the condition (F2) only to stress the intrinsic requirement on independence from the definition of semi-$G$-normal. The main requirement of fully-sequential independence is (F1). It is also the reason why $\fullseqind$ is stronger than $\seqind$ because the latter only involves the product $V\stdrv$ but the former is about the joint vector $(V,\stdrv)$.
\end{rem}

The fully-sequential independence is a stronger version of sequential independence and it does not exhibit much difference with sequential independence in our current scope of discussion (which will be illustrated by \ref{thm:represent-n-seqind-semignorm}).

Hence, the key new type of independence here is the \emph{semi-sequential independence}, which is different from the sequential independence and also leads to different joint distribution of $(W_1,W_2,\dotsc,W_n)$. 
We will study the properties and behaviours of semi-$G$-normal under semi-sequential independence. 
Under such kind of independence, some of the intuitive properties we have in classical situation are preserved. First of all, it is actually a \emph{symmetric} independence among objects with distributional uncertainty (\ref{prop:semi-seqind-sym}). This symmetry makes it different from the sequential independence although $\semiseqind$ is defined through $\seqind$. Moreover, the joint vector of $n$ semi-sequentially independent semi-$G$-normal follows a multivariate semi-$G$-normal.
It actually provides a view on how to connect univariate and multivariate objects (under distributional uncertainty), which is a non-trivial task for $G$-normal distribution. It further provides a path to start from univariate classical normal to approach a multivariate $G$-normal (by using the multivariate semi-$G$-normal as a middle stage). This idea will be further illustrated in \ref{subsec:connect-uni-multi}.




We call it as ``semi-sequential'' independence because the only ``sequential'' requirement in the independence is $(V_1,V_2,\dotsc,V_n) \seqind (\stdrv_1,\stdrv_2,\dotsc,\stdrv_n)$ but the squential order within each vector is inessential in the sense that it can be arbitrarily switched. \ref{thm:equiv-semiseqind} elaborates this point by giving an equivalent definition. 


\begin{thm}
\label{thm:equiv-semiseqind}
The semi-sequential independence of $\{W_{i}\}_{i=1}^{n}$ can be equivalently defined as:
\begin{enumerate}
\item[(S1)] The $\epsilon$ part is independent from $V$ part: $(V_{1},V_{2},\dotsc,V_{n})\seqind(\stdrv_{1},\stdrv_{2},\dotsc,\stdrv_{n})$, 
\item[(S2)] The elements in $V$ part are sequentially independent: $\nseqind{V}{n}$, 
\item[(S3)] The elements in $\stdrv$ part are classically independent. 
\end{enumerate}
\end{thm}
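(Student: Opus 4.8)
The plan is to treat the defining relation of semi-sequential independence, namely the single long chain
\[
V_1 \seqind V_2 \seqind \cdots \seqind V_n \seqind \stdrv_1 \seqind \cdots \seqind \stdrv_n,
\]
as one statement and show it is equivalent to the conjunction of (S1), (S2) and (S3). The backbone of both directions is the collection of structural results already established: \ref{ind-subseq} and \ref{ind-subseq-2} to pass to subvectors and subsequences, \ref{prop:indep-sub-seq} to split a chain into two consecutive blocks, \ref{lem:three-obj-seqind-2} to re-associate a three-way relation, and \ref{rem:seq-indep-and-cl-indep} to convert sequential independence among classically distributed variables into classical independence.

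For the forward direction I would read off the three conditions directly from the chain. Condition (S2), $V_1\seqind\cdots\seqind V_n$, is the sequential independence of the first $n$ terms, which is immediate from \ref{ind-subseq-2}. Condition (S1), $(V_1,\dots,V_n)\seqind(\stdrv_1,\dots,\stdrv_n)$, is the split of the chain at position $n$ into its ``left'' and ``right'' blocks, which is exactly the conclusion of \ref{prop:indep-sub-seq} with $l=n$. For (S3) I would first obtain $\stdrv_1\seqind\cdots\seqind\stdrv_n$ from \ref{ind-subseq-2}; since every $\stdrv_i$ carries a (certain) classical distribution and all live on the classical factor of the product space, \ref{rem:seq-indep-and-cl-indep}, applied inductively to $(\stdrv_1,\dots,\stdrv_i)\seqind\stdrv_{i+1}$, forces the joint law to factor into the product of marginals, i.e.\ classical independence.

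For the converse I would rebuild the long chain from (S1)--(S3) by verifying, for the concatenated sequence $(V_1,\dots,V_n,\stdrv_1,\dots,\stdrv_n)$, each defining relation $(X_1,\dots,X_i)\seqind X_{i+1}$ of \ref{defn:indep-seq}. Inside the $V$-block ($i<n$) this is (S2); the transition step $(V_1,\dots,V_n)\seqind\stdrv_1$ comes from (S1) via \ref{ind-subseq}. The substantive part is the $\stdrv$-block, where I would run an induction maintaining the block statement $(V_1,\dots,V_n,\stdrv_1,\dots,\stdrv_j)\seqind(\stdrv_{j+1},\dots,\stdrv_n)$: the base case $j=0$ is (S1), and the inductive step applies \ref{lem:three-obj-seqind-2} with $X=(V_1,\dots,V_n,\stdrv_1,\dots,\stdrv_j)$, $Y=\stdrv_{j+1}$ and $Z=(\stdrv_{j+2},\dots,\stdrv_n)$, the required auxiliary relation $Y\seqind Z$ being supplied by the classical independence (S3). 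Extracting the one-step relation $(V_1,\dots,V_n,\stdrv_1,\dots,\stdrv_j)\seqind\stdrv_{j+1}$ from each block statement (again by \ref{ind-subseq}) then completes the chain.

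The main obstacle is this $\stdrv$-block reconstruction in the converse: sequential independence is asymmetric and cannot simply be ``grown'' by appending terms, so one must feed \ref{lem:three-obj-seqind-2} the grouped relation $\stdrv_{j+1}\seqind(\stdrv_{j+2},\dots,\stdrv_n)$. This is precisely where (S3) is indispensable --- classical independence, unlike sequential independence, is invariant under arbitrary grouping and reordering, which is exactly the flexibility the induction requires and the reason (S3) (rather than a mere sequential statement on the $\stdrv_i$) is the correct hypothesis. A secondary point to treat carefully is the two-to-$n$ upgrade of \ref{rem:seq-indep-and-cl-indep} used to establish (S3) in the forward direction.
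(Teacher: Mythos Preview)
Your proposal is correct and follows essentially the same route as the paper: the forward direction is read off via \ref{ind-subseq-2} and \ref{prop:indep-sub-seq}, and the converse rebuilds the chain using (S2) for the $V$-block, (S1) for the transition, and \ref{lem:three-obj-seqind-2} fed by (S3) for the $\stdrv$-block. The only cosmetic difference is that the paper packages the $\stdrv$-block step for each $l$ as a single application of \ref{prop:four-obj-seqind-gen} (with the four ``objects'' $V_1^{n-1},V_n,\stdrv_1^l,\stdrv_{l+1}$), whereas you unwind that proposition into its underlying \ref{lem:three-obj-seqind-2} via an explicit induction on~$j$; since \ref{prop:four-obj-seqind-gen} is itself proved from \ref{lem:three-obj-seqind-2}, the two arguments are the same in substance.
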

\begin{rem}
	The order of independence within $V$ part in (S2) is inessential in the sense that it can be arbitrarily switched by \ref{thm:maximal-multi-relation}. Meanwhile, the order in $\stdrv$ part can also be switched due to the classical independence. Hence, this equivalence definition of semi-sequential independence indicates some intrinsic \emph{symmetry} of this relation coming from the only two categories of distributions (maximal and classical) that allow mutual independence. This point will be elaborated in the discussion of \ref{prop:semi-seqind-sym} and further formalized in \ref{thm:n-semi-seqind-sym}.
	 \end{rem}

%


To show the idea of the symmetry of $\semiseqind$, we start from a simple case with $n=2$ and include a short proof for readers to grasp the intuition. The validation of other results in this section is given in \ref{pf:subsec:indep-semignorm}. 


\begin{prop}[Symmetry in semi-sequential independence]
\label{prop:semi-seqind-sym}
The following statements are equivalent: 
\begin{enumerate}
	\item[(1)]  $W_{1}\semiseqind W_{2}$, 
	\item[(2)]   $W_{2}\semiseqind W_{1}$, 
	\item[(3)]  $(W_1,W_2)\sim \semiGN(\myvec{0},\varI \idtymat_2)$.
\end{enumerate}
\end{prop}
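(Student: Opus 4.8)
The plan is to route everything through the equivalent characterization of semi-sequential independence in \ref{thm:equiv-semiseqind}, which splits $W_{1}\semiseqind W_{2}$ into three conditions: (S1) $(V_{1},V_{2})\seqind(\stdrv_{1},\stdrv_{2})$, (S2) $V_{1}\seqind V_{2}$, and (S3) $\stdrv_{1},\stdrv_{2}$ are classically independent. The whole proposition then reduces to the observation that each of (S1)--(S3) is \emph{already symmetric} under the relabeling $1\leftrightarrow 2$, and that the same three conditions are exactly what it means for $(W_{1},W_{2})$ to be a bivariate semi-$G$-normal with diagonal covariance-uncertainty set. So I would establish $(1)\Leftrightarrow(2)$ and $(1)\Leftrightarrow(3)$ separately, both by unpacking to (S1)--(S3).

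For $(1)\Leftrightarrow(2)$, I would apply \ref{thm:equiv-semiseqind} to each side. Swapping the two objects swaps $(V_{1},\stdrv_{1})\leftrightarrow(V_{2},\stdrv_{2})$, so $W_{2}\semiseqind W_{1}$ unpacks to $(V_{2},V_{1})\seqind(\stdrv_{2},\stdrv_{1})$, $V_{2}\seqind V_{1}$, and classical independence of $\stdrv_{2},\stdrv_{1}$. I would then check these coincide with (S1)--(S3): condition (S3) is symmetric because classical independence is symmetric; condition (S2) is symmetric by \ref{thm:maximal-multi-relation}, which permits the order of sequential independence among maximal distributions to be switched freely; and condition (S1) is insensitive to the internal order of each block, since $(V_{1},V_{2})\seqind(\stdrv_{1},\stdrv_{2})$ and $(V_{2},V_{1})\seqind(\stdrv_{2},\stdrv_{1})$ are the same statement after renaming the arguments of the test function (formally \ref{ind-subseq} applied with the coordinate permutations). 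Hence (1) and (2) reduce to the identical triple of conditions.

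For $(1)\Leftrightarrow(3)$, I would unpack \ref{defn:multi-semignorm} for the diagonal set $\dvarset=\varI\idtymat_{2}$. Its square-root set $\dsdset$ consists of the diagonal matrices $\mathrm{diag}(v_{1},v_{2})$ with $v_{i}\in\sdInt$, so the random matrix $\mymat{V}\sim\Maximal(\dsdset)$ is $\mathrm{diag}(V_{1},V_{2})$ with $(V_{1},V_{2})\sim\Maximal(\sdInt\times\sdInt)$, and $\mymat{V}\myvec{\stdrv}=(V_{1}\stdrv_{1},V_{2}\stdrv_{2})=(W_{1},W_{2})$. By \ref{thm:maximal-multi-relation}, the rectangular support $\sdInt\times\sdInt$ is equivalent to $V_{1}\seqind V_{2}$ with each $V_{i}\sim\Maximal\sdInt$ (this is (S2)); the requirement $\myvec{\stdrv}\sim\CN(\myvec 0,\idtymat_{2})$ is exactly the classical independence of $\stdrv_{1},\stdrv_{2}$ (this is (S3)); and $\mymat{V}\seqind\myvec{\stdrv}$ is precisely $(V_{1},V_{2})\seqind(\stdrv_{1},\stdrv_{2})$ (this is (S1)). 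Thus statement (3) holds if and only if (S1)--(S3) hold, which by \ref{thm:equiv-semiseqind} is statement (1).

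The step requiring the most care is the translation in $(1)\Leftrightarrow(3)$ between the random-\emph{matrix} formulation of the multivariate semi-$G$-normal and the scalar picture: one must verify that a maximal distribution supported on the \emph{rectangular} (Cartesian-product) set of diagonal matrices is genuinely equivalent to the componentwise sequential independence $V_{1}\seqind V_{2}$, which is exactly the content of \ref{thm:maximal-multi-relation} and is the only non-routine ingredient. An alternative and arguably more transparent route would bypass the characterization and instead compute $\expt[\varphi(W_{1},W_{2})]$ directly via \ref{thm:represent-multi-semignorm}, showing that under either order of independence it equals $\max_{v_{1},v_{2}\in\sdInt}\lexpt[\varphi(v_{1}\stdrv_{1},v_{2}\stdrv_{2})]$ with $\stdrv_{1},\stdrv_{2}$ classically independent; since this representation is manifestly invariant under swapping indices and coincides with the sublinear expectation induced by the distribution in (3), all three statements follow at once. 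I would keep the characterization-based argument as the main proof for brevity and mention the representation computation as the intuitive cross-check.
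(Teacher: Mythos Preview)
Your proposal is correct and essentially matches the paper's approach. The paper proves the proposition via the cycle $(1)\Rightarrow(2)\Rightarrow(3)\Rightarrow(1)$ using a dedicated \ref{lem:four-obj-symmetry}, which is precisely the $n=2$ instance of the characterization in \ref{thm:equiv-semiseqind} that you invoke directly; the remaining ingredients---\ref{thm:maximal-multi-relation} for the symmetry of the $V$ part and the translation between the diagonal random matrix $\mymat{V}=\mathrm{diag}(V_1,V_2)$ and the rectangular maximal distribution on $\sdInt^2$---are identical in both arguments.
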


The proof of \ref{prop:semi-seqind-sym} relies on the following \ref{lem:four-obj-symmetry}, which is a direct consequence of \ref{prop:four-obj-seqind-gen} but we still include a separate proof from scratch to show the idea.

\begin{lem}
\label{lem:four-obj-symmetry}
The following two statements are equivalent: 
\begin{enumerate}
\item[(1)] $V_{1}\seqind V_{2}\seqind\stdrv_{1}\seqind\stdrv_{2}$, 
\item[(2)] $(V_{1},V_{2})\seqind(\stdrv_{1},\stdrv_{2})$, $V_{1}\seqind V_{2}$,
$\stdrv_{1}\seqind\stdrv_{2}$. 
\end{enumerate}
\end{lem}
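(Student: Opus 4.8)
The plan is to prove the two implications (1)$\implies$(2) and (2)$\implies$(1) separately, giving a self-contained argument rather than simply specializing \ref{prop:four-obj-seqind-gen}. Throughout I would unfold only the definition of sequence independence (\ref{defn:indep-seq}) and lean on the subvector/subsequence machinery already established (\ref{ind-subseq}, \ref{ind-subseq-2}, \ref{prop:indep-sub-seq}) together with the absorption lemma \ref{lem:three-obj-seqind-2}. The guiding observation is that statement (1) is the full four-term chain $V_1\seqind V_2\seqind\stdrv_1\seqind\stdrv_2$, whose definition via \ref{defn:indep-seq} unpacks into the three links $V_1\seqind V_2$, $(V_1,V_2)\seqind\stdrv_1$, and $(V_1,V_2,\stdrv_1)\seqind\stdrv_2$, while statement (2) reorganizes the same information into one cross-group relation plus two within-group relations.

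For the direction (1)$\implies$(2), all three conclusions are quick consequences of the subsequence results. The link $V_1\seqind V_2$ is literally the first relation in the chain by \ref{defn:indep-seq}. The within-pair relation $\stdrv_1\seqind\stdrv_2$ follows by extracting the subsequence $\{\stdrv_1,\stdrv_2\}$ from the chain using \ref{ind-subseq-2}. Finally, the cross-pair relation $(V_1,V_2)\seqind(\stdrv_1,\stdrv_2)$ is exactly the split of the full chain at position $l=2$, which is the content of \ref{prop:indep-sub-seq} applied with $k=4$. So this direction requires no new computation.

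For the direction (2)$\implies$(1), I would again unpack the target chain through \ref{defn:indep-seq} into the three requirements (i) $V_1\seqind V_2$, (ii) $(V_1,V_2)\seqind\stdrv_1$, and (iii) $(V_1,V_2,\stdrv_1)\seqind\stdrv_2$. Requirement (i) is assumed outright in (2); requirement (ii) follows from the hypothesis $(V_1,V_2)\seqind(\stdrv_1,\stdrv_2)$ by discarding the last coordinate via the subvector result \ref{ind-subseq}. The real work is (iii): here I would invoke \ref{lem:three-obj-seqind-2} with the grouping $X=(V_1,V_2)$, $Y=\stdrv_1$, $Z=\stdrv_2$, so that the two available hypotheses $(V_1,V_2)\seqind(\stdrv_1,\stdrv_2)$ and $\stdrv_1\seqind\stdrv_2$ yield precisely $((V_1,V_2),\stdrv_1)\seqind\stdrv_2$. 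To keep the proof genuinely from scratch as promised, I would alternatively reproduce the short nested-expectation calculation behind that lemma: setting $H(x,y)\coloneqq\expt[\varphi(x,y,\stdrv_2)]$ for $\varphi\in\fspacedef$ and rewriting $\expt[\expt[\varphi(x,y,\stdrv_2)]_{x=(V_1,V_2),\,y=\stdrv_1}]$, one uses $(V_1,V_2)\seqind\stdrv_1$ to peel off the inner conditioning, then $\stdrv_1\seqind\stdrv_2$ to absorb $\stdrv_1$, and finally $(V_1,V_2)\seqind(\stdrv_1,\stdrv_2)$ to recover $\expt[\varphi(V_1,V_2,\stdrv_1,\stdrv_2)]$.

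The main obstacle is step (iii), since it is the only place where the two distinct hypotheses in (2) must be combined in the correct nested order; all the other steps are either given or direct subvector extractions. The delicacy is purely bookkeeping of the order of conditioning — the asymmetry of $\seqind$ means one cannot freely permute the iterated expectations, so the argument must thread $(V_1,V_2)\seqind(\stdrv_1,\stdrv_2)$ and $\stdrv_1\seqind\stdrv_2$ in exactly the sequence dictated by \ref{lem:three-obj-seqind-2}, which is why restating that lemma's computation is the cleanest route.
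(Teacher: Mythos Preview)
Your proposal is correct and follows essentially the same route as the paper: both directions use the subvector/subsequence lemmas for the easy parts, and for the key step $(V_1,V_2,\stdrv_1)\seqind\stdrv_2$ in (2)$\implies$(1) the paper carries out exactly the nested-expectation computation you describe (it is the content of \ref{lem:three-obj-seqind-2} written out by hand, just read from the other end of the chain of equalities).
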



\begin{proof}[Proof of \ref{lem:four-obj-symmetry}]
	 We can directly see $(1)\implies(2)$ because the independence of a sequence implies the independence among the non-overlapping subvectors as long as keeping the original order (\ref{prop:indep-sub-seq}). 
	 
	 $(2)\implies(1)$. The relation $(1)$ is equivalent to, 
\begin{enumerate}
\item $V_{1}\seqind V_{2}$,
\item $(V_{1},V_{2})\seqind\stdrv_{1}$, 
\item $(V_{1},V_{2},\stdrv_{1})\seqind\stdrv_{2}$.
\end{enumerate}
The first two are directly implied by (2). 
For a fixed scalar vector $(v_{1},v_{2},e_{1})$, let
$
H(v_{1},v_{2},e_{1})\coloneqq\expt[\varphi(v_{1},v_{2},e_{1},e_{2})].
$
Then the third one is equivalent to 
\[
\expt[\varphi(V_{1},V_{2},\stdrv_{1},\stdrv_{2})]=\expt[H(V_{1},V_{2},\stdrv_{1})].
\]
In fact, since $(V_{1},V_{2})\seqind(\stdrv_{1},\stdrv_{2})$, we have
\begin{align*}
\expt[\varphi(V_{1},V_{2},\stdrv_{1},\stdrv_{2})] & =\expt[\expt[\varphi(v_{1},v_{2},\stdrv_{1},\stdrv_{2})]_{v_{i}=V_{i},i=1,2}]\\
 & \overset{(a)}{=}\expt[\expt[\expt[\varphi(v_{1},v_{2},e_{1},\stdrv_{2})]_{e_{1}=\stdrv_{1}}]_{v_{i}=V_{i},i=1,2}]\\
 & =\expt[\expt[H(v_{1},v_{2},\stdrv_{1})]_{v_{i}=V_{i},i=1,2}]\\
 & \overset{(b)}{=}\expt[H(V_{1},V_{2},\stdrv_{1})],
\end{align*}
where $(a)$ comes from the independence $\stdrv_{1}\seqind\stdrv_{2}$
and $(b)$ is due to the relation $(V_{1},V_{2})\seqind\stdrv_{1}$. 
\end{proof}

\begin{proof}[Proof of \ref{prop:semi-seqind-sym}]
The equivalence of the three statements will be proved by this logic: $(1)\implies (2) \implies (3) \implies (1)$. 

$(1)\implies(2)$. By \ref{lem:four-obj-symmetry}, (1) indicates
\begin{equation}
\label{eq:four-obj-rel-1}
(V_{1},V_{2})\seqind(\stdrv_{1},\stdrv_{2}),V_{1}\seqind V_{2},\stdrv_{1}\seqind\stdrv_{2}.	
\end{equation}
In \ref{eq:four-obj-rel-1}, the roles in $V$ part are symmetric and so are $\stdrv$ part (due to \ref{thm:mutual-indep}). Then \ref{eq:four-obj-rel-1} is equivalent to 
\begin{equation}
\label{eq:four-obj-rel-2}
(V_{2},V_{1})\seqind(\stdrv_{2},\stdrv_{1}),V_{2}\seqind V_{1},\stdrv_{2}\seqind\stdrv_{1},	
\end{equation}
which in turn implies $W_2\semiseqind W_1$ by \ref{lem:four-obj-symmetry}. 

$(2)\implies(3)$ Let $\myvec{W}\coloneqq(W_{1},W_{2})$. Then 
\[
\myvec{W}=(V_{1}\stdrv_{1},V_{2}\stdrv_{2})=\mymat{V}\myvec{\stdrv},
\]
where $\mymat{V}=\diag(V_1,V_2)$ and $\myvec{\stdrv}=(\stdrv_1,\stdrv_2)$. 
Under the independence $(2)$, we have \ref{eq:four-obj-rel-2} by \ref{lem:four-obj-symmetry}, which further implies 
$
\mymat{V}\seqind \myvec{\stdrv}.
$
We also have $(V_1,V_2)\sim \Maximal(\sdInt^2)$ from $V_2\seqind V_1$ (\ref{thm:maximal-multi-relation}), then $\mymat{V} \sim \Maximal(\sdInt \idtymat_2)$. 
	Meanwhile, $\stdrv_2\seqind \stdrv_1$ means they are actually classically independent with the joint distribution 
	$
	\myvec{\stdrv} \sim N(\myvec 0,\idtymat_n ^2)
	$ because the distribution of $\stdrv_i$ is classical. 
	Therefore, by \ref{defn:multi-semignorm}, 
	$
	\myvec W = \mymat{V} \myvec{\stdrv} \sim \semiGN(\myvec{0}, \varI \idtymat_2).$

$(3)\implies(1).$ First, from the definition of $\myvec{W}=(W_{1},W_{2})\sim\semiGN(\myvec{0}, \varI \idtymat_2)$,
there exist 
$
\gdsd=\diag(V_{1},V_{2})\sim\Maximal(\sdInt\idtymat_2),
$
and 
$
\dstdrv=(\stdrv_{1},\stdrv_{2})\sim\CN(0,\idtymat_{2}),
$
with independence 
\begin{equation}
\gdsd\seqind\dstdrv,\label{eq:mat-V-seqind-ep}
\end{equation}
such that 
$
\myvec{W}=\mymat{V}\myvec{\stdrv}.
$
In other words, 
$
(W_{1},W_{2})=(V_{1}\stdrv_{1},V_{2}\stdrv_{2}).
$
We directly have $\stdrv_{1}$ and $\stdrv_{2}$ are classically
independent from their joint distribution. Next we study the independence
between $V_{i}$ part. Similarly, we can the joint distribution $(V_1,V_2)\sim \Maximal(\sdInt^2)$ from the distribution of $\mymat{V}$:
\begin{align*}
\expt[\varphi(V_{1},V_{2})] & =\expt[\varphi((1,1)\mymat{V})]\\
 & =\max_{B\in \sdInt \idtymat_2}\varphi((1,1)B)\\
 & =\max_{(v_{1},v_{2})\in \sdInt^2}\varphi(v_{1},v_{2}).
\end{align*}
By \ref{thm:maximal-multi-relation}, we have $V_{1}\seqind V_{2}$ (also vice versa). Note that \ref{eq:mat-V-seqind-ep}
implies 
$
(V_{1},V_{2})\seqind(\stdrv_{1},\stdrv_{2}).
$
Hence we have $W_{1}\semiseqind W_{2}$ by \ref{lem:four-obj-symmetry}.
\end{proof}



\begin{prop}[Zero sublinear covariance implies semi-sequential independence] 
\label{prop:zero-corr-semiseqind}
If $(W_1,W_2)$ follows a bivariate semi-$G$-normal and they have certain zero covariance:
\[\expt[W_1W_2] = -\expt[-W_1W_2]=0,\]
	then we have $W_1\semiseqind W_2$ (and vice versa). 
\end{prop}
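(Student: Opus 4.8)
The plan is to route the whole argument through the characterization already available in \ref{prop:semi-seqind-sym}. That proposition states that the three conditions $W_1\semiseqind W_2$, $W_2\semiseqind W_1$, and $(W_1,W_2)\sim\semiGN(\myvec 0,\varI\idtymat_2)$ are equivalent. Hence, writing the given bivariate object as $(W_1,W_2)\sim\semiGN(\myvec 0,\dvarset)$ with covariance uncertainty set $\dvarset$, it suffices to prove that the moment hypotheses force $\dvarset=\varI\idtymat_2$; this single identification then yields both $W_1\semiseqind W_2$ and the ``vice versa'' $W_2\semiseqind W_1$ at once. So the task reduces entirely to pinning down $\dvarset$.

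The next step is to convert the two scalar hypotheses into constraints on $\dvarset$ via \ref{prop:cov-semi-G-norm}. That result gives $\expt[W_1W_2]=\max_{\Sigma\in\dvarset}\Sigma_{12}$ and $-\expt[-W_1W_2]=\min_{\Sigma\in\dvarset}\Sigma_{12}$, so the assumption $\expt[W_1W_2]=-\expt[-W_1W_2]=0$ forces $\max_{\Sigma\in\dvarset}\Sigma_{12}=\min_{\Sigma\in\dvarset}\Sigma_{12}=0$, i.e. $\Sigma_{12}=\Sigma_{21}=0$ for every $\Sigma\in\dvarset$; being positive semidefinite, each such $\Sigma$ is therefore a nonnegative diagonal matrix. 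Simultaneously, the marginal information $W_i\sim\semiGN(0,\varI)$ together with the diagonal formulas $\sdr^2=\expt[W_i^2]=\myupper{\Sigma}_{ii}$ and $\sdl^2=-\expt[-W_i^2]=\mylower{\Sigma}_{ii}$ of \ref{prop:cov-semi-G-norm} fixes the range of each diagonal entry to exactly $[\sdl^2,\sdr^2]$. Taking the symmetric square root in the decomposition $(W_1,W_2)=\Sigma^{1/2}\myvec{\stdrv}$, the vanishing off-diagonal makes the maximally distributed matrix diagonal, $\Sigma^{1/2}=\diag(V_1,V_2)$, so that $W_i=V_i\stdrv_i$ with $V_i\sim\Maximal(\sdInt)$ recovers the scalar decompositions underlying \ref{thm:equiv-semiseqind}. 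Identifying $\dvarset$ with the product set $\varI\idtymat_2=\{\diag(\sigma_1^2,\sigma_2^2):\sigma_i^2\in[\sdl^2,\sdr^2]\}$, equivalently $(V_1,V_2)\sim\Maximal(\sdInt^2)$, and feeding this back into \ref{prop:semi-seqind-sym} finishes the proof.

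The step I expect to be the main obstacle is exactly this last identification: reconstituting the full product set $\varI\idtymat_2$ from the information ``off-diagonal identically $0$ and each diagonal entry sweeping $[\sdl^2,\sdr^2]$''. Knowing the two one-dimensional projections of a bounded, closed, convex diagonal set does not by itself force the set to be the whole square — a thinner comonotone set such as $\{\diag(t,t):t\in[\sdl^2,\sdr^2]\}$ has the same projections yet corresponds to $V_1=V_2$, which is \emph{not} self-sequentially independent. The argument must therefore use the structure of the bivariate semi-$G$-normal under consideration, namely that its covariance uncertainty set is generated by letting the variance entries vary over their intervals independently with the covariance pinned at $0$; once the off-diagonal is shown to vanish identically, this product structure forces $(V_1,V_2)$ to be jointly maximal on the rectangle rather than on any strictly smaller subset, and hence gives $\dvarset=\varI\idtymat_2$. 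All remaining computations are direct applications of \ref{prop:cov-semi-G-norm} and \ref{prop:semi-seqind-sym}.
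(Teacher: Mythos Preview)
Your route is exactly the paper's: the one-line proof there just says ``direct result of \ref{prop:semi-seqind-sym} and \ref{prop:cov-semi-G-norm}'', and you have unpacked precisely that. Where you go further is in isolating the real difficulty, and here your diagnosis is sharper than your cure.

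You are right that the moment conditions, via \ref{prop:cov-semi-G-norm}, pin every $\Sigma\in\dvarset$ to be diagonal and force each diagonal projection to equal $[\sdl^{2},\sdr^{2}]$. You are also right that this does \emph{not} by itself identify $\dvarset$ with the full square $\varI\idtymat_{2}$: your comonotone example $\{\diag(t,t):t\in[\sdl^{2},\sdr^{2}]\}$ is a bona fide bounded, closed, convex subset of $\numset{S}_{2}^{+}$, produces a bivariate semi-$G$-normal in the sense of \ref{defn:multi-semignorm}, has marginals $\semiGN(0,\varI)$, and satisfies $\expt[W_{1}W_{2}]=-\expt[-W_{1}W_{2}]=0$ --- yet yields $V_{1}=V_{2}$, for which $V_{1}\seqind V_{2}$ fails and hence $W_{1}\semiseqind W_{2}$ cannot hold. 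So the obstacle you flag is genuine.

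The trouble is that your closing paragraph does not remove it. Saying that ``the structure of the bivariate semi-$G$-normal under consideration'' supplies the product form is circular: that product form is exactly condition~(3) of \ref{prop:semi-seqind-sym}, which is what you are trying to deduce. Nothing in \ref{prop:cov-semi-G-norm} or \ref{prop:semi-seqind-sym} rules out the thin-diagonal $\dvarset$, and no other result in the paper does so either from the stated hypotheses alone. The paper's terse proof glosses over the same point; as written, the proposition needs either a strengthened hypothesis (e.g.\ that $\dvarset$ is a product set, or equivalently that $(V_{1},V_{2})$ is maximal on a rectangle) or an additional argument excluding the non-rectangular diagonal cases. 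Your write-up would be improved by stating this explicitly rather than asserting that the gap closes.
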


\begin{proof}
	It is a direct result of \ref{prop:semi-seqind-sym} and \ref{prop:cov-semi-G-norm}. 
\end{proof}

\ref{prop:zero-corr-semiseqind} and \ref{prop:semi-seqind-sym} seem like natural results for a ``normal'' object in multivariate case, but this is the first time we establish such kind of connections within the $G$-expectation framework, because the $G$-normal distribution does not have such properties in multivariate case. For instance, for $\myvec{X}=(X_1,X_2)$ with $X_i\sim \GN(0,\varI)$. On the one hand, given the independence $X_1\seqind X_2$, $\myvec{X}$ does \emph{not} follow a bivariate $G$-normal, neither does $\mymat{A}\myvec{X}$ under any invertible transformation $\mymat{A}$. One the other hand, if $\myvec{X}$ follows a bivariate $G$-normal $\GN(\myvec{0},\varI \idtymat_2)$, we do \emph{not} have $X_1\seqind X_2$ or $X_2\seqind X_1$. These kinds of strange properties bring barriers to the understanding of $G$-normal in multivariate situations, especially on the connection univariate and multivariate objects. More details of this concern can be found in \cite{bayraktar2015normal}. Fortunately, the substructure of semi-$G$-normal provides some insights to reveal this connection. 



\ref{prop:semi-seqind-sym} can be extended to $n$ random variables. 

\begin{thm}
\label{thm:n-semi-seqind-sym}
	The following three statements are equivalent: 
	\begin{itemize}
		\item[(1)] $\nsemiseqind{W}{n}$,
		\item[(2)] $W_{k_{1}}\semiseqind W_{k_{2}}\semiseqind\cdots\semiseqind W_{k_{n}}$ for any permutation $\{k_j\}_{j=1}^n$ of $\{1,2,\dotsc,n\}$,
		\item[(3)] $(W_{1},W_{2},\dotsc,W_{n})\sim\semiGN(\myvec{0},\varI\mymat{I}_{n})$.
	\end{itemize}
\end{thm}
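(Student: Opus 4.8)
The plan is to reduce the whole equivalence to the characterization (S1)--(S3) of semi-sequential independence supplied by \ref{thm:equiv-semiseqind}, which plays the same role here that \ref{lem:four-obj-symmetry} played in the $n=2$ case of \ref{prop:semi-seqind-sym}. Throughout I would write $\mymat{V}=\diag(V_{1},\dotsc,V_{n})$ and keep $\dstdrv=(\stdrv_{1},\dotsc,\stdrv_{n})$, so that $\myvec{W}=\mymat{V}\dstdrv$ with $W_{i}=V_{i}\stdrv_{i}$. I would then prove $(1)\Leftrightarrow(2)$ by showing that each of the three conditions (S1), (S2), (S3) is invariant under the simultaneous permutation of the pairs $(V_{i},\stdrv_{i})$, and prove $(1)\Leftrightarrow(3)$ by matching (S1)--(S3) directly against the defining conditions of $\semiGN(\myvec{0},\varI\idtymat_{n})$ in \ref{defn:multi-semignorm}.

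For $(1)\Leftrightarrow(2)$, fix a permutation $\{k_{j}\}_{j=1}^{n}$ and apply \ref{thm:equiv-semiseqind} to the reordered sequence $W_{k_{1}},\dotsc,W_{k_{n}}$; this turns statement (2) into the permuted analogues (S1$'$)--(S3$'$). Condition (S2$'$), namely $V_{k_{1}}\seqind\dotsc\seqind V_{k_{n}}$, is equivalent to (S2) by \ref{thm:maximal-multi-relation}, since sequential independence among maximally distributed variables may be reordered arbitrarily. Condition (S3$'$) is equivalent to (S3) because classical independence is unaffected by relabelling. Finally (S1$'$), namely $(V_{k_{1}},\dotsc,V_{k_{n}})\seqind(\stdrv_{k_{1}},\dotsc,\stdrv_{k_{n}})$, is equivalent to (S1): permuting the coordinates within the two vectors amounts to precomposing the test function with a fixed coordinate permutation, which stays in $\fspace$, so the defining identity of $\seqind$ is preserved. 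Hence (1) and (2) reduce to the same set of conditions.

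For $(1)\Rightarrow(3)$, assuming (S1)--(S3), condition (S2) and \ref{thm:maximal-multi-relation} give $(V_{1},\dotsc,V_{n})\sim\Maximal(\prod_{i=1}^{n}\sdInt)$, and applying the transformation rule \ref{cor:Properties-of-Maximal} to the (linear, hence locally Lipschitz) diagonal embedding yields $\mymat{V}\sim\Maximal(\dsdset)$, where $\dsdset=(\varI\idtymat_{n})^{1/2}$ is exactly the set of diagonal matrices with entries in $\sdInt$. Condition (S3) gives $\dstdrv\sim N(\myvec{0},\idtymat_{n})$, and condition (S1) together with \ref{ind-subseq} (the diagonal matrix being a function of $(V_{1},\dotsc,V_{n})$) gives $\mymat{V}\seqind\dstdrv$, so \ref{defn:multi-semignorm} delivers $\myvec{W}\sim\semiGN(\myvec{0},\varI\idtymat_{n})$. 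For the converse $(3)\Rightarrow(1)$, I would start from the decomposition guaranteed by \ref{defn:multi-semignorm}, recover the component distribution of $(V_{1},\dotsc,V_{n})$ from the diagonal maximal matrix to obtain (S2) via \ref{thm:maximal-multi-relation}, read off (S3) from the standard normal joint law, transfer $\mymat{V}\seqind\dstdrv$ down to (S1) by \ref{ind-subseq}, and then invoke \ref{thm:equiv-semiseqind} to conclude $\nsemiseqind{W}{n}$.

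The main obstacle I anticipate is the matrix-embedding bookkeeping in the $(3)\Rightarrow(1)$ direction: one must carefully confirm that a maximal distribution supported on the diagonal set $\dsdset$ genuinely corresponds to $(V_{1},\dotsc,V_{n})\sim\Maximal(\prod_{i=1}^{n}\sdInt)$ with the component structure $W_{i}=V_{i}\stdrv_{i}$ intact, which is where the uniqueness of decomposition (\ref{prop:unique-decompose}) and its multivariate counterpart are implicitly needed, and that the independence $\mymat{V}\seqind\dstdrv$ is faithfully passed to the component vectors rather than only to functions of them. Once this correspondence is pinned down, the remaining steps are routine applications of the results already established.
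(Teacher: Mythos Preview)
Your proposal is correct and follows essentially the same approach as the paper: both reduce everything to the equivalent characterization (S1)--(S3) from \ref{thm:equiv-semiseqind}, establish $(1)\Leftrightarrow(2)$ by checking that each of (S1), (S2), (S3) is permutation-invariant (via \ref{thm:maximal-multi-relation} for the $V$-part, classical symmetry for the $\stdrv$-part, and a test-function argument for (S1)), and establish $(1)\Leftrightarrow(3)$ by matching (S1)--(S3) against the three ingredients of \ref{defn:multi-semignorm}. Your explicit flagging of the uniqueness issue in $(3)\Rightarrow(1)$ is well taken; the paper's proof simply writes $\myvec{W}=\mymat{V}\dstdrv$ with the \emph{given} $V_{i},\stdrv_{i}$ and reads off the three conditions, which tacitly relies on \ref{prop:unique-decompose} (or on the convention that the $W_{i}$ come equipped with their fixed decomposition throughout \ref{subsec:indep-semignorm}).
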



\begin{rem}
\ref{thm:n-semi-seqind-sym} shows the semi-$G$-normal under the the semi-sequential independence has \emph{symmetry} and \emph{compatiability with the multivariate case}.
	The underlying reason is that it takes advantage the (only) two families of distributions that allow both properties: classical normal and maximal distribution. For classical normal, we know that a bivariate normal with diagonal covariance matrix is equivalent to the (symmetric) independence between components. For maximal, the results are provided in \ref{thm:maximal-multi-relation}.  
\end{rem}



We end this session by showing the stability of semi-$G$-normal distribution under semi-sequential independence, which indicates that more analogous generalizations of results on classical normal can be discussed here.


\begin{prop}
\label{semi-GN-eqdistn}
For any $\bar W$ satisfying $\bar W \eqdistn W$ and $W \semiseqind \bar W$, we have 
\[
W+\bar W \eqdistn \sqrt 2 W.
  \]
\end{prop}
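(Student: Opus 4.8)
The plan is to reduce the statement to the representation theorems for the bivariate and univariate semi-$G$-normal distributions, after which everything collapses to a routine computation with classical normals. First I would invoke the symmetry-and-compatibility result \ref{prop:semi-seqind-sym}: since $W \semiseqind \bar W$ with $W, \bar W \eqdistn \semiGN(0, \varI)$, the joint vector $(W, \bar W)$ follows the bivariate semi-$G$-normal $\semiGN(\myvec{0}, \varI \idtymat_2)$. Writing $W = V_1 \stdrv_1$ and $\bar W = V_2 \stdrv_2$ through \ref{defn:multi-semignorm}, this identifies $(W, \bar W) = \mymat{V}\myvec{\stdrv}$ with $\mymat{V} = \diag(V_1, V_2) \sim \Maximal(\sdInt \idtymat_2)$, $\myvec{\stdrv} = (\stdrv_1, \stdrv_2) \sim \CN(\myvec{0}, \idtymat_2)$ and $\mymat{V} \seqind \myvec{\stdrv}$.

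Next I would fix an arbitrary $\varphi \in \fspacedef(\numset{R})$ (which suffices for $\eqdistn$ by \ref{defn:G-distn}) and set $\psi(w_1, w_2) \coloneqq \varphi(w_1 + w_2)$, which is bounded and Lipschitz, hence lies in $\fspace(\numset{R}^2)$. Applying the bivariate representation \ref{thm:represent-multi-semignorm} over the degenerate family $\mysetrvdeg$ (so that $\cdsd$ ranges over the diagonal matrices $\diag(v_1, v_2)$ with $v_i \in \sdInt$), I would obtain
\[
\expt[\varphi(W + \bar W)] = \expt[\psi(W, \bar W)] = \max_{(v_1, v_2) \in \sdInt^2} \lexpt[\varphi(v_1 \stdrv_1 + v_2 \stdrv_2)].
\]
Because $\stdrv_1, \stdrv_2$ are classically independent standard normals, classical additivity of the normal law gives $v_1 \stdrv_1 + v_2 \stdrv_2 \eqdistn \sqrt{v_1^2 + v_2^2}\, \stdrv$ under $\lexpt$, so the right-hand side equals $\max_{(v_1, v_2) \in \sdInt^2} \lexpt[\varphi(\sqrt{v_1^2 + v_2^2}\, \stdrv)]$.

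The key observation is then that the continuous map $(v_1, v_2) \mapsto \sqrt{v_1^2 + v_2^2}$ carries the square $\sdInt^2$ onto the interval $[\sqrt{2}\, \sdl,\, \sqrt{2}\, \sdr]$, the endpoints being attained at $(\sdl, \sdl)$ and $(\sdr, \sdr)$ and all intermediate values reached by continuity on the connected compact set $\sdInt^2$. Using the elementary identity $\max_{x \in D} g(h(x)) = \max_{y \in h(D)} g(y)$, this yields
\[
\expt[\varphi(W + \bar W)] = \max_{u \in [\sqrt{2}\, \sdl,\, \sqrt{2}\, \sdr]} \lexpt[\varphi(u \stdrv)].
\]
On the other hand, the univariate representation \ref{thm:represent-uni-semignorm} applied to $\sqrt{2}\, W = \sqrt{2}\, V \stdrv$ gives $\expt[\varphi(\sqrt{2}\, W)] = \max_{v \in \sdInt} \lexpt[\varphi(\sqrt{2}\, v\, \stdrv)]$, and as $v$ ranges over $\sdInt$ the scalar $\sqrt{2}\, v$ ranges over the same interval $[\sqrt{2}\, \sdl,\, \sqrt{2}\, \sdr]$. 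The two envelopes therefore coincide for every $\varphi$, which is exactly $W + \bar W \eqdistn \sqrt{2}\, W$.

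I do not expect a serious obstacle; the only point requiring care is the range computation for $\sqrt{v_1^2 + v_2^2}$ and the reduction of the maximization over the square to a maximization over its image interval, both of which are elementary consequences of continuity on a connected compact set. The conceptual content is entirely carried by \ref{prop:semi-seqind-sym}, which converts the semi-sequential independence hypothesis into the bivariate semi-$G$-normal structure that makes the representations applicable.
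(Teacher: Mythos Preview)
Your proposal is correct and follows essentially the same approach as the paper. The only cosmetic difference is packaging: the paper unfolds the semi-sequential independence $V\seqind\bar V\seqind\stdrv\seqind\bar\stdrv$ directly (via the auxiliary function $\varphi_\stdrv(v)=\lexpt[\varphi(v\stdrv)]$) to reach $\max_{(x,y)\in\sdInt^2}\varphi_\stdrv(\sqrt{x^2+y^2})$, whereas you first invoke \ref{prop:semi-seqind-sym} and then the bivariate representation \ref{thm:represent-multi-semignorm} to arrive at the same maximization; both proofs finish with the identical range identity $\{\sqrt{x^2+y^2}:(x,y)\in\sdInt^2\}=[\sqrt2\,\sdl,\sqrt2\,\sdr]$.
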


\begin{proof}
	With $W=V\stdrv$ and $\bar W =\bar{V} \bar{\stdrv}$, semi-sequential independence means: 
	\[
	V \seqind \bar{V} \seqind \stdrv \seqind \bar{\stdrv}.
	\] 
	For any $\varphi\in \fspace(\numset{R})$, first recall that $\varphi_\stdrv(v)\coloneqq \expt[\varphi(v\stdrv)]$ is in $\fspace(\numset{R})$ (\ref{lem:varphi-stdrv}). On the one hand,
	\begin{align*}
\expt[\varphi(V\stdrv+\bar{V}\bar{\stdrv})] & = \expt[\expt[\varphi(v\stdrv+\bar{v}\bar{\stdrv})]_{v=V,\,\bar{v}=\bar{V}}]
  = \expt\left[\varphi_\stdrv\left(\sqrt{v^{2}+\bar{v}^{2}}\right)\right]\\
 & = \expt\left[\expt[\varphi_\stdrv(\sqrt{v+\bar{V}})]_{v=V}\right]
  = \max_{x\in\sdInt }\max_{y\in\sdInt }\varphi_\stdrv(\sqrt{x^{2}+y^{2}}),\end{align*}
where we use the fact that $V\seqind \bar{V}$. 
On the other hand, 
\[
\expt[\varphi(\sqrt{2}W)] = \max_{x\in \sdInt} \lexpt[\varphi(\sqrt{2}x\stdrv)] = \max_{x\in \sdInt}\varphi_\stdrv(\sqrt{2}x).
\] 
Since 
\[\{\sqrt{x^{2}+y^{2}};\,(x,y)\in\sdInt^2 \}=[\sqrt{2}\sdl,\,\sqrt{2}\sdr ]=\{\sqrt{2}x;\,x\in\sdInt \},\]
we have $\expt[\varphi(W+\bar{W})]=\expt[\varphi(\sqrt{2}W)]$ for all $\varphi \in \fspace(\numset{R})$.
\end{proof}

We will further investigate the connection and distinction between these types of independence in \ref{subsec:represent-semignorm-indep} by studying their representations. 


\subsection{Representations under three types of independence}
\label{subsec:represent-semignorm-indep}

Let us come back to the story setup in \ref{subsec:setup-story} to introduce our results to general audience. 
Suppose we intend to study the dynamic of the whole observation process (which is the observable data sequence)
\[(Y_{1},Y_{2},\dots,Y_{n})=(\sigma_{1}\epsilon_{1},\sigma_{2}\epsilon_{2},\dotsc,\sigma_{n}\epsilon_{n}).\] 
Depending on the background information or knowledge (or the lackness
of reliable knowledge on the data pattern and underlying dynamic),
we may still have uncertainty on the distribution or dynamic
of $\myvec{Y}$. 
Especially in the early stage of data analysis, it is
 usually required to specify a model structure and search for the optimal one in a family of them. 
However, at this stage, how to select or distinguish the \emph{family of models} is an important and non-trivial task in statistial modeling. Suppose we assume that the underlying $\myvec{\sigma}$ process belong to a family $\mysetrv{A}_{n}\sdInt$, but some patterns of the data sequence, which could be generally quantified by $\lexpt[\varphi(\sigma_{1}\epsilon_{1},\sigma_{2}\epsilon_{2},\dotsc,\sigma_{n}\epsilon_{n})]$ for a test function $\varphi$, seems to exceed even the extreme cases in $\mysetrv{A}_{n}\sdInt$. In this case, we may tend to reject the hypothesis that $\sigma \in \mysetrv{A}_{n}\sdInt$. In this situation, we usually need to work with the maximum expected value under the uncertainty $\myvec{\sigma}\in\mysetrv{A}_{n}\sdInt$: 
\begin{equation}
\label{eq:max-lexpt-An}
	\sup_{\myvec{\sigma}\in\mysetrv{A}_{n}\sdInt}\lexpt[\varphi(\sigma_{1}\epsilon_{1},\sigma_{2}\epsilon_{2},\dotsc,\sigma_{n}\epsilon_{n})].
\end{equation}

However, in principle, $\mysetrv{A}_{n}\sdInt$ might be an infinite dimensional family of non-parametric (or semi-parametric) dynamics (due to lack of information on the undelying dynamic). In our current context of discussions, the possible choices of $\mysetrv{A}_{n}\sdInt$ include:
\begin{itemize}
	\item $\mysetrv{S}_{n}\sdInt\coloneqq \{\myvec{\sigma}: \sigma_{t}\text{ is }\sigmafield{G}_{t}\text{-measurable},
  \myvec{\sigma}_{(n)}\independent\myvec{\epsilon}_{(n)}\}$. As illustrated by \ref{fig:diagram-S}, it includes independent mixture models and a typical class of hidden Markov models \emph{without} feedback process.(In \ref{fig:diagram-S}, we omit the edge from $\sigma_1$ to $\sigma_3$ only for graphical simplicity.)  
  \item 
$\mysetrv{L}_{n}\sdInt \coloneqq \{\myvec{\sigma}:  \sigma_{t} \text{ is }\sigmafield{Y}_{t-1}\text{-measurable}\}$. As illustrated by \ref{fig:diagram-L}, it includes those state space models that the future state variable only depends on the historical observations. 
  \item $\mysetrv{L}^*_{n}\sdInt\coloneqq \bigl\{ \myvec{\sigma}: \sigma_{t} \text{ is }\sigmafield{F}_{t}\text{-measurable},
 (\sigma_{t}\independent\epsilon_{t})|\sigmafield{F}_{t-1}\}.$ As illustrated by \ref{fig:diagram-L-bar}, it contains a class of hidden Markov models with feedback process: the future state variable depends on both the previous states and observations. In \ref{fig:diagram-L,fig:diagram-L-bar}, the dashed arrows mean these are possible feedback effects. 
 \end{itemize}
 
 Note that 
\[
\mysetrv{S}_{n}\sdInt\cup\mysetrv{L}_{n}\sdInt\subset\mysetrv{L}^*_{n}\sdInt.
\]
This includes two aspects: 
\begin{itemize}
\item $\mysetrv{S}_{n}\sdInt\subset\mysetrv{L}^*_{n}\sdInt$
due to the fact that $\sigmafield{G}_{t}\subset\sigmafield{F}_{t}$
and $\myvec{\sigma}_{(n)}\independent\myvec{\epsilon}_{(n)}$;
\item $\mysetrv{L}_{n}\sdInt\subset\mysetrv{L}^*_{n}\sdInt$ because for any $\myvec{\sigma}\in \mysetrv{L}_{n}\sdInt$, given $\sigmafield{F}_{t-1}\supset \sigmafield{Y}_{t-1}$, $\sigma_t$ can be treated as a constant thus we must have $\sigma_t\independent \epsilon_t|\myset{F}_{t-1}$.

\end{itemize}

\begin{rem}
The condition $\sigma_t\independent \epsilon_t |\sigmafield{F}_{t-1}$ in $\mysetrv{L}^*_{n}\sdInt$ is equivalent to 
\[
\eta_t \independent \stdrv_t|\sigmafield{F}_{t-1},
\]
where $\eta_t \coloneqq \sigma_t -\lexpt[\sigma_t|\sigmafield{F}_{t-1}]$ is a sequence of $\numset{F}$-martingale increments.
\end{rem}
 
 In traditional statistical modeling, how to deal with the quantity \ref{eq:max-lexpt-An} is essentially a difficult task when $\mysetrv{A}_{n}\sdInt$ is highly unspecified and only contains some vague conditions on the possible design of edges (such as the additional edges in \ref{fig:diagram-L-bar} compared with \ref{fig:diagram-S}). 
 
 In this section, we will show that \ref{eq:max-lexpt-An} can be related to the $G$-expectation of a random vector with semi-$G$-normal marginals and different choice of $\mysetrv{A}_{n}\sdInt$ is corresponding to a type of independence associated with semi-$G$-normal. After transforming \ref{eq:Gnorm-cl-comp} into a $G$-expectation, it becomes more convenient to evaluate the $G$-expectation and this evaluation procedure also gives us a guidance on what should be the ``skeleton'' part to consider the extreme scenario when dealing with different forms of $\mysetrv{A}_{n}\sdInt$. 


\begin{figure}[h]
\centering
	\begin{tikzpicture}[
            > = stealth, 
            shorten > = 1pt, 
            auto,
            node distance = 2cm, 
            semithick 
        ]

        \tikzstyle{every state}=[
            draw = black,
            thick,
            fill = white,
            minimum size = 4mm
        ]
        
        \node[state](s1){$\sigma_1$};
        \node[state](o1)[below of =s1]{$Y_1$};
        \path[->] (s1) edge node {$\cdot \epsilon_1$} (o1);
        \node[state](s2)[right of =s1]{$\sigma_2$};
        \node[state](o2)[below of =s2]{$Y_2$};
        \path[->] (s2) edge node {$\cdot \epsilon_2$} (o2);
        \path[->] (s1) edge (s2);
        
        \node[state](s3)[right of =s2]{$\sigma_3$};
        \node[state](o3)[below of =s3]{$Y_3$};
        \path[->] (s3) edge node {$\cdot \epsilon_3$} (o3);
        \path[->] (s2) edge (s3);

   \end{tikzpicture}
   \caption{Diagram for $\mysetrv{S}_3\sdInt$}
   \label{fig:diagram-S}
\end{figure}

\begin{figure}[h]
\centering
	\begin{tikzpicture}[
            > = stealth, 
            shorten > = 1pt, 
            auto,
            node distance = 2cm, 
            semithick 
        ]

        \tikzstyle{every state}=[
            draw = black,
            thick,
            fill = white,
            minimum size = 4mm
        ]
        
        \node[state](s1){$\sigma_1$};
        \node[state](o1)[below of =s1]{$Y_1$};
        \path[->] (s1) edge node {$\cdot \epsilon_1$} (o1);
        \node[state](s2)[right of =s1]{$\sigma_2$};
        \node[state](o2)[below of =s2]{$Y_2$};
        \path[->] (s2) edge node {$\cdot \epsilon_2$} (o2);
        \path[dashed,->](o1) edge (s2);
        
        \node[state](s3)[right of =s2]{$\sigma_3$};
        \node[state](o3)[below of =s3]{$Y_3$};
        \path[->] (s3) edge node {$\cdot \epsilon_3$} (o3);
        \path[dashed,->](o2) edge (s3);
        \path[dashed,->](o1) edge (s3);

   \end{tikzpicture}
   \caption{Diagram for $\mysetrv{L}_3\sdInt$}
   \label{fig:diagram-L}
\end{figure}
   

\begin{figure}[h]
	\centering
\begin{tikzpicture}[
            > = stealth, 
            shorten > = 1pt, 
            auto,
            node distance = 2cm, 
            semithick 
        ]

        \tikzstyle{every state}=[
            draw = black,
            thick,
            fill = white,
            minimum size = 4mm
        ]
        
        \node[state](s1){$\sigma_1$};
        \node[state](o1)[below of =s1]{$Y_1$};
        \path[->] (s1) edge node {$\cdot \epsilon_1$} (o1);
        \node[state](s2)[right of =s1]{$\sigma_2$};
        \node[state](o2)[below of =s2]{$Y_2$};
        \path[->] (s2) edge node {$\cdot \epsilon_2$} (o2);
        \path[->] (s1) edge (s2);
        \path[dashed,->](o1) edge (s2);
        
        \node[state](s3)[right of =s2]{$\sigma_3$};
        \node[state](o3)[below of =s3]{$Y_3$};
        \path[->] (s3) edge node {$\cdot \epsilon_3$} (o3);
        \path[->] (s2) edge (s3);
        \path[dashed,->](o2) edge (s3);
        \path[dashed,->](o1) edge (s3);

   \end{tikzpicture}
   \caption{Diagram for $\mysetrv{L}^*_3\sdInt$}
   \label{fig:diagram-L-bar}
   \end{figure}

Our main result can be summarized as follows.

\begin{thm}
(Representations of $n$ semi-$G$-normal random variables under various types of independence)
\label{thm:represent-n-seqind-semignorm}
Consider $W_{i}\sim\semiGN(0,\varI),i=1,2,\dotsc,n$ and any $\varphi\in\fspace(\numset{R}^{n})$,

\begin{itemize}
\item Under semi-sequential independence:
\begin{equation}
\label{eq:cond-semiseqind}
	W_{1}\semiseqind W_{2}\semiseqind\cdots\semiseqind W_{n},
\end{equation}

we have $\expt[\abs{\varphi(\myvec{W})}]<\infty$, and
\begin{equation}
	\label{eq:represent-n-semiseqind}
	\expt[\varphi(W_1,W_2,\dotsc,W_n)]=\max_{\myvec{\sigma}\in\mysetrv{S}_{n}\sdInt}\lexpt_\lprob[\varphi(\sigma_1\stdrv_1,\sigma_2\stdrv_2,\dotsc,\sigma_n\stdrv_n)].
\end{equation}
\item Under sequential independence:
\begin{equation}
\label{eq:cond-seqind}
	W_{1}\seqind W_{2}\seqind\cdots\seqind W_{n},
\end{equation}
or fully-sequential independence: 
\begin{equation}
\label{eq:cond-fullseqind}
W_{1}\fullseqind W_{2}\fullseqind\cdots\fullseqind W_{n},	
\end{equation}
we have $\expt[\abs{\varphi(\myvec{W})}]<\infty$, and
\begin{align}
	\expt[\varphi(W_1,W_2,\dotsc,W_n)] & =\max_{\myvec{\sigma}\in\mysetrv{L}_{n}\sdInt}\lexpt_\lprob[\varphi(\sigma_1\stdrv_1,\sigma_2\stdrv_2,\dotsc,\sigma_n\stdrv_n)] \label{eq:represent-n-seqind-1}\\
	& =\max_{\myvec{\sigma}\in\mysetrv{L}^*_{n}\sdInt}\lexpt_\lprob[\varphi(\sigma_1\stdrv_1,\sigma_2\stdrv_2,\dotsc,\sigma_n\stdrv_n)] \label{eq:represent-n-seqind-2}.
\end{align}


\end{itemize}

\end{thm}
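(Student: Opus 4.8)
The plan is to separate the semi-sequential case from the two sequential cases, since the former reduces almost immediately to results already in hand. Under \ref{eq:cond-semiseqind}, \ref{thm:n-semi-seqind-sym} gives $(W_1,\dots,W_n)\sim\semiGN(\myvec{0},\varI\idtymat_n)$, so I would apply the multivariate representation \ref{thm:represent-multi-semignorm} with covariance-root set $\dsdset=(\varI\idtymat_n)^{1/2}=\{\diag(\sigma_1,\dots,\sigma_n):\sigma_i\in\sdInt\}$ and the choice $\myset{A}=\mysetrv{D}$. Writing the random diagonal root as $\cdsd=\diag(\sigma_1,\dots,\sigma_n)$, the product $\cdsd\myvec{\stdrv}$ is exactly $(\sigma_1\stdrv_1,\dots,\sigma_n\stdrv_n)$, and the admissible family of such $\myvec{\sigma}$ — arbitrary, valued in $\sdInt^n$, independent of $\myvec{\epsilon}$ — is precisely $\mysetrv{S}_{n}\sdInt$ (the $\sigmafield{G}_t$-measurability of $\sigma_t$ being automatic). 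This yields \ref{eq:represent-n-semiseqind}, with finiteness of $\expt[\abs{\varphi(\myvec{W})}]$ inherited from \ref{thm:represent-multi-semignorm}.

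For the sequential and fully-sequential cases I would first collapse $\expt[\varphi(\myvec{W})]$ into a nested max--integrate functional. Starting from $W_1\seqind\dots\seqind W_n$, the relation $(W_1,\dots,W_{n-1})\seqind W_n$ lets me freeze the first $n-1$ arguments and apply the univariate representation \ref{thm:represent-uni-semignorm} to the last slot, producing $\phi_{n-1}(w_1,\dots,w_{n-1})\coloneqq\max_{v_n\in\sdInt}\lexpt[\varphi(w_1,\dots,w_{n-1},v_n\stdrv_n)]$; iterating downward (each step using the prefix relation $(W_1,\dots,W_i)\seqind W_{i+1}$, inherited by sub-sequences via \ref{ind-subseq-2}) gives
\[
\expt[\varphi(\myvec{W})]=\max_{v_1\in\sdInt}\lexpt_{\stdrv_1}\Bigl[\max_{v_2\in\sdInt}\lexpt_{\stdrv_2}\bigl[\cdots\max_{v_n\in\sdInt}\lexpt_{\stdrv_n}[\varphi(v_1\stdrv_1,\dots,v_n\stdrv_n)]\cdots\bigr]\Bigr].
\]
Since fully-sequential independence implies sequential independence (\ref{rem:compatible-semignorm}), the same functional is obtained under \ref{eq:cond-fullseqind}. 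A small regularity lemma, generalizing \ref{lem:varphi-stdrv}, is needed here: the operation $\psi\mapsto\max_{v\in\sdInt}\lexpt[\psi(\cdot,v\stdrv)]$ maps $\fspace$ into $\fspace$, which both legitimizes each peeling step and yields $\expt[\abs{\varphi(\myvec{W})}]<\infty$ by running the recursion on $\abs{\varphi}$.

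It then remains to identify the nested functional with $\max_{\mysetrv{L}_{n}\sdInt}$ and $\max_{\mysetrv{L}^*_{n}\sdInt}$. For the upper bound I would define backward value functions $u_n,\dots,u_1$ by $u_n(y_1,\dots,y_{n-1})=\max_{v\in\sdInt}\lexpt[\varphi(y_1,\dots,y_{n-1},v\stdrv_n)]$ and $u_t(y_1,\dots,y_{t-1})=\max_{v\in\sdInt}\lexpt[u_{t+1}(y_1,\dots,y_{t-1},v\stdrv_t)]$, so that $u_1$ is the nested value. For any $\myvec{\sigma}\in\mysetrv{L}^*_{n}\sdInt$ a backward induction on $\lexpt[\varphi(\myvec{\sigma}*\myvec{\stdrv})\mid\sigmafield{F}_{t-1}]\le u_t(Y_1,\dots,Y_{t-1})$ goes through: conditioning on $\sigmafield{F}_{t-1}$, the defining property $\sigma_t\independent\stdrv_t\mid\sigmafield{F}_{t-1}$ factors the conditional law, $\stdrv_t\sim\CN(0,1)$ is independent of $\sigmafield{F}_{t-1}$, and $\sigma_t\in\sdInt$ lets me dominate the average over $\sigma_t$ by the maximum over a constant $v\in\sdInt$. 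This gives $\max_{\mysetrv{L}^*_{n}\sdInt}\le$ nested value $=\expt[\varphi(\myvec{W})]$, hence the same for the smaller family $\mysetrv{L}_{n}\sdInt$. For the matching lower bound I would extract, by compactness of $\sdInt$ and a measurable-selection theorem, optimal selectors $v_t^{*}(\stdrv_1,\dots,\stdrv_{t-1})$ attaining each inner maximum, and then realize this noise-adapted strategy inside $\mysetrv{L}_{n}\sdInt$: when $\sdl>0$ one shows by induction that $\sigmafield{Y}_t=\sigma(\stdrv_1,\dots,\stdrv_t)$ (since $\stdrv_s=Y_s/\sigma_s$ is recoverable), so each $v_t^{*}$ rewrites as an $\sigmafield{Y}_{t-1}$-measurable $\sigma_t$, making the nested value attained by an element of $\mysetrv{L}_{n}\sdInt$. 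Sandwiching $\max_{\mysetrv{L}_{n}\sdInt}\le\max_{\mysetrv{L}^*_{n}\sdInt}\le\expt[\varphi(\myvec{W})]\le\max_{\mysetrv{L}_{n}\sdInt}$ then forces the three to coincide, giving \ref{eq:represent-n-seqind-1} and \ref{eq:represent-n-seqind-2}.

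The main obstacle I anticipate is this last realizability step, not the upper bound: converting a strategy adapted to the \emph{noise} filtration into one adapted to the \emph{observation} filtration relies on inverting $Y_s=\sigma_s\stdrv_s$, which degenerates on the boundary case $\sdl=0$, where $\sigma_s$ may vanish and $Y_s$ carries no information about $\stdrv_s$. I would handle this by first proving the identity for $\sdl>0$ and then recovering $\sdl=0$ through an approximation $\sdl\downarrow 0$, using continuity of both sides in the lower variance bound; alternatively one can absorb the degeneracy into the larger family $\mysetrv{L}^*_{n}\sdInt$, where fresh independent randomness is permitted, and transfer the conclusion to $\mysetrv{L}_{n}\sdInt$ by the sandwich. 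The measurable selection and the $\fspace$-stability lemma are the remaining technical points, but both are routine given the compactness of $\sdInt$ and the polynomial-growth bookkeeping already used in \ref{lem:varphi-stdrv}.
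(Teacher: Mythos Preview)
Your overall architecture matches the paper's proof closely: the semi-sequential case is handled exactly as you describe (via \ref{thm:n-semi-seqind-sym} and \ref{thm:represent-multi-semignorm}), and the sequential case is a sandwich
\[
\expt[\varphi(\myvec{W})]\;\overset{(1)}{\leq}\;\max_{\mysetrv{L}_{n}}\;\overset{(2)}{\leq}\;\max_{\mysetrv{L}^*_{n}}\;\overset{(3)}{\leq}\;\expt[\varphi(\myvec{W})],
\]
with (2) by inclusion, (3) by the backward conditional-expectation bound you wrote, and (1) by exhibiting an optimal strategy. The $\fspace$-stability lemma you isolate is precisely the paper's \ref{claim:varphi-k}.

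The one place you take a detour is in (1), and the obstacle you anticipate is self-inflicted. You defined the value functions as $u_t(y_1,\dots,y_{t-1})$, i.e.\ as functions of the \emph{observations}; the measurable selector you extract at step $t$ is therefore $v_t^{*}(y_1,\dots,y_{t-1})$, not $v_t^{*}(\stdrv_1,\dots,\stdrv_{t-1})$. Setting $\sigma_1\coloneqq v_1^{*}$, $Y_1=\sigma_1\stdrv_1$, $\sigma_2\coloneqq v_2^{*}(Y_1)$, $Y_2=\sigma_2\stdrv_2$, and so on, produces a strategy with $\sigma_t$ manifestly $\sigmafield{Y}_{t-1}$-measurable, hence in $\mysetrv{L}_{n}\sdInt$ directly. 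No inversion of $Y_s=\sigma_s\stdrv_s$ is needed, the case $\sdl=0$ requires no separate treatment, and no limiting argument is required. This is exactly how the paper proceeds: it writes $\varphi_k(\myvec{w}_{(k)})=\max_{\sigma_{k+1}}\lexpt[\varphi(\myvec{w}_{(k)},\sigma_{k+1}\stdrv_{k+1})]$ with maximizer $v_{k+1}(\myvec{w}_{(k)})$, and then sets $\myvec{v}_{(k+1)}=(\myvec{v}_{(k)},v_{k+1}(\myvec{v}_{(k)}*\myvec{\stdrv}_{(k)}))$, which is observation-adapted by construction. Once you drop the noise-filtration phrasing, your argument and the paper's coincide.
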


\begin{proof}[Proof of \ref{thm:represent-n-seqind-semignorm}]
	Turn to \ref{pf:subsec:represent-semignorm-indep}. 
\end{proof}

\begin{rem}
	We can only say $\expt[\varphi(\myvec{W})]=\expt[\varphi(V_i\stdrv_i,i=1,2,\dotsc,n)]$ stays the same under sequential or fully sequential independence. It does not mean these two types of independence are equivalent. Their difference might arise when we consider a more general situation $\expt[\varphi((V_i,\stdrv_i),i=1,2,\dotsc,n)]$, which is out of our current scope of discussion. 
\end{rem}

\begin{rem}
	Here we only consider $W_t$ as univariate semi-$G$-normal which can also be routinely extended to multivariate semi-$G$-normal (defined in \ref{subsec:multi-semignorm}). Then $\sigma_t$ is also requred to be changed to a matrix-valued process. 
\end{rem}

\begin{rem}
\label{rem:vision-represent}
	The vision here is that we can use the $G$-expectation of semi-$G$-version random vector under various types of independence to obtain the envelope associated with different family of model structures. With or without a kind of dependence (such as with or without the feedback), the family of models is usually infinite dimensional because, in principle, the form of the feedback dependence could be any kind of nonlinear function. 
	Nonetheless, \ref{eq:represent-n-semiseqind,eq:represent-n-seqind-1,eq:represent-n-seqind-2} tell us that, instead of going through all possible elements on the right hand side, we can move to the left side of the equation treat it as a sublinear expectation which has a convenient way to evaluate. 
	For instance, under semi-sequential independence, by \ref{thm:n-semi-seqind-semi-G-normal}, $\myvec{W}$ follows a multivariate semi-$G$-normal, then we only need to run through a finite-dimensional subset (as the ``skeleton'' part) to get the extreme scenario,
	\[
	\max_{\myvec{\sigma}\in\mysetrv{S}_{n}\sdInt}\lexpt_\lprob[\varphi(\sigma_1\stdrv_1,\sigma_2\stdrv_2,\dotsc,\sigma_n\stdrv_n)] = \max_{\myvec{\sigma}\in\sdInt^n} \lexpt_\lprob[\varphi(\sigma_1\stdrv_1,\sigma_2\stdrv_2,\dotsc,\sigma_n\stdrv_n)].
	\]
	Under sequential independence, we only need to run through an iterative algorithm to evaluate $\expt[\varphi(W_1,W_2,\dotsc,W_n)]$, which will be explained in \ref{subsec:connect-expt-CN-GN}.
\end{rem}

\begin{cor} 
\label{cor:represent-normalized-sum}
As special cases, 
under semi-sequential independence, 
we have 
\begin{equation}
	\label{eq:represent-sum-semiseqind}
	\expt[\varphi(\frac{1}{\sqrt{n}}\sum_{t=1}^{n}W_{t})]=\max_{\sigma\in\mysetrv{S}_{n}\sdInt}\lexpt_{\lprob}[\varphi(\frac{1}{\sqrt{n}}\sum_{t=1}^{n}\sigma_{t}\epsilon_{t})]
\end{equation}
Under sequential independence or fully sequential independence, we have
\begin{equation}
	\label{eq:represent-sum-seqind}
	\expt[\varphi(\frac{1}{\sqrt{n}}\sum_{t=1}^{n}W_{t})]=\max_{\sigma\in\mysetrv{L}^*_{n}\sdInt}\lexpt_{\lprob}[\varphi(\frac{1}{\sqrt{n}}\sum_{t=1}^{n}\sigma_{t}\epsilon_{t})],
\end{equation}
where $\mysetrv{L}^*_{n}\sdInt$ can be replaced by $\mysetrv{L}_{n}\sdInt$.
\end{cor}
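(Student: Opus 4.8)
The plan is to recognize this corollary as an immediate specialization of \ref{thm:represent-n-seqind-semignorm} obtained by composing the given one-dimensional test function with a fixed linear contraction. Concretely, I would fix $\varphi\in\fspace(\numset{R})$ and introduce the auxiliary function $\psi:\numset{R}^{n}\to\numset{R}$ defined by
\[
\psi(x_{1},x_{2},\dotsc,x_{n})\coloneqq\varphi\Bigl(\tfrac{1}{\sqrt{n}}\textstyle\sum_{t=1}^{n}x_{t}\Bigr),
\]
so that $\varphi(\tfrac{1}{\sqrt{n}}\sum_{t=1}^{n}W_{t})=\psi(W_{1},\dotsc,W_{n})$ and likewise $\varphi(\tfrac{1}{\sqrt{n}}\sum_{t=1}^{n}\sigma_{t}\epsilon_{t})=\psi(\sigma_{1}\epsilon_{1},\dotsc,\sigma_{n}\epsilon_{n})$. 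Both sides of the claimed identities \ref{eq:represent-sum-semiseqind} and \ref{eq:represent-sum-seqind} are then exactly the two sides of \ref{thm:represent-n-seqind-semignorm}, but evaluated at $\psi$ in place of the generic $n$-dimensional test function.

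The only point that genuinely requires checking is that $\psi$ is an admissible test function, i.e.\ $\psi\in\fspace(\numset{R}^{n})$, so that \ref{thm:represent-n-seqind-semignorm} actually applies. I would verify this directly from the local-Lipschitz definition recorded in \ref{sec:G-frame-setup}. Writing $L(\myvec{x})\coloneqq\tfrac{1}{\sqrt{n}}\sum_{t=1}^{n}x_{t}$, the Cauchy--Schwarz inequality gives $\abs{L(\myvec{x})}\leq\abs{\myvec{x}}$ and $\abs{L(\myvec{x})-L(\myvec{y})}=\abs{L(\myvec{x}-\myvec{y})}\leq\abs{\myvec{x}-\myvec{y}}$ in the Euclidean norm. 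Hence, using the defining bound $\abs{\varphi(s)-\varphi(u)}\leq C_{\varphi}(1+\abs{s}^{k}+\abs{u}^{k})\abs{s-u}$,
\[
\abs{\psi(\myvec{x})-\psi(\myvec{y})}\leq C_{\varphi}\bigl(1+\abs{\myvec{x}}^{k}+\abs{\myvec{y}}^{k}\bigr)\abs{\myvec{x}-\myvec{y}},
\]
so $\psi$ inherits the same growth exponent $k$ and constant $C_{\varphi}$ and thus lies in $\fspace(\numset{R}^{n})$.

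With admissibility secured, the semi-sequential case \ref{eq:represent-sum-semiseqind} follows by substituting $\psi$ into \ref{eq:represent-n-semiseqind}, and the sequential / fully-sequential case \ref{eq:represent-sum-seqind} follows by substituting $\psi$ into \ref{eq:represent-n-seqind-1} and \ref{eq:represent-n-seqind-2}; the coincidence of the $\mysetrv{L}_{n}\sdInt$ and $\mysetrv{L}^{*}_{n}\sdInt$ envelopes is inherited verbatim from \ref{thm:represent-n-seqind-semignorm}, which is precisely why $\mysetrv{L}^{*}_{n}\sdInt$ may be replaced by $\mysetrv{L}_{n}\sdInt$. The finiteness $\expt[\abs{\psi(\myvec{W})}]<\infty$ is likewise part of the conclusion of \ref{thm:represent-n-seqind-semignorm} applied to $\psi$. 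Since the argument is a one-line reduction, there is essentially no genuine obstacle here; the sole place an error could creep in is the function-space membership, which is why I would carry out the growth estimate explicitly rather than merely assert that the composition preserves the class $\fspace$.
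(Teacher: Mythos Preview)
Your proposal is correct and follows exactly the route the paper takes: the paper's proof is the single sentence ``This is a direct result of \ref{thm:represent-n-seqind-semignorm},'' and you have simply made that reduction explicit by defining $\psi=\varphi\circ L$ and verifying $\psi\in\fspace(\numset{R}^{n})$ so that the theorem applies. There is nothing to add.
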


\begin{proof}
	This a direct result of \ref{thm:represent-n-seqind-semignorm}.
\end{proof}

\begin{rem}
\label{rem:represent-sum-semiseqind}
	Under semi-sequential independence, by \ref{semi-GN-eqdistn},
	we have 
	\[
	\expt[\varphi(\frac{1}{\sqrt{n}}\sum_{t=1}^{n}W_{t})] = \expt[\varphi(W_1)],
	\]
	then we have 
	\[
	\max_{\sigma\in\mysetrv{S}_{n}\sdInt}\lexpt_{\lprob}[\varphi(\frac{1}{\sqrt{n}}\sum_{t=1}^{n}\sigma_{t}\epsilon_{t})] = \max_{\sigma\in\sdInt} \lexpt[\varphi(\sigma\stdrv)].
	\]
\end{rem}


\begin{rem} To show consistency with the existing results in the literature, 
	if we choose $\mysetrv{L}^*_{n}\sdInt$ in \ref{cor:Properties-of-Maximal}, (we can also change the
distribution of $\stdrv$ in $W$ to any applicable classical distribution,)
	then we can apply the CLT in the $G$-expectation framework to the
left handside to retrieve a result similar to the one in \cite{rokhlin2015central} (which is obtained by treating it as a discrete-time stochastic control problem):  
\[
\expt[\varphi(\GNrv)]=\lim_{n\to\infty}\expt[\varphi(\frac{1}{\sqrt{n}}\sum_{i=1}^{n}W_{i})]=\lim_{n\to\infty}\sup_{\sigma\in\mysetrv{L}^*_{n}\sdInt}\lexpt[\varphi(\frac{1}{\sqrt{n}}\sum_{i=1}^{n}\sigma_{i}\epsilon_{i})],
\]
where $\GNrv\sim \GN(0,\varI)$. When choosing $\mysetrv{L}_{n}\sdInt$, \ref{cor:represent-normalized-sum} is related to the discussion in Section 4 of \cite{fang2019limit}. It is also related to the formulation in \cite{dolinsky2012weak}, although the latter uses a different approach. 
\end{rem}

\begin{rem}[A more explicit distinction between semi-$G$-normal and $G$-normal]
\label{rem:comp-semiGN-GN-2}
Let us extend our discussion to a continuous-time version of the setup in \ref{subsec:setup-story}. 
	By \cite{denis2011function}, the distributional uncertainty of $\GN(0,\varI)$ can be explictily written as 
 \[
 \expt[\varphi(W^G)]= \sup_{\sigma \in \myset{L}\sdInt} \lexpt_\lprob[\varphi(\int_{0}^{1}\sigma_{s}dB^P_{s})],
 \]
 where $B_t^P$ is a classical Brownian motion (induced by $\stdrv_t$) under $(\Omega,\sigmafield{F}, \mathbb{F}, \lprob)$ and $\myset{L}\sdInt$ is the collection of all $\sigmafield{F}_t$-measurable processes valuing in $\sdInt$. Meanwhile, by considering the continuous-time version of \ref{rem:represent-sum-semiseqind}, the distributional uncertainty of $\semiGN(0,\varI)$ can be expressed as 
 \[
 \expt[\varphi(W)] = \sup_{\sigma \in \myset{S}\sdInt} \lexpt_\lprob[\varphi(\int_{0}^{1}\sigma_{s}dB^P_{s})],
 \]
 where $\myset{S}\sdInt$ is the collection of all $\sigmafield{G}_t$-measurable processes valuing in $\sdInt$. Note that $\myset{S}\sdInt\subset \myset{L}\sdInt$ because $\myset{S}\sdInt$ only considers those $\sigma_t$ processes that is independent from $B_t$. 
 This gives another more explicit distinction between semi-$G$-normal and $G$-normal distribution compared with \ref{rem:comp-semiGN-GN}.
\end{rem}

\begin{cor}
\label{cor:convex-case-same}
Under the setup of \ref{thm:represent-n-seqind-semignorm}, when $\varphi\in \fspace(\numset{R}^{n})$ is convex or concave,
\[
\expt[\varphi(W_{1},W_{2},\dotsc,W_{n})],
\]
will be the same under either sequential or semi-sequential independence. Furthermore, in these cases, we have
\begin{equation}
\label{eq:indep-convex-case-same}
	\expt[\varphi(W_1,W_2,\dotsc,W_n)]=\begin{cases}
	\lexpt[\varphi(\sdl\stdrv_1,\sdl\stdrv_2,\dotsc,\sdl\stdrv_n)] & \text{ when }\varphi \text{ is concave}\\
	\lexpt[\varphi(\sdr\stdrv_1,\sdr\stdrv_2,\dotsc,\sdr\stdrv_n)] & \text{ when }\varphi \text{ is convex}
\end{cases}.
\end{equation}
\end{cor}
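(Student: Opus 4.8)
The plan is to prove the displayed formula in the convex case; the concave case then follows by the symmetric argument (replace $\sdr$ by $\sdl$, ``convex'' by ``concave'', and reverse the relevant monotonicity). Once both the sequential and the semi-sequential $G$-expectations are shown to equal the common value $\lexpt[\varphi(\sdr\stdrv_{1},\dotsc,\sdr\stdrv_{n})]$, the first assertion (that the two coincide) is immediate. I would handle the sequential case (equivalently, the fully-sequential one, since by \ref{thm:represent-n-seqind-semignorm} they yield the same value) by an induction carried out directly in the $G$-framework, and then obtain the semi-sequential case by a sandwich argument through the representations of \ref{thm:represent-n-seqind-semignorm}.

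For the sequential case I would set up an induction on $n$, with base case $n=1$ supplied directly by \ref{thm:conn-G}. For the inductive step, $W_{1}\seqind\cdots\seqind W_{n}$ gives $(W_{1},\dotsc,W_{n-1})\seqind W_{n}$ by \ref{defn:indep-seq}, so
\[
\expt[\varphi(W_{1},\dotsc,W_{n})]=\expt[\,\expt[\varphi(w_{1},\dotsc,w_{n-1},W_{n})]_{w_{i}=W_{i}}\,].
\]
For each fixed $\myvec{w}$ the slice $t\mapsto\varphi(w_{1},\dotsc,w_{n-1},t)$ is convex and lies in $\fspace(\numset{R})$, so \ref{thm:conn-G} replaces the inner $G$-expectation by $\psi(\myvec{w})\coloneqq\lexpt[\varphi(w_{1},\dotsc,w_{n-1},\sdr\stdrv_{n})]$. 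This $\psi$ is again convex (an integral of the convex slices $\myvec{w}\mapsto\varphi(\myvec{w},\sdr z)$ against the normal density) and belongs to $\fspace(\numset{R}^{n-1})$, since local Lipschitzness and polynomial growth are preserved under integration against a density with all moments finite. As $W_{1}\seqind\cdots\seqind W_{n-1}$ by \ref{ind-subseq-2}, the inductive hypothesis applied to $\psi$ gives $\expt[\psi(W_{1},\dotsc,W_{n-1})]=\lexpt[\psi(\sdr\stdrv_{1},\dotsc,\sdr\stdrv_{n-1})]$, and the classical independence of $\stdrv_{1},\dotsc,\stdrv_{n}$ (Fubini) collapses this to $\lexpt[\varphi(\sdr\stdrv_{1},\dotsc,\sdr\stdrv_{n})]$, closing the induction.

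For the semi-sequential case I would invoke the representation \ref{eq:represent-n-semiseqind}, which expresses the value as $\max_{\myvec{\sigma}\in\mysetrv{S}_{n}\sdInt}\lexpt_{\lprob}[\varphi(\sigma_{1}\stdrv_{1},\dotsc,\sigma_{n}\stdrv_{n})]$. On one hand, the constant process $\sigma_{t}\equiv\sdr$ lies in $\mysetrv{S}_{n}\sdInt$ (a constant is $\sigmafield{G}_{t}$-measurable and independent of $\myvec{\epsilon}_{(n)}$), giving the lower bound $\lexpt[\varphi(\sdr\stdrv_{1},\dotsc,\sdr\stdrv_{n})]$. On the other hand, since $\mysetrv{S}_{n}\sdInt\subset\mysetrv{L}^{*}_{n}\sdInt$, the semi-sequential value is at most $\max_{\myvec{\sigma}\in\mysetrv{L}^{*}_{n}\sdInt}\lexpt_{\lprob}[\varphi(\sigma_{1}\stdrv_{1},\dotsc,\sigma_{n}\stdrv_{n})]$, which by \ref{eq:represent-n-seqind-2} is exactly the sequential value, already shown to equal $\lexpt[\varphi(\sdr\stdrv_{1},\dotsc,\sdr\stdrv_{n})]$. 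The two bounds coincide, so the semi-sequential value equals the sequential one, and both equal the stated explicit expression.

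The main obstacle I anticipate is bookkeeping rather than any deep difficulty: in the inductive step one must verify that the reduced function $\psi$ remains inside $\fspace$ and stays convex so that \ref{thm:conn-G} can be reapplied, and one must track the direction of the inequalities carefully in the concave case (where $\sdl$ replaces $\sdr$). If instead one prefers to establish the upper bound directly over the largest class $\mysetrv{L}^{*}_{n}\sdInt$, the analogous obstacle is to justify, by conditioning on $\sigmafield{F}_{t-1}$ and using the conditional independence $\sigma_{t}\independent\epsilon_{t}\mid\sigmafield{F}_{t-1}$ together with \ref{prop:convex-concave-case}, that $\lexpt_{\lprob}[\varphi(\sigma_{1}\stdrv_{1},\dotsc,\sigma_{n}\stdrv_{n})]\le\lexpt[\varphi(\sdr\stdrv_{1},\dotsc,\sdr\stdrv_{n})]$ for every admissible $\myvec{\sigma}$; that backward induction again hinges on the preservation of convexity at each stage.
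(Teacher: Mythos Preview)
Your proposal is correct and, for the sequential case, essentially identical to the paper's proof: both peel off one coordinate at a time, apply \ref{thm:conn-G} to the convex slice to fix $\sigma_{n-i+1}=\sdr$, verify that the reduced function remains convex and in $\fspace$, and iterate. The paper phrases this as an iterative algorithm recording the optimizer at each step, while you phrase it as induction on $n$, but the content is the same.

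The one minor difference is in the semi-sequential case. The paper handles it directly: it invokes the multivariate semi-$G$-normal representation $\expt[\varphi(\myvec{W})]=\max_{\sigma_i\in\sdInt}\lexpt[\varphi(\diag(\sigma_1,\dotsc,\sigma_n)\myvec{\stdrv})]$ and then repeats the \ref{thm:conn-G} idea in the multivariate setting to see that the maximum over the hypercube $\sdInt^n$ is attained at $(\sdr,\dotsc,\sdr)$. Your sandwich argument is a neat shortcut that avoids redoing that multivariate convexity analysis: the lower bound from the constant process $\sigma\equiv\sdr\in\mysetrv{S}_n\sdInt$ and the upper bound from $\mysetrv{S}_n\sdInt\subset\mysetrv{L}^*_n\sdInt$ together with the already-established sequential value pin the semi-sequential value exactly. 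Both routes work; yours is slightly more economical once the sequential case is in hand.
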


The following result can be treated as an extension of 
\ref{thm:conn-G}. 

\begin{cor}
\label{cor:convex-case-same-2}
Let $\{W_i^G\}_{i=1}^n$ denote a sequence of nonlinearly i.i.d.
$G$-normally distributed random variables with $W_1^G \sim \GN(0,\varI)$. 
When $\varphi\in \fspace(\numset{R}^{n})$ is convex or concave, we have
\[
\expt[\varphi(W_1^G,W_2^G,\dotsc,W_n^G)] = \expt[\varphi(W_1,W_2,\dotsc,W_n)],
\]
where $W_i\sim \semiGN(0,\varI),i=1,2,\dotsc,n$ and they can be either sequentially or semi-sequentially independent.
\end{cor}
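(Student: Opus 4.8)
The plan is to reduce the claim to the $G$-normal side, since the semi-$G$-normal side has already been evaluated, and then to compute the $G$-normal side by iterating the univariate identity \ref{prop:convex-concave-case} while propagating convexity through each partial expectation. By \ref{cor:convex-case-same}, the quantity $\expt[\varphi(W_{1},\dotsc,W_{n})]$ for the semi-$G$-normal sequence (under either sequential or semi-sequential independence) already equals $\lexpt[\varphi(\sdr\stdrv_{1},\dotsc,\sdr\stdrv_{n})]$ when $\varphi$ is convex and $\lexpt[\varphi(\sdl\stdrv_{1},\dotsc,\sdl\stdrv_{n})]$ when $\varphi$ is concave, with $\{\stdrv_{i}\}$ classically i.i.d.\ $\CN(0,1)$. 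Hence it suffices to show that $\expt[\varphi(W_{1}^{G},\dotsc,W_{n}^{G})]$ takes these same values. I treat the convex case in detail; the concave case is identical with $\sdl$ in place of $\sdr$ and concavity propagated in place of convexity.

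First I would unfold the sequential independence $W_{1}^{G}\seqind\dotsc\seqind W_{n}^{G}$. Setting $\phi_{n}\coloneqq\varphi$ and defining, for $k=n,n-1,\dotsc,1$,
\[
\phi_{k-1}(w_{1},\dotsc,w_{k-1})\coloneqq\expt[\phi_{k}(w_{1},\dotsc,w_{k-1},W_{k}^{G})],
\]
the relation $(W_{1}^{G},\dotsc,W_{k-1}^{G})\seqind W_{k}^{G}$ coming from \ref{defn:indep-seq} gives $\expt[\varphi(W_{1}^{G},\dotsc,W_{n}^{G})]=\phi_{0}$, peeling off $W_{n}^{G}$ first and proceeding inward. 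The engine of the argument is the one-step claim: if $\phi_{k}\in\fspace(\numset{R}^{k})$ is jointly convex, then $\phi_{k-1}\in\fspace(\numset{R}^{k-1})$, is again jointly convex, and
\[
\phi_{k-1}(w_{1},\dotsc,w_{k-1})=\lexpt[\phi_{k}(w_{1},\dotsc,w_{k-1},\sdr\stdrv)].
\]
The evaluation is immediate from \ref{prop:convex-concave-case}, because for fixed $(w_{1},\dotsc,w_{k-1})$ the slice $t\mapsto\phi_{k}(w_{1},\dotsc,w_{k-1},t)$ is a convex element of $\fspace(\numset{R})$, so the partial $G$-expectation over the last coordinate reduces to a classical $\CN(0,\sdr^{2})$ integral. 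Membership of $\phi_{k-1}$ in $\fspace(\numset{R}^{k-1})$ follows from the same growth estimate used in \ref{lem:varphi-stdrv} applied coordinatewise.

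Applying the claim once for each of $W_{n}^{G},W_{n-1}^{G},\dotsc,W_{1}^{G}$, every stage fixes the corresponding variance at $\sdr^{2}$ and hands a convex locally-Lipschitz function to the next stage. Since a $G$-normal at its maximal variance degenerates to $\CN(0,\sdr^{2})$ (\ref{rem:GN-degenerate-to-CN}) and sequential independence collapses to classical independence once the laws are classical (\ref{rem:seq-indep-and-cl-indep}), after $n$ steps we obtain
\[
\expt[\varphi(W_{1}^{G},\dotsc,W_{n}^{G})]=\lexpt[\varphi(\sdr\stdrv_{1},\dotsc,\sdr\stdrv_{n})],
\]
with $\{\stdrv_{i}\}$ classically i.i.d.\ $\CN(0,1)$, which matches the semi-$G$-normal value supplied by \ref{cor:convex-case-same}. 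This settles the convex case, and the concave case follows verbatim with $\sdl$ replacing $\sdr$.

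The main obstacle is the convexity-preservation step. One must check both that slicing a jointly convex $\fspace$ function in one coordinate retains joint convexity and locally-Lipschitz growth in the remaining coordinates, and that the partial $G$-expectation of that last coordinate — which by \ref{prop:convex-concave-case} is genuinely a classical integral against $\CN(0,\sdr^{2})$ — returns a function still jointly convex in the surviving variables, so that the induction hypothesis is available at the next stage. Concretely, for each fixed $\stdrv=z$ the map $(w_{1},\dotsc,w_{k-1})\mapsto\phi_{k}(w_{1},\dotsc,w_{k-1},\sdr z)$ is convex, and $\phi_{k-1}$ is a positive-weighted integral of this family against the standard normal law, whence convexity passes through. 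Everything else is the bookkeeping of the iterated expectation together with the reduction already provided by \ref{cor:convex-case-same}.
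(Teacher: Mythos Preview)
Your proposal is correct and follows essentially the same approach as the paper: both arguments peel off the $G$-normal variables one at a time via sequential independence, apply \ref{prop:convex-concave-case} at each step to collapse the partial $G$-expectation to a classical $\CN(0,\sdr^{2})$ (or $\CN(0,\sdl^{2})$) integral, and verify that joint convexity (concavity) is preserved through each iteration so that the reduction can be repeated. The paper simply states this as a variant of the proof of \ref{cor:convex-case-same} with \ref{eq:pf-iterate-varphi} replaced by $\varphi_{i}(\myvec{x}_{(n-i)})=\expt[\varphi_{i-1}(\myvec{x}_{(n-i)},W^{G}_{n-i+1})]$, whereas you spell out the convexity-preservation and growth-control details more explicitly.
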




We can also prove that the representations mentioned in this paper also hold for $\varphi(x) = \ind{x\leq y}$ so that we can apply them to consider the upper probability or
\emph{capacity} induced by the sublinear expectation: $\upprob(A) = \expt[\indicator_A]$ (from \ref{defn:upprob-lowprob} and \ref{prop:H-space}). Without loss of generality, we only discuss the univariate case, which can be routinely extended to multivariate situations. 




\begin{defn}(The upper and lower cdf) 
In sublinear expectation space, the \emph{upper cdf} of a random variable $X$ is 
\[
\myupper{F}_{X}(y)\coloneqq\upprob(X\leq y)=\expt[\ind{X\leq y}], 
\]
and the \emph{lower cdf} is 
\[
\mylower{F}_{X}(y)\coloneqq\lowprob(X\leq y)=-\expt[-\ind{X\leq y}]. 
\]
\end{defn}

\begin{thm} (Representations of the upper and lower cdf)
\label{thm:represent-capacity}
Let $X$ denote a random variable in sublinear expectation space and $X^\alpha$ is a random variable in the classical probability space whose distribution is characterized by a latent variable $\alpha$. 
Suppose a representation of the sublinear expectation, 
\begin{equation}
\expt[\varphi(X)]=\sup_{\alpha\in\myset{A}}\lexpt[\varphi(X^{\alpha})],\label{eq:represent-gen}
\end{equation}
holds for any $\varphi\in\fspace(\numset{R})$. 
Then we also have the representations for the upper cdf,
\begin{equation}
\myupper{F}_X(y)=\upprob(X\leq y)=\sup_{\alpha\in\myset{A}}\lprob(X^{\alpha}\leq y),
\label{eq:represent-upcdf}	
\end{equation}
which holds for for any continuity point $y$ of $\myupper{F}_X$.  
In other words, the representation can be extended to functions in the form $\varphi(x)\coloneqq\ind{x\leq y}$. 
Meanwhile, we also have the representation for the lower cdf, 
\begin{equation}
\mylower{F}_X(y)=\lowprob(X\leq y)=\inf_{\alpha\in\myset{A}}\lprob(X^{\alpha}\leq y),
\label{eq:represent-lowcdf}	
\end{equation}
which holds for any continuity point $y$ of $\mylower{F}_X$.
\end{thm}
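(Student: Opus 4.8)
The plan is to approximate the discontinuous indicator $\ind{x\le y}$ from above and below by bounded Lipschitz functions, apply the hypothesised representation \ref{eq:represent-gen} to these approximants, and then pass to the limit. Fix $y\in\numset{R}$ and for $\delta>0$ introduce two bounded Lipschitz functions $\varphi_\delta^{+}$ and $\varphi_\delta^{-}$: the first equals $1$ on $(-\infty,y]$, decreases linearly to $0$ on $[y,y+\delta]$, and vanishes thereafter; the second equals $1$ on $(-\infty,y-\delta]$, decreases linearly to $0$ on $[y-\delta,y]$, and vanishes thereafter. Both lie in $\fspacedef\subset\fspace$, so \ref{eq:represent-gen} applies to them, and they obey the pointwise sandwich
\[
\ind{x\le y-\delta}\le\varphi_\delta^{-}(x)\le\ind{x\le y}\le\varphi_\delta^{+}(x)\le\ind{x\le y+\delta}.
\]
Writing $G(y)\coloneqq\sup_{\alpha\in\myset{A}}\lprob(X^{\alpha}\le y)$, which is nondecreasing in $y$, monotonicity of $\expt$ together with \ref{eq:represent-gen} gives $\expt[\varphi_\delta^{-}(X)]\le\myupper{F}_X(y)=\expt[\ind{X\le y}]\le\expt[\varphi_\delta^{+}(X)]$. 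Since $\lexpt[\varphi_\delta^{-}(X^{\alpha})]\ge\lprob(X^{\alpha}\le y-\delta)$ and $\lexpt[\varphi_\delta^{+}(X^{\alpha})]\le\lprob(X^{\alpha}\le y+\delta)$, taking the supremum over $\alpha$ yields the two-sided bound
\[
G(y-\delta)\le\myupper{F}_X(y)\le G(y+\delta),\qquad\delta>0.
\]

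First I would let $\delta\downarrow0$; by monotonicity of $G$ this produces $G(y^{-})\le\myupper{F}_X(y)\le G(y^{+})$ for every $y$. The remaining and most delicate step is to upgrade this one-sided squeeze to an exact identity at continuity points of $\myupper{F}_X$. I would do this by showing that $\myupper{F}_X$ and $G$ possess the same left and right limits everywhere. Indeed, using only that both functions are nondecreasing and bounded in $[0,1]$, one reads off from the squeeze that $\myupper{F}_X(y^{+})=\inf_{z>y}\myupper{F}_X(z)$ is trapped between $\inf_{z>y}G(z^{-})$ and $\inf_{z>y}G(z^{+})$, both of which equal $G(y^{+})$; an identical computation on the left gives $\myupper{F}_X(y^{-})=G(y^{-})$. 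Consequently, if $y$ is a continuity point of $\myupper{F}_X$, then $G(y^{-})=\myupper{F}_X(y^{-})=\myupper{F}_X(y^{+})=G(y^{+})$, so $G$ is continuous at $y$ as well and the squeeze collapses to $\myupper{F}_X(y)=G(y)=\sup_{\alpha\in\myset{A}}\lprob(X^{\alpha}\le y)$, which is exactly \ref{eq:represent-upcdf}.

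For the lower cdf I would run the dual argument. Replacing $\varphi$ by $-\varphi$ in \ref{eq:represent-gen} shows that $-\expt[-\psi(X)]=\inf_{\alpha\in\myset{A}}\lexpt[\psi(X^{\alpha})]$ for every $\psi\in\fspace(\numset{R})$, and this lower envelope is again monotone in $\psi$. Applying it to the same approximants $\varphi_\delta^{\pm}$ and writing $H(y)\coloneqq\inf_{\alpha\in\myset{A}}\lprob(X^{\alpha}\le y)$ gives the identical squeeze $H(y-\delta)\le\mylower{F}_X(y)\le H(y+\delta)$; the limit argument above then yields $\mylower{F}_X(y)=H(y)$ at every continuity point of $\mylower{F}_X$, which is \ref{eq:represent-lowcdf}.

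The main obstacle is the passage described in the second paragraph: because the indicator is discontinuous, the representation is only inherited in the sandwiched form $G(y-\delta)\le\myupper{F}_X(y)\le G(y+\delta)$, and converting this into an equality at a continuity point of $\myupper{F}_X$ is not automatic — it requires verifying that $\myupper{F}_X$ and the envelope $G$ share their one-sided limits (equivalently, their jump sets). This monotone-function bookkeeping is the only non-routine part; everything else reduces to monotonicity of $\expt$ and the elementary Lipschitz approximation of the indicator.
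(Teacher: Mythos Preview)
Your proof is correct and uses the same Lipschitz-sandwich approximation of the indicator as the paper. The only difference is the direction of the sandwich: from the very same inequalities you write down, the paper extracts $\myupper{F}_X(y-\delta)\le\expt[f(X)]=\sup_\alpha\lexpt[f(X^\alpha)]\le G(y)\le\sup_\alpha\lexpt[g(X^\alpha)]=\expt[g(X)]\le\myupper{F}_X(y+\delta)$, i.e.\ it squeezes $G(y)$ between values of $\myupper{F}_X$, so continuity of $\myupper{F}_X$ at $y$ finishes the argument immediately --- your ``delicate'' second paragraph (matching one-sided limits of $G$ and $\myupper{F}_X$) is then unnecessary.
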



\begin{proof}[Proof of \ref{thm:represent-capacity}]
It is easy to show $\myupper{F}_{X}(y)$ is a monotone function, then the set of discountinuous
points is at most a countable set. Let $y$ be any continuous point
of $\myupper{F}_{X}(y)$. For any $\epsilon>0$, take $\delta$ small
enough such that, 
\[
\myupper{F}_{X}(y+\delta)-\myupper{F}_{X}(y-\delta)\leq\epsilon.
\]
Take $f$ and $g$ be two bounded continuous functions such that 
\[
f(x)=\begin{cases}
1 & x\leq y-\delta\\
\in[0,1] & y-\delta<x\leq y\\
0 & x>y
\end{cases},
\]
and 
\[
g(x)=\begin{cases}
1 & x\leq y\\
\in[0,1] & y<x\leq y+\delta\\
0 & x>y+\delta
\end{cases}.
\]
Then we have 
\[
\ind{x\leq y-\delta}\leq f(x)\leq\ind{x\leq y}\leq g(x)\leq\ind{x\leq y+\delta}.
\]
We can apply this inequality to $X^{\alpha}$ for any given $\alpha$:
\[
\lexpt[f(X^{\alpha})]\leq\lprob(X^{\alpha}\leq y)\leq\lexpt[g(X^{\alpha})],
\]
then 
\[
\sup_{\alpha\in\myset{A}}\lexpt[f(X^{\alpha})]\leq\sup_{\alpha\in\myset{A}}\lprob(X^{\alpha}\leq y)\leq\sup_{\alpha\in\myset{A}}\lexpt[g(X^{\alpha})].
\]
Note that $f,g\in\fspace(\numset{R})$ we can use the representation
\ref{eq:represent-gen} to get,
\[
\upprob(X\leq y-\delta)\leq\expt[f(X)]\leq\sup_{\alpha\in\myset{A}}\lprob(X^{\alpha}\leq y)\leq\expt[g(X)]\leq\upprob(X\leq y+\delta).
\]
Then 
\[
\myupper{F}_{X}(y)-\epsilon\leq\myupper{F}_{X}(y-\delta)\leq\sup_{\alpha\in\myset{A}}\lprob(X^{\alpha}\leq y)\leq\myupper{F}_{X}(y+\delta)\leq\myupper{F}_{X}(y)+\epsilon.
\]
Since $\epsilon>0$ can be arbitrarily small, we have proved the required result \ref{eq:represent-upcdf} for $\myupper{F}_{X}$. 
To validate the representation \ref{eq:represent-lowcdf} for $\mylower{F}_X$, we simply need to replace $\myupper{F}_{X}$ with $\mylower{F}_X$ and change $\sup$ to $\inf$ accordingly. 
\end{proof}

\begin{rem}(Notes on the continuity of $\upprob$)
	Note that \ref{thm:represent-capacity} does not require the continuity of $\upprob$: $\upprob(A_n)\to \upprob(A)$ if $A_n\to A$. Since one can easily check that $\upprob$ is automatically lower continuous: $\upprob(A_n)\uparrow \upprob(A)$ if $A_n\uparrow A$, the upper continuity ($\upprob(A_n)\downarrow \upprob(A)$ if $A_n\downarrow A$) is what we are really discussing here whenever we say the continuity of $\upprob$. Here we try to avoid the assumption on the upper continuity of $V$ which is a quite strong and restrictive one. Even under the regularity of $\expt$, we can only say the upper continuity holds for closed $A$ (Lemma 7 in \cite{denis2011function}). 
	However, when $y$ is a continuous point of $\myupper{F}_X$, consider any sequence $y_n$ converging to $y$ as $n\to \infty$, we do have $V(A_n)\to V(A)$ for sets $A_n\coloneqq\{X\leq y_n\}$ and $A\coloneqq\{X\leq y\}$; namely, $\upprob$ appears some continuity on this kind of sets. 
\end{rem}


\section{The hybrid roles and applications of semi-$G$-normal distributions}
\label{sec:hybrid-semignorm}

In this section, we will show the hybrid roles of semi-$G$-normal distributions, connecting the intuition between the classical framework and the $G$-expectation framework, by answering the four questions mentioned in the introduction. 

\subsection{How to connect the linear expectations of classical normal with $G$-normal}
\label{subsec:connect-expt-CN-GN}

In principle, it is feasible to understand the expectation of $G$-normal distribution through the structure of $G$-heat equation. Nonetheless, as a generalization of the normal distribution, it will be better if we can understand the $G$-normal distribution in a more distributional sense. 
\emph{Is it possible to understand the $G$-normal distribution from our old friend, the classical normal?} 
It is indeed a natural concern or question but this is essentially not straightforward. 
 Even for people who have partially learned the theory of the $G$-expectation framework, there usually exists several \emph{common thinking gaps} between classical normal and $G$-normal distribution. 

For instance, as mentioned in \ref{rem:comp-semiGN-GN}, for $\varphi\in \fspace(\numset{R})$,
\begin{equation}
\label{eq:Gnorm-cl-comp}
	\expt[\varphi(\GN(0,\varI))]\geq \sup _{\sigma\in\sdInt} \lexpt[\varphi\big(N(0,\sigma^2))],
	\end{equation}
  which indicates that the uncertainty set of $G$-normal distribution is larger than a class of classical normal distributions with $\sigma\in\sdInt$. Especially, \cite{hu2009explicit} shows the strict inequality that when $\varphi(x)=x^3$, we have
$\expt[\big(\GN(0,\varI)\big)^3]>0.$ (It stays positive for any odd moments.)
Let $W^G\eqdistn \GN(0,\varI)$. By checking the $G$-function defined in \ref{$G$-normal-heat-eq}, we have
\begin{equation}
	\label{eqdistn-GN}
	W^G\eqdistn -W^G,
\end{equation} 
which indicates that the $G$-normal distribution should have some ``symmetry''. However, exactly due to this identity in distribution shown in \ref{eqdistn-GN}, we should have $W^G$ and $-W^G$ share the same (sublinear) third moment: 
$
\expt[-(W^G)^3]=\expt[(-W^G)^3)]=\expt[(W^G)^3] >0
,$
which directly implies, 
\[
\expt[\big(\GN(0,\varI)\big)^3]>0=\lexpt[\big(N(0,\sigma^2)\big)^3]>-\expt[-\big(\GN(0,\varI)\big)^3]. 
 \]
It tells us that the degree of symmetry or skewness of $G$-normal distribution is uncertain, which somehow looks like a ``contradiction'' with \Cref{eqdistn-GN} and seems quite counter-intuitive for a ``normal'' distribution. 
 
Based on the above-mentioned statements showing how different the $G$-normal and classical normal are, our motivation comes from the following opposite aspect:
is this possible for us to connect the linear expectation $\lexpt[\varphi\big(N(0,\sigma^2))]$ of classical normal distribution with the sublinear expectation $\expt[\varphi(\GN(0,\varI))]$ of $G$-normal distribution (or use the former to approach the latter one)? 

This section will first give an affirmative answer to this question by providing an iterative algorithm given by our previous work (\cite{li2018iterative}) based on the semi-$G$-normal distribution. Then we are going to extend this iterative algorithm into a general computational procedure to deal with weighted summations in statistical practice.

\begin{thm}[The Iterative Approximation of the $G$-normal Distribution] 
\label{iterate-G-1} 

For any $\varphi\in \fspace(\numset{R})$ and integer $n \geq 1$, consider the series of iteration functions 
$\{\varphi_{i,n}\}_{i=1}^n$ with initial function $\varphi_{0,n}(x)\coloneqq\varphi(x)$ and iterative relation: 
\begin{equation}
\label{eq:iterate-G-1}
	\varphi_{i+1,n}(x)\coloneqq \max_{\sigma\in[\sdl,\sdr]}\lexpt_\lprob[\varphi_{i,n}(N(x,\sigma^2/n))],i=0,1,\dotsc,n-1.
\end{equation}
The final iteration function for a given $n$ is $\varphi_{n,n}$. As $n\to\infty$, we have $\varphi_{n,n}(0)\to\expt[\varphi(\GNrv)]$, where $\GNrv \sim \GN(0,\varI)$.

\end{thm}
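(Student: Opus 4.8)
The plan is to recognize a single iteration step as the action of a semi-$G$-normal sublinear expectation, collapse the $n$ nested steps into one $G$-expectation of a normalized sum of independent semi-$G$-normals, and then invoke the central limit theorem in the $G$-framework. First I would rewrite one step of \ref{eq:iterate-G-1}. For $\psi\in\fspace(\numset{R})$ let $W\sim\semiGN(0,\varI)$ and set $g(w)\coloneqq\psi(x+\tfrac{1}{\sqrt n}w)$; then \ref{thm:represent-uni-semignorm} applied to $g$ gives
\[
\expt[\psi(x+\tfrac{1}{\sqrt n}W)]=\max_{\sigma\in\sdInt}\lexpt[\psi(x+\tfrac{1}{\sqrt n}\sigma\stdrv)]=\max_{\sigma\in[\sdl,\sdr]}\lexpt[\psi(N(x,\sigma^{2}/n))],
\]
so the iteration is exactly $\varphi_{i+1,n}(x)=\expt[\varphi_{i,n}(x+\tfrac{1}{\sqrt n}W)]$. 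An argument identical to \ref{lem:varphi-stdrv} (the maximum over the compact set $\sdInt$ of a family of locally Lipschitz functions stays locally Lipschitz) shows this operation preserves membership in $\fspace(\numset{R})$, so every $\varphi_{i,n}\in\fspace(\numset{R})$ and the rewriting is legitimate at each stage.

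Next I would fix a nonlinearly i.i.d.\ sequence $\{W_i\}_{i=1}^{n}$ of $\semiGN(0,\varI)$ variables (which exist by the construction in \ref{rem:existence-semignorm}) and prove by induction on $k$ that $\varphi_{k,n}(x)=\expt[\varphi(x+\tfrac{1}{\sqrt n}\textstyle\sum_{i=1}^{k}W_{i})]$. The induction step merges the outer expectation over a fresh $W_k$ with the already-collapsed inner expectation over $(W_1,\dots,W_{k-1})$; by \ref{defn:G-indep} this merging is precisely the statement $W_k\seqind(W_1,\dots,W_{k-1})$, which is guaranteed by the block-independence consequence of sequential independence in \ref{prop:indep-sub-seq} (after arranging the summands in the appropriate order, which is harmless since they are identically distributed and only the sum enters). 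Setting $k=n$ and $x=0$ yields
\[
\varphi_{n,n}(0)=\expt[\varphi(\tfrac{1}{\sqrt n}\textstyle\sum_{i=1}^{n}W_i)].
\]

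It then remains to identify the limit, and here I would check the hypotheses of the central limit theorem in the $G$-framework (\ref{$G$-CLT}) for $W_1\sim\semiGN(0,\varI)$. Mean certainty $\expt[W_1]=-\expt[-W_1]=0$ is immediate from $\lexpt[\stdrv]=0$ in the representation; the uniform integrability $\lim_{\lambda\to\infty}\expt[(\abs{W_1}^{2}-\lambda)^{+}]=0$ follows because this quantity is bounded above by $\lexpt[(\sdr^{2}\stdrv^{2}-\lambda)^{+}]\to0$; and the limiting $G$-function is $G(a)=\expt[\tfrac12 aW_1^{2}]=\tfrac12(\sdr^{2}a^{+}-\sdl^{2}a^{-})$, which is exactly the $G$-function of $\GN(0,\varI)$ from \ref{$G$-normal-heat-eq}. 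The theorem then gives $\lim_{n}\expt[\varphi(\tfrac{1}{\sqrt n}\sum_{i=1}^n W_i)]=\expt[\varphi(\GNrv)]$ for bounded Lipschitz $\varphi$, hence $\varphi_{n,n}(0)\to\expt[\varphi(\GNrv)]$ on $\fspacedef(\numset{R})$; to upgrade to all $\varphi\in\fspace(\numset{R})$ I would apply \ref{prop:extend-function-space}, whose only hypothesis is the uniform moment bound $\sup_n\expt[\abs{\tfrac{1}{\sqrt n}\sum_{i=1}^n W_i}^{p}]+\expt[\abs{\GNrv}^{p}]<\infty$ for every $p\geq1$.

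The main obstacle is this last moment bound. While $\expt[\abs{\GNrv}^{p}]=\sdr^{p}\lexpt[\abs{\stdrv}^{p}]<\infty$ is immediate from \ref{prop:convex-concave-case}, the uniform-in-$n$ control of $\expt[\abs{\tfrac{1}{\sqrt n}\sum W_i}^{p}]$ must be extracted from the representation in \ref{thm:represent-n-seqind-semignorm}, namely $\max_{\myvec{\sigma}}\lexpt[\abs{\tfrac{1}{\sqrt n}\sum_t\sigma_t\stdrv_t}^{p}]$, where each classical sum is a martingale with increments scaled into $\sdInt$. A Rosenthal/Burkholder--Davis--Gundy-type inequality (as in \cite{zhang2016rosenthal}) then yields a bound independent of $n$. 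Everything else reduces to routine bookkeeping of the sequential independence, so this single inequality is the technical heart of the argument.
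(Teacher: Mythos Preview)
Your proposal is correct and follows the same route as the paper: rewrite each iteration step as a semi-$G$-normal expectation, collapse to $\expt[\varphi(n^{-1/2}\sum_{i=1}^n W_i)]$ for nonlinearly i.i.d.\ $W_i\sim\semiGN(0,\varI)$, and then apply the $G$-CLT together with \ref{prop:extend-function-space}; this is precisely \ref{prop:gen-conn-semiGN-GN}. Two remarks. First, your hand-wave about reordering the summands is exactly what the paper does: because the sequential independence runs $(W_1,\dots,W_{k-1})\seqind W_k$, the clean induction peels off the \emph{last} index first, so one writes $\varphi_{i,n}(x)=\expt[\varphi(x+\sum_{j=0}^{i}W_{n-j}/\sqrt n)]$ and then observes that any length-$i$ block of an i.i.d.\ sequence has the same joint law as $(W_1,\dots,W_i)$. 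Second, the step you flag as the ``technical heart'' is in fact free: since $x\mapsto|x|^p$ is convex, \ref{cor:convex-case-same} gives the \emph{exact} identity
\[
\expt\Bigl[\bigl|\tfrac{1}{\sqrt n}\textstyle\sum_{i=1}^n W_i\bigr|^{p}\Bigr]
=\lexpt\Bigl[\bigl|\tfrac{1}{\sqrt n}\textstyle\sum_{i=1}^n \sdr\stdrv_i\bigr|^{p}\Bigr]
=\sdr^{p}\lexpt[|\stdrv_1|^{p}],
\]
which is a constant independent of $n$, so no Rosenthal/BDG argument is needed.
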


\begin{rem}
As opposed to \ref{eq:Gnorm-cl-comp}, the relation \ref{eq:iterate-G-1} shows that, to correclty understand the sublinear expectation of $\GN(0,\varI)$, we need to start from the linear expectation of classical normal and go through an iterative maximization of the function $\varphi$ itself to approach the expectation of $\GN(0,\varI)$. For a fixed $n$, we actually have 
	\[
	\expt[\varphi(\GN(0,\varI))]\approx \max_{\sigma\in \sdInt} \lexpt_\lprob[\varphi_{n-1,n}(\CN(0,\sigma^2/n))].
	\]
\end{rem}

\begin{rem}
\label{rem:improve-iterate}
	From a computational aspect, the normal distribution in \ref{eq:iterate-G-1} can be replaced by other classical distributions with finite moment generating functions because this algorithm is based on the $G$-version central limit theorem (as indicated in \ref{prop:gen-conn-semiGN-GN}). 
	The interval $\sdInt$ can be further simplified to a two-point set $\{\sdl,\sdr\}$
or a three-point set $\{\sdl,\frac{\sdl+\sdr}{2},\sdr\}$ for computational
convenience. 
More theoretical details and numerical aspects (as well as PDE sides) of this iterative algorithm can be found in \cite{li2018iterative}.
This iterative algorithm is also related to the idea of the discrete-time formulation in \cite{dolinsky2012weak}. 
\end{rem}


\begin{rem}
Consider a sequence $\{W_i\}_{i=1}^n$ of nonlinearly i.i.d. semi-$G$-normal random variables with $W_1\sim \semiGN(0,\varI)$. 
	Each iteration function can also be expressed as the sublinear expectation of the semi-$G$-normal distribution (letting $W_0\coloneqq 0$): 
	\[
	\varphi_{i,n}(x)=\expt[\varphi(x+\sum_{j=0}^{i}\frac{W_{n-j}}{\sqrt{n}})]=\expt[\varphi(x+\sum_{j=0}^{i}\frac{W_j}{\sqrt{n}})],
	\]
	for $i=0,1,\dotsc,n$. 
	Moreover, \cite{li2018iterative} further show that the series of iteration functions is an approximation of the whole solution surface of $G$-heat equation on a given time grid. To be specific, consider the $G$-heat equation defined on $[0,\infty) \times\R$:
	\[
	u_t+G(u_{xx})=0,\,u|_{t=1}=\varphi,
	\]
where $G(a)\coloneqq \frac{1}{2}\expt[aX^{2}]=\frac{1}{2}(\sdr^{2}a^{+}-\sdl^{2}a^{-})$ and $\varphi\in \fspace(\numset{R})$.
	For each $p\in (0,1]$, we have
	\[
	|u(1-p,x) - \varphi_{\lfloor np\rfloor,n}(x)|=|\expt[\varphi(x+\sqrt p X)]-\expt[\varphi(x+\sum_{i=0}^{\lfloor np\rfloor}\frac{W_i}{\sqrt{n}})]| = C_\varphi(1+|x|^k) O(\frac{1}{(np)^{\alpha/2}}),
	\]
	where $\alpha \in (0,1)$ depending on $(\sdl,\sdr)$. 
\end{rem}

The basic idea of the iterative algorithm comes from the following result.
In the following context, without futher notice, let $\{W_i\}_{i=1}^\infty$ denote a sequence of nonlinearly i.i.d. semi-$G$-normally distributed random variables with $W_1 \sim \semiGN(0,\varI)$. 

\begin{prop}
	(A general connection between semi-$G$-normal and $G$-normal) 
	\label{prop:gen-conn-semiGN-GN}
	For any $\varphi \in \fspace(\numset{R})$, we have 
	\[
	\lim_{n\to \infty}\expt[\varphi(\frac{1}{\sqrt{n}}\sum_{i=1}^n W_i)] = \expt[\varphi(W^G)],
	\]
	where $W^G\sim \GN(0,\varI)$.
\end{prop}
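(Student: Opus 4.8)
The plan is to recognize the left-hand side as the conclusion of the $G$-version central limit theorem (\ref{$G$-CLT}) applied to the nonlinearly i.i.d.\ sequence $\{W_i\}$, and then to upgrade the resulting convergence from bounded-Lipschitz test functions to the full class $\fspace(\numset{R})$. Writing $Y_n \coloneqq \frac{1}{\sqrt n}\sum_{i=1}^n W_i$, the goal is to show $\lim_{n\to\infty}\expt[\varphi(Y_n)] = \expt[\varphi(W^G)]$ for every $\varphi\in\fspace(\numset{R})$, where $W^G\sim\GN(0,\varI)$.

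First I would verify the hypotheses of \ref{$G$-CLT} for $W_1\sim\semiGN(0,\varI)$, all by means of the representation \ref{thm:represent-uni-semignorm}. Writing $W_1=V_1\stdrv_1$ with $V_1\sim\Maximal\sdInt$, $\stdrv_1\sim\CN(0,1)$ and $V_1\seqind\stdrv_1$, mean-certainty follows from $\expt[W_1]=\max_{\sigma\in\sdInt}\lexpt[\sigma\stdrv]=0$ and likewise $-\expt[-W_1]=0$, since $\lexpt[\sigma\stdrv]=0$ for each fixed $\sigma$. For the uniform integrability condition, the map $x\mapsto(\abs{x}^2-\lambda)^+$ lies in $\fspace(\numset{R})$, so the representation gives $\expt[(\abs{W_1}^2-\lambda)^+]=\max_{\sigma\in\sdInt}\lexpt[(\sigma^2\stdrv^2-\lambda)^+]\leq\lexpt[(\sdr^2\stdrv^2-\lambda)^+]$, and the right-hand side tends to $0$ as $\lambda\to\infty$ by classical dominated convergence (all moments of $\stdrv^2$ are finite). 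Finally, the same representation yields $\expt[W_1^2]=\max_{\sigma\in\sdInt}\sigma^2=\sdr^2$ and $-\expt[-W_1^2]=\min_{\sigma\in\sdInt}\sigma^2=\sdl^2$, so the $G$-normal limit produced by \ref{$G$-CLT} has variance interval exactly $\varI$; that is, it is the prescribed $W^G\sim\GN(0,\varI)$.

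With the hypotheses in place, \ref{$G$-CLT} gives $\lim_{n\to\infty}\expt[\varphi(Y_n)]=\expt[\varphi(W^G)]$ for every continuous $\varphi$ of linear growth, and in particular for every $\varphi\in\fspacedef(\numset{R})$. To reach the larger class $\fspace(\numset{R})$, I would invoke \ref{prop:extend-function-space}, which requires the uniform moment bound $\sup_n\expt[\abs{Y_n}^p]+\expt[\abs{W^G}^p]<\infty$ for every $p\geq1$. The second term is finite since the $G$-normal has all moments finite; for the first, the nonlinear i.i.d.\ structure together with the finiteness $\expt[\abs{W_1}^p]=\sdr^p\lexpt[\abs{\stdrv}^p]<\infty$ lets me apply a Rosenthal-type inequality in the $G$-framework (\cite{zhang2016rosenthal}) to conclude $\expt[\abs{Y_n}^p]\leq C_p$ uniformly in $n$. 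Applying \ref{prop:extend-function-space} then promotes the convergence to all $\varphi\in\fspace(\numset{R})$, which is the claim.

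The main obstacle is precisely this passage from linear-growth test functions (all that \ref{$G$-CLT} delivers directly) to the polynomial-growth class $\fspace(\numset{R})$: it is here that uniform moment control of the normalized sums $Y_n$ is essential, and the Rosenthal-type estimate is the one genuinely nontrivial analytic input. By contrast, checking the CLT hypotheses and matching the variance interval are routine once the representation \ref{thm:represent-uni-semignorm} is in hand.
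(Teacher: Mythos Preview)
Your argument is correct and follows the same two-step architecture as the paper: apply the $G$-version CLT (\ref{$G$-CLT}) to the nonlinearly i.i.d.\ semi-$G$-normal sequence, then invoke \ref{prop:extend-function-space} to pass from $\fspacedef$ to $\fspace$ via a uniform moment bound on $Y_n$. Your verification of the CLT hypotheses (mean-certainty, uniform integrability, identification of $\varI$) is more explicit than the paper's, which simply asserts these.

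The genuine difference is in how the uniform moment bound $\sup_n\expt[\abs{Y_n}^p]<\infty$ is obtained. You appeal to a Rosenthal-type inequality from \cite{zhang2016rosenthal}, treating this as ``the one genuinely nontrivial analytic input.'' The paper instead observes that $(x_1,\dotsc,x_n)\mapsto\abs{n^{-1/2}\sum_i x_i}^p$ is convex and applies \ref{cor:convex-case-same}: under sequential independence of semi-$G$-normals, the sublinear expectation of a convex functional is attained at $\sigma_i\equiv\sdr$, so
\[
\expt[\abs{Y_n}^p]=\lexpt\Bigl[\bigl|\tfrac{1}{\sqrt n}\sum_{i=1}^n\sdr\stdrv_i\bigr|^p\Bigr]=\sdr^p\lexpt[\abs{\stdrv_1}^p],
\]
using that $n^{-1/2}\sum\stdrv_i\sim\CN(0,1)$ classically. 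This is an \emph{exact} value, constant in $n$, and requires no external moment inequality. The same convexity trick via \ref{prop:convex-concave-case} handles $\expt[\abs{W^G}^p]$. So what you flag as the hard step is, in the paper's treatment, essentially free once the semi-$G$-normal representation theory (specifically \ref{cor:convex-case-same}) is available; your route works but imports heavier machinery than needed.
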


\begin{rem}
	The iterative algorithm can also be extended to $d$-dimensional cases by extending the dimensions of $\{W_i\}_{i=1}^\infty$ and $W^G$ accordingly.
\end{rem}

\begin{proof}
	This is direct result of the $G$-version central limit theorem (\ref{$G$-CLT-semiGN}). We can extend the space of function $\varphi$ to $ \fspace(\numset{R})$ because the condition in \ref{prop:extend-function-space} is satisfied. 
	Let $S_{n}\coloneqq\frac{1}{\sqrt{n}}\sum_{i=1}^{n}W_{i}$.
	In fact, for any $p\geq 1$, since $f(x_1,x_2,\dotsc,x_n)=\abs{\sum_{i=1}^p x_i}^p$ is a convex function, by \ref{cor:convex-case-same}, with $\stdrv_i\sim \CN(0,1),i=1,2,\dotsc,n$, we have 
	\[
	\expt[\abs{S_n}^{p}] =\lexpt[\abs{\frac{1}{\sqrt{n}}\sum_{i=1}^{n}\sdr\stdrv_{i}}^{p}]
  =\sdr^{p}\lexpt[\abs{\stdrv_{1}}^{p}].
	\]
	Meanwhile, $\expt[\abs{W^G}^p]=\sdr^{p}\lexpt[\abs{\stdrv_{1}}^{p}]$ due to \ref{prop:convex-concave-case}.
	Hence, we have, for any
$p\in\numset{N}_{+}$,
\[
\sup_{n}\expt[\abs{S_{n}}^{p}]+\expt[\abs{W^{G}}^{p}]<\infty.\qedhere
\]
\end{proof}
Then the iterative algorithm (\ref{iterate-G-1}) can be treated as a direct evaluation of $\expt[\varphi(\frac{1}{\sqrt{n}}\sum_{i=1}^n W_i)]$. 
Interestingly, 
the uncertainty set of each $W_i$ is strictly smaller than the $G$-normal distribution by \ref{eq:Gnorm-cl-comp} but their normalized sum is able to approach the $G$-normal. 
Then it leads us to another closely related question: 
how does the uncertainty set of $\expt[\varphi(\frac{1}{\sqrt{n}}\sum_{i=1}^n W_i)]$ exactly aggregate (towards the one of $G$-normal) as $n$ increases? How does the $G$-version \emph{independence} change the uncertainty set associated with the expectation of the joint random vector 
	\[
	\expt[\varphi(W_1,W_2,\dotsc,W_n)]?
	\]
	This question has been answered by the representations shown in \ref{thm:represent-n-seqind-semignorm} and \ref{cor:represent-normalized-sum}.
	
	
We can also extend the idea of iterative algorithm into a procedure that can deal with sublinear expectation under sequential independnce in a broader sense. We call it as a \emph{$G$-EM (Expectation-Maximation) procedure} because it happens to involve expectation and maximization step (but it has no direct relation to the Expectation-Maximiazation algorithm in statistical modeling.)



One of the goals of $G$-EM procedure is to deal
with following object for a any \emph{fixed} $\varphi\in\fspace$:
\begin{equation}
\expt[\varphi(\left\langle \myvec{a},\myvec{W}\right\rangle )]=\expt[\varphi(\sum_{i=1}^{n}a_{i}W_{i})],\label{eq:iterate-weight-sum}.
\end{equation}
where $W_{i}\sim\semiGN(0,\varI),i=1,2,\dotsc,n$ are sequentially independent (the
distribution of $W_i$ could also be generalized to any member in a semi-$G$-family
of distributions which will be defined in \ref{subsec:semi-G-family}) and $\myvec{a}\in\numset{R}^{n}$ is the weight
vector. Without loss of generality, we assume the Eulicdean norm
$\norm{\myvec{a}}=1$ (or $\sum a_{i}^{2}=1$). These kinds of objects are common in data practice (in the context of financial modeling, statistics or actuarial science). We are going to give an example of a simple linear regression problem in \ref{subsec:robust-CI}.



 
%


The iterative algorithm is a special case of this, with $a_{i}=1/\sqrt{n},i=1,2,\dotsc,n$:
\[
\expt[\varphi(\frac{1}{\sqrt{n}}\sum_{i=1}^{n}W_{i})],
\]
which converges to $\expt[\varphi(\GN(0,\varI))]$ as $n\to\infty$.
 
However, in practice, using a asymptotic result may not be
feasible here for the following reasons: 
\begin{enumerate}
	\item Note that $a_{i}$ could be in arbitrary form (ususally
depend on the data or problem itself). Although we do we have results like the weighted central limit theorem proved by \cite{zhang2014weighted}, we may not always a general asysmptotic result for it.
 \item More fundamentally, $n$
could be a small number which still has a gap with the asymptotic
result. In this case, we need to
have a non-asymptoic approximation by 
involving the convergence rates of the central limiting theorem (like the Berry-Essen bound in classical case) which has been studied by \cite{fang2019limit,huang2019monotone,song2020normal,krylov2020shige}.
 \item If $n$ is small compared with the dimension
$d$ of the data, it further requires us to have a non-asymptotic
view of \ref{eq:iterate-weight-sum}. 
\end{enumerate}

Next we explain the details of the $G$-EM procedure to deal with \ref{eq:iterate-weight-sum}.
Again, under the spirit of iterative approximation, \ref{eq:iterate-weight-sum}
can be computed by the following procedure: with $\varphi_{0,n}\coloneqq\varphi$,
for $i=0,1,2,\dotsc,n-1$,
\[
\varphi_{i+1,n}(x)=\expt[\varphi(x+a_{n-i}W_{n-i})]=\max_{\sigma_{n-i}\in\sdInt}\lexpt[\varphi_{i,n}(x+a_{n-i}\sigma_{n-i}\epsilon_{n-i})].
\]
Finally we have 
\[
\expt[\varphi(\sum_{i=1}^{n}a_{i}W_{i})]=\varphi_{n,n}(0).
\]

Then we can store the optimal choice of $\sigma_{i}$ control process
for our later simulation study (then there is no need to run the iterative
algorithm again). Remember, the order we get the optimal $\sigma^{*}$
process is in the \emph{backward }order
\[
(\sigma_{n}^{*},\sigma_{n-1}^{*},\dotsc,\sigma_{1}^{*}),
\]
To follow the original order, we need to reverse it and the optimal
$\sigma^{*}$ process is in the form of 
\begin{align*}
\sigma_{1}^{*} & \in\sdInt\\
\sigma_{k}^{*} & =\sigma_{k}^{*}(\sum_{i=1}^{k-1}a_{i}W_{i}),k=2,\dotsc,n.
\end{align*}
In this way, we have 
\[
\expt[\varphi(\left\langle \myvec{a},\myvec{W}\right\rangle )]=\expt[\varphi(\sum_{i=1}^{n}a_{i}W_{i})]=\lexpt[\varphi(\sum_{i=1}^{n}a_{i}\sigma_{i}^{*}W_{i})],
\]
and the linear expectation can be approximated by a classical
Monte-Carlo simulation.

\subsection{How to connect univariate and multivariate objects}
\label{subsec:connect-uni-multi}
There are two basic properties for classical normal distribution, which brings convenience in the study of multivariate statistics. First, in $\lexptSpace$
for any two independent $X_1$ and $X_2$ both following $\CN(0,1)$, we must have $(X_1,X_2)$ form a bivariate normal. (This result still holds even if they are not independent but linearly correlated.) 
Second, a $\numset{R}^{d}$-valued random vector $\myvec{X}$
follows multivariate normal if and only if the inner product $\left\langle \myvec{a},\myvec{X}\right\rangle $
is normal for any $\myvec{a}\in\numset{R}^{d}$.
However, these two properties no longer hold for $G$-normal distributions. Readers can find the following established result in the book \cite{peng2019nonlinear} (Exercise 2.5.1),
\begin{prop}
\label{prop:ind-Gnorm}
  Suppose $X_{1}\seqind X_{2}$ and $X_{1}\eqdistn X_{2}\eqdistn N(0,\varI)$ with $\sdl < \sdr$, for $\myvec{X}\coloneqq(X_{1},X_{2})$, we have
  \begin{enumerate}
\item $\left\langle \myvec a,\myvec{X}\right\rangle $ is $G$-normal distributed for
any $\myvec{a}\in\mathbb{R}^{2}$; 
\item $\myvec{X}$ does not follow a bivariate $G$-normal distribution. 
\end{enumerate}
\end{prop}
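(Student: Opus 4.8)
For part (1), the plan is to reduce to nonnegative coefficients using the symmetry of the $G$-normal and then invoke the scaling relation of the $G$-distribution. Since $X_{1}\seqind X_{2}$ and $X_{1}\eqdistn X_{2}$, the variable $X_{2}$ is an independent copy of $X_{1}$. First I would record that a univariate $G$-normal is symmetric, $X_{i}\eqdistn -X_{i}$, so that $cX_{i}\eqdistn \abs{c}X_{i}$ for any scalar $c$. Writing $\expt[\varphi(a_{1}X_{1}+a_{2}X_{2})]=\expt[\expt[\varphi(a_{1}x+a_{2}X_{2})]_{x=X_{1}}]$ and applying the symmetry of $X_{2}$ inside the inner expectation replaces $a_{2}$ by $\abs{a_{2}}$; a second pass, this time peeling off $X_{1}$ as the outer variable, replaces $a_{1}$ by $\abs{a_{1}}$. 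At each step one checks, exactly as in \ref{lem:varphi-stdrv}, that the partially evaluated function stays in $\fspace(\numset{R})$ so that $\eqdistn$ may legitimately be applied. This yields $a_{1}X_{1}+a_{2}X_{2}\eqdistn \abs{a_{1}}X_{1}+\abs{a_{2}}X_{2}$, and the $G$-distribution scaling relation \ref{eq:G-distn-eqdistn} (with $a=\abs{a_{1}},b=\abs{a_{2}}\geq 0$) then gives $\abs{a_{1}}X_{1}+\abs{a_{2}}X_{2}\eqdistn \sqrt{a_{1}^{2}+a_{2}^{2}}\,X_{1}$. Since a nonnegative rescaling of a $G$-normal is again $G$-normal, $\langle\myvec{a},\myvec{X}\rangle\sim\GN(0,(a_{1}^{2}+a_{2}^{2})\varI)$, which proves (1).

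For part (2), the plan is proof by contradiction: assume $\myvec{X}$ is bivariate $G$-normal, characterized (by \ref{$G$-normal-heat-eq}) through a bounded, closed, convex covariance set $\dvarset$ via $G(A)=\tfrac{1}{2}\sup_{\Sigma\in\dvarset}\mathrm{tr}(A\Sigma)$, and then derive an inconsistency. I would first pin down the constraints that the marginal and linear-combination moments impose on $\dvarset$: from the marginals, $\expt[X_{i}^{2}]=\sdr^{2}$ and $-\expt[-X_{i}^{2}]=\sdl^{2}$, and from part (1) applied to $X_{1}\pm X_{2}$, $\expt[(X_{1}\pm X_{2})^{2}]=2\sdr^{2}$. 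These second-moment identities turn out to be consistent with a nontrivial $\dvarset$, so the obstruction must be sought at the level of a genuinely non-quadratic test function, where the difference between optimizing the volatility jointly (bivariate $G$-normal) and optimizing it coordinate-by-coordinate (the iterated structure forced by $X_{1}\seqind X_{2}$) becomes visible.

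Concretely, I would compute $\expt[\varphi(\myvec{X})]$ directly from $X_{1}\seqind X_{2}$ as a nested pair of univariate $G$-normal evaluations, $\expt[\varphi(\myvec{X})]=\expt[\,\expt[\varphi(x,X_{2})]_{x=X_{1}}\,]$, where each inner/outer evaluation selects the extreme volatility of a single coordinate (using \ref{prop:convex-concave-case} whenever $\varphi(x,\cdot)$ is convex or concave in the free coordinate). For the bivariate $G$-normal the same quantity is instead the value at $t=1$ of the solution of the fully two-dimensional $G$-heat equation $\partial_{t}u=G(D^{2}u)$. The plan is to choose $\varphi$ that is convex in the second coordinate on $\{x_{1}>0\}$ but concave there on $\{x_{1}<0\}$, so that the conditionally optimal second-coordinate volatility flips with the sign of the first coordinate, and to argue that the resulting nested value cannot coincide with the single joint PDE value for any admissible $\dvarset$.

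The hard part is precisely this last step: the multivariate $G$-normal has no closed-form expectation, being defined through the nonlinear PDE, so the contradiction cannot be read off from a formula. The crux is to exhibit an explicit $\varphi$ together with a rigorous argument that the two constructions disagree --- equivalently, that the composition of two one-dimensional $G$-heat operators acting on separate coordinates does not reduce to a single two-dimensional operator $G(D^{2}\cdot)$ for any admissible $\dvarset$. I expect to handle this either by a direct functional computation on a carefully engineered $\varphi$, or by contradicting the self-similarity $\myvec{Y}+\bar{\myvec{Y}}\eqdistn\sqrt{2}\,\myvec{Y}$ that every bivariate $G$-normal must satisfy; in both routes the essential input is that sequential independence performs the volatility maximization sequentially rather than jointly, which is what \ref{thm:mutual-indep} already signals by ruling out mutual independence for non-maximal, non-classical objects.
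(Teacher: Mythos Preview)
The paper does not supply its own proof of this proposition; it is quoted as an established result (Exercise~2.5.1 in \cite{peng2019nonlinear}) and used without argument, so there is no in-paper proof to compare against.

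Your treatment of part~(1) is correct and standard.

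For part~(2) your plan is sound in spirit but remains incomplete, and the two routes you sketch (matching a two-dimensional $G$-heat operator directly, or contradicting $\myvec{Y}+\bar{\myvec{Y}}\eqdistn\sqrt{2}\myvec{Y}$ via a long expansion) are heavier than necessary. A short completion is available and you have nearly set it up. You correctly note that the second-moment data pin down $G$: since $\expt[\pm X_{1}X_{2}]=0$ (peel off $X_{2}$ and use $\expt[\pm X_{2}]=0$), one obtains
\[
G(A)=\tfrac12\expt[\langle A\myvec{X},\myvec{X}\rangle]=\tfrac12\bigl(a_{11}^{+}\sdr^{2}-a_{11}^{-}\sdl^{2}+a_{22}^{+}\sdr^{2}-a_{22}^{-}\sdl^{2}\bigr),
\]
which satisfies $G(A)=G(PAP^{T})$ for the coordinate-swap $P$. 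By uniqueness of viscosity solutions in \ref{$G$-normal-heat-eq}, any bivariate $G$-normal with this $G$ is exchangeable: $(Y_{1},Y_{2})\eqdistn(Y_{2},Y_{1})$. But \ref{eg:indep-ex-org} gives $\expt[X_{1}X_{2}^{2}]=(\sdr^{2}-\sdl^{2})\expt[X_{1}^{+}]>0$ while $\expt[X_{2}X_{1}^{2}]=0$, so $(X_{1},X_{2})$ is not exchangeable and hence not $G$-normal. Your test function $\varphi(x_{1},x_{2})=x_{1}x_{2}^{2}$ is exactly the right probe; the missing idea is that one need not solve the PDE at all --- it suffices to read the coordinate symmetry off the $G$-function and let PDE uniqueness convert the known asymmetry of $\expt[X_{1}X_{2}^{2}]$ versus $\expt[X_{2}X_{1}^{2}]$ into the contradiction.
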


\ref{prop:ind-Gnorm} shows we cannot construct bivariate $G$-normal distribution directly from two independent univariate $G$-normal distributed random variables. It stays unfeasible even considering any invertible linear transformations of the random vector $(X_1,X_2)$ as shown by \cite{bayraktar2015normal}, which study more details about these strange properties of $G$-normal in multidimensional case. 

To further explain the obstacle here, let us first recall that, in \ref{subsec:connect-expt-CN-GN}, we have shown 
how to start from the linear expectation $\lexpt[\varphi(\CN(0,\sigma^2))]$ of classical normal to correctly understand (also compute) the sublinear expectation $\expt[\varphi(\GN(0,\varI))]$ of $G$-normal. 
Suppose our next goal here is to help general audience further understand (or compute) the sublinear expectation $\expt[\varphi(\GN(0,\dvarset))]$ of a multivariate $G$-normal distribution with covariance uncertainty characterized by $\dvarset$ such as $\dvarset \coloneqq \{\diag(\sigma_1^2,\sigma_2^2),\sigma_i\in\sdInt,i=1,2\}$ from $(X_1,X_2)$ with $X_i\sim \GN(0,\varI)$, $i=1,2$ and $X_1\seqind X_2$. 
However, as shown in \ref{prop:ind-Gnorm}, it is difficult to achieve this goal from this path because
$\myvec{X}=(X_1,X_2)$ is not $G$-normal distributed, and neither is $\mymat{A}\myvec{X}^T$ for any invertible $2\times 2$ matrix $\mymat{A}$. 



It turns out the connection between univariate and multivariate object is essentially nontrivial.
 The contribution of this section is to show that this connection can be 
 revealed by introducing an intermediate substructure, called the semi-$G$-normal 
 imposed with \emph{semi-sequential independence}. Typically, \ref{thm:n-semi-seqind-semi-G-normal} shows that a joint vector of semi-sequentially independent univariate semi-$G$-normal follows a multivariate semi-$G$-normal (with a diagonal covariance matrix).

\begin{thm}
\label{thm:n-semi-seqind-semi-G-normal}
	For a sequence of semi-$G$-normal distributed random variables $\{W_i\}_{i=1}^n$, satisfying $W_i \sim \semiGN (0,[\underline\sigma_i^2, \overline\sigma_i^2])$ for $i=1,2,\dotsc,n$, and
\[
W_1\semiseqind W_2 \semiseqind \dotsc \semiseqind W_n,
   \]
we have 
\[
(W_1,W_2,\dotsc,W_n)^{T} \sim \semiGN (\myvec 0,\dvarset)
,\]
where $\dvarset \subset \numset{S}_d^+$ is the uncertainty set of covariance matrices defined as 
\[
\dvarset\coloneqq\left\{ \dvar =\left(\begin{array}{ccc}
\sigma^2_1\\
 & \ddots\\
 &  & \sigma^2_n
\end{array}\right):\sigma^2_{i}\in [\underline\sigma_i^2, \overline\sigma_i^2],i=1,2,\dotsc,n
\right\}.\]

\end{thm}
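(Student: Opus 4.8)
The plan is to verify directly the three ingredients in the definition of a multivariate semi-$G$-normal (\ref{defn:multi-semignorm}), starting from the decomposition $W_i=V_i\stdrv_i$ with $V_i\sim\Maximal([\underline\sigma_i,\overline\sigma_i])$, $\stdrv_i\sim\CN(0,1)$ and $V_i\seqind\stdrv_i$. This is the $n$-variable, non-identical-interval analogue of the implication $(1)\implies(3)$ in \ref{prop:semi-seqind-sym} and \ref{thm:n-semi-seqind-sym}; the essential task is to reorganize the single chain of sequential independences encoded in $W_1\semiseqind\cdots\semiseqind W_n$ into the matrix-against-vector form $\mymat{V}\seqind\myvec{\stdrv}$ required by \ref{defn:multi-semignorm}.

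First I would apply the (generalized) equivalent characterization of semi-sequential independence from \ref{thm:equiv-semiseqind} --- legitimate here since the identity of the variance intervals is inessential --- to replace the hypothesis by the three conditions (S1) $(V_1,\dots,V_n)\seqind(\stdrv_1,\dots,\stdrv_n)$, (S2) $V_1\seqind V_2\seqind\cdots\seqind V_n$, and (S3) $\stdrv_1,\dots,\stdrv_n$ are classically independent. Setting $\mymat{V}\coloneqq\diag(V_1,\dots,V_n)$ and $\myvec{\stdrv}\coloneqq(\stdrv_1,\dots,\stdrv_n)$, we have $\myvec{W}=\mymat{V}\myvec{\stdrv}$. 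From (S2) and \ref{thm:maximal-multi-relation}, $(V_1,\dots,V_n)$ is jointly maximal on the rectangle $\prod_{i=1}^{n}[\underline\sigma_i,\overline\sigma_i]$; pushing this law through the linear (hence locally Lipschitz) embedding $\psi$ that sends a vector to the associated diagonal matrix, \ref{cor:Properties-of-Maximal} yields $\mymat{V}\sim\Maximal(\dsdset)$, where $\dsdset$ is the box of diagonal matrices with entries $\sigma_i\in[\underline\sigma_i,\overline\sigma_i]$. This set is bounded, closed and convex, and its square is precisely the diagonal covariance set $\dvarset$ in the statement. From (S3), classical independence of standard normal marginals gives $\myvec{\stdrv}\sim\CN(\myvec 0,\idtymat_n)$, which we read as the degenerate $G$-normal $N(\myvec 0,\idtymat_n)$.

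The remaining and most delicate step is the independence $\mymat{V}\seqind\myvec{\stdrv}$. I would obtain it from (S1) by transferring sequential independence through the transformation $\psi$ applied on the $V$-side only: for any test function $\varphi\in\fspace$ on matrix-times-vector space, the composite $(\myvec v,\myvec e)\mapsto\varphi(\psi(\myvec v),\myvec e)$ is again in $\fspace$ because $\psi$ is linear, so the defining identity of $(V_1,\dots,V_n)\seqind\myvec{\stdrv}$ upgrades verbatim to $\mymat{V}\seqind\myvec{\stdrv}$ --- this is the same device used in \ref{ind-subseq}. The subtlety to watch is that the function class $\fspace$ must be preserved under the embedding into the larger matrix space, which holds precisely because $\psi$ is linear and leaves the $\stdrv$-coordinates untouched. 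With $\mymat{V}\sim\Maximal(\dsdset)$, $\myvec{\stdrv}\sim\CN(\myvec 0,\idtymat_n)$, $\mymat{V}\seqind\myvec{\stdrv}$ and $\myvec{W}=\mymat{V}\myvec{\stdrv}$ all established, \ref{defn:multi-semignorm} delivers $\myvec{W}\sim\semiGN(\myvec 0,\dvarset)$, finishing the proof.
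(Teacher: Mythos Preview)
Your proof is correct and follows essentially the same route as the paper: the paper derives this theorem as a direct consequence of \ref{thm:n-semi-seqind-sym}, whose $(1)\Rightarrow(3)$ direction is proved by exactly the decomposition you describe --- passing through \ref{thm:equiv-semiseqind} to obtain (S1)--(S3), invoking \ref{thm:maximal-multi-relation} for the joint maximal law of $(V_1,\dots,V_n)$, and then reading off $\mymat{V}\seqind\myvec{\stdrv}$ from (S1) via the diagonal embedding. You have simply unpacked that argument explicitly (and a bit more carefully on the function-class point) rather than citing the packaged equivalence.
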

%

\begin{proof} It is a direct result of \ref{thm:n-semi-seqind-sym} (the non-identical variance interval here is inessential to the proof). 
\end{proof}

Next \ref{prop:multi-semi-G-normal-transform} shows we can do linear transformation on $\myvec{W}$ to get a multivariate semi-$G$-normal with a non-diagnonal covariance matrix. 

\begin{prop}[Multivariate semi-$G$-normal under linear transformation]
\label{prop:multi-semi-G-normal-transform}
	Let $\myvec W_{n\times 1}\sim \semiGN(\myvec 0,\dvarset)$. For any constant matrix $\mymat A \in \numset{R}^{r\times n}$ with $r\leq n$, we have 
	\[
	\mymat A \myvec W \sim \semiGN(\myvec 0, \mymat{A}\dvarset\mymat{A}^T),
	 \]
	where 
	\[
	\mymat{A}\dvarset\mymat{A}^T \coloneqq \left\{ \mymat{A}\dvar\mymat{A}^T: \dvar\in\dvarset \right\}\subset \numset{R}^{r\by r}.
	 \]
\end{prop}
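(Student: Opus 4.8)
The plan is to prove the statement through the representation of the multivariate semi-$G$-normal distribution (\ref{thm:represent-multi-semignorm}) rather than directly manipulating the decomposition $\myvec W=\mymat V\myvec\stdrv$ from \ref{defn:multi-semignorm}. The direct route would require rewriting $\mymat A\mymat V\myvec\stdrv$ in the form $\widetilde{\mymat V}\widetilde{\myvec\stdrv}$ for some $r\times r$ maximal matrix $\widetilde{\mymat V}$ and an $r$-dimensional standard normal $\widetilde{\myvec\stdrv}$, which is awkward because the $r\times n$ matrix $\mymat A\mymat V$ does not itself have the shape demanded by the definition, so one is forced to combine a dimension reduction with the construction of a new maximal matrix. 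The representation avoids this: two random vectors are identically distributed once their sublinear expectations agree on all test functions, so it suffices to compare $\expt[\psi(\mymat A\myvec W)]$ with the representation of a $\semiGN(\myvec 0,\mymat A\dvarset\mymat A^T)$ object for every $\psi\in\fspace(\numset{R}^r)$.

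First I would fix $\psi\in\fspace(\numset{R}^r)$ and set $\varphi\coloneqq\psi\circ\mymat A$. A short estimate using $\abs{\mymat A\myvec x}\leq\norm{\mymat A}\abs{\myvec x}$ shows $\varphi\in\fspace(\numset{R}^n)$, so the representation \ref{thm:represent-multi-semignorm} applies to $\myvec W$. Choosing the degenerate family $\mysetrvdeg$, so that the supremum is a maximum over constant square-root matrices, I obtain
\[
\expt[\psi(\mymat A\myvec W)]=\expt[\varphi(\myvec W)]=\max_{\Sigma\in\dvarset}\lexpt[\psi(\mymat A\,\Sigma^{1/2}\dstdrv)],
\]
where $\dstdrv\sim\CN(\myvec 0,\idtymat_n)$. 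The key observation is then purely Gaussian: since $\Sigma^{1/2}$ is symmetric, $\mymat A\,\Sigma^{1/2}\dstdrv$ is a mean-zero classical normal vector with covariance $\mymat A\Sigma^{1/2}(\mymat A\Sigma^{1/2})^T=\mymat A\Sigma\mymat A^T$, so its law, and hence $\lexpt[\psi(\mymat A\,\Sigma^{1/2}\dstdrv)]$, depends on $\Sigma$ only through $\mymat A\Sigma\mymat A^T$. Writing $\widetilde\Sigma\coloneqq\mymat A\Sigma\mymat A^T$ and changing variables in the maximum, as $\Sigma$ runs over $\dvarset$ the matrix $\widetilde\Sigma$ runs over $\widetilde\dvarset\coloneqq\mymat A\dvarset\mymat A^T$, giving
\[
\expt[\psi(\mymat A\myvec W)]=\max_{\widetilde\Sigma\in\widetilde\dvarset}\lexpt[\psi(\CN(\myvec 0,\widetilde\Sigma))].
\]

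To close the argument I would check that $\widetilde\dvarset$ is a legitimate covariance-uncertainty set: it lies in $\numset{S}_r^+$ since $\myvec y^T\mymat A\Sigma\mymat A^T\myvec y=(\mymat A^T\myvec y)^T\Sigma(\mymat A^T\myvec y)\geq 0$, and it is convex, bounded and closed because $\Sigma\mapsto\mymat A\Sigma\mymat A^T$ is a continuous linear map and $\dvarset$ is compact and convex. Hence $\semiGN(\myvec 0,\widetilde\dvarset)$ is well defined, and applying \ref{thm:represent-multi-semignorm} to it with the family $\mysetrvdeg$ yields $\max_{\widetilde\Sigma\in\widetilde\dvarset}\lexpt[\psi(\widetilde\Sigma^{1/2}\widetilde{\dstdrv})]=\max_{\widetilde\Sigma\in\widetilde\dvarset}\lexpt[\psi(\CN(\myvec 0,\widetilde\Sigma))]$, which coincides with the expression above. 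As this holds for every $\psi\in\fspace(\numset{R}^r)$, I conclude $\mymat A\myvec W\eqdistn\semiGN(\myvec 0,\mymat A\dvarset\mymat A^T)$. The only genuinely delicate point is the mismatch between $(\mymat A\Sigma\mymat A^T)^{1/2}$ and $\mymat A\Sigma^{1/2}$, which differ in general because matrix square roots do not commute with $\mymat A$; the proof works precisely because a centered Gaussian is determined by its covariance, so the particular square root used to generate it is irrelevant. Everything else is a routine verification that the linear image of a compact convex set of positive semi-definite matrices is again such a set.
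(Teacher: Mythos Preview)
Your proof is correct, but it takes a genuinely different route from the paper's. The paper does precisely the ``direct'' approach you dismiss: it keeps the decomposition $\mymat A\myvec W=(\mymat A\mymat V)\myvec\stdrv$, shows $\mymat A\mymat V\sim\Maximal(\mymat A\dsdset)$ via the definition of the maximal distribution, observes that $\mymat A\mymat V\seqind\myvec\stdrv$ is inherited from $\mymat V\seqind\myvec\stdrv$, and reads off the covariance set as $\{BB^T:B\in\mymat A\dsdset\}=\mymat A\dvarset\mymat A^T$. Your worry that this is ``awkward'' because $\mymat A\mymat V$ is $r\times n$ rather than $r\times r$ is overstated: the paper simply allows a rectangular maximal factor acting on an $n$-dimensional standard normal, which yields the same representation. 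What your approach buys is that it never needs this mild relaxation of \ref{defn:multi-semignorm}; by passing through \ref{thm:represent-multi-semignorm} and the classical fact that a centered Gaussian is determined by its covariance, you sidestep any question about which square root generates it. The paper's approach, in turn, is more structural: it explicitly produces the maximal--times--normal decomposition of $\mymat A\myvec W$, which can be reused in later arguments (e.g., independence of transformed objects). Both are short; the paper's is slightly more direct once one accepts the rectangular factor.
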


\begin{proof}
	 First of all, note that
	$
	\mymat A_{r\by n} \myvec W_{n\by 1} = \mymat A_{r\by n} \mymat{V}_{n\by n} \myvec{\stdrv}_{n\by 1}
	$
	with $\mymat{V} \sim \Maximal(\dsdset)$. 
	For any $H\in\fspace(\R^{r\by n})$, we have 
	\[
	\expt[H(\mymat A \mymat{V})]=\max_{\dsd\in\dsdset}\lexpt_\lprob[H(\mymat{A} \dsd)]=\max_{\mymat{B} \in \mymat{A} \dsdset}\lexpt_\lprob[H(\mymat B)],
	 \] 
	so $\mymat A \mymat{V} \sim \Maximal(\mymat A \dsdset)$, which can be treated as the 
	 scaling property for the $n\times n$-dimensional maximal distribution. It follows from $\mymat{V}\seqind \myvec{\stdrv}$  that $\mymat{A}\mymat{V}\seqind \myvec{\stdrv}$. Therefore, 
	 \[
	 \mymat A \myvec W \eqdistn \Maximal(\mymat A \dsdset) N(\myvec 0,\idtymat_n^2) \sim \semiGN(\myvec 0, \dvarset^{\prime}),
	  \]
	 where 
	 \begin{align*}
	 	\dvarset^{'} &\coloneqq \left\{\mymat B \mymat B^T: \mymat B \in \mymat A \dsdset \right \} \\
	 	& = \left\{(\mymat A \dsd) (\mymat A \dsd)^T:  \dsd \in \dsdset \right \} \\
	 	& = \left\{\mymat A \dvar \mymat A^T: \dvar \in \dvarset \right \}. 
	 \end{align*}
	 In other words, 
	 $
	\mymat A \myvec W \sim \semiGN(\myvec 0, \mymat{A}\dvarset\mymat{A}^T).
	 $
\end{proof}


 


Then we can use a sequence of $\myvec W\sim \semiGN(\myvec 0, \dvarset)$ to approach the multivariate $G$-normal $\GN(\myvec 0, \dvarset)$ by nonlinear CLT. 

\begin{thm}
\label{$G$-CLT-semiGN}
	Consider a sequence of nonlinearly i.i.d. $\{\myvec{W}_i\}_{i=1}^\infty$ with $W_1\sim\semiGN(0,\dvarset)$.
	 Let $W^G$ be a $G$-normal distributed random vector following $\GN(0,\dvarset)$. Then we have, for any $\varphi \in\fspace$, 
	 \[
	\lim_{n\to\infty} \expt[\varphi(\frac{1}{\sqrt n}\sum_{i=1}^{n} \myvec{W}_i)] = \expt[\varphi(\myvec{W}^G)].
	 \]
	 It means that,
	\[
	\frac{1}{\sqrt n}\sum_{i=1}^{n} \myvec{W}_i \convergeto{\dist} \GN(0,\dvarset).
	\]
\end{thm}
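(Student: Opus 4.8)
The plan is to recognize the asserted convergence as a direct instance of the multivariate central limit theorem already established in \ref{$G$-CLT}, applied to the nonlinearly i.i.d.\ sequence $\{\myvec{W}_i\}$, and then to identify the limiting $G$-normal law by computing its characterizing $G$-function. Writing $\myvec{W}_1=\mymat{V}\myvec{\stdrv}$ as in \ref{defn:multi-semignorm}, with $\mymat{V}\sim\Maximal(\dsdset)$ and $\myvec{\stdrv}\sim N(\myvec{0},\idtymat_d)$ under $\mymat{V}\seqind\myvec{\stdrv}$, I would first verify the two hypotheses of \ref{$G$-CLT}. Mean-certainty $\expt[\langle p,\myvec{W}_1\rangle]=-\expt[-\langle p,\myvec{W}_1\rangle]=0$ for every $p$ follows from the representation \ref{thm:represent-multi-semignorm}: each classical mixture $\lexpt[\langle p,\cdsd\myvec{\stdrv}\rangle]=\langle p,\cdsd\lexpt[\myvec{\stdrv}]\rangle$ vanishes, and taking the supremum (for $\pm\langle p,\cdot\rangle$) keeps it zero. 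The uniform integrability condition holds because $\dsdset$ is bounded, so $\abs{\myvec{W}_1}^2\leq C\abs{\myvec{\stdrv}}^2$ for a constant $C$, whence by monotonicity and the representation $\expt[(\abs{\myvec{W}_1}^2-\lambda)^+]\leq\lexpt[(C\abs{\myvec{\stdrv}}^2-\lambda)^+]\to 0$ by dominated convergence.

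Second, I would pin down the limit. By \ref{$G$-CLT}, the normalized sum converges to the $G$-normal characterized by $G(\mymat{A})=\expt[\frac12\langle\mymat{A}\myvec{W}_1,\myvec{W}_1\rangle]$. Using $\myvec{W}_1=\mymat{V}\myvec{\stdrv}$, the representation of the maximal distribution, and $\mymat{V}\seqind\myvec{\stdrv}$, this evaluates to
\[
G(\mymat{A})=\max_{\cdsd\in\dsdset}\frac12\,\lexpt[\myvec{\stdrv}^{T}\cdsd^{T}\mymat{A}\,\cdsd\,\myvec{\stdrv}]=\max_{\cdsd\in\dsdset}\frac12\,\trace[\cdsd^{T}\mymat{A}\,\cdsd]=\frac12\sup_{\cdvar\in\dvarset}\trace[\mymat{A}\cdvar],
\]
where I used $\lexpt[\myvec{\stdrv}^{T}\mymat{M}\myvec{\stdrv}]=\trace[\mymat{M}]$ for $\myvec{\stdrv}\sim N(\myvec{0},\idtymat_d)$ and $\cdsd\cdsd^{T}=\cdvar$. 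This is exactly the $G$-function of $\GN(\myvec{0},\dvarset)$, so the limit coincides with $\myvec{W}^G$ (consistent with the covariance bookkeeping in \ref{prop:cov-semi-G-norm}).

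Third, since \ref{$G$-CLT} delivers convergence only for continuous functions of linear growth --- in particular for all $\varphi\in\fspacedef$ --- I would upgrade it to all $\varphi\in\fspace$ via \ref{prop:extend-function-space}. This needs the moment bound $\sup_n\expt[\abs{\frac{1}{\sqrt{n}}\sum_{i=1}^n\myvec{W}_i}^p]+\expt[\abs{\myvec{W}^G}^p]<\infty$ for every $p\geq 1$. The $G$-normal term is finite because, the map $\myvec{x}\mapsto\abs{\myvec{x}}^p$ being convex, the convex case applied to a single vector reduces $\expt[\abs{\myvec{W}^G}^p]$ to a classical Gaussian moment with the extremal covariance (finite by boundedness of $\dsdset$). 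I expect the uniform-in-$n$ bound on the sum to be the main obstacle: the robust route is the (multivariate) representation under sequential independence, which expresses $\expt[\abs{S_n}^p]$ as a supremum of classical moments $\lexpt[\abs{\frac{1}{\sqrt{n}}\sum_i\sigma_i\myvec{\stdrv}_i}^p]$ over adapted matrix processes $\sigma_i$ valued in $\dsdset$; since each $\norm{\sigma_i}$ is uniformly bounded, a Burkholder--Davis--Gundy estimate together with the finite moments of $\myvec{\stdrv}$ controls this uniformly in $n$, mirroring the univariate computation in the proof of \ref{prop:gen-conn-semiGN-GN} and \ref{cor:convex-case-same}. With this bound in hand, \ref{prop:extend-function-space} closes the argument and yields the stated convergence in distribution for all $\varphi\in\fspace$.
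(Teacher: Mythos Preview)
Your proposal is correct and follows essentially the same approach as the paper: apply \ref{$G$-CLT} to the i.i.d.\ semi-$G$-normal sequence, verify mean-certainty, and identify the limiting $G$-function by the trace computation $\expt[\langle\mymat{A}\myvec{W}_1,\myvec{W}_1\rangle]=\sup_{\cdvar\in\dvarset}\trace[\mymat{A}\cdvar]$, then upgrade to $\fspace$ via \ref{prop:extend-function-space}. The only notable difference is in the moment bound you flag as ``the main obstacle'': the paper (via the argument in \ref{prop:gen-conn-semiGN-GN}) avoids any martingale inequality entirely by observing that $\myvec{x}\mapsto\abs{\frac{1}{\sqrt{n}}\sum_i\myvec{x}_i}^p$ is convex, so \ref{cor:convex-case-same} collapses $\expt[\abs{S_n}^p]$ to the single classical moment $\lexpt[\abs{\frac{1}{\sqrt{n}}\sum_i\sdr\myvec{\stdrv}_i}^p]$, which is exactly $\sdr^p\lexpt[\abs{\myvec{\stdrv}_1}^p]$ and hence uniform in $n$ --- no Burkholder--Davis--Gundy needed.
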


\begin{proof}
This is a multivariate version of \ref{prop:gen-conn-semiGN-GN}. We only need to validate the conditions. First of all, the sequence $\{W_i\}_{i=1}^\infty$ definitely has certain zero mean.
Then, notice that the distribution of $\myvec{W}^G$ is characterized by the function $G(\mymat{A})=\frac 12 \sup_{\dvar\in\dvarset} \trace[\mymat{A}\dvar]$, where $\trace[\cdot]$ means the trace of the matrix. 
We only need to prove that $G(\mymat{A}) = \frac 12 \expt[\langle \mymat{A}W_1,W_1 \rangle]$ for any $\mymat A \in \numset{S}_d$. By the representation of semi-$G$-normal distribution, letting $\myvec{\stdrv} \sim N(0,\dvar)$, we have 
\begin{align*}
	\expt[\langle \mymat{A}W_1,W_1 \rangle] & = \sup_{\dvar\in\dvarset} \lexpt_\lprob[\langle \mymat{A}\myvec{\stdrv},\myvec{\stdrv} \rangle] = \sup_{\dvar\in\dvarset} \lexpt_\lprob[(\mymat{A}\myvec{\stdrv})^T \myvec{\stdrv}]\\ 
	& = \sup_{\dvar\in\dvarset} \lexpt_\lprob[\myvec{\stdrv}^T \mymat{A} \myvec{\stdrv}] = \sup_{\dvar\in\dvarset} \lexpt_\lprob[\trace[\myvec{\stdrv}^T \mymat{A} \myvec{\stdrv}]]\\
	& = \sup_{\dvar\in\dvarset} \lexpt_\lprob[\trace[\mymat{A} \myvec{\stdrv} \myvec{\stdrv}^T]] = \sup_{\dvar\in\dvarset} \trace[\lexpt_\lprob[\mymat{A} \myvec{\stdrv} \myvec{\stdrv}^T]] \\
	& = \sup_{\dvar\in\dvarset} \trace[\mymat{A}\lexpt_\lprob[ \myvec{\stdrv} \myvec{\stdrv}^T]] = \sup_{\dvar\in\dvarset} \trace[\mymat{A}\dvar]. \qedhere
\end{align*}
The argument on how to extend the choice of $\varphi$ to $\fspace$ is is similar to the proof in \ref{prop:gen-conn-semiGN-GN}.
\end{proof}

Then it creates a path from univariate classical normal to a multivariate $G$-normal.
\ref{fig:connection-threenorm} shows the relations among linear, semi-$G$- and $G$-normal distributions. 
We can start from the \emph{univariate} objects (semi-$G$-normal distribution), and construct its multivariate version under semi-sequential independence, then approach the \emph{multivariate} $G$-normal distribution, which gives us a feasible way to start from univariate objects to approximately approach the multivariate distribution. 

\begin{figure}[h]
\centering
\includegraphics[width=0.95\linewidth]{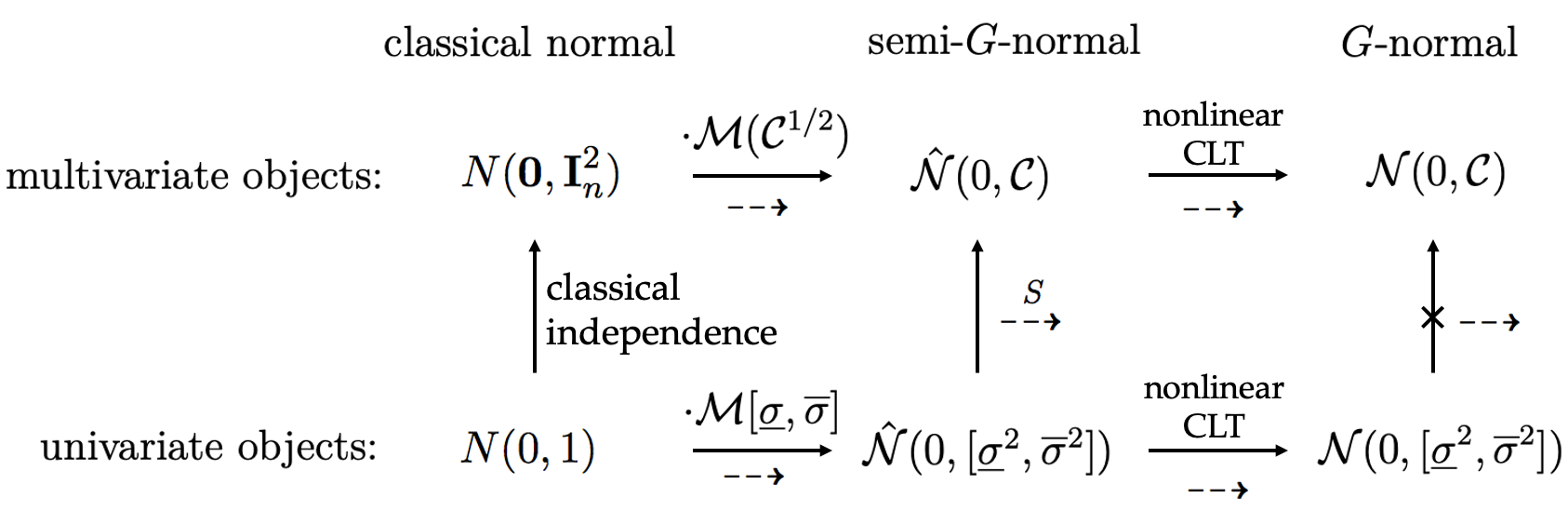}
	\caption{The relations among classical, semi-$G$-, and $G$-normal in univariate and multivariate cases}
	\label{fig:connection-threenorm}
\end{figure}

%
%
%
%
%
%
%
%

\subsection{A statistical interpretation of asymmetry in sequential independence}
\label{subsec:interpret-asymmetry-indep}

In this section, we will expand \ref{eg:indep-ex-org} (which is used to illustrate the asymmetry of independence in this framework) by studying its representation result to provide a more specific, statistical interpretation of this asymmetry. 
More interestingly, we will show that, for two semi-$G$-normally distributed random objects, each of them has \emph{certain} zero third moment (because its distributional uncertainty can be written as a family of classical normal with different variances). This property is preserved for the summation of them under semi-sequential independence. However, after we impose sequential independence onto them, their summation will exhibit \emph{third-moment uncertainty}. This phenomenon is closely related to the third moment uncertainty of the $G$-normal (as shown in \ref{subsec:connect-expt-CN-GN}) by applying the $G$-version central limit theorem (\ref{$G$-CLT}).





Next we expand \ref{eg:indep-ex-org} by considering $X$ and $Y$ as two semi-$G$-normal distributed random variables. 
\begin{eg}[Third moment uncertainty comes from asymmetry of independence]
\label{eg:two-semiGnorm-skewness}
Suppose $\gsd_1 \eqdistn \gsd_2 \sim \Maximal\sdInt$, and $\stdrv_1 \eqdistn \stdrv_2 \sim \GN(0,[1,1])$ (which is exactly the classical $\CN(0,1)$) imposed with sequential independence 
\[
\gsd_i \seqind \stdrv_i, i=1,2.
\]
Let $W_i \eqdistn \gsd_i \stdrv_i, i=1,2$, which turn out to be two identically distributed semi-$G$-normal random variables $W_1 \eqdistn W_2 \eqdistn \semiGN(0,\varI)$. Note that $(W_1,W_2)$ is a special case of $(X,Y)$ in \ref{eg:indep-ex-org}. We are going to show that, under different types of independence for $W_i$'s or different structures of sequential independence for $\gsd_i$'s and $\stdrv_i$'s, we will have different uncertainty for $W_1W_2^2$ and $(W_1+W_2)^3$ whose extreme scenarios can be described by their sublinear expectations. 

When $W_1 \semiseqind W_2$ or
\begin{equation}
	\gsd_1\seqind \gsd_2 \seqind \stdrv_1 \seqind \stdrv_2,
	\label{eq:indep-four-obj}
\end{equation}
since $W_1+W_2 \eqdistn \sqrt{2}W_1$, then
we have 
	\begin{equation}
	\label{skewness-zero}
	\expt[(W_1+W_2)^3]=\max_{v\in\sdInt} \lexpt_\lprob[(\sqrt{2}v\epsilon)^3]=0=-\expt[-(W_1+W_2)^3].
	\end{equation}
It shows that under semi-sequential independence, $W_1+W_2$ does not have third-moment uncertainty. Meanwhile, since $(W_1,W_2)$ follows bivariate semi-$G$-normal, we also have 
\[
\expt[W_1W_2^2]=\max_{(v_1,v_2)\in \sdInt^2}\lexpt_\lprob[(v_1v^2_2)\stdrv_1\stdrv_2^2]=0.
\]
Since we have shown that semi-sequential independence is symmetric (\ref{prop:semi-seqind-sym}), it means that things do not change when we consider $W_2\semiseqind W_2$ or $
\gsd_2\seqind \gsd_1 \seqind \stdrv_2 \seqind \stdrv_1.
$

However, if we only switch the order of independence between $\gsd_2$ and $\stdrv_1$ in \ref{eq:indep-four-obj} to get
\[
\gsd_1\seqind \stdrv_1 \seqind \gsd_2 \seqind \stdrv_2,
\]
which means $W_1 \fullseqind W_2$, implying $W_1\seqind W_2$. Note that $-W_i\eqdistn W_i,i=1,2$ and $-W_1\fullseqind -W_2$, so we still have 
\[
-(W_1+W_2)=(-W_1)+(-W_2)\eqdistn W_1+W_2.
\]
It indicates some ``symmetry'' in its ($G$-version) distribution. Although its second moment is uncertain, we still expect it to have some kind of ``zero skewness'' which indicates at least ``zero third moment''.
However, it turns out this is not the case:
	\begin{equation}
	\label{skewness-positive}
	\expt[(W_1+W_2)^3]=3\expt[W_1W_2^2]=3(\sdr^2-\sdl^2)\frac{\sdr}{\sqrt{2\pi}} >0 >-\expt[-(W_1+W_2)^3],
	\end{equation}
where we apply \ref{prop:linear-prop-expt} based on the facts that both $W_{1}$ and $W_{2}$ have certain
zero third moment as well as the results from \ref{eg:indep-ex-org}: \[
\expt[W_{1}W_{2}^{2}]>0\text{ while }\expt[W_{1}^{2}W_{2}]=0.
\]

\end{eg}

How can we understand the asymmetry of independence in \ref{eg:two-semiGnorm-skewness} from the representations of the sublinear expectations? This question is answered by the following \ref{prop:represent-two-semignorm}, which can be treated as a special case of \ref{thm:represent-n-seqind-semignorm}.

Let $C_{\text{s.poly}}$ denote a
basic family of bivariate polynomials:
\[
C_{\text{s.poly}}\coloneqq\{\varphi:\varphi(x_{1},x_{2})=(ax_{1}+bx_{2})^{n},\text{ or }cx_{1}^{p}x_{2}^{q},\text{ with }p,q,n\in\numset{N},a,b,c\in\numset{R}\}.
\]


\begin{prop}
(The joint distribution of two semi-$G$-normal random variables under various independence: for a small family of $\varphi$'s)
\label{prop:represent-two-semignorm}
 Consider $W_{i}\sim\semiGN(0,\varI),i=1,2$ and any $\varphi \in C_{\text{s.poly}}$,
\begin{itemize}
\item When $W_{1}\semiseqind W_{2}$, we have 
\[
\expt[\varphi(W_{1},W_{2})]=\max_{\myvec{\sigma}\in\myset{S}_2^0\sdInt}\lexpt_\lprob[\varphi(\sigma_{1}\stdrv_{1},\sigma_{2}\stdrv_{2})],
\]
where 
\begin{align*}
\myset{S}_2^0\sdInt & \coloneqq\bigl\{\myvec{\sigma}=(\sigma_{1},\sigma_{2}):(\sigma_{1},\sigma_{2})\in\{\sdl,\sdr\}^{2}\bigr\}.
\end{align*}
\item When $W_{1}\seqind W_{2}$ or $W_1\fullseqind W_2$, we have 
\[
\expt[\varphi(W_{1},W_{2})]=\max_{\myvec{\sigma}\in\myset{L}_2^0\sdInt}\lexpt_\lprob[\varphi(\sigma_{1}\stdrv_{1},\sigma_{2}\stdrv_{2})],
\]
where 
\begin{align*}
\myset{L}_2^0\sdInt & \coloneqq\bigl\{\myvec{\sigma}=(\sigma_{1},\sigma_{2}(\sigma_{1}\epsilon_{1})):\sigma_{1}\in\{\sdl,\sdr\},\\
 & \hphantom{\coloneqq\{\myvec{\sigma}=(\sigma_{1},\sigma_{2}(\sigma_{1}\epsilon_{1})):}\sigma_{2}(x)=\ind{x>0}(\sigma_{22}-\sigma_{21})+\sigma_{21},\\
 & \hphantom{\coloneqq\{\myvec{\sigma}=(\sigma_{1},\sigma_{2}(\sigma_{1}\epsilon_{1})):}(\sigma_{21},\sigma_{22})\in\{\sdl,\sdr\}^{2}\bigr\}.
\end{align*}
\end{itemize}

\end{prop}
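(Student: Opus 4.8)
The plan is to read this statement as a specialization of \ref{thm:represent-n-seqind-semignorm} to the case $n=2$: that theorem already writes $\expt[\varphi(W_1,W_2)]$ as a supremum of $\lexpt_\lprob[\varphi(\sigma_1\stdrv_1,\sigma_2\stdrv_2)]$ over the full state-space families $\mysetrv{S}_2\sdInt$ (under $\semiseqind$) or $\mysetrv{L}_2\sdInt$ (under $\seqind$ or $\fullseqind$). The entire content of the proposition is therefore that, \emph{for the restricted test class} $C_{\text{s.poly}}$, these suprema over infinite-dimensional families of $\myvec\sigma$-processes collapse onto the finite corner set $\myset{S}_2^0\sdInt=\{\sdl,\sdr\}^2$, respectively onto the two-parameter family of sign-step functions $\myset{L}_2^0\sdInt$. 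The inequality $\geq$ is automatic, since $\myset{S}_2^0\sdInt$ and $\myset{L}_2^0\sdInt$ are subsets of the corresponding full families; so the work is to prove the reverse, that an optimizer may be taken inside these small sets. The common engine is an explicit evaluation of $\lexpt_\lprob[\varphi(\sigma_1\stdrv_1,\sigma_2\stdrv_2)]$ via Gaussian moments: writing $\varphi$ as either $c x_1^p x_2^q$ or $(ax_1+bx_2)^n$, all odd-order moments of $\stdrv_1,\stdrv_2$ vanish and the surviving even-order coefficients carry a controlled sign.

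For the semi-sequential case I would first invoke \ref{prop:semi-seqind-sym} (or \ref{thm:n-semi-seqind-sym}) so that $(W_1,W_2)\sim\semiGN(\myvec 0,\varI\idtymat_2)$, and then use \ref{thm:represent-multi-semignorm} together with \ref{rem:vision-represent} to reduce the representation to $\expt[\varphi(W_1,W_2)]=\max_{(\sigma_1,\sigma_2)\in\sdInt^2}\lexpt_\lprob[\varphi(\sigma_1\stdrv_1,\sigma_2\stdrv_2)]$. It then remains to show the maximum over the square $\sdInt^2$ is attained at a corner. For a monomial $cx_1^px_2^q$ the objective equals $c\,\sigma_1^p\sigma_2^q\,\lexpt[\stdrv^p]\lexpt[\stdrv^q]$, which is $0$ unless $p,q$ are both even and is otherwise, up to an overall sign, separately monotone in $\sigma_1$ and $\sigma_2$ on the positive interval $\sdInt$; for $(ax_1+bx_2)^n$ the objective is $0$ when $n$ is odd and equals $(a^2\sigma_1^2+b^2\sigma_2^2)^{n/2}\lexpt[\stdrv^n]$ when $n$ is even, a monotone function of $a^2\sigma_1^2+b^2\sigma_2^2$ whose maximum on the square sits at $(\sdr,\sdr)$. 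In every case the extremum is a corner, so the reduction to $\myset{S}_2^0\sdInt$ follows.

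For the sequential case I would work from the backward iteration underlying \ref{thm:represent-n-seqind-semignorm}, namely $\expt[\varphi(W_1,W_2)]=\max_{\sigma_1\in\sdInt}\lexpt_{\stdrv_1}\!\big[\max_{\sigma_2\in\sdInt}\lexpt_{\stdrv_2}[\varphi(\sigma_1\stdrv_1,\sigma_2\stdrv_2)]\big]$, and analyze the two layers in turn. For the inner layer, fix $w_1=\sigma_1\stdrv_1$ and set $g(w_1,s)\coloneqq\lexpt_{\stdrv_2}[\varphi(w_1,s\stdrv_2)]$; expanding in Gaussian moments shows that $s\mapsto g(w_1,s)$ is, up to sign, monotone in $s$ (monomial case), and that $\lexpt_{\stdrv_2}[(aw_1+bs\stdrv_2)^n]$ is a polynomial in $s^2$ whose non-constant coefficients do not change sign as $s$ varies (being non-negative for $n$ even, or all of one sign determined by $\mathrm{sign}(w_1)$ for $n$ odd, since $w_1^{\,n-2j}$ has fixed sign). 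Hence the inner optimizer $\sigma_2^\ast$ lies in $\{\sdl,\sdr\}$ and depends on $w_1$ only through $\mathrm{sign}(w_1)=\mathrm{sign}(\stdrv_1)$, which is exactly the sign-step form $\sigma_2(x)=\ind{x>0}(\sigma_{22}-\sigma_{21})+\sigma_{21}$ with $(\sigma_{21},\sigma_{22})\in\{\sdl,\sdr\}^2$ prescribed by $\myset{L}_2^0\sdInt$. For the outer layer I would substitute this optimal $\sigma_2^\ast$, obtaining $\psi(w_1)\coloneqq g(w_1,\sigma_2^\ast(w_1))$, and show $\sigma_1\mapsto\lexpt_{\stdrv_1}[\psi(\sigma_1\stdrv_1)]$ is maximized at an endpoint. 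The monomial and even-power cases are immediate because $\psi$ is then positively homogeneous in $w_1$, giving $\lexpt_{\stdrv_1}[\psi(\sigma_1\stdrv_1)]\propto\sigma_1^{\deg}$. The delicate subcase is an odd power $(ax_1+bx_2)^n$, where $\psi$ is \emph{not} homogeneous; here (taking $a>0$, the reversed case being symmetric) I would split the outer expectation at $\stdrv_1=0$, use the symmetry $\stdrv_1\to-\stdrv_1$ together with $n$ odd to fold the two halves into a single integral over $z>0$ of the difference $\lexpt_{\stdrv_2}[(a\sigma_1 z+b\sdr\stdrv_2)^n]-\lexpt_{\stdrv_2}[(a\sigma_1 z+b\sdl\stdrv_2)^n]$, and observe that this difference is a sum of terms $d_j\,\sigma_1^{\,n-2j}$ with $d_j\geq0$ (because $\sdr^{2j}\geq\sdl^{2j}$ and $z>0$), hence increasing in $\sigma_1$ with maximum at $\sigma_1=\sdr$.

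I expect this last step — the outer maximization for the odd linear-combination powers — to be the main obstacle, precisely because $\psi(w_1)=\max_{\sigma_2}\lexpt_{\stdrv_2}[(aw_1+b\sigma_2\stdrv_2)^n]$ loses the positive homogeneity that makes the other cases transparent, so monotonicity in $\sigma_1$ is not visible term by term. The cancellation produced by the sign symmetry of $\stdrv_1$ is what restores a representation with manifestly non-negative coefficients; getting this bookkeeping right (including the boundary cases $a=0$ or $b=0$, and carrying the matching optimal pair $(\sigma_{21},\sigma_{22})$ back into $\myset{L}_2^0\sdInt$) is where the care is needed. As a consistency check I would verify the scheme reproduces \ref{eg:two-semiGnorm-skewness}: for $\varphi=x_1x_2^2$ it yields $\sigma_2^\ast=\sdr\,\ind{\stdrv_1>0}+\sdl\,\ind{\stdrv_1<0}$ and outer value $\sdr(\sdr^2-\sdl^2)\lexpt[\stdrv^+]$, while $\varphi=x_1^2x_2$ gives $0$, matching the stated asymmetry in the third moment.
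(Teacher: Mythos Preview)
Your proposal is correct and follows essentially the same route as the paper: reduce to the full representations from \ref{thm:represent-n-seqind-semignorm}, then show by explicit Gaussian-moment computation that for $\varphi\in C_{\text{s.poly}}$ the optimizer lies in the finite corner set (semi-sequential case) or the sign-step family (sequential case), with the inner optimizer $\sigma_2^\ast$ depending only on $\mathrm{sign}(w_1)$.

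The only notable difference is in how the outer maximization over $\sigma_1$ is handled. You use positive homogeneity of $\psi$ (for monomials and even powers) and a direct sign-folding argument yielding non-negative coefficients $d_j$ (for odd powers of $ax_1+bx_2$). The paper instead isolates a mean-zero piece $H(\sdl;W_1)$ and observes that the remainder $K(w_1)=\ind{w_1\geq 0}\sum_k C_k w_1^{2k+1}$ with $C_k\geq 0$ is \emph{convex}, then invokes \ref{thm:conn-G} to conclude $\expt[K(W_1)]=\lexpt[K(\sdr\stdrv_1)]$. Your folding computation and the paper's convexity observation are two presentations of the same fact (the coefficients $C_k$ are exactly your $d_j$ up to normalization), so the arguments are equivalent; yours is slightly more self-contained, while the paper's recycles \ref{thm:conn-G} to avoid recomputing the outer expectation. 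Your consistency check against \ref{eg:two-semiGnorm-skewness} is also exactly what the paper's proof of the $q$-even, $p$-odd monomial case delivers.
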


\begin{rem}
	\ref{prop:represent-two-semignorm} provides us with the following intuitions: 
\begin{enumerate}
	\item we can directly see the difference between sequential and semi-sequential independence: under this basic setup, if $W_1\semiseqind W_2$, we can use the upper envelope of a \emph{four-element} set to represent (or compute) $\expt[\varphi(W_1,W_2)]$, while an \emph{eight-element} set is required when $W_1\seqind W_2$. Meanwhile, note that $\myset{S}_2^0\sdInt\subset \myset{L}_2^0\sdInt$: it indicates that sequential independence can cover a \emph{larger family} of models compared with the semi-sequential one. This statement is confirmed in general by \ref{thm:represent-n-seqind-semignorm};
	\item it reveals another a more intuitive insight on why $\expt[(W_1+W_2)^3]>0$ under sequential independence: the set difference $\myset{L}_2^0\sdInt\setminus\myset{S}_2^0\sdInt$ contains those elements where $\sigma_2$ actually depends on the previous $\sigma_1\epsilon_1$ (or simply the \emph{sign} of $\epsilon_1$). Although this kind of dependence does not create a shift in the mean part of $W_1+W_2$ (which is still zero), it will have strong effects on the skewness of the distribution of $W_1+W_2$. This phenomenon is related to the so-called \emph{leverage effect} in the context of financial time series analysis. In the companion of this paper,
	we will use a dual-volatility regime-switching data example to give a statistical illustration of this phenomenon and show the necessity of discussing $\myset{L}_2^0\sdInt$.
	\item we can get one specific \emph{interpretation of asymmetry} in the sequential independence from the format of $\myset{L}_2^0\sdInt$ - the role of $\sigma_1$ and $\sigma_2$ are not symmetric: when $W_1 \seqind W_2$, this sequential order means that $W_1$ is realized \emph{first} and
	the volatility part $\sigma_2\in \sdInt$ of $W_2$ may or may not depend on the value of $W_1$ so as to make the distributional uncertainty in $W_2$ unchanged. In short, when we aggregate the uncertainty set from time $1$ to $2$, due to the sequential order of the data, we can only have $\sigma_2$ is affected by a function of $(\sigma_1,\epsilon_1)$ but we almost \emph{never} have $\sigma_1$ is influenced by a function of $(\sigma_2,\epsilon_2)$ (due to the restriction from the order of time). 
As opposed to this asymmetry, the semi-sequential independence is \emph{symmetric} indicated by the form of $\myset{S}_2^0\sdInt$: the role of $\sigma_1$ and $\sigma_2$ are symmetric so we must have the same results for $\expt[W_1W_2^2]$ and $\expt[W_1^2W_2]$ under $W_1\semiseqind W_2$. 
	\item  More importantly, it also offers guidance on the possible \emph{simulation study} in this framework. When one intends to generate a data sequence that can go through the scenarios covered by a sequential independent random variables, a more cautious attitude and in-depth thought are required: different blocks of samples with \emph{separate} $\sigma_i\in\sdInt$ may only go through $\myset{S}_2^0\sdInt$ which can be at most treated as semi-sequential independence rather than sequential independence. In order to touch the latter one, one needs to generate those scenarios that allow possible classical \emph{dependence} between $\sigma_i$ and previous $(\sigma_k,\epsilon_k)$ with $k<i$. By borrowing the language of a state-space model, if we treat $\sigma_i$ as \emph{states} and $\sigma_i\epsilon_i$ as \emph{observations}, uncertainty in the \emph{dependence} between current states with previous observations needs to be considered in order to sufficiently discuss the uncertainty set covered by the \emph{sequential independence}. Otherwise, it is likely to be at most \emph{semi-sequential independence}. 
	We will further discuss this point in \ref{subsec:data-seq-G-normal} and further in the companion paper of this work.
\end{enumerate}

\end{rem}

\begin{eg}[The direction of independence comes from finer structure]
\label{eg:direct-indep-structure}
	In the $G$-framework, a symmetric (or mutual) independence between two non-constant random variables only arises if they belong to either of the following two categories: classical distribution or maximal distribution (\cite{hu2014independence}). 
	One interesting question is: how about the independence for combinations (such as products) of them? Logically speaking, if the combination does not fall into the two cases, they must have asymmetric independence, but where does this ``asymmetry'' come from? 
	To be specific, suppose we have $V\eqdistn\bar{V}\eqdistn\Maximal\sdInt$ and $\stdrv\eqdistn\bar{\stdrv}\eqdistn\GN(0,[1,1])$.
Meanwhile, assume independence $V\seqind\bar{V}$ and $\stdrv\seqind\bar{\stdrv}$.
By \ref{prop:one-indep-to-mutual-indep}, we also have $\bar{V}\seqind V$
and $\bar{\stdrv}\seqind\stdrv$. However, we do not have $W=V\epsilon$
and $\bar{W}=\bar{V}\bar{\stdrv}$ are mutually independent by \ref{thm:mutual-indep},
because $W$ and $\bar{W}$ are neither classically nor maximally
distributed. To further explain the interesting phenomenon here, we
have $V$ and $\bar{V}$ are \emph{mutually} independent, so are $\stdrv$ and
$\bar{\stdrv}$. It seems that the role of ``$V$ versus $\bar{V}$'' and
``$\stdrv$ versus $\bar{\stdrv}$'' should all be ``symmetric'' and
they do not appear any ``direction'' yet. Nonetheless, when we consider
the product $W$ and $\bar{W}$, if they are independent, we must
have the direction that either $W\seqind\bar{W}$ or $\bar{W}\seqind W$,
but there seems no middle stage where $W$ and $\bar{W}$ have some degree
of independence where their roles in this relation are symmetric. 
One question we may ask is, does there exist such kind of middle stage? 

In this example, we will give an affirmative answer to this question. 
It turns out the current conditions of independence are not enough,
the relation depends on the structure of independence among the \emph{four}
objects $V,\bar{V},\stdrv$ and $\bar{\stdrv}$. 

To be compatible with the assumed independence $V\seqind\bar{V}$ and $\stdrv\seqind\bar{\stdrv}$,
suppose we have additional sequential independence among the four objects.
There are essentially four cases: 
\begin{enumerate}
\item If $V\seqind\bar{V}\seqind\stdrv\seqind\bar{\stdrv}$, we have $W\semiseqind\bar{W},$
\item If $\bar{V}\seqind V\seqind\bar{\stdrv}\seqind\stdrv,$ we have $\bar{W}\semiseqind W,$
\item If $V\seqind\epsilon\seqind\bar{V}\seqind\bar{\stdrv}$, we have $W\seqind\bar{W},$
\item If $\bar{V}\seqind\bar{\stdrv}\seqind V\seqind\epsilon$, we have
$\bar{W}\seqind W.$ 
\end{enumerate}

Note that Case 1 and 2 are equivalent (by \ref{prop:semi-seqind-sym}). The semi-sequential independence (which is the $G$-version sequential independence) between $W$ and $\bar{W}$ is symmetric. 
From \ref{prop:one-indep-to-mutual-indep},
the first two cases are also equivalent to 
\[
\bar{V}\seqind V\seqind\stdrv\seqind\bar{\stdrv},
\]
or
\[
V\seqind\bar{V}\seqind\bar{\stdrv}\seqind\stdrv.
\]
However, Case 3 and 4 (sequential independence) are not equivalent (by \ref{eg:indep-ex-org}).

\end{eg}

\begin{eg}[The sequential independence is not an ``order'' with transitivity]
	Although sequential independence has its ``order'', it is not really an order relation with \emph{transitivity}. In other words, for three random variables $X,Y,Z\in \myset{H}$, the sequential independence $X\seqind Y$ and $Y\seqind Z$ do not necessarily imply $X\seqind Z$.
	 A trivial example is when we consider $Z_i,i=1,2$ both follow maximal distribution, if we have $Z_1\seqind Z_2$, then we have $Z_2\seqind Z_1$ (by \ref{thm:maximal-multi-relation}), but we never have $Z_1\seqind Z_1$. 
	 A non-trivial example (with three distinct random variables) comes from the fully-sequential independence structure \ref{full-seq-ind-n}. For two semi-$G$-normal objects $W=V\stdrv$ and $\bar{W}=\bar{V}\bar{\stdrv}$, $W\fullseqind \bar W$ means 
	 \[
	 V \seqind \stdrv \seqind \bar{V} \seqind \bar{\stdrv}.
	 \]
	 By \ref{ind-subseq}, we have $V\seqind \bar{V}$ and $\stdrv\seqind \bar{\stdrv}$. Then we further have the other direction of independence also holds $\bar{V}\seqind V$ and $\bar{\stdrv}\seqind \stdrv$ (by \ref{prop:one-indep-to-mutual-indep}). Then we have the following counter-example:
	 \[ \bar{\stdrv} \seqind \stdrv \text{ and } \stdrv \seqind \bar{V}\text{ but }\bar{\stdrv}\seqind \bar{V}\text{ does not hold},\]
	 because we already have $\bar{V} \seqind \bar{\stdrv}$ but the pair $(\bar{V},\bar{\stdrv})$ cannot have mutual independence by \ref{thm:mutual-indep}. 
	 Similarly, we have another example 
	 \[
	 \bar{V} \seqind V \text{ and } V\seqind \stdrv \text{ but }\bar{V}\seqind \stdrv \text{ does not hold.}
	 \]
\end{eg}

\subsection{What kinds of sequences are not $G$-normal?}
\label{subsec:data-seq-G-normal}





For examples: 
\begin{enumerate}
\item generate $Y_{i}\sim N(0,\sigma_i^{2})$ with $\sigma_i\sim\text{Unif}\sdInt$,
$i=1,2,\dotsc,n.$
This is essentially a sample from independent normal mixture (with scaling parameter following a uniform distribution). Note that this essence is not affected by the distribution of $\sigma$ (as long as $\sigma$ follows a fixed distribution). The whole data sequence $Y_i$ does not have any distributional uncertainty. 
\item first generate $\sigma_{i}\sim\text{Unif}\sdInt$, $i=1,2\dotsc,n$,
then generate $Y_{ij}\sim N(0,\sigma_{i}^{2})$ with $j=1,2,\dotsc,m$.
By introducing this blocking design, even though we pretend to treat
the switching rule of $\sigma_{i}$ as unknown here (although it may
not be so hard for data analyst to observe this pattern), if we look
at the uncertainty set considered in this generation scheme: ${\CN(0,\sigma^{2})\colon\sigma\in\sdInt}$,
it is actually at most a pseudo simulation of semi-$G$-normal distribution.
One typical feature of this sample is that it does not have skewness-uncertainty:
it has a certain zero skewness. 
\item consider equal-spaced grid 
\[
\sdl=\sigma_{1}<\sigma_{2}<\dotsc<\sigma_{m}=\sdr.
\]
For each $\sigma_{i}$ with $i=1,2,\dotsc,m$, generate $Y_{ij}\sim\CN(0,\sigma_{i}^{2})$,
$j=1,2,\dotsc,n$. Then treat 
\begin{equation}
\label{eq:max-mean-est-1}
	\max_{1\leq i\leq m}\frac{1}{n}\sum_{j=1}^{n}\varphi(Y_{ij}),
\end{equation}
as an approximation of 
$
\expt[\varphi(\GN(0,\varI)].
$
This kind of schemes has been used in some of the literature (such as \cite{deng2019stability,fei2019consistency}).
We may cautiously step back and ask ourselves: is this a valid approximation? Not really, it is actually an approximation of $\expt[\varphi(\semiGN(0,\varI)]$: 
\[
\max_{1\leq i\leq m}\frac{1}{n}\sum_{j=1}^{n}\varphi(Y_{ij}) \overset{n\to\infty}{\to} \max_{1\leq i\leq m} \lexpt[\varphi(\sigma_i\stdrv)] \overset{m\to\infty}{\to} \max_{\sigma \in \sdInt} \lexpt[\varphi(\sigma\stdrv)],
\]
where the first convergence can be treated as a classical almost sure convergence and the second one is a deterministic one due to the design of $\{\sigma_i\}_{i=1}^m$. 
This fact does not change even using some overlapping groups because each group can be at most treated as a sample from a normal mixture.
Again, the problem of the above-mentioned scheme is that it could be misleading
for general audience. It is actually going through the uncertainty
set of $\semiGN(0,\varI)$ in a semi-sequential independence rather than in sequential independence. For general $\varphi$, only in the later case, the normalized sum will be closer to the $G$-normal distribution. 
However, this issue can be fixed by consdering an extra step: if the function $\varphi$ considered in the question can be proved to be a convex or concave one, then in this practical sense, by \ref{cor:convex-case-same}, the semi-sequential independence and sequential independence can be treated as the same. 
For general fixed $\varphi$, we usually need to consider the $G$-EM procedure to do the approximation as discussed in \ref{subsec:connect-expt-CN-GN}, which is also closely related to Section 4 in \cite{fang2019limit}. Alternatively, we may consider a small family of $\varphi$'s so that we have a finite dimensional set of distributions to go through (such as the one in \ref{prop:represent-two-semignorm}). In this way, we can get a feasible approximation based on the idea of max-mean estimation by \cite{jin2021optimal} similar to the form \ref{eq:max-mean-est-1}. 
\end{enumerate}

The idea of this section will be further illustrated in the companion of this paper by using a series of data experiments.

\section{Conclusions and extensions}
\label{sec:conclude-and-ext}


For a researcher or practitioner from various backgrounds who may not be familiar with the notion of nonlinear expectation, but is comfortable the  classical probability theory and normal distribution, when 
they try to understand the $G$-normal from classical normal, it will be intuitive and beneficial if there exists an intermediate structure that can be directly transformed from the classical normal and also create a bridge towards the $G$-normal. 
Another thinking gap is from the classical independence (symmetric) to the $G$-version sequential independence (asymmetric). It will be useful if we have an intermediate stage of independence that is under distributional uncertainty but preverses the symmetry, so that it is associated with our common first impression on the relation beween two \emph{static} separate random objects both with distributional uncertainty, but no sequential order assumed. Once we talk about two objects with sequential order or in a dynamic way, it becomes possible to involve the sequential independence. 



This paper has rigorously set up and discussed the semi-$G$-normal distributions with its own types of independence, especially the semi-sequential independence. 
The hybrid roles of these new substructures, the semi-$G$-normal with its semi-sequential independence, can be summarized as follows: 
\begin{enumerate}
	\item The semi-$G$-normal $\semiGN(0,\varI)$ is closely related to the classical normal that it is simply a classical normal $\CN(0,1)$ scaled by a $G$-version constant $V\sim \Maximal\sdInt$ (with a typical independence). Then the semi-$G$-normal only exhibits the moment uncertainty with even order but its odd moments, especially the third moment (related to skewness) is preserved to be zero. Meanwhile, the semi-$G$-normal is also closely connected with the $G$-normal: they have the same sublinear expectation under a convex or concave transformation $\varphi$. For general $\varphi$, they are connected by the using the $G$-version central limit theorem. 
	\item The semi-$G$-normal $\semiGN(0,\varI)$ with semi-sequential independence also preserve the properties of classical normal in multivariate situation (\ref{thm:n-semi-seqind-sym}). 
	\item \ref{eg:direct-indep-structure} shows the hybrid and intermediate role of semi-sequential independence between classical and the sequential independence. The semi-sequential independence is related to the classical one in the sense that it is symmetric ($W_1\semiseqind W_2$ implies $W_2\semiseqind W_1$) and it is also related to the sequential independence under convex or concave $\varphi$ as illustrated in \ref{cor:convex-case-same}. 
\end{enumerate}

We can use a comparison table (\ref{tb:comparison}) to summarize the hybrid roles of this substructure: semi-$G$-normal with semi-sequential independence, creating a bridge connecting classical normal and $G$-normal. 


\begin{table}[ht]
\centering
\caption{Comparison among normal, semi-$G$-normal and $G$-normal}
\begin{tabular}{lcccc}
\toprule
 & Normal & Semi-$G$-normal & $G$-normal\tabularnewline

 & $N(0,\sigma^{2})$ & $\semiGN(0,\varI)$ & $\GN(0,\varI)$\tabularnewline

\midrule
Expectation & Linear & Sublinear & Sublinear \tabularnewline
 \hline
1st-moment & Certain (0) & Certain (0) & Certain (0)\tabularnewline

2nd-moment & Certain ($\sigma^{2}$) & Uncertain ($\varI$) & Uncertain ($\varI$)\tabularnewline

3rd-moment & Certain (0) & Certain (0) & Uncertain\tabularnewline
\hline 
Independence & $\begin{array}{c}
\text{Classical }(\independent)\\
\text{Symmetric}
\end{array}$ & $\begin{array}{c}
\text{Semi-sequential }(\semiseqind)\\
\text{Symmetric}
\end{array}$  & $\begin{array}{c}
\text{Sequential }(\seqind)\\
\text{Asymmetric}
\end{array}$ \tabularnewline
\hline 
(Setup) & $\begin{array}{c}
X\eqdistn\bar{X}\eqdistn\CN(0,\sigma^{2})\\
X\independent\bar{X}
\end{array}$ & $\begin{array}{c}
X\eqdistn\bar{X}\eqdistn\semiGN(0,\varI)\\
X\semiseqind\bar{X}
\end{array}$ & $\begin{array}{c}
X\eqdistn\bar{X}\eqdistn\GN(0,\varI)\\
X\seqind\bar{X}
\end{array}$\tabularnewline
Stability & $X+\bar{X}\eqdistn\sqrt{2}X$ 
& $X+\bar{X}\eqdistn\sqrt{2}X$ 
& $X+\bar{X}\eqdistn\sqrt{2}X$
\tabularnewline
Multivariate & $(X,\bar{X})\eqdistn\CN(\myvec{0},\sigma^{2}\idtymat_{2})$ 
& $(X,\bar{X})\eqdistn\semiGN(\myvec{0},\varI\idtymat_{2})$
& $(X,\bar{X})\overset{\text{d}}{\neq}\GN(\myvec{0},\varI\idtymat_{2})$ \tabularnewline
\bottomrule 
\end{tabular}
\label{tb:comparison}
\end{table}







Furthermore, we hope the substructures proposed in this paper will open up new extensions of discussions on the difference and connection between the $G$-expectation framework with the classical one, by providing more details on the operations on the distributions and independence in a ground where the researchers in both areas can have a proper overlapping intuition. 
In this way, we are able to have a finer discussion on the model uncertainty in the dynamic situation where the volatility part is ambiguous or cannot be precisely determined for data analysts. 

Next we will give several possible directions of extensions of this paper. 
Interestingly, the discussion in \ref{subsec:inference-ssvm} actually shows the vision that, after introducing the tool of sublinear expectation with representations in different situations, we are able to extend our horizon of statistical questions to include those that could be too complicated to handle under classical probability system. 


\subsection{The semi-$G$-family of distributions}
\label{subsec:semi-G-family}


We can extend our current notion of semi-$G$-normal distribution
to a broader class of semi-$G$-family of distributions or the class
of semi-$G$-version distributions. 

For simplicity, we only provide this notion in one-dimensional case. 

\begin{defn}
A random variable $W$ follows a semi-$G$-version distribution if
there exists a maximally distributed $\myvec{Z}\sim\Maximal(\Theta)$
(where $\Theta\subset\numset{R}^{k}$ is a bounded, closed and convex
set) and a classically distributed $\stdrv$, satisfying 
\[
Z\seqind\stdrv,
\]
such that 
\[
W=f(Z,\stdrv),
\]
for a Borel measurable function $f$ satisfying $f(Z,\stdrv)\in\myset{H}$. 
\end{defn}



\begin{rem}
The three types of independence in \ref{semi-GN-ind-n} can be carried over to members in
semi-$G$-family of distributions. For instance, with $W_{i}=f_i(Z_{i},\stdrv_{i}),i=1,2,\dotsc,n$,
they are called semi-sequentially independent if 
\[
Z_{1}\seqind Z_{2}\seqind\cdots\seqind Z_{n}\seqind\stdrv_{1}\seqind\stdrv_{2}\seqind\cdots\seqind\stdrv_{n}.
\]
\end{rem}


\begin{rem}
Most of \emph{classical }distributions should exist in this framework.
To give a quick validation, since there exists $\epsilon\sim\GN(0,[1,1])$
which follows standard normal distribution $\CN(0,1)$ with classical cumulative distribution function (cdf) $\Phi$
which is defined as 
\[
\Phi(x)\coloneqq\expt[\ind{\stdrv\leq x}]=\lexpt[\ind{\stdrv\leq x}].
\]
(Such cdf can be defined using the solution of the classical heat
equation.) Let 
\[
U\coloneqq\Phi(\epsilon).
\]
Note that $\Phi$ is a bounded and continuous function, so $U\in\myset{H}$.
Then we can check that $U$ follows
classical $\text{Unif}[0,1]$ distribution. Next we can use the classical inverse
cdf method. For any classical distribution with cdf $F$ (no matter
it is conitnuous or not), let 
\begin{equation}
	\label{eq:gen-inverse}
	F^{-1}(y)\coloneqq\inf\{x;F(x)\geq y\},
\end{equation}
denote the generalized inverse of $F$. Let $X\coloneqq F^{-1}(U).$ We only need
to add suitable conditions on $F$ so that $\expt[\abs{X}]<\infty$
to have $X\in\myset{H}$. Then we get a random object $X$ following
distribution with cdf $F$. 
\end{rem}

\begin{rem}
Note that we assume $\Theta$ to be a bounded, closed and convex set
for theoretical convenience. In practice, this condition can be weakened.
For instance, when we talk about the semi-$G$-normal random variable
$W=V\stdrv$ where $V\sim\Maximal\sdInt$, the interval $\sdInt$
can be changed to $\{\sdl,\sdr\}$ or $\{\sdl,(\sdl+\sdr)/2,\sdr\}$.
\end{rem}


\begin{eg}
	Here are several special examples of semi-$G$-version distributions. 
	\begin{enumerate}
		\item Consider $Z\sim\Maximal\sdInt$, $\stdrv\sim\CN(0,1)$ and $f(x,y)=xy$,
then $W$ follows the semi-$G$-normal distribution, whose distributional
uncertainty can be characterized by 
\[
\{\CN(0,\sigma^{2}),\sigma\in\sdInt\}.
\]

\item Consider $Z=(U,V)\sim\Maximal(\meanInt\times\sdInt)$, $\epsilon\sim\CN(0,1)$
and 
\[
W=U+V\stdrv.
\]
Then the distributional uncertainty of $W$ can be described as 
\[
\{\CN(\mu,\sigma^{2}),\mu\in\meanInt,\sigma\in\sdInt\}.
\]
We can also show that $\expt[\varphi(W)]$ can cover a family of normal
mixture models. 
\item (Semi-$G$-exponential) Let $Z\sim\Maximal[\mylower{\lambda},\myupper{\lambda}]$ and $\epsilon\sim\exp(1)$.
Consider 
\[
W=Z\stdrv,
\]
then we can check that the distributional uncertainty of $W$ can
be written as 
\[
\{\exp(\lambda),\lambda\in[\mylower{\lambda},\myupper{\lambda}]\},
\]
where each $\exp(\lambda)$ has pdf $f(x)=\frac{1}{\lambda}e^{-x/\lambda}\ind{x\geq 0}$. 
\item (Semi-$G$-Bernoulli) With $0\leq\mylower{p}\leq\myupper{p}\leq1$,
let $\stdrv\sim\text{Unif}[0,1]$, and 
\[
Z\sim\Maximal[\mylower{p},\myupper{p}].
\]
Consider 
\[
W=\ind{\epsilon-Z<0}.
\]
Then $W$ has distributional uncertainty
\[
\{\text{Bern}(p),p\in[\mylower{p},\myupper{p}]\}.
\]
\end{enumerate}
	
\end{eg}


\begin{eg}
	In general, 
	 we can take advantage of the idea of classical inverse cdf method to design the transformation $f$. Then we are able to consider any distributional
uncertainty in the form of 
\begin{equation}
\{F(x;\theta),\theta\in\Theta\},\label{eq:classical-distn-uncertainty}
\end{equation}
where $F(x;\theta)$ is the cdf of a classical distribution with parameter
$\theta$. Let $F^{-1}(y;\theta)$ denote the generalized inverse
of $F(x;\theta)$ as shown in \ref{eq:gen-inverse}. Consider $\myvec{Z}\sim\Maximal(\Theta)$ and $\stdrv\sim\text{Unif}[0,1]$,
and 
\[
W=F^{-1}(\stdrv,\myvec{Z}).
\]
After we add more conditions on $F$ such that $W\in \myset{H}$, 
we have $W$ has distributional uncertainty in the form \ref{eq:classical-distn-uncertainty},
because
\[
\expt[\varphi(W)]=\sup_{\myvec{z}\in\Theta}\lexpt[\varphi(F^{-1}(\stdrv,z))],
\]
where $F^{-1}(\stdrv,z)$ follows the distribution with cdf $F(x;z)$. 
	
\end{eg}


To further study the properties of semi-$G$-version distributions and the semi-sequential independence, 
let $\bar{\myset{H}}_{s}$ denote the semi-$G$-family of distributions: 
\[\bar{\myset{H}}_{s}\coloneqq\{X\in\myset{H}:X=f(V,\stdrv),V\sim\Maximal(\Theta),\stdrv\text{ classical},V\seqind\stdrv\}.\]
Note that $\bar{\myset{H}}_{s}$ satisfies: 
\begin{enumerate}
\item If $X\in\bar{\myset{H}}_{s}$, $aX\in\bar{\myset{H}}_{s}$ for any
$a\in\numset{R}$, 
\item If $X\in\bar{\myset{H}}_{s}$, $\abs{X}\in\bar{\myset{H}}_{s}$, 
\item For $X,Y\in\text{\ensuremath{\bar{\myset{H}}_{s}}}$, if $X\semiseqind Y$,
$X+Y\in\bar{\myset{H}}_{s}.$  
\end{enumerate}

For any $X,Y\in\bar{\myset{H}}_{s}$, we have $X\semiseqind Y$
is equivalent to $Y\semiseqind X$ (by the symmetry of semi-sequential
independence as illustrated by \ref{prop:semi-seqind-sym}), so we can omit the direction between $X$ and $Y$
and also call the mutual semi-sequential independence between them
as semi-$G$-independence. 

\begin{defn}
\label{def:semiGindep}(Semi-$G$-independence) For any $X,Y\in\bar{\myset{H}}_{s}$,
with $X=f(V_{x},\stdrv_{x})$ and $Y=g(V_{y},\stdrv_{y})$, $X$ and
$Y$ are semi-$G$-independent if 
\begin{enumerate}
\item $(V_{x},V_{y})\seqind(\stdrv_{x},\stdrv_{y})$, 
\item $V_{x}\seqind V_{y}$ (which is equivalent to $V_{y}\seqind V_{x}$), 
\item $\stdrv_{x},\stdrv_{y}$ are classically independent.
\end{enumerate}
\end{defn}

\begin{defn}
\label{def:semiGindep-n}(Semi-$G$-independence of sequence) For a sequence $\{X_{i}\}_{i=1}^{n}\subset\bar{\myset{H}}_{s}$
with $X_{i}=f_i(V_{i},\stdrv_{i})$, they are (mutually)
semi-$G$-independent if 
\begin{enumerate}
\item $(V_{1},V_{2},\dotsc,V_{n})\seqind(\stdrv_{1},\stdrv_{2},\dotsc,\stdrv_{n})$, 
\item $\{V_{i}\}_{i=1}^{n}$are $G$-version (sequentially) independent
(that is $\nseqind{V}{n}$), 
\item $\{\stdrv\}_{i=1}^{n}$ are classically independent.
\end{enumerate}
\end{defn}
A sequence $\{X_{i}\}_{i=1}^{n}$ are called \emph{semi-$G$-version
i.i.d.} (or semi-$G$-i.i.d.) if they are identically distributed
and semi-$G$-independent. 

In the following context, consider $X,Y\in\bar{\myset{H}}_{s}$, with $X=f(V_{x},\stdrv_{x})$
and $Y=g(V_{y},\stdrv_{y})$. 
\begin{prop}
\label{prop:joint-two}If $X$ and $Y$ are semi-$G$-independent,
for the joint vector $(X,Y)$, we have for any $\varphi\in\fspacedef$,
\[
\expt[\varphi(X,Y)]=\sup_{(v_{x},v_{y})\in[\sdl_{x},\sdr_{x}]\times[\sdl_{y},\sdr_{y}]}\lexpt[\varphi(f(v_{x},\stdrv_{x}),g(v_{y},\stdrv_{y}))].
\]
\end{prop}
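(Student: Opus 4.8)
The plan is to mirror the proof of \ref{thm:represent-uni-semignorm}: first peel off the classical $\stdrv$-part using the independence between the maximal block and the classical block, then invoke the representation of the bivariate maximal distribution on the $V$-part. Writing $X=f(V_{x},\stdrv_{x})$ and $Y=g(V_{y},\stdrv_{y})$, I would introduce the inner functional
\[
\psi(v_{x},v_{y})\coloneqq\lexpt[\varphi(f(v_{x},\stdrv_{x}),g(v_{y},\stdrv_{y}))],
\]
obtained by freezing the maximal coordinates at scalar values $(v_{x},v_{y})$. The target is to show $\expt[\varphi(X,Y)]=\expt[\psi(V_{x},V_{y})]$ and then that this equals the supremum of $\psi$ over $[\sdl_{x},\sdr_{x}]\times[\sdl_{y},\sdr_{y}]$.

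First I would apply Condition 1 of \ref{def:semiGindep}, namely $(V_{x},V_{y})\seqind(\stdrv_{x},\stdrv_{y})$, to the composite test function $h(v_{x},v_{y},e_{x},e_{y})\coloneqq\varphi(f(v_{x},e_{x}),g(v_{y},e_{y}))$, obtaining
\[
\expt[\varphi(X,Y)]=\expt[\expt[\varphi(f(v_{x},\stdrv_{x}),g(v_{y},\stdrv_{y}))]_{v_{x}=V_{x},\,v_{y}=V_{y}}].
\]
Since $\stdrv_{x}$ and $\stdrv_{y}$ are classically distributed, the inner sublinear expectation collapses to a linear one exactly as in \ref{rem:seq-indep-and-cl-indep}, and Condition 3 (classical independence of $\stdrv_{x},\stdrv_{y}$) ensures that $(\stdrv_{x},\stdrv_{y})$ carries a genuine product law, so the inner object is precisely $\psi(v_{x},v_{y})$ and is finite because $\varphi$ is bounded. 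This identifies $\expt[\varphi(X,Y)]=\expt[\psi(V_{x},V_{y})]$.

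Next I would use Condition 2, $V_{x}\seqind V_{y}$, together with the fact that both are maximal: by \ref{thm:maximal-multi-relation} the joint vector satisfies $(V_{x},V_{y})\sim\Maximal([\sdl_{x},\sdr_{x}]\times[\sdl_{y},\sdr_{y}])$. Applying the representation of the multivariate maximal distribution (\ref{prop:rep-multi-maximal}, in its degenerate form) then gives
\[
\expt[\psi(V_{x},V_{y})]=\sup_{(v_{x},v_{y})\in[\sdl_{x},\sdr_{x}]\times[\sdl_{y},\sdr_{y}]}\psi(v_{x},v_{y}),
\]
which, after unfolding $\psi$, is exactly the claimed identity.

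The main obstacle is a function-space issue rather than an algebraic one. Because $f$ and $g$ are merely Borel measurable (with only $f(V_{x},\stdrv_{x}),g(V_{y},\stdrv_{y})\in\myset{H}$), the composite $h$ need not lie in $\fspacedef(\numset{R}^{4})$ and $\psi$ need not lie in $\fspace(\numset{R}^{2})$, so neither the independence identity nor the maximal representation applies verbatim. I would resolve this exactly as \ref{lem:varphi-stdrv} does in the product case: establish continuity (hence admissibility) of $\psi$ by bounded convergence, where boundedness of $\varphi$ supplies the dominating function and mild continuity of $f,g$ in their first arguments gives pointwise convergence, and then extend the independence relation from $\fspacedef$ to the bounded measurable composite $h$ by the standard density/approximation argument in the completed space $\myset{H}$, using the moment conditions built into $\bar{\myset{H}}_{s}$. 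Once $\psi$ is shown to be admissible, all three steps go through and the proof closes.
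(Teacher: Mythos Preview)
Your argument is correct and is exactly what the paper has in mind; the paper's own proof is the single sentence ``This is a direct consequence of the definition of the semi-$G$-independence,'' and your three steps (use Condition~1 to peel off the classical block, use Condition~3 to identify the inner expectation as linear, use Condition~2 together with \ref{thm:maximal-multi-relation} to get $(V_x,V_y)\sim\Maximal([\sdl_x,\sdr_x]\times[\sdl_y,\sdr_y])$ and then read off the supremum) are precisely the unpacking of that sentence.

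The function-space obstacle you flag is real in the general Borel-measurable formulation of Section~\ref{subsec:semi-G-family}, but the paper does not address it either. The reason the paper can be so terse is that in the product-space construction preceding \ref{defn:uni-semignorm}, the sublinear expectation on $\bar{\mysetrv{H}}_s$ is \emph{defined} by $\expt[X]=\sup_{\theta\in\Theta}\lexpt_P[f(\theta,\eta)]$ for $f\in\fspace$; once the three conditions place $\varphi(X,Y)$ in that subspace with maximal part $(V_x,V_y)$ on the rectangle, the representation is the definition itself rather than a consequence of the abstract independence identity restricted to $\fspacedef$. So no approximation argument is needed there. Your proposed density/bounded-convergence fix is a reasonable way to extend the result to the fully general Borel case, but it goes beyond what the paper actually establishes.
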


\begin{proof}
This is direct consequence of the definition of the semi-$G$-independence. 
\end{proof}

For any $v_{x}\in[\sdl_{x},\sdr_{x}]$ and $v_{y}\in[\sdl_{y},\sdr_{y}]$,
let 
\[
h_{1}(v_{x})\coloneqq\lexpt[f(v_{x},\stdrv_{x})]\text{ and }h_{2}(v_{y})=\lexpt[g(v_{y},\stdrv_{y})].
\]
Then 
\[
\expt[X]=\expt[h_{1}(V_{x})]\text{ and }\expt[Y]=\expt[h_{1}(V_{y})].
\]

In the following context, for simplicty of discussion, we assume $h_1, h_2$ are continuous functions. Then we can take maximum on the rectangle $[\sdl_{x},\sdr_{x}]\times[\sdl_{y},\sdr_{y}]$
in \ref{prop:joint-two}. 
(This assumption can be relaxed whenever $\sup$ does not affect the derivation.) 
Readers may find out that, under the semi-$G$-independence, our manipulation of sublinear expectation of semi-$G$-version objects becomes quite intuitive and flexible. 

\begin{prop}
\label{prop:sum-semiseqind}
If $X$ and $Y$ are semi-$G$-independent, we have 
\[
\expt[X+Y]=\expt[X]+\expt[Y].
\]
\end{prop}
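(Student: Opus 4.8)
The plan is to integrate out the classical components $\stdrv_x,\stdrv_y$ first, reduce $\expt[X+Y]$ to a maximal expectation of the pair $(V_x,V_y)$, and then exploit that the maximum of a \emph{separable} function over a rectangle splits into two independent maxima. Since $\expt$ is always subadditive, only the reverse inequality $\expt[X+Y]\geq\expt[X]+\expt[Y]$ carries content, but the representation I use will deliver the exact equality in a single pass.

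First I would write $X+Y=\Psi(V_x,V_y,\stdrv_x,\stdrv_y)$ with $\Psi(v_x,v_y,e_x,e_y)\coloneqq f(v_x,e_x)+g(v_y,e_y)$, and apply the first clause of \ref{def:semiGindep}, namely $(V_x,V_y)\seqind(\stdrv_x,\stdrv_y)$, to peel off the classical part:
\[
\expt[X+Y]=\expt[\expt[\Psi(v_x,v_y,\stdrv_x,\stdrv_y)]_{(v_x,v_y)=(V_x,V_y)}].
\]
Because $\stdrv_x,\stdrv_y$ are classical, the inner $\expt$ acts as the linear $\lexpt$, so ordinary linearity collapses the inner term to $h_1(v_x)+h_2(v_y)$, where $h_1,h_2$ are the functions introduced above. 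This yields $\expt[X+Y]=\expt[h_1(V_x)+h_2(V_y)]$.

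Next I would use the second clause $V_x\seqind V_y$ of \ref{def:semiGindep}: since $V_x$ and $V_y$ are maximally distributed, \ref{thm:maximal-multi-relation} promotes this to the joint law $(V_x,V_y)\sim\Maximal([\sdl_x,\sdr_x]\times[\sdl_y,\sdr_y])$. Evaluating a maximal expectation as a maximum over its support and using separability,
\[
\expt[h_1(V_x)+h_2(V_y)]=\max_{(v_x,v_y)}\bigl(h_1(v_x)+h_2(v_y)\bigr)=\max_{v_x}h_1(v_x)+\max_{v_y}h_2(v_y),
\]
the maxima running over $[\sdl_x,\sdr_x]$ and $[\sdl_y,\sdr_y]$ respectively (the assumed continuity of $h_1,h_2$ ensures they are attained). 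Recognizing $\max_{v_x}h_1(v_x)=\expt[h_1(V_x)]=\expt[X]$, and likewise for $Y$, closes the argument.

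The main obstacle is not the algebra but the function class: $\Psi$ (equivalently $\varphi_0(x,y)=x+y$) is unbounded, so it falls outside the $\fspacedef$ hypothesis under which \ref{defn:G-indep} and \ref{prop:joint-two} are phrased. I would resolve this by noting that $X,Y\in\bar{\myset{H}}_s\subset\myset{H}$ forces $\expt[\abs{X}],\expt[\abs{Y}]<\infty$, so $\Psi$ has at most linear growth with finite envelope; one then either truncates $\Psi$ by bounded Lipschitz functions and passes to the limit in the $\norm{\cdot}_1$-complete space $\myset{H}$, or invokes the same extension from $\fspacedef$ to $\fspace$ already used for the semi-$G$-normal representations. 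Once the peeling identity is licensed for this test function, every remaining step is routine.
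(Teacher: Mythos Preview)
Your proof is correct and follows essentially the same route as the paper: both reduce $\expt[X+Y]$ to $\max_{(v_x,v_y)}[h_1(v_x)+h_2(v_y)]$ over the rectangle and then split the separable maximum; the paper simply cites the joint representation \ref{prop:joint-two} directly, whereas you re-derive it inline by peeling off $(\stdrv_x,\stdrv_y)$ and then invoking \ref{thm:maximal-multi-relation}. Your explicit attention to the unbounded test function $\varphi(x,y)=x+y$ is a point the paper glosses over (it states \ref{prop:joint-two} for $\fspacedef$ but applies it here with $\varphi\in\fspace$), so your truncation/extension remark is a welcome addition rather than a deviation.
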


\begin{proof}
By \ref{prop:joint-two}, we have 
\begin{align*}
\expt[X+Y] & =\max_{(v_{x},v_{y})\in[\sdl_{x},\sdr_{x}]\times[\sdl_{y},\sdr_{y}]}\lexpt[f(v_{x},\stdrv_{x})+g(v_{y},\stdrv_{y})]\\
 & =\max_{(v_{x},v_{y})\in[\sdl_{x},\sdr_{x}]\times[\sdl_{y},\sdr_{y}]}[h_{1}(v_{x})+h_{2}(v_{y})]\\
 & =\max_{v_{x}}\max_{v_{y}}[h_{1}(v_{x})+h_{2}(v_{y})]\\
 & =\max_{v_{x}}[h_{1}(v_{x})+\max_{v_{y}}h_{2}(v_{y})]\\
 & =\max_{v_{x}}h_{1}(v_{x})+\max_{v_{y}}h_{2}(v_{y})=\expt[X]+\expt[Y].
\end{align*} \qedhere
\end{proof}

\begin{rem}
Compared with \ref{prop:linear-prop-expt}, for $X,Y\in\bar{\myset{H}}_{s}$, we have one more situation for $\expt[X+Y]=\expt[X]+\expt[Y]$ to hold: 
\begin{enumerate}
\item either $X$ or $Y$ has mean-certainty,  
\item $X\seqind Y$ or $X\seqind Y$, 
\item $X$ and $Y$ are semi-$G$-independent. 
\end{enumerate}
\end{rem}

\begin{prop}
\label{prop:prod-semiG}If $X$ and $Y$ are semi-$G$-independent
and either one of them has certain mean zero, we have
\[
\expt[XY]=-\expt[-XY]=0.
\]
\end{prop}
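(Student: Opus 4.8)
The plan is to exploit the representation of the joint law in \ref{prop:joint-two} together with the defining structure of semi-$G$-independence. By the symmetry of semi-$G$-independence I may assume without loss of generality that $X$ is the factor with certain mean zero, i.e. $\expt[X]=-\expt[-X]=0$. Recalling that $V_{x}\sim\Maximal[\sdl_{x},\sdr_{x}]$ is sequentially independent from the classical $\stdrv_{x}$, the representation gives $\expt[X]=\expt[h_{1}(V_{x})]=\sup_{v_{x}\in[\sdl_{x},\sdr_{x}]}h_{1}(v_{x})$ and, applying the same to $-X$, $-\expt[-X]=\inf_{v_{x}\in[\sdl_{x},\sdr_{x}]}h_{1}(v_{x})$. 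The certain-mean-zero hypothesis then forces $\sup h_{1}=\inf h_{1}=0$, so that $h_{1}(v_{x})=0$ for every $v_{x}\in[\sdl_{x},\sdr_{x}]$; note this conclusion needs no continuity of $h_{1}$, only that its supremum and infimum both vanish.

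Next I would apply \ref{prop:joint-two} to the test function $\varphi(x,y)=xy$, which lies in $\fspace(\numset{R}^{2})$, to obtain
\[
\expt[XY]=\sup_{(v_{x},v_{y})\in[\sdl_{x},\sdr_{x}]\times[\sdl_{y},\sdr_{y}]}\lexpt[f(v_{x},\stdrv_{x})\,g(v_{y},\stdrv_{y})].
\]
The third clause of \ref{def:semiGindep} — the classical independence of $\stdrv_{x}$ and $\stdrv_{y}$ — allows me to factor the inner linear expectation as $\lexpt[f(v_{x},\stdrv_{x})]\,\lexpt[g(v_{y},\stdrv_{y})]=h_{1}(v_{x})h_{2}(v_{y})$. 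Since $h_{1}\equiv 0$, every term inside the supremum vanishes, whence $\expt[XY]=0$.

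Finally, to reach $-\expt[-XY]=0$ I would rerun the identical computation with $\varphi(x,y)=-xy$, giving $\expt[-XY]=\sup_{v_{x},v_{y}}\bigl(-h_{1}(v_{x})h_{2}(v_{y})\bigr)=0$, so $-\expt[-XY]=0$ as well, which is exactly the claimed $\expt[XY]=-\expt[-XY]=0$. The only genuine obstacle here is bookkeeping rather than conceptual: $\varphi(x,y)=xy$ is unbounded and hence not in $\fspacedef$, so one must first justify that the representation of \ref{prop:joint-two} extends from $\fspacedef$ to $\fspace$ — equivalently that $XY\in\myset{H}$, so that $\expt[XY]$ is well defined. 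I expect this to follow from the moment and uniform-integrability conditions already invoked throughout this framework (cf.\ \ref{prop:extend-function-space}); once that extension is granted, the factorization and the vanishing of $h_{1}$ make the remaining steps immediate.
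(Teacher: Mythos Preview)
Your proof is correct and follows essentially the same route as the paper's: both apply the joint representation of \ref{prop:joint-two}, factor the linear expectation via the classical independence of $\stdrv_{x}$ and $\stdrv_{y}$ into $h_{1}(v_{x})h_{2}(v_{y})$, deduce $h_{1}\equiv0$ from the certain-mean-zero hypothesis, and repeat with $-xy$ to handle the lower expectation. If anything you are slightly more careful, since you flag the need to extend \ref{prop:joint-two} from $\fspacedef$ to $\fspace$ in order to apply it to $\varphi(x,y)=xy$, a point the paper passes over silently.
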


\begin{proof}
Since $X$ and $Y$ are semi-$G$-independent, by \ref{prop:joint-two},
\begin{align*}
\expt[XY] & =\max_{(v_{x},v_{y})\in[\sdl_{x},\sdr_{x}]\times[\sdl_{y},\sdr_{y}]}\lexpt[f(v_{x},\stdrv_{x})g(v_{y},\stdrv_{y})]\\
 & =\max_{(v_{x},v_{y})\in[\sdl_{x},\sdr_{x}]\times[\sdl_{y},\sdr_{y}]}h_{1}(v_{x})h_{2}(v_{y}).
\end{align*}
If we have either one of them has certain mean zero such as $\expt[X]=-\expt[-X]=0$,
we have 
\[
\max_{v_{x}\in[\sdl_{x},\sdr_{x}]}\lexpt[f(v_{x},\stdrv_{x})]=\min_{v_{x}\in[\sdl_{x},\sdr_{x}]}\lexpt[f(v_{x},\stdrv_{x})]=0,
\]
It means $h_{1}(v_{x})=0$ for any $v_{x}\in[\sdl_{x},\sdr_{x}]$.
Then we must have $\expt[XY]=0$ and similarly we have $-\expt[-XY]$
by changing $\max$ to $\min$.  
\end{proof}


\subsection{The semi-$G$-version of central limit theorem}
\label{subsec:semi-G-clt} 

After setting up the semi-$G$-family of distributions and semi-sequential independence, it turns out we can prove a semi-$G$-version of \emph{central limit theorem} in this context, which further brings a substructure connecting the classical central limit theorem with the $G$-version central limit theorem. 
It also shows the central role of semi-$G$-normal in a semi-$G$-version class of distributions.  
 
First we consider a subset of $\bar{\myset{H}}_s$:
\[
\myset{H}_{s}\coloneqq\{X\in\bar{\myset{H}}_{s}:X=V\stdrv,V\sim\Maximal\sdInt\text{ with }0\leq\sdl\leq\sdr\text{ and the classical }\stdrv\text{ is standardized}\},
\]
where we call a classical $\stdrv$ is \emph{standardized} if $\lexpt[\stdrv]=0$ and $\lexpt[\stdrv^{2}]=1$.
Here $\myset{H}_{s}$ can be treated as a class of semi-$G$-distributions with zero mean and variance uncertainty. 

Our current version of the semi-$G$-version of central limit theorem can be formulated as follows. 

\begin{thm}
\label{thm:semi-G-CLT} For any sequence $\{X_{i}\}_{i=1}^{n} = \{V_i\eta_i\}_{i=1}^{n}\subset\myset{H}_{s}$
that are semi-$G$-i.i.d. with certain zero mean and uncertain variance:
\[
\sdl^{2}=-\expt[-X_{1}^{2}]\leq\expt[X_{1}^{2}]=\sdr^{2}\text{ with }0\leq\sdl\leq\sdr,
\]
we have 
\[
\frac{1}{\sqrt{n}}\sum_{i=1}^{n}X_{i}\convergeto{\dist}W,
\]
where $W\sim\semiGN(0,\varI)$. To be specific, for any bounded and continuous $\varphi$, we have
\begin{equation}
\label{eq:clt-expt-converge}
	\lim_{n\to\infty}\expt[\varphi(\frac{1}{\sqrt{n}}\sum_{i=1}^{n}X_{i})]=\expt[\varphi(W)].
\end{equation}
\end{thm}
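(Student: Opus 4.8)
The plan is to collapse both sides of \ref{eq:clt-expt-converge} into purely classical quantities via the representation machinery already developed, and then to settle the resulting classical limit by a triangular-array central limit theorem whose convergence is uniform in the underlying volatility path.

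First I would unwind the semi-$G$-i.i.d.\ structure. Write $S_{n}:=\frac{1}{\sqrt n}\sum_{i=1}^{n}V_{i}\eta_{i}$. By condition (1) of \ref{def:semiGindep-n} we have $(V_{1},\dots,V_{n})\seqind(\eta_{1},\dots,\eta_{n})$, so for any bounded continuous $\varphi$,
\[
\expt[\varphi(S_{n})]=\expt\bigl[\,\expt[\varphi(\tfrac{1}{\sqrt n}\textstyle\sum_{i}v_{i}\eta_{i})]_{v_{i}=V_{i}}\,\bigr].
\]
Since the $\eta_{i}$ are classically distributed and classically independent (conditions (3)), the inner expectation is linear, $g_{n}(\myvec v):=\lexpt[\varphi(\frac{1}{\sqrt n}\sum_{i}v_{i}\eta_{i})]$, which is bounded and continuous in $\myvec v$. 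Condition (2), $\nseqind{V}{n}$, together with \ref{thm:maximal-multi-relation}, gives $(V_{1},\dots,V_{n})\sim\Maximal(\sdInt^{n})$, so the representation of the maximal distribution (\ref{prop:rep-multi-maximal}, extended from $\fspace$ to continuous functions on the compact cube $\sdInt^{n}$ by uniform approximation) yields
\[
\expt[\varphi(S_{n})]=\max_{\myvec v\in\sdInt^{n}}\lexpt\bigl[\varphi(\tfrac{1}{\sqrt n}\textstyle\sum_{i=1}^{n}v_{i}\eta_{i})\bigr].
\]
On the other side, \ref{thm:represent-uni-semignorm} gives $\expt[\varphi(W)]=\max_{\sigma\in\sdInt}\lexpt[\varphi(\sigma\stdrv)]$ with $\stdrv\sim\CN(0,1)$. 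Hence \ref{eq:clt-expt-converge} is equivalent to the classical identity $\lim_{n\to\infty}\max_{\myvec v\in\sdInt^{n}}\lexpt[\varphi(\frac{1}{\sqrt n}\sum_{i}v_{i}\eta_{i})]=\max_{\sigma\in\sdInt}\lexpt[\varphi(\sigma\stdrv)]$.

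For the lower bound I would test the constant paths $\myvec v=(v,\dots,v)$ for each fixed $v\in\sdInt$: there $\frac{1}{\sqrt n}\sum_{i}v\eta_{i}=v\cdot\frac{1}{\sqrt n}\sum_{i}\eta_{i}$, and the ordinary CLT (the $\eta_{i}$ are i.i.d., mean zero, variance one) gives $\lexpt[\varphi(v\,\frac{1}{\sqrt n}\sum_{i}\eta_{i})]\to\lexpt[\varphi(v\stdrv)]$. Since the maximum over $\sdInt^{n}$ dominates each constant choice, taking $\liminf$ and then $\sup$ over $v\in\sdInt$ produces the inequality ``$\ge$''.

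The upper bound is the main obstacle, precisely because the maximum sweeps over \emph{all} paths $\myvec v$ at once and so demands a limit uniform in $\myvec v$. I would argue by contradiction along a subsequence. If ``$\le$'' failed, there would exist $\epsilon>0$, $n_{k}\to\infty$ and paths $\myvec v^{(k)}\in\sdInt^{n_{k}}$ with $\lexpt[\varphi(T_{n_{k}})]\ge\max_{\sigma\in\sdInt}\lexpt[\varphi(\sigma\stdrv)]+\epsilon$, where $T_{n_{k}}:=\frac{1}{\sqrt{n_{k}}}\sum_{i}v^{(k)}_{i}\eta_{i}$. The row variances $s_{n_{k}}^{2}:=\frac{1}{n_{k}}\sum_{i}(v^{(k)}_{i})^{2}$ lie in the compact interval $[\sdl^{2},\sdr^{2}]$, so after passing to a further subsequence, $s_{n_{k}}^{2}\to s^{2}\in[\sdl^{2},\sdr^{2}]$. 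The crucial point is that the triangular array $\{v^{(k)}_{i}\eta_{i}/\sqrt{n_{k}}\}_{i=1}^{n_{k}}$ obeys the Lindeberg condition \emph{uniformly} in the choice of path: because $|v^{(k)}_{i}|\le\sdr$, one bounds, for any $\delta>0$,
\[
\sum_{i=1}^{n_{k}}\lexpt\bigl[(v^{(k)}_{i}\eta_{i}/\sqrt{n_{k}})^{2}\ind{|v^{(k)}_{i}\eta_{i}|>\delta\sqrt{n_{k}}}\bigr]\le\sdr^{2}\,\lexpt\bigl[\eta_{1}^{2}\ind{|\eta_{1}|>\delta\sqrt{n_{k}}/\sdr}\bigr]\longrightarrow 0,
\]
using only $\lexpt[\eta_{1}^{2}]<\infty$. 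The Lindeberg CLT then gives $T_{n_{k}}\convergeto{\dist}\CN(0,s^{2})$ (degenerate at $0$ if $s=0$), whence $\lexpt[\varphi(T_{n_{k}})]\to\lexpt[\varphi(s\stdrv)]\le\max_{\sigma\in\sdInt}\lexpt[\varphi(\sigma\stdrv)]$, contradicting the gap $\epsilon$. This yields ``$\le$'' and hence the equality. The variance identifications $\sdl^{2}=-\expt[-X_{1}^{2}]$ and $\sdr^{2}=\expt[X_{1}^{2}]$ are exactly the standing hypotheses (and are consistent with $X_{1}=V_{1}\eta_{1}$ since $V_{1}\seqind\eta_{1}$ and $\lexpt[\eta_{1}^{2}]=1$). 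Finally, no separate extension of $\varphi$ is needed: the contradiction argument already operates for arbitrary bounded continuous $\varphi$, and the only representation step requiring regularity (the maximal formula for $g_{n}$) is justified by uniform approximation on the compact support, controlled by the uniform bound $\sup_{n,\myvec v}\mathrm{Var}(T_{n})\le\sdr^{2}$.
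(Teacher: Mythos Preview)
Your argument is correct and takes a genuinely different route from the paper's own proof. The paper carries out a Lindeberg swapping argument \emph{directly inside the sublinear expectation}: it introduces an auxiliary i.i.d.\ $\CN(0,1)$ sequence $\{\stdrv_i\}$, couples $W_{i,n}=V_i\stdrv_i/\sqrt n$ to $e_{i,n}=V_i\eta_i/\sqrt n$ through the \emph{same} maximal part $V_i$, and then swaps one summand at a time via a second-order Taylor expansion. The coupling through $V_i$ is what makes the key second-order cross term $(e_{i,n}^2-W_{i,n}^2)\varphi''(U_{i,n})=\tfrac{1}{n}V_i^2(\eta_i^2-\stdrv_i^2)\varphi''(U_{i,n})$ have certain zero mean under $\expt$, and the proof first works with $\varphi\in C^{*}$ (bounded, uniformly continuous $\varphi''$) and then extends to $C_b$ by a separate approximation lemma.

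You instead use the representation machinery up front to collapse the sublinear side to the classical optimisation $\max_{\myvec v\in\sdInt^{n}}\lexpt[\varphi(n^{-1/2}\sum v_i\eta_i)]$, and then handle the uniformity in $\myvec v$ by a subsequence/compactness argument on the row variance $s_n^2\in[\sdl^2,\sdr^2]$ combined with the ordinary Lindeberg--Feller triangular-array CLT (the uniform Lindeberg bound via $|v_i|\le\sdr$ is exactly the right observation). This is more elementary in that it invokes the classical CLT as a black box rather than reproving it, and it makes the structural point of the theorem very transparent: the semi-$G$ CLT is nothing more than a classical CLT that is uniform over bounded deterministic volatility paths. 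The paper's approach, by contrast, is more intrinsic to the sublinear framework and serves as a template for situations where a clean finite-dimensional representation may not be available; it also isolates precisely where the semi-$G$-independence (as opposed to sequential independence) is used, namely in the symmetry that allows the leave-one-out swap.
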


\begin{rem}
	Note that any $\varphi\in \fspacedef$ must be a bounded and continuous one, so the convergence in distribution (\ref{defn:G-distn}) must hold. 
\end{rem}

\begin{rem}
As a classical perspective of \ref{thm:semi-G-CLT}, by using the
representation of semi-$G$-normal under semi-$G$-independence, we
have 
\begin{equation}
\lim_{n\to\infty}\sup_{\myvec{\sigma}\in[\underline{\sigma},\overline{\sigma}]^{n}}E[\varphi(\frac{1}{\sqrt{n}}\sum_{i=1}^{n}\sigma_{i}\epsilon_{i})]=\sup_{v\in[\underline{\sigma},\overline{\sigma}]}E[\varphi(v\stdrv^{*})],\label{eq:semi-G-CLT-goal-1}
\end{equation}
where $\stdrv^{*}\sim\CN(0,1)$ and $\myvec{\sigma}=(\sigma_{1},\dotsc,\sigma_{n})$
is a scalar vector. This form is also equivalent to
\[
\lim_{n\to\infty}\sup_{\myvec{\sigma}\in S_{n}\sdInt}E[\varphi(\frac{1}{\sqrt{n}}\sum_{i=1}^{n}\sigma_{i}\epsilon_{i})]=\sup_{v\in[\underline{\sigma},\overline{\sigma}]}E[\varphi(v\stdrv^{*})],
\]
where $\myvec{\sigma}$ could be any hidden processes valuing in $\sdInt$
that is independent from $(\stdrv_{1},\stdrv_{2},\dotsc,\stdrv_{n})$. 
When the unknown variance form is taken in this way, the uncertainty on the behavior the normalized summation can be asymptotically characterized by the semi-$G$-normal. 

If $\myvec{\sigma}$ is chosen from a larger family that may involve
dependence between $\sigma_{i}$ with previous $(\stdrv_{j},j<i)$,
then it will be related with the $G$-version central limit theorem
(under sequential independence, rather than the semi-$G$-version
independence): $X_{i}$ are \emph{sequentially independent,}
\[
\lim_{n\to\infty}\expt[\varphi(\frac{1}{\sqrt{n}}\sum_{i=1}^{n}X_{i})]=\expt[\varphi(W^{G})],
\]
which gives us
\[
\lim_{n\to\infty}\sup_{\myvec{\sigma}\in \mysetrv{L}_{n}^{*}\sdInt}E[\varphi(\frac{1}{\sqrt{n}}\sum_{i=1}^{n}\sigma_{i}\epsilon_{i})]=\expt[\varphi(W^{G})].
\]
To summarize, the semi-$G$-normal distribution can be treated as
the attractor for normalized summations of semi-$G$-i.i.d. random variables
and the $G$-normal is the attractor for summations of $G$-version i.i.d.
random variables. 
\end{rem}

In the proof of \ref{thm:semi-G-CLT}, we adapt the idea of Lindeberg method in a ``leave-one-out'' manner (\cite{breiman1992}) to
the sublinear context. One of the reason that we are able to do such adaptation is the symmetry in semi-$G$-independence: $X_i$ is semi-$G$-independent from $\{X_j,j\neq i\}$ (Note that we cannot do such adaptation under sequential independence due to its asymmetry). 
More details of the proof can be found in \ref{ap:pf-semi-G-clt}.

Since we only have finite second moment assumption so far in \ref{thm:semi-G-CLT}, by adding stronger moment conditions on $X_n$, the function space of $\varphi$ can be taken to be $\fspace$ to include those unbounded ones. This statement is based on \ref{prop:extend-function-space}. 

As a basic example, given a stronger condition $\expt[\abs{X_1}^3]<\infty$, we can check that the convergence \ref{eq:clt-expt-converge} holds for $\varphi(x)=x^3$ by direct computation (\ref{eg:converge-varphi-x3}). 

\begin{eg}[Check $\varphi(x)=x^{3}$]
\label{eg:converge-varphi-x3}
In the convergence \ref{eq:clt-expt-converge},
since $\expt[W^{3}]=0$, we only need to show: 
\[
\lim_{n\to\infty}\expt[(\frac{1}{\sqrt{n}}\sum_{i=1}^{n}X_{i})^{3}]=0.
\]
In fact, 
\begin{align*}
\expt[(\frac{1}{\sqrt{n}}\sum_{i=1}^{n}X_{i})^{3}] & =n^{-3/2}\expt[(\sum_{i=1}^{n}X_{i})^{3}]\\
 & =n^{-3/2}\expt[\sum_{i=1}^{n}X_{i}^{3}+\sum_{i\neq j\text{ or }j\neq k}X_{i}X_{j}X_{k}].
\end{align*}
(Note that, if $i=j$ and $j=k$, the second term becomes the first
one.) 
For the summand $X_{i}X_{j}X_{k}$ of the second term, without loss
of generality, we assume that $i\leq j\leq k$. Then we have three
cases: 
\begin{enumerate}
	\item $i<j=k$, 
    \item $i=j<k$, 
  \item $i<j<k$.
\end{enumerate}


In Case 1, since $X_{i}$ and $X_{j}$ are semi-$G$-independent,
we have: 
\begin{align*}
\expt[X_{i}X_{j}^{2}] & =\max_{(v_{i},v_{j})}\lexpt[v_{i}v_{j}^{2}\eta_{i}\eta_{j}^{2}]\\
 & =\max_{(v_{i},v_{j})}v_{i}v_{j}^{2}\lexpt[\eta_{i}]\lexpt[\eta_{j}^{2}]=0.
\end{align*}
(Note that $\expt[X_{i}X_{j}^{2}]=0$ does not hold under sequential independence $X_i \seqind X_j$ by \ref{eg:indep-ex-org}.) Meanwhile, we can obtain $-\expt[-X_{i}X_{j}^{2}]=0$ so $X_{i}X_{j}^{2}$ has certain mean zero.

We can similarly prove the result in Case 2, that is, $X_{i}^2X_{k}$ has certain mean zero. 
For Case 3, since $X_{i},X_{j},X_{k}$ are semi-$G$-independent,
we have 
\[
\expt[X_{i}X_{j}X_{k}]=\max_{(v_{i},v_{j},v_{k})}v_{i}v_{j}v_{k}\lexpt[\eta_{i}]\lexpt[\eta_{j}]\lexpt[\eta_{k}]=0.
\] We further have $-\expt[-X_{i}X_{j}X_{k}]=0$ using the same logic. 
Therefore, 
\[
\expt[(\frac{1}{\sqrt{n}}\sum_{i=1}^{n}X_{i})^{3}]=n^{-3/2}\expt[\sum_{i=1}^{n}X_{i}^{3}]=n^{-1/2}\expt[X_{1}^{3}]\to 0,
\]
where we use the condition that $\expt[\abs{X_{1}}^{3}]<\infty$ and \ref{prop:sum-semiseqind}.
\end{eg}

\subsection{Fine structures of independence and the associated family of state-space volatility models}
\label{subsec:fine-structure}

In \ref{eg:direct-indep-structure}, we have mainly
discussed the independence between two semi-$G$-distributed objects.
Here we consider three of them as a starting point to discuss much
finer structure of independence. 

Consider $W_{i}=V_{i}\stdrv_{i}\eqdistn\semiGN(0,\varI),i=1,2,3$.
The independence structure among them is essentially related to the
$G$-version independence among $V_{i}$ and $\stdrv_{i}$, $i=1,2,3$.
For instance, 
\begin{itemize}
\item [(a)] $V_{1}\seqind V_{2}\seqind V_{3}\seqind\stdrv_{1}\seqind\stdrv_{2}\seqind\stdrv_{3}$,
\item [(b)] $V_{1}\seqind\stdrv_{1}\seqind V_{2}\seqind\stdrv_{2}\seqind V_{3}\seqind\stdrv_{3}$. 
\end{itemize}
Note that a) is equivalent to $W_{1}\semiseqind W_{2}\semiseqind W_{3}$
and b) means $W_{1}\fullseqind W_{2}\fullseqind W_{3}$ which implies
$W_{1}\seqind W_{2}\seqind W_{3}$. 

Then we can see that there are several middle stages between (a) and (b). 
%
In order to present these intermediate stages, let us play a simple game: switch
two components each time and change the independence structure from (a)
to (b). During this game, the following rules are required: 
\begin{itemize}
\item[R1] we must keep the independence $V_{i}\seqind\stdrv_{i}$
due to the definition of semi-$G$-normal, 
\item[R2] we must keep the order as $(V_{1},V_{2},V_{3})$ and $(\stdrv_{1},\stdrv_{2},\stdrv_{3}$),
because the independence order of elements within each vector is usually equivalent. Otherwise, if we break this order, we need an unnecessary extra step to retrieve the index order $(1,2,3)$ to be consistent with (b). 
\end{itemize}
Here we can get two approaches: 
\begin{enumerate}
\item Since we do not want to break the order within $(V_{1},V_{2},V_{3})$
or $(\stdrv_{1},\stdrv_{2},\stdrv_{3}$), the first step has to be
switching some $V_{i}$ with $\stdrv_{j}$ with $i,j=1,2,3$. For the $\stdrv$ part,
we can only move $\stdrv_{1}$ due to R1, and similarly for $V$ part
we can only move $V_{3}$. Hence, the first step is to exchange $V_{3}$
and $\stdrv_{1}$ in (a) to get 
\begin{equation}
V_{1}\seqind V_{2}\seqind\stdrv_{1}\seqind V_{3}\seqind\stdrv_{2}\seqind\stdrv_{3}.\label{eq:indep-mid-form-2}
\end{equation}
Then we have two equivalent ways to move on. 
\item One way is to exchange $V_{2}$ and $\stdrv_{1}$ in \ref{eq:indep-mid-form-2}
to get 
\begin{equation}
V_{1}\seqind\stdrv_{1}\seqind V_{2}\seqind V_{3}\seqind\stdrv_{2}\seqind\stdrv_{3}.\label{eq:indep-mid-form-3}
\end{equation}
Then we can exchange $V_{3}$ and $\stdrv_{2}$ to get (b).
\item Another way is to exchange $V_{3}$ and $\stdrv_{2}$ to get 
\begin{equation}
V_{1}\seqind V_{2}\seqind\stdrv_{1}\seqind\stdrv_{2}\seqind V_{3}\seqind\stdrv_{3}.\label{eq:indep-mid-form-3-2}
\end{equation}
Then we can exchange $V_{2}$ and $\stdrv_{1}$ to get (b). 
\end{enumerate}

Note that \ref{eq:indep-mid-form-3} implies the following relation:
\[
W_{1}\seqind(W_{2},W_{3})\text{ and }W_{2}\semiseqind W_{3}.
\]
We can show that the family of models associated with the representation of $\expt[\varphi(W_1,W_2,W_3)]$ under \ref{eq:indep-mid-form-3} can be illustrated by \ref{fig:diagram-L-subset-3}.
Similarly, \ref{eq:indep-mid-form-3-2} implies 
\[
(W_{1},W_{2})\seqind W_{3}\text{ and }W_{1}\semiseqind W_{2}.
\]
The family of models associated with \ref{eq:indep-mid-form-3-2} can be described by \ref{fig:diagram-L-subset-3-2}. The family of models for \ref{eq:indep-mid-form-2} can be shown by \ref{fig:diagram-L-subset-2}.

The intuition here is: if all $V_{j}$ is before $\stdrv_{i}$, since
$\stdrv_{i}$ has distributional certainty, in the directed graph,
$\sigma_{j}$ does not have effects on $\stdrv_{i}$. As long as we
have $\stdrv_{i}$ is \emph{before} $V_{j}$ in the order of the $G$-version independence, we must have the additional edge from
$\stdrv_{i}$ to $\sigma_{j}$ in the directed graph of the
family of models to represent the sublinear expectation of the joint vector. 

We can see that by changing the independence structure, the sublinear expectation of a joint vector of semi-$G$-version of distributions can be represented by classes of state-space models with different graphical structures. 

One question to be explored is whether there is an independence structure 
that is associated with the familiy shown in \ref{fig:diagram-L-subset-markov}. Our conjecture is as follows: at least we need the following conditions, 
\begin{itemize}
	\item [{1)}] $V_{1}\seqind\stdrv_{1}\seqind V_{2}\seqind\stdrv_{2}$ which
means $W_{1}\seqind W_{2}$, 
\item [{2)}] $V_{2}\seqind\stdrv_{2}\seqind V_{3}\seqind\stdrv_{3}$ which
means $W_{2}\seqind W_{3}$, 
\item [{3)}] $V_{1}\seqind V_{3}\seqind\stdrv_{1}\seqind\stdrv_{3}$ which
means $W_{1}\semiseqind W_{3}$. 
\end{itemize}

\begin{figure}[h]
	\centering
\begin{tikzpicture}[
            > = stealth, 
            shorten > = 1pt, 
            auto,
            node distance = 2cm, 
            semithick 
        ]

        \tikzstyle{every state}=[
            draw = black,
            thick,
            fill = white,
            minimum size = 4mm
        ]
        
        \node[state](s1){$\sigma_1$};
        \node[state](o1)[below of =s1]{$Y_1$};
        \path[->] (s1) edge node {$\cdot \epsilon_1$} (o1);
        \node[state](s2)[right of =s1]{$\sigma_2$};
        \node[state](o2)[below of =s2]{$Y_2$};
        \path[->] (s2) edge node {$\cdot \epsilon_2$} (o2);
        \path[->] (s1) edge (s2);

        \node[state](s3)[right of =s2]{$\sigma_3$};
        \node[state](o3)[below of =s3]{$Y_3$};
        \path[->] (s3) edge node {$\cdot \epsilon_3$} (o3);
        \path[->] (s2) edge (s3);
        \path[dashed,->](o1) edge (s3);

   \end{tikzpicture}
   \caption{Diagram for \ref{eq:indep-mid-form-2} }
   \label{fig:diagram-L-subset-2}
   \end{figure}
   
   \begin{figure}[h]
	\centering
\begin{tikzpicture}[
            > = stealth, 
            shorten > = 1pt, 
            auto,
            node distance = 2cm, 
            semithick 
        ]

        \tikzstyle{every state}=[
            draw = black,
            thick,
            fill = white,
            minimum size = 4mm
        ]
        
        \node[state](s1){$\sigma_1$};
        \node[state](o1)[below of =s1]{$Y_1$};
        \path[->] (s1) edge node {$\cdot \epsilon_1$} (o1);
        \node[state](s2)[right of =s1]{$\sigma_2$};
        \node[state](o2)[below of =s2]{$Y_2$};
        \path[->] (s2) edge node {$\cdot \epsilon_2$} (o2);
        \path[->] (s1) edge (s2);

        \node[state](s3)[right of =s2]{$\sigma_3$};
        \node[state](o3)[below of =s3]{$Y_3$};
        \path[->] (s3) edge node {$\cdot \epsilon_3$} (o3);
        \path[->] (s2) edge (s3);
        \path[dashed,->](o1) edge (s2);
        \path[dashed,->](o1) edge (s3);

   \end{tikzpicture}
   \caption{Diagram for \ref{eq:indep-mid-form-3} }
   \label{fig:diagram-L-subset-3}
   \end{figure}
   
   \begin{figure}[h]
	\centering
\begin{tikzpicture}[
            > = stealth, 
            shorten > = 1pt, 
            auto,
            node distance = 2cm, 
            semithick 
        ]

        \tikzstyle{every state}=[
            draw = black,
            thick,
            fill = white,
            minimum size = 4mm
        ]
        
        \node[state](s1){$\sigma_1$};
        \node[state](o1)[below of =s1]{$Y_1$};
        \path[->] (s1) edge node {$\cdot \epsilon_1$} (o1);
        \node[state](s2)[right of =s1]{$\sigma_2$};
        \node[state](o2)[below of =s2]{$Y_2$};
        \path[->] (s2) edge node {$\cdot \epsilon_2$} (o2);
        \path[->] (s1) edge (s2);

        \node[state](s3)[right of =s2]{$\sigma_3$};
        \node[state](o3)[below of =s3]{$Y_3$};
        \path[->] (s3) edge node {$\cdot \epsilon_3$} (o3);
        \path[->] (s2) edge (s3);
        \path[dashed,->](o2) edge (s3);
        \path[dashed,->](o1) edge (s3);

   \end{tikzpicture}
   \caption{Diagram for \ref{eq:indep-mid-form-3-2} }
   \label{fig:diagram-L-subset-3-2}
   \end{figure}

    \begin{figure}[h]
	\centering
\begin{tikzpicture}[
            > = stealth, 
            shorten > = 1pt, 
            auto,
            node distance = 2cm, 
            semithick 
        ]

        \tikzstyle{every state}=[
            draw = black,
            thick,
            fill = white,
            minimum size = 4mm
        ]
        
        \node[state](s1){$\sigma_1$};
        \node[state](o1)[below of =s1]{$Y_1$};
        \path[->] (s1) edge node {$\cdot \epsilon_1$} (o1);
        \node[state](s2)[right of =s1]{$\sigma_2$};
        \node[state](o2)[below of =s2]{$Y_2$};
        \path[->] (s2) edge node {$\cdot \epsilon_2$} (o2);
        \path[->] (s1) edge (s2);

        \node[state](s3)[right of =s2]{$\sigma_3$};
        \node[state](o3)[below of =s3]{$Y_3$};
        \path[->] (s3) edge node {$\cdot \epsilon_3$} (o3);
        \path[->] (s2) edge (s3);
        \path[dashed,->](o1) edge (s2);
        \path[dashed,->](o2) edge (s3);

   \end{tikzpicture}
   \caption{Diagram for the common structure of classical first-order hidden Markov models with feedback}
   \label{fig:diagram-L-subset-markov}
   \end{figure}

\subsection{A robust confidence interval for regression under heteorskedastic noise with unknown variance structure}
\label{subsec:robust-CI}


Let $\{W_i\}_{i=1}^\infty$ denote a sequence of nonlinearly i.i.d. semi-$G$-normally distributed random variables with $W_1 \sim \semiGN(0,\varI)$.
In \ref{subsec:connect-expt-CN-GN}. 
we have studied the $G$-EM procedure which is aimed at the following expression: 
\begin{equation}
\label{eq:iterate-weight-sum-2}
	\expt[\varphi(\sum_{i=1}^n a_i W_i)].
\end{equation}
This section will provide a basic example in the context of regression to show why we need to think about \ref{eq:iterate-weight-sum-2} in statistical practice. 

Consider a simple linear regression problem in the context of sequential data $(x_i,Y_i)$(where the order of the data \emph{matters}):
\begin{equation}
\label{eq:SLR-cl}
	Y_{i}=\beta_{0}+\beta_{1}x_{i}+\xi_i,i=1,2,\dotsc,n,
\end{equation}
where $x_i$ is treated as known and $\xi_i=\sigma_i\stdrv_i$ with $\sigma_i:\Omega\to \sdInt$ and $\stdrv_i\sim \CN(0,1)$ for each $i=1,2,\dotsc,n$. We can see that the noise $\xi_i$ part is heteorskedastic (although $\sigma_i$ is not observable). 
However, if the variance structure of the noise part $\xi_i$ is complicated due to measurement errors or the data is collected from different subpopulations with different variances, we need to have some precaution on the properties of the least square estimator $\hat{\beta}_1$, especially when we are lack of prior knowledge on the dynamic of $\sigma_i$.
If we worry that $\sigma_i$ may depend on the preivous $\epsilon_k$ with $k<i$, rather than assuming a single probabilistic model for $\sigma_i$ then perform the regression, in an early stage of data analysis,
 we can first assume $\myvec{\sigma}=(\sigma_i)_{i=1}^n$ could belong to any elements in $\myset{L}_n\sdInt$ defined in \ref{subsec:represent-semignorm-indep}. 
 Note that the distributional uncertainty of each $\xi_i$ can be described by $W_i\sim \semiGN(0,\varI)$. Then the distributional uncertainty of \ref{eq:SLR-cl} can be translated into a $G$-version format: 
 \begin{equation}
 \label{eq:SLR-R}
 	Y^G_{i}=\beta_{0}+\beta_{1}x_{i}+W_{i},i=1,2,\dotsc,n.
 \end{equation}
 
%
Let 
\[
a_{i}\coloneqq\frac{x_{i}-\bar{x}_{n}}{\sum(x_{i}-\bar{x}_{n})^{2}}.
\]
Then the least-square estimator can be written as 
\begin{equation}
\hat{\beta}_{1}=\frac{\sum(x_{i}-\bar{x}_{n})Y^G_{i}}{\sum(x_{i}-\bar{x}_{n})^{2}}=\sum a_{i}Y^G_{i}=\beta_{0}\sum a_{i}+\sum a_{i}x_{i}\beta_{1}+\sum a_{i}W_{i}=\beta_{1}+\sum a_{i}W_{i}.\label{eq:LS-formula-1}
\end{equation}
Then we have 
$
\hat{\beta}_{1}-\beta_{1}=\sum a_{i}W_{i}.
$
Note that $\expt[\hat{\beta}_1]=-\expt[-\hat{\beta}_1]=\beta_1$. 

Then we are able to study the properties of $\hat{\beta}_{1}$ by assigning different forms of $\varphi$ in \ref{eq:iterate-weight-sum-2}: 
\begin{enumerate}
\item With $\varphi(x)=x^{k}$ and $k\in \numset{N}_+$, we have the centred moments of $\hat{\beta}_{1}$
\[
\expt[\varphi(\sum a_{i}W_{i})]=\expt[(\hat{\beta}_{1}-\beta_{1})^{k}].
\]
\item With $\varphi(x)=\ind{\abs{x}>c}$, we get
the object that is useful to derive a confidence interval in this
context: 
\begin{equation}
\label{eq:CI-represent}
\expt[\varphi(\sum a_{i}W_{i})]=\upprob(\abs{\hat{\beta}_{1}-\beta_{1}}>c).
\end{equation} 
\end{enumerate}

Interestingly, from \ref{thm:represent-n-seqind-semignorm} and \ref{thm:represent-capacity}, \ref{eq:CI-represent} further leads us to a \emph{robust} confidence
interval by solving the following equation:
\[
\upprob(\abs{\hat{\beta}_{1}-\beta_{1}}>c_{\alpha/2})=\sup_{\myvec{\sigma}\in\myset{L}_{n}\sdInt}\lprob(\abs{\sum a_{i}\sigma_{i}\stdrv_{i}}>c_{\alpha/2}) = \alpha,
\]
or 
\[
\inf_{\myvec{\sigma}\in\myset{L}_{n}\sdInt}\lprob(\abs{\sum a_{i}\sigma_{i}\stdrv_{i}}\leq c_{\alpha/2}) = 1-\alpha.
\]
The resulting confidence interval is robust in the sense that its coverage rate will be at least $1-\alpha$ regardless of
the unknown variance structure of the noise part $\sigma_{i}\stdrv_{i}$ in the regression. 
If we have more information that shows $\sigma_k$ does not depend on the previous $\stdrv_i$ with $i<k$, we can consider a smaller family of sets $\myset{S}_n\sdInt$. Alternatively, it also provides a way to perform a \emph{sensitivity analysis} on the performance of a regression estimator (such as $\hat{\beta}_1$ here) under heteroscedastic noise with unknown variance structure that could belong to different family of models. 


Then this discussion leads us to another interesting question. 
In an early stage of data analysis, should we choose $\myvec{\sigma}\in \myset{S}_n\sdInt$ or $\myvec{\sigma}\in \myset{L}_n\sdInt$? This question will be explored in \ref{subsec:inference-ssvm}. 



\subsection{Inference on the general model structure of a state-space volatility model}
\label{subsec:inference-ssvm}

Recall the setup in \ref{subsec:robust-CI}.
In practice, if lacking knowledge on the underlying dynamic of the datasets, whether we should choose $\myvec{\sigma}\in \myset{S}_n\sdInt$ or $\myvec{\sigma}\in \myset{L}_n\sdInt$ is a difficult problem in classical statistical methodology (in model specification) because both families involve a infinitely-dimensional family of elements. However, it turns out it can be essentially transformed into a $G$-version question: it has a feasible solution once we introduce the $G$-expectation of semi-$G$-family of distributions. This becomes a hypothesis test to distinguish between semi-sequential independence and sequential independence. 
To be specific, we are able to consider a test:
\[
H_0:\myvec{\sigma} \in \myset{S}_n\sdInt\textbf{ vs } H_a:\myvec{\sigma} \in \myset{L}_n\sdInt\setminus \myset{S}_n\sdInt.
\]
A good interpretation of this test is, since the class of hidden Markov models (with volatility as the switching regimes) belong to $\myset{S}$. If we reject the null hypothesis, it means the underlying $\myvec{\sigma}$ process cannot be treated as a switching-regime in the hidden Markov setup (or in any other kinds of normal mixture model), but we need to re-investigate the dataset and consider the $\myvec{\sigma}$ process outside of the family of the normal mixture model (for instance, we may need to introduce other dependency, like the one between the previous observation $Y_{<t}$ with current $\sigma_t$, such as a feedback design). 
Throughout this discussion, we did not make any parametric assumption on the model of $\myvec{\sigma}$, and we are still able to give a rigorous test on this distinction. 
The idea of this test will take advantage of \ref{thm:represent-n-seqind-semignorm} to transform the distinction between two family of classical models to a task of distinguishing two different types of independence ($\semiseqind$ versus $\seqind$) for semi-$G$-normal vector $(W_1,W_2,\dotsc,W_n)$. There are plenty of test functions $\varphi$ (neither convex nor concave ones) to reveal the difference between $\semiseqind$ and $\seqind$ such that 
\[
\expt^{S}[\varphi(W_{1},W_{2},\dotsc,W_{n})]<\expt^{L}[\varphi(W_{1},W_{2},\dotsc,W_{n})].
\]
For instance, we can choose 
\[
\varphi(x_{i},i=1,2,\dotsc,n)=(\frac{1}{\sqrt{n}}\sum_{i=1}^{n}x_{i})^{3}.
\]
Under this $\varphi$, the expectation under $\semiseqind$ is a certain zero but the one under $\seqind$ is greater than zero. Then we should be able to construct a test statistic based on the form of $\varphi$ and obtain a rejection region by studying its tail probability under $\upprob$ which can be transformed back into the sublinear expectation of $(W_{1},W_{2},\dotsc,W_{n})$. 

How to choose the test function will have significant effect on the performance of this hypothesis test. 
Moreover, the current interpretation of $n$ is the length of the whole data sequence and $\myvec{\sigma}$ is the unknown volatility dynamic of the full sequence. We can also interpret $n$ as the group size after grouping the dataset in a either non-overlapping or overlapping manner, then we can consider $\myvec{\sigma}$ for each group to test whether there is a case falling into the class of $H_a$, because the sublinear expectation can give a control on the extremes of the group statistics as indicated by \cite{jin2021optimal} and Section 2.2 in \cite{fang2019limit}.

\section*{Acknowledgements and the story behind the semi-$G$-normal}

We have received many useful suggestions and feedback from the community in the past four years which are beneficial to the formation of this paper (so this paper can be treated as a report to the community). 

The authors would like to first express their sincere thanks to Prof.~Shige Peng who visited our department in May 2017 (invited by Prof.~Reg Kulperger) and our discussion at that time motivated us to study a distributional and probabilistic structure that has a direct connection with the intuition behind the existing max-mean estimation proposed by \cite{jin2021optimal}.
Later on during the Fields-China Industrial Problem Solving Workshop in Finance, and a short course on the $G$-expectation framework given by Prof.~Peng at Fields Institute, Toronto, Canada, we had several interesting discussions on the data experiments in this context, which can be treated as the starting point of the companion paper of the current one. In our regular discussion notes in that period, there was a prototype of the current semi-$G$-normal distribution and also a question on independence between semi-$G$-normal was raised which is currently included and answered by \ref{eg:direct-indep-structure}. 



Although the design of semi-$G$-normal is mainly for distributional purpose, this concept was first proposed in \cite{li2018iterative}, which was applied to design an iterative approximation towards the $G$-normal distribution by starting from the linear expectations of classical normal, as discussed in \ref{subsec:connect-expt-CN-GN}.  
During the 2018 Young Researcher Meeting on BSDEs, Nonlinear Expectations and Mathematical Finance in Shanghai Jiao Tong University, 
we have received beneficial feedback on this iterative method from participants in the conference. Specially we would like to thank to Prof.~Yiqing Lin on providing more references that have potential theoretical connections. \ref{rem:improve-iterate} is benefited from the comments by Prof.~Shuzhen Yang and Prof.~Xinpeng Li.

The first author would also like to express his gratitude to Prof.~Huaxiong Huang at Fields Institute and his Ph.D.~student Nathan Gold for their support and suggestions during a separate long-term and challenging joint project (regularly discussing with Prof.~Peng) during summer 2017 on a stochastic method of the $G$-heat equation in high-dimensional case and its theoretical and numerical convergence. In this project, the first author has learned the intuition about the methods in how to appropriately design a switching rule in the stochastic volatility to approximate the solution to a fully nonlinear PDE, which is related to the methods based on BSDEs and the second order BSDEs, and also the intuition behind nonlinear expectations in this context. Although the methods are different, this experience creates another motivation for \cite{li2018iterative}.  




The authors are grateful for the valuable discussions with the community during the conference of Probability, Uncertainty and Quantitative Risk in July 2019. 
One of the motivations of \ref{eg:direct-indep-structure} is from the comments by Prof.~Mingshang Hu on the independence property of maximal distribution. The writing of \ref{subsec:data-seq-G-normal} is motivated by the discussions with Prof.~Peng during the conference. 
\ref{subsec:data-seq-G-normal} and further the data experiments in the  companion paper are also benefited from the discussions on the meaning of sequential independence under a set of measures with Prof.~Jianfeng Zhang. 



We have also benefited from the feedback from participants coming from various backgrounds in the Annual Meetings of SSC (Statistical Society of Canada) in 2018 and 2019 to understand the impression from general audience on the $G$-expectation framework. 
During the poster session of the Annual Meeting of SSC at McGill University in 2018, we have received several positive comments about designing a substructure connecting the $G$-expectation framework (which is a highly technical one for general audience) with the objects in the classical system. These comments further motivate us to write this paper for general readers. 
In the Annual Meeting of SSC at Calgary University in 2019, there is a comment from the audience on the property of $\myset{H}$ and the choice of function space ($\myset{H}$ could be quite small if we choose a large function space for $\varphi$). It has motivated us to improve the preliminary setup (\ref{sec:G-frame-setup}) and put more attention on the design of $\myset{H}$.

During the improvement of this manuscript from the first version (April 2021) to the third version (October 2021), 
the authors are grateful to Prof.~Defei Zhang who gives many beneficial comments (such as the comment on the product space and an improvement of \ref{fig:connection-threenorm}) and Prof.~Xinpeng Li whose suggestion motivates us to develop the research in \ref{subsec:semi-G-clt}. 

\newpage

\section{Proofs}
\label{sec:proofs}
%

%
%

\subsection{Proofs in \ref{subsec:property-maximal}}
\label{pf:subsec-property-maximal}

\begin{proof}[Proof of \ref{prop:rep-uni-maximal}]
The finiteness of $\lexpt[\abs{\varphi(V)}]$ is obvious due to the continuity of $\varphi$ and the compactness of $\sdInt$.
First of all, note that \ref{eq:represent-maximal-1} 
is a direct result of \ref{def:maximal}. It is also not hard to see \ref{eq:represent-maximal-3}, since for any $\sigma\in\mysetrv{D}\sdInt$, it satisfies
$\lprob_{\sigma}(\sdInt)=1$, then
\begin{align*}
\lexpt[\varphi(\sigma)] & =\int_{\sdl}^{\sdr}\varphi(x)\lprob_{\sigma}(\diff x)\leq\int_{\sdl}^{\sdr}\max_{x\in\sdInt}\varphi(x)\lprob_{\sigma}(\diff x)\\
 & =\max_{x\in\sdInt}\varphi(x)\lprob_{\sigma}(\sdInt)=\max_{x\in\sdInt}\varphi(x),
\end{align*}
which implies
\[
\max_{\sigma \in \mysetrv{D}\sdInt}\lexpt[\varphi(\sigma)]\leq\max_{\sigma\in \sdInt}\lexpt[\varphi(\sigma)].
\]
Since $\sdInt \subset\mysetrv{D}\sdInt$, we also have the other direction
of inequality holds. Similarly, we can show \ref{eq:represent-maximal-disc}.


To validate \ref{eq:represent-maximal-2}, 
we need to show that for any $\alpha>0$, there exist a random variable $\sigma_{\alpha}\in\mysetrvcont\sdInt$,
such that 
\begin{equation}
\label{eq:ineq-proof-3}
\lexpt[\varphi(\sigma_{\alpha})]>\expt[\varphi(V)]-\alpha.
\end{equation}
Let $v^{*}=\argmax_{v\in\sdInt}\varphi(v)$. Then we have $\expt[\varphi(V)]=\varphi(v_{0})$.
Since $\varphi$ is a continuous function on $\sdInt$, there exists
$v_{0}\in(\sdl,\sdr)$ such that $\varphi(v_{0})>\varphi(v^{*})-\alpha/2$.
In a classical probability space $\lexptSpace$, 
consider a series of random variables 
$\xi_{n}\coloneqq v_{0}+e/\sqrt{n}$
where $e\sim\CN(0,1)$ and $n\in\numset{N}_{+}$. 
In short, $\xi_{n}\sim\CN(v_{0},1/n)$ with diminishing variance. 
Then we must have $\xi_{n}\convergeto{\dist}v_{0}$.
Then transform $\xi_{n}$ into
its truncation on $\sdInt$: $\xi_{n}^{*}\coloneqq \xi_{n}I_{n}$ with $I_{n}\coloneqq\ind{\xi_{n}\in\sdInt}$. 
We can easily
show that $I_{n}\convergeto{\lprob}1$ since, for any $a>0$,
$
\lprob(\abs{I_{n}-1} >a)=\lprob(I_{n}=0)=1-\lprob(\xi_{n}\in\sdInt)\to0.
$
By classical Slustky's theorem,
$\xi_{n}^{*}=\xi_{n}I_{n}\convergeto{\dist}v_{0}.$ Therefore, for any
$\varphi\in\fspace(\numset{R})$,
\[
\lexpt[\varphi(\xi_{n}^{*})]\to\varphi(v_{0}).
\]
For any $\alpha>0$, there exists $n_{\alpha}$ such that 
$
\lexpt[\varphi(\xi_{n_{\alpha}}^{*})]>\varphi(v_{0})-\alpha/2.
$
Let $\sigma_\alpha\coloneqq \xi_{n_{\alpha}}$ which belongs to $\mysetrvcont\sdInt$. It the required object satisfying \ref{eq:ineq-proof-3}, because
\[
\lexpt[\varphi(\sigma_\alpha))]>\varphi(v_{0})-\alpha/2>\varphi(v_{0})-\alpha=\expt[\varphi(V)]-\alpha.\qedhere
\]
\end{proof}

\begin{proof}[Proof of \ref{thm:maximal-multi-relation}]
\label{pf:thm:maximal-multi-relation}
We can prove it by mathematical induction. For $d=1$, it obviously
holds. Suppose the results holds for $d=k$ with $k\in\numset{N}_{+}$,
namely, 
\[
\myvec{\gsd}_{(k)}\coloneqq(\gsd_{1},\gsd_{2},\dotsc,\gsd_{k})\sim\Maximal(\prod_{i=1}^{k}[\sdl_{i},\sdr_{i}]),
\]
then we only need to show it holds for $d=k+1$. In fact, consider any
locally Lipschitz function 
\[
\varphi:(\numset{R}^{k+1},\norm{\cdot})\to(\numset{R},\abs{\cdot}),
\]
satisfying, there exists $C_{\varphi}>0$, $m\in\numset{R}_{+}$,
\[
\abs{\varphi(x)-\varphi(y)}\leq C_{\varphi}(1+\norm{x}^{m}+\norm{y}^{m})\norm{x-y}.
\]
Since $\gsd_{k+1}$ is independent from $\myvec{\gsd}_{(k)}$,
\[
\expt[\varphi(\gsd_{1},\gsd_{2},\dotsc,\gsd_{k})] =\expt[\varphi(\myvec{\gsd}_{(k)},\gsd_{k+1})] = \expt\left[\expt[\varphi(\myvec{\sigma}_{(k)},\gsd_{k+1})]_{\myvec{\sigma}_{(k)}=\myvec{\gsd}_{(k)}}\right].
\]
Let 
$
\varphi_{k}(x)\coloneqq\varphi(\myvec{\sigma}_{(k)},x),
$
and 
\[
\psi_{k+1}(\myvec{\sigma}_{(k)})\coloneqq\max_{\sigma_{k+1}\in[\sdl_{k+1},\sdr_{k+1}]}\varphi(\myvec{\sigma}_{(k)},\sigma_{k+1})=\max_{\sigma_{k+1}\in [\sdl_{k+1},\sdr_{k+1}]}\varphi_{k}(\sigma_{k+1}).
\]
For notational convenience, we sometimes omit the domain $[\sdl_{k+1},\sdr_{k+1}]$ of the maximization here in our later discussions if it is clear from the context.
\begin{claim}
	\label{claim:maximal-varphi-k}
	We have $\varphi_{k}\in\fspace(\numset{R})$ and $\psi_{k+1}\in\fspace(\numset{R}^{k})$.
\end{claim}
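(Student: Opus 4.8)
The plan is to verify the two locally Lipschitz estimates directly from the growth bound on $\varphi$, handling the two assertions separately and exploiting the fact that the extra coordinate $\sigma_{k+1}$ ranges only over the \emph{bounded} interval $[\sdl_{k+1},\sdr_{k+1}]$. Recall that $\varphi$ satisfies $\abs{\varphi(x)-\varphi(y)}\leq C_\varphi(1+\norm{x}^m+\norm{y}^m)\norm{x-y}$ for some $m\in\numset{R}_+$, and that locally Lipschitz implies continuous, which will be used for the maximization to be well-posed.

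For $\varphi_{k}(x)=\varphi(\myvec{\sigma}_{(k)},x)$ I would fix $\myvec{\sigma}_{(k)}$ and compare the points $u=(\myvec{\sigma}_{(k)},x)$ and $v=(\myvec{\sigma}_{(k)},y)$ in $\numset{R}^{k+1}$. Since they differ only in the last coordinate, $\norm{u-v}=\abs{x-y}$, so the growth bound on $\varphi$ gives
\[
\abs{\varphi_{k}(x)-\varphi_{k}(y)}\leq C_\varphi(1+\norm{u}^m+\norm{v}^m)\abs{x-y}.
\]
I would then absorb the fixed vector into the constant: using $\norm{u}^m\leq(\norm{\myvec{\sigma}_{(k)}}^2+x^2)^{m/2}\leq C_m(\norm{\myvec{\sigma}_{(k)}}^m+\abs{x}^m)$ and the analogous bound for $v$, the polynomial factor is dominated by $C'(1+\abs{x}^m+\abs{y}^m)$ with $C'$ depending only on $\myvec{\sigma}_{(k)}$ and $m$. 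This yields $\varphi_{k}\in\fspace(\numset{R})$.

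For $\psi_{k+1}(\myvec{\sigma}_{(k)})=\max_{\sigma_{k+1}\in[\sdl_{k+1},\sdr_{k+1}]}\varphi(\myvec{\sigma}_{(k)},\sigma_{k+1})$ I would first note the maximum is attained by continuity of $\varphi$ and compactness of the interval. The key tool is the elementary inequality $\abs{\max_{a}f(a)-\max_{a}g(a)}\leq\max_{a}\abs{f(a)-g(a)}$. Applying it at two points $s,t\in\numset{R}^{k}$ and then the growth bound of $\varphi$ to each fixed $\sigma_{k+1}$ (again $\norm{(s,\sigma_{k+1})-(t,\sigma_{k+1})}=\norm{s-t}$) gives
\[
\abs{\psi_{k+1}(s)-\psi_{k+1}(t)}\leq C_\varphi\max_{\sigma_{k+1}}\bigl(1+\norm{(s,\sigma_{k+1})}^m+\norm{(t,\sigma_{k+1})}^m\bigr)\norm{s-t}.
\]
Here the decisive point is that $\sigma_{k+1}^2\leq\sdr_{k+1}^2$ on the interval, so $\norm{(s,\sigma_{k+1})}^m\leq C_m(\norm{s}^m+\sdr_{k+1}^m)$ \emph{uniformly} in $\sigma_{k+1}$; taking the max then bounds the polynomial factor by $C''(1+\norm{s}^m+\norm{t}^m)$ with $C''$ depending only on $\sdr_{k+1}$ and $m$, whence $\psi_{k+1}\in\fspace(\numset{R}^{k})$.

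The only genuine obstacle is securing a bound on the growth factor that is uniform in the maximizing variable $\sigma_{k+1}$; this is precisely where the boundedness of $[\sdl_{k+1},\sdr_{k+1}]$ (the support of the maximal distribution) enters, and it is the reason the claim would fail for an unbounded scaling set. Everything else is a routine application of the inequality $(a+b)^m\leq C_m(a^m+b^m)$ together with the stability of $\fspace$ under fixing coordinates and under taking maxima over a compact set.
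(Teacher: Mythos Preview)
Your proposal is correct and follows essentially the same approach as the paper's own proof: for $\varphi_k$ you fix $\myvec{\sigma}_{(k)}$, apply the locally Lipschitz bound of $\varphi$, and absorb $\norm{\myvec{\sigma}_{(k)}}^m$ into the constant via $(a+b)^m\leq C_m(a^m+b^m)$; for $\psi_{k+1}$ you use the inequality $\abs{\max_a f-\max_a g}\leq\max_a\abs{f-g}$, then the growth bound of $\varphi$, and finally the boundedness of $[\sdl_{k+1},\sdr_{k+1}]$ to control $\norm{(s,\sigma_{k+1})}^m$ uniformly in $\sigma_{k+1}$. The paper does exactly this, with the same constants $K=\max\{1,2^{m-1}\}$ and the same emphasis on the compactness of the scaling interval as the key ingredient.
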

Then we are able to apply the representation of maximal distribution $\myvec{V}_{(k)}$ (allowed by \ref{claim:maximal-varphi-k}) to have
\begin{align*}
\expt[\varphi(\gsd_{1},\gsd_{2},\dotsc,\gsd_{k+1})] & =\expt[\varphi(\myvec{\gsd}_{(k)},\gsd_{k+1})]\\
 & =\expt\Bigl[\expt[\underbrace{\varphi(\myvec{\sigma}_{(k)},\gsd_{k+1})}_{\varphi_{k}(\sigma_{k+1})}]_{\myvec{\sigma}_{(k)}=\myvec{\gsd}_{(k)}}\Bigr]\\
 & =\expt\Bigl[[\max_{\sigma_{k+1}}\varphi_{k}(\sigma_{k+1})]_{\myvec{\sigma}_{(k)}=\myvec{\gsd}_{(k)}}\Bigr]\\
 & =\expt[\psi_{k+1}(\myvec{\gsd}_{(k)})]\\
 & =\max_{\myvec{\sigma}_{(k)}}\psi_{k+1}(\myvec{\sigma}_{(k)})\\
 & =\max_{(\sigma_1,\sigma_2,\dotsc,\sigma_k)}\max_{\sigma_{k+1}}\varphi(\sigma_{1},\dotsc,\sigma_{k},\sigma_{k+1})\\
 & =\max_{(\sigma_1,\sigma_2,\dotsc,\sigma_{k+1})}\varphi(\sigma_{1},\dotsc,\sigma_{k},\sigma_{k+1}).
\end{align*}
Therefore, 
\[
(\gsd_{1},\gsd_{2},\dotsc,\gsd_{k+1})\sim\Maximal(\prod_{i=1}^{k+1}[\sdl_{i},\sdr_{i}]).
\]
The conclusion can be achieved by induction.

The remaining task is to prove \ref{claim:maximal-varphi-k}. 


To show $\varphi_{k}\in\fspace(\numset{R})$, we write
\begin{align*}
\abs{\varphi_{k}(x)-\varphi_{k}(y)} & =\abs{\varphi(\myvec{\sigma}_{(k)},x)-\varphi(\myvec{\sigma}_{(k)},y)}\\
 & \leq C_{\varphi}(1+\norm{(\myvec{\sigma}_{(k)},x)}^{m}+\norm{(\myvec{\sigma}_{(k)},y)}^{m})\norm{x-y},
\end{align*}
where we adapt $\norm{\cdot}$ to lower dimension in the sense that
$\norm{x}\coloneqq\norm{(\myvec{0}_{(k)},x)}$. Notice
\[
\norm{(\myvec{\sigma}_{(k)},x)}  =\norm{(\myvec{\sigma}_{(k)},0)+(\myvec{0}_{(k)},x)} \leq\norm{\myvec{\sigma}_{(k)}}+\norm{x}.
\]
Meanwhile, there exists $K\geq0$ (actually $K=\max\{1,2^{m-1}\}$),
such that 
\[
\norm{(\myvec{\sigma}_{(k)},x)}^{m}  \leq(\norm{\myvec{\sigma}_{(k)}}+\norm{x})^{m} \leq K(\norm{\myvec{\sigma}_{(k)}}^{m}+\norm{x}^{m}).
\]
Then we have 
\[
\abs{\varphi_{k}(x)-\varphi_{k}(y)}\leq C_{1}(1+\norm{x}^{m}+\norm{y}^{m})\norm{x-y},
\]
where $C_{1}=C_{\varphi}\max\{1+2K\norm{\myvec{\sigma}_{(k)}}^{m},K\}.$

Next we check $\psi_{k+1}\in\fspace(\numset{R}^{k})$. For any $\myvec{a}_{(k)},\myvec{b}_{(k)}\in\numset{R}^{k}$, 
\begin{align*}
 & |\psi_{k+1}(\myvec{a}_{(k)})-\psi_{k+1}(\myvec{b}_{(k)})|\\
= & |\max_{\sigma_{k+1}\in\sdInt}\varphi(\myvec{a}_{(k)},\sigma_{k+1})-\max_{\sigma_{k+1}\in\sdInt}\varphi(\myvec{b}_{(k)},\sigma_{k+1})|\\
\leq & \max_{\sigma_{k+1}\in\sdInt}|\varphi(\myvec{a}_{(k)},\sigma_{k+1})-\varphi(\myvec{b}_{(k)},\sigma_{k+1})|\\
\leq & \max_{\sigma_{k+1}\in\sdInt}C_{\varphi}(1+\norm{(\myvec{a}_{(k)},\sigma_{k+1})}^{m}+\norm{(\myvec{b}_{(k)},\sigma_{k+1})}^{m})\norm{\myvec{a}_{(k)}-\myvec{b}_{(k)}}\\
\leq & C_{2}(1+\norm{\myvec{a}_{(k)}}+\norm{\myvec{b}_{(k)}})\norm{\myvec{a}_{(k)}-\myvec{b}_{(k)}},
\end{align*}
where $C_{2}=C_{\varphi}\max\{1+2K\sdr^{m},K\}$. 
\end{proof}

\begin{proof}[Proof of \ref{cor:Properties-of-Maximal}]
\label{pf:cor:Properties-of-Maximal}
The first statement can be proved by studying the range of $\psi(\myvec{\gsd})$.
First, we need to show that $\varphi\circ\psi(x)\coloneqq\varphi(\psi(x))$
is also a locally Lipschitz function for any $\varphi\in\fspace(\numset{R}^{d})$.
Suppose $\psi$ satisfies, 
\begin{equation}
\norm{\psi(\myvec{x})-\psi(\myvec{y})}\leq C_{\psi}(1+\norm{\myvec{x}}^{p}+\norm{\myvec{y}}^{p})\norm{\myvec{x}-\myvec{y}}.\label{eq:psi-property}
\end{equation}
We first can write
\begin{align}
|\varphi\circ\psi(\myvec{x})-\varphi\circ\psi(\myvec{y})| & =|\varphi(\psi(\myvec{x}))-\varphi(\psi(\myvec{y}))|\nonumber \\
 & \leq C_{\varphi}(1+\norm{\psi(\myvec{x})}^{m}+\norm{\psi(\myvec{y})}^{m})\norm{\psi(\myvec{x})-\psi(\myvec{y})}.\label{eq:prodfunction-local-lip}
\end{align}
As preparations for next step, we are going to frequently use tha
basic fact that the lower-degree polynomials can be dominated by higher-degree
ones in the sense that, 
\begin{equation}
\label{eq:ineq-poly-1}
	\norm{\myvec{x}}^{k}\leq\max\{1,\norm{\myvec{x}}^{l}\}\leq1+\norm{\myvec{x}}^{l}\text{ with }k\leq l,
\end{equation}
and for any $k,l\in\numset{N}_{+}$,
\begin{equation}
\label{eq:ineq-poly-2}
	\norm{\myvec{x}}^{k}\norm{\myvec{y}}^{l}\leq\frac{1}{2}(\norm{\myvec{x}}^{2k}+\norm{\myvec{x}}^{2l}).
\end{equation}
In \ref{eq:prodfunction-local-lip}, we can directly use \ref{eq:psi-property}
to dominate $\norm{\psi(\myvec{x})-\psi(\myvec{y})}$. For the parts
like $\norm{\psi(\myvec{x})}^{m}$, \ref{eq:psi-property} implies,
\[
\norm{\psi(\myvec{x})} \leq|\psi(\myvec{x})-\psi(\myvec{0})|+|\psi(\myvec{0})|
  \leq C_{\psi}(1+\norm{\myvec{x}}^{p})\norm{\myvec{x}}+C_{0}
  \leq C_{\psi}'(1+\norm{\myvec{x}}^{p+1}),
\]
then there exists $C_{\psi}''>0$ such that,
\[
\norm{\psi(\myvec{x})}^{m}  \leq[C_{\psi}'(1+\norm{\myvec{x}}^{p+1})]^{m}
 \leq C_{\psi}''(1+\norm{\myvec{x}}^{(p+1)m}).
\]
Hence, we can get $\varphi\circ\psi\in\fspace(\numset{R}^{d})$ by
the inequality as follows, 
\begin{align*}
|\varphi\circ\psi(\myvec{x})-\varphi\circ\psi(\myvec{y})| & \leq K_{1}(1+\norm{\myvec{x}}^{(p+1)m}+\norm{\myvec{y}}^{(p+1)m})(1+\norm{\myvec{x}}^{p}+\norm{\myvec{y}}^{p})\norm{\myvec{x}-\myvec{y}}\\
 & \leq K_{2}(1+\norm{\myvec{x}}^{2(p+1)pm}+\norm{\myvec{y}}^{2(p+1)pm})\norm{\myvec{x}-\myvec{y}}.
\end{align*}
Finally, we have $\psi(\myvec{\sigma})\sim\Maximal(\mysetrv{S})$ from its
representation:
\begin{align*}
\expt[\varphi(\myvec{S})]=\expt[\varphi(\psi(\myvec{\gsd}))] & =\expt[\varphi\circ\psi(\myvec{\gsd})]\\
 & =\max_{\sigma_{i}\in[\sdl_{i},\sdr_{i}],i=1,2,\dotsc,d}\varphi\circ\psi(\sigma_{1},\sigma_{2},\dotsc,\sigma_{d})\\
 & =\max_{\myvec{s}\in\mysetrv{S}}\varphi(\myvec{s}).
\end{align*}

The second statement essentially comes from the basic property of the maximum of a continuous function on a rectangle: in this ideal setup, the  order of taking marginal maximum does not affect the final value. To show the basic idea, start from a simple case $d=2$: 
if $\gsd_{1}\seqind \gsd_{2}$, for any $\varphi\in\fspace(\numset{R}^{2})$,
we can work on $(\gsd_{2},\gsd_{1})$ to show the other direction of independence, 
\begin{align*}
	\expt[\varphi(\gsd_{2},\gsd_{1})] & =\expt[\expt[\varphi(\sigma_{2},\gsd_{1})]_{\sigma_{1}=\gsd_{1}}]\\
	& = \max_{\sigma_{1}\in[\sdl_{1},\sdr_{1}]}\max_{\sigma_{2}\in[\sdl_{2},\sdr_{2}]}\varphi(\sigma_{2},\sigma_{1}) \\
	& =\max_{(\sigma_{1},\sigma_{2})\in\prod_{i=1}^{2}[\sdl_{i},\sdr_{i}]}\varphi(\sigma_{2},\sigma_{1})\\
	& = \max_{\sigma_{2}\in[\sdl_{2},\sdr_{2}]}\max_{\sigma_{1}\in[\sdl_{1},\sdr_{1}]}\varphi(\sigma_{2},\sigma_{1}) \\
	& = \expt[\expt[\varphi(\sigma_{2},\gsd_{1})]_{\sigma_{2}=\gsd_{2}}],
\end{align*}
where we have used the fact that $
\varphi_{x}(y)\coloneqq\max_{x\in\sdInt}\varphi(x,y)\in\fspace(\numset{R})
$ if $\varphi\in\fspace(\numset{R}^{2})$, which can be validated by  \ref{claim:maximal-varphi-k}.
Hence, we have $\gsd_{2}\seqind \gsd_{1}$. 
  
In general, for any permutation $(i_{1},i_{2},\dotsc,i_{d})$ of $(1,2,\dotsc,d)$, our objective is
to prove for any $j=2,\dotsc d$, 
\[
(\gsd_{i_{1}},\gsd_{i_{2}},\dotsc \gsd_{i_{j-1}})\seqind \gsd_{i_{j}}.
\]
From the first statement, $(\gsd_{i_{1}},\gsd_{i_{2}},\dotsc,\gsd_{i_{j}})$,
as a function of $(\gsd_{1},\gsd_{2},\dotsc,\gsd_{d})$, must also follow a
maximal distribution, characterized by $\Maximal(\myset{V}_{j})$
with 
\[
\myset{V}_{j}\coloneqq\prod_{k=1}^{j}[\sdl_{i_{k}},\sdr_{i_{k}}].
\]
Then we can mimic the derivation for $d=2$ to check the independence,
\begin{align*}
\expt[\varphi(\gsd_{i_{1}},\gsd_{i_{2}},\dotsc,\gsd_{i_{j}})] & =\max_{(\sigma_{i_{1}},\sigma_{i_{2}},\dots,\sigma_{i_{j}})\in\myset{V}_{j}}\varphi(\sigma_{i_{1}},\sigma_{i_{2}},\dots,\sigma_{i_{j}})\\
 & =\max_{(\sigma_{i_{1}},\sigma_{i_{2}},\dotsc,\sigma_{i_{j-1}})}\max_{\sigma_{i_{j}}}\varphi(\sigma_{i_{1}},\sigma_{i_{2}},\dots,\sigma_{i_{j}})\\
 & =\expt[[\max_{\sigma_{i_{j}}}\varphi(\sigma_{i_{1}},\sigma_{i_{2}},\dots,\sigma_{i_{j-1}},\gsd_{i_{j}})]_{\sigma_{i_{k}}=\gsd_{i_{k}},k=1,2\dotsc,j-1}]\\
 & =\expt[\expt[\varphi(\sigma_{i_{1}},\sigma_{i_{2}},\dotsc,\sigma_{i_{j-1}},\gsd_{i_{j}})]_{\sigma_{i_{k}}=\gsd_{i_{k}},k=1,2\dotsc,j-1}].
\end{align*}
Since it holds for all possible $j$, it is equivalent to say
\[
\gsd_{i_{1}}\seqind \gsd_{i_{2}}\seqind\cdots\seqind \gsd_{i_{d}}.\qedhere
\]

\end{proof}

\subsection{Proofs in \ref{subsec:semi-G-normal-intro} (improved)}
\label{pf:subsec:semi-G-normal-intro}

%


%


In order to show the uniqueness of decomposition (\ref{prop:unique-decompose}), we first prepare several lemmas. 
\begin{lem}
\label{lem:g-ky-classical}For any $g(K,\eta)\in\bar{\myset{H}}_{s}$
where $K\sim\Maximal(\Theta)$ and $\eta$ is classical, if $g(K,\eta)\eqdistn\stdrv$
where $\stdrv$ is classical, we must have, for any fixed $k\in\Theta$,
\[
g(k,\eta)\eqdistn\stdrv.
\]
\end{lem}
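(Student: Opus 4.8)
The plan is to exploit the maximal-distribution representation of the sublinear expectation of $W:=g(K,\eta)$ together with the fact that a classical $\stdrv$ carries no distributional uncertainty, so that $\expt[\varphi(\stdrv)]=-\expt[-\varphi(\stdrv)]$ for every admissible test function $\varphi$. The whole argument then reduces to showing that a certain function of $k\in\Theta$ is forced to be constant.

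First I would record the representation. Since $W=g(K,\eta)$ with $K\sim\Maximal(\Theta)$, $\eta$ classical and $K\seqind\eta$, for any admissible $\varphi$ set $H_{\varphi}(k):=\lexpt[\varphi(g(k,\eta))]$; this is a genuine classical expectation because for fixed $k$ the map $g(k,\cdot)$ is a deterministic Borel transform of the classical $\eta$. Conditioning on the maximal part first gives
\[
\expt[\varphi(W)]=\expt[\expt[\varphi(g(k,\eta))]_{k=K}]=\expt[H_{\varphi}(K)]=\max_{k\in\Theta}H_{\varphi}(k),
\]
where the last equality is the representation of $\Maximal(\Theta)$ from \ref{def:maximal}. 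Applying the same identity to $-\varphi$ yields $\expt[-\varphi(W)]=\max_{k\in\Theta}(-H_{\varphi}(k))=-\min_{k\in\Theta}H_{\varphi}(k)$.

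Next I would invoke the hypothesis $W\eqdistn\stdrv$ with $\stdrv$ classical. Since $\stdrv$ has a certain (classical) law, $\expt[\varphi(\stdrv)]=-\expt[-\varphi(\stdrv)]=\lexpt[\varphi(\stdrv)]$, and $\eqdistn$ transfers this mean-certainty to $W$, so $\expt[\varphi(W)]=-\expt[-\varphi(W)]$. Combining this with the two representations above gives, for each fixed $\varphi$,
\[
\max_{k\in\Theta}H_{\varphi}(k)=\expt[\varphi(W)]=-\expt[-\varphi(W)]=\min_{k\in\Theta}H_{\varphi}(k).
\]
Hence $H_{\varphi}$ is constant on $\Theta$, necessarily equal to $\lexpt[\varphi(\stdrv)]$. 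Therefore, for every fixed $k\in\Theta$ and every admissible $\varphi$ we obtain $\lexpt[\varphi(g(k,\eta))]=\lexpt[\varphi(\stdrv)]$, which is exactly $g(k,\eta)\eqdistn\stdrv$, as claimed.

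The main obstacle is the regularity needed to legitimately apply the maximal representation: one must check that $k\mapsto H_{\varphi}(k)$ is continuous on the compact set $\Theta$, so that the maximum and minimum are attained and the $\Maximal(\Theta)$ representation is applicable. This is the same type of estimate already carried out in \ref{lem:varphi-stdrv} and \ref{claim:maximal-varphi-k}; when $g$ is only Borel measurable one additionally needs the integrability conditions built into $\bar{\myset{H}}_{s}$ to control $\varphi(g(k,\eta))$ uniformly in $k$. I would dispatch this by a dominated-convergence argument in $k$, reducing to the local-Lipschitz bounds used there, and otherwise the chain of equalities above is routine.
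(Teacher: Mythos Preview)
Your argument is correct and essentially identical to the paper's: both use the maximal representation $\expt[\psi(g(K,\eta))]=\max_{k\in\Theta}\lexpt[\psi(g(k,\eta))]$, replace $\psi$ by $-\psi$ to obtain the corresponding $\min$, and then invoke the mean-certainty of the classical $\stdrv$ to force $\max=\min$ and hence constancy in $k$. The paper simply writes this out in three lines without your additional remarks on the regularity of $k\mapsto H_\varphi(k)$.
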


\begin{proof}
Since for any function $\psi$,
\[
\max_{k\in\Theta}\lexpt[\psi(g(k,\eta))]=\expt[\psi(g(K,\eta))]=\lexpt[\psi(\stdrv)],
\]
by replacing $\psi$ with $-\psi$, we have 
\begin{align*}
\min_{k\in\Theta}\lexpt[\psi(g(k,\eta))] & =-\expt[-\psi(g(K,\eta))]\\
 & =-\lexpt[-\psi(\stdrv)]=\lexpt[\psi(\stdrv)].
\end{align*}
It means for any $k\in\Theta$, we have 
\[
\lexpt[\psi(g(k,\eta))]\equiv\lexpt[\psi(\stdrv)].
\]
Therefore, we have $g(k,\eta)\eqdistn\stdrv.$
\end{proof}

\begin{lem}
\label{lem:bound-maximal}For a maximally distributed $V\sim\Maximal\sdInt$,
we have $\lowprob(V\in\sdInt)=1$. 
\end{lem}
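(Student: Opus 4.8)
The goal is to show $\lowprob(V\in\sdInt)=1$ where $\sdInt=[\sdl,\sdr]$. By definition $\lowprob(V\in\sdInt)=\inf_{\lprob\in\myset{P}}\lprob(V\in\sdInt)$, so it suffices to prove that \emph{every} representing measure assigns full mass to $\{V\in\sdInt\}$; equivalently, I would show $\upprob(V\notin\sdInt)=\expt[\indicator_{\{V\notin\sdInt\}}]=0$, since $\upprob(A)=\sup_{\lprob\in\myset{P}}\lprob(A)=0$ forces $\lprob(A)=0$ for each $\lprob$. The natural idea is to dominate the indicator $\indicator_{\{V\notin\sdInt\}}$ from above by a continuous bounded function and then invoke the maximal-distribution property $\expt[\psi(V)]=\max_{v\in\sdInt}\psi(v)$ from \ref{def:maximal}.

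The main obstacle is that one \emph{cannot} dominate $\indicator_{\{V\notin\sdInt\}}$ directly: any continuous $\psi$ with $\psi\geq\indicator_{\{V\notin\sdInt\}}$ must satisfy $\psi\geq 1$ on the open complement, hence $\psi(\sdl),\psi(\sdr)\geq 1$ by continuity, so $\max_{v\in\sdInt}\psi(v)\geq 1$ and the bound is vacuous. I would circumvent this by enlarging the interval. For each $n\in\numset{N}_+$, set $I_n\coloneqq[\sdl-\tfrac1n,\sdr+\tfrac1n]$ and construct a bump function $\psi_n\in\fspacedef(\numset{R})$ (bounded and Lipschitz, hence also in $\fspace(\numset{R})$) with $\psi_n\equiv 0$ on $\sdInt$, $\psi_n\equiv 1$ outside $I_n$, and a linear ramp on the two transition strips $[\sdl-\tfrac1n,\sdl]$ and $[\sdr,\sdr+\tfrac1n]$. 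Then $\indicator_{\{V\notin I_n\}}\leq\psi_n(V)$ pointwise, both sides lie in $\myset{H}$ (the indicator by \ref{prop:H-space}, and $\psi_n(V)$ since $\psi_n$ is bounded Lipschitz), so monotonicity of $\expt$ together with \ref{def:maximal} gives
\[
\upprob(V\notin I_n)=\expt[\indicator_{\{V\notin I_n\}}]\leq\expt[\psi_n(V)]=\max_{v\in\sdInt}\psi_n(v)=0.
\]

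Finally I would pass from $I_n$ to $\sdInt$. From $\upprob(V\notin I_n)=0$ we get $\lprob(V\notin I_n)=0$ for every $\lprob\in\myset{P}$. Since $\bigcap_{n}I_n=\sdInt$, taking preimages gives $\{V\notin\sdInt\}=\bigcup_{n}\{V\notin I_n\}$, a countable union; by countable subadditivity $\lprob(V\notin\sdInt)\leq\sum_{n}\lprob(V\notin I_n)=0$, so $\lprob(V\in\sdInt)=1$ for each $\lprob\in\myset{P}$. Therefore $\lowprob(V\in\sdInt)=\inf_{\lprob\in\myset{P}}\lprob(V\in\sdInt)=1$, as claimed. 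I note that this route deliberately avoids any appeal to upper-continuity of the capacity $\upprob$ (which, as remarked after \ref{thm:represent-capacity}, is delicate), relying instead only on the elementary countable-subadditivity of each genuine probability measure in $\myset{P}$.
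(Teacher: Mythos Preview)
Your proof is correct and the construction is essentially the same as the paper's: your bump function $\psi_n$ is precisely $1-\varphi_n$ for the paper's tent function $\varphi_n$, and your computation $\expt[\psi_n(V)]=\max_{v\in\sdInt}\psi_n(v)=0$ is equivalent to the paper's $\expt[-\varphi_n(V)]=-1$.

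The only genuine difference is in the closing step. The paper passes directly to the limit in the sublinear operator, arguing $\expt[-\indicator_{\sdInt}(V)]=\lim_n\expt[-\varphi_n(V)]=-1$ via completeness of $\myset{H}$ (implicitly a monotone-convergence-from-below property of the upper expectation). You instead deduce $\upprob(V\notin I_n)=0$ from $\indicator_{\{V\notin I_n\}}\le\psi_n$, then drop down to each individual $\lprob\in\myset{P}$ and use ordinary countable subadditivity to get $\lprob(V\notin\sdInt)=0$. Your route is slightly more elementary and self-contained in that it never appeals to any continuity of $\expt$ or $\upprob$; the paper's route is a touch shorter once one accepts the limit passage. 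Both are entirely valid.
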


\begin{proof}
Let 
\[
\varphi_{n}(x)=\begin{cases}
1 & x\in\sdInt\\
n(x-\sdl)+1 & x\in[\sdl-\frac{1}{n},\sdl)\\
-n(x-\sdr)+1 & x\in(\sdr,\sdr+\frac{1}{n}]\\
0 & \text{otherwise}
\end{cases}.
\]
Then we have $\varphi_{n}\in\fspace(\numset{R})$ and $\varphi_{n}(x)\downarrow\indicator_{\sdInt}(x)$
or $-\varphi_{n}(x)\uparrow-\indicator_{\sdInt}(x)$. (Since each
$\varphi_{n}(V)\in\myset{H}$, we have $\varphi(V)=\lim_{n\to\infty}\varphi_{n}(V)\in\myset{H}$
by the completeness of $\myset{H}$.) Note that 
\[
\expt[-\varphi_{n}(V)]=\max_{x\in\sdInt}(-\varphi_{n}(x))=-\min_{x\in\sdInt}\varphi_{n}(x)=-1.
\]
It implies that 
\[
\expt[-\indicator_{\sdInt}(V)]=\lim_{n\to\infty}\expt[-\varphi_{n}(V)]=-1,
\]
then 
\[
\lowprob(V\in\sdInt)=-\expt[-\indicator_{\sdInt}(V)]=1.
\]
\end{proof}

\begin{lem}
\label{prop:hxy-hx}Consider $K\sim\Maximal(\Theta)$ where $\Theta$
is a compact and convex set and $\eta$ follows a non-degenerate classical
distribution $P_{\eta}$ with $K\seqind\eta$. For any $h\in\fspace$,
if $h(K,\eta)\sim\Maximal\sdInt$, there exists $B\in\myset{B}(\numset{R})$
with $P_{\eta}(B)=1$ such that $h(x,y)$ does not depend on $y$
or simply $h(x,y)=h(x)$ when $y\in B$. 
\end{lem}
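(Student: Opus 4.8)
The plan is to pass from the function $h$ to its one-parameter slices $y \mapsto h(k,y)$ for fixed $k \in \Theta$, to show that each relevant slice is $P_{\eta}$-almost surely constant, and then to glue the resulting null sets into a single $B$. First I would put the hypothesis into analytic form. Because $K \seqind \eta$ with $K \sim \Maximal(\Theta)$, and because $k \mapsto \lexpt[\varphi(h(k,\eta))]$ is itself locally Lipschitz (the estimate is the one already carried out in \ref{claim:maximal-varphi-k}, now also using that the classical $\eta$ has finite moments of every order), the representation of the maximal distribution gives $\expt[\varphi(h(K,\eta))] = \max_{k\in\Theta}\lexpt[\varphi(h(k,\eta))]$ for every $\varphi\in\fspace(\numset{R})$. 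Feeding in $h(K,\eta)\sim\Maximal\sdInt$ and then replacing $\varphi$ by $-\varphi$ yields the two envelope identities
\[
\max_{k\in\Theta}\lexpt[\varphi(h(k,\eta))]=\max_{s\in\sdInt}\varphi(s), \qquad \min_{k\in\Theta}\lexpt[\varphi(h(k,\eta))]=\min_{s\in\sdInt}\varphi(s),
\]
valid for all $\varphi\in\fspace(\numset{R})$; this mirrors the classical-target reduction of \ref{lem:g-ky-classical}.

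Next I would extract two consequences. Applying \ref{thm:represent-capacity} to $\varphi(\cdot)=\ind{\cdot\le t}$ and comparing with the upper and lower cdf of $\Maximal\sdInt$ (computed as in \ref{lem:bound-maximal}) confines every slice, $\sdl\le h(k,\eta)\le\sdr$ $P_{\eta}$-a.s.\ for each $k\in\Theta$. Then, testing the $\min$-identity against the strictly convex functions $\varphi_c(s)=(s-c)^2\in\fspace(\numset{R})$ for each $c\in\sdInt$ gives $\min_{k\in\Theta}\lexpt[(h(k,\eta)-c)^2]=0$; since $\Theta$ is compact and the integrand is continuous in $k$, the minimum is attained at some $k_c$, and strict convexity forces $h(k_c,\eta)=c$ $P_{\eta}$-a.s. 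Thus a full degenerate family of slices, one at each target value $c\in\sdInt$, is already present.

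The hard part will be upgrading this to the assertion that \emph{every} slice $h(k,\cdot)$ is $P_{\eta}$-a.s.\ constant, which is what the conclusion requires. The difficulty is structural: the law $\Maximal\sdInt$ records only the upper and lower envelopes of the family $\{\text{law}(h(k,\eta))\}_{k\in\Theta}$, and these envelopes are insensitive to the presence of additional slices whose laws sit inside $\sdInt$, so the sublinear data alone does not detect a non-degenerate slice. To close this gap I would bring in the two pieces of information not yet used, namely the continuity of $h$ in $k$ and the genuine randomness of $P_{\eta}$, arguing that a slice with positive variance, being a limit of nearby slices, would perturb the extremal identities above once probed by a suitably localized test function and so violate one of the envelope equalities. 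Granting this degeneracy, the assembly is routine: for each $k$ in a countable dense set $D\subset\Theta$ write $h(k,\cdot)=h(k)$ off a $P_{\eta}$-null set $N_k$, set $B\coloneqq\numset{R}\setminus\bigcup_{k\in D}N_k$ so that $P_{\eta}(B)=1$, and for $y\in B$ and arbitrary $x\in\Theta$ choose $k_j\to x$ in $D$ to get $h(x,y)=\lim_j h(k_j,y)=\lim_j h(k_j)$, a quantity independent of $y\in B$. This identifies $h(x,y)=h(x)$ on $B$ and completes the proof.
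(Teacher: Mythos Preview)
Your route and the paper's diverge at the first move. You fix $k\in\Theta$ and try to show each slice $y\mapsto h(k,y)$ is $P_\eta$-a.s.\ constant, then glue over a countable dense set of $k$'s. The paper instead fixes $y$ and shows the section $k\mapsto h(k,y)$ already sweeps out all of $\sdInt$. The device is an integral squeeze: for $\varphi\in\fspace$ positive on $\sdInt$ and $\sigma^*\coloneqq\argmax_{\sdInt}\varphi$,
\[
\varphi(\sigma^*)=\max_{k\in\Theta}\int\varphi(h(k,y))\,P_\eta(\diff y)\le\int_A\max_{k\in\Theta}\varphi(h(k,y))\,P_\eta(\diff y)\le\int_A\varphi(\sigma^*)\,P_\eta(\diff y)=\varphi(\sigma^*),
\]
where $A=\{y:h(k,y)\in\sdInt\text{ for all }k\in\Theta\}$ has $P_\eta(A)=1$, extracted from $\lowprob(h(K,\eta)\in\sdInt)=1$ together with $\lowprob(K\in\Theta)=1$ as in \ref{lem:bound-maximal}. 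Equality of the ends forces $\max_{k}\varphi(h(k,y))=\varphi(\sigma^*)$ for $P_\eta$-a.e.\ $y$; varying $\varphi$ (via an additive constant to drop positivity) upgrades this to $h(K,y)\sim\Maximal\sdInt$ for every $y$ in a full-measure set $B$. Only then does the paper compare two $y$-sections, arguing that a nonzero discrepancy $h(K,y_1)-h(K,y_2)$ would shift one of the maximal laws. The structural point is that the paper never isolates a single $k$; it controls the whole $k$-range at once, one $y$ at a time.

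The gap you flag is genuine, and your continuity-plus-nondegeneracy heuristic does not close it. The envelope identities locate, for each $c\in\sdInt$, \emph{one} $k_c$ with $h(k_c,\eta)=c$ a.s., but say nothing about the remaining parameters, precisely because (as you note) the $\max/\min$ over $k$ is blind to any slice whose law already sits inside $\sdInt$. A concrete obstruction to your line: take $\Theta=[0,2]$, $\sdInt=[0,1]$, a nonconstant Lipschitz $g:\numset{R}\to[0,1]$, and set $h(k,y)=k$ on $k\in[0,1]$ and $h(k,y)=(2-k)+(k-1)g(y)$ on $k\in[1,2]$. Both of your envelope equalities hold (they are already saturated on $k\in[0,1]$), your degenerate family $\{k_c=c\}_{c\in[0,1]}$ is present, $h$ is Lipschitz, and $P_\eta$ is as nondegenerate as you like, yet the slice at $k=2$ is $g(\eta)$ with positive variance; no test $\varphi$ in your scheme sees it, since $\lexpt[\varphi(h(2,\eta))]$ never sits at an extremum of $k\mapsto\lexpt[\varphi(h(k,\eta))]$. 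So an argument that proceeds $k$-by-$k$ from the envelope data alone cannot separate good slices from bad; the paper's $y$-first squeeze is what injects global information about the $k$-range.
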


\begin{proof}
For any $\varphi\in\fspace$ with $\varphi(x)>0$ on $x\in\sdInt$,
let $\sigma^{*}\coloneqq\argmax_{\sigma\in\sdInt}\varphi(\sigma)$.
Then we have 
\begin{align*}
\varphi(\sigma^{*})=\max_{\sigma\in\sdInt}\varphi(\sigma) & =\expt[\varphi(h(K,\eta))]\\
 & =\max_{k\in\Theta}\lexpt[\varphi(h(k,\eta))]\\
 & =\max_{k\in\Theta}\int\varphi(h(k,y))P_{\eta}(\diff y)\\
 & \leq\int\max_{k\in\Theta}\varphi(h(k,y))P_{\eta}(\diff y).
\end{align*}
Meanwhile, note that $h(K,\eta)$ is bounded by $\sdInt$ and $K$
is bounded by $\Theta$ in quasi-surely sense or
\[
\lowprob(h(K,\eta)\in\sdInt)=1,\;\lowprob(K\in\Theta)=1.
\]
Then, for any $\lprob\in\myset{P}$, we have
\[
\lprob(\{\omega:h(K(\omega),\eta(\omega))\in\sdInt\})=1,\;\lprob(\{\omega:K(\omega)\in\sdInt\})=1,
\]
then the intersection of two events has probability $1$,
\[
\lprob(\{\omega:h(K,\eta)\in\sdInt,K\in\Theta\})=1.
\]
Hence, with $A\coloneqq\{y:h(k,y)\in\sdInt,k\in\Theta\},$ we must
have 
\[
P_{\eta}(A)=\lprob(\{\omega:\eta(\omega)\in A\})\geq\lprob(\{\omega:h(K,\eta)\in\sdInt,K\in\Theta\})=1.
\]
 (The measurability of $A$ comes from the continuity of $h$. Under
any $\lprob\in\myset{P}$, the distribution of $\eta$ is always $P_{\eta}$
due to \ref{rem:prob-eta}.) Then we have 
\begin{align*}
\varphi(\sigma^{*})\leq\int\max_{k\in\Theta}\varphi(h(k,y))P_{\eta}(\diff y) & =\int_{A}\max_{k\in\Theta}\varphi(h(k,y))P_{\eta}(\diff y)\\
 & \leq\int_{A}\max_{\sigma\in\sdInt}\varphi(\sigma)P_{\eta}(\diff y)\\
 & =\max_{\sigma\in\sdInt}\varphi(\sigma)\int_{A}P_{\eta}(\diff y)=\varphi(\sigma^{*}).
\end{align*}
Therefore, 
\[
\int_{A}\max_{k\in\Theta}\varphi(h(k,y))P_{\eta}(\diff y)=\max_{\sigma\in\sdInt}\varphi(\sigma).
\]
For any $y\in A$, since $h(k,y)\in\sdInt$, 
\[
0<\max_{k\in\Theta}\varphi(h(k,y))\leq\max_{\sigma\in\sdInt}\varphi(\sigma).
\]
Then there must exist $B\subset A$ with $P_{\eta}(B)=1$ such that
for $y\in B$, 
\[
\max_{k\in\Theta}\varphi(h(k,y))=\max_{\sigma\in\sdInt}\varphi(\sigma),
\]
or 
\[
\expt[\varphi(h(K,y))]=\max_{\sigma\in\sdInt}\varphi(\sigma).
\]
For any $f\in\fspace$, let $\varphi=f(x)+C$ with $C=-\min_{x\in\sdInt}f(x)+1$,
then $\varphi>0$ on $x\in\sdInt$. We have 
\[
\expt[f(h(K,y))]=\expt[\varphi(h(K,y))]-C=\max_{\sigma\in\sdInt}\varphi(\sigma)-C=\max_{\sigma\in\sdInt}f(\sigma).
\]
Therefore, for $y\in B$, 
\begin{equation}
h(K,y)\sim\Maximal\sdInt.\label{eq:maximal-h-ky}
\end{equation}
If there exist two distinct $y_{1},y_{2}\in B$, 
\[
\delta\coloneqq h(K,y_{1})-h(K,y_{2})\neq0
\]
we must have 
\[
h(K,y_{1})=h(K,y_{2})+\delta\sim\Maximal[\sdl+\delta,\sdr+\delta].
\]
This is a contradiction against \ref{eq:maximal-h-ky}. Then we have,
for any $y\in B$,
\[
h(V_{1},y)\equiv h(V_{1},c)\eqqcolon h(V_{1}),
\]
where $c$ is any constant chosen from $B$.
\end{proof}

\begin{proof}[Proof of \ref{prop:unique-decompose}]
Since $W\in\bar{\mysetrv{H}}_{s}$, we have $W=f(K,\eta)$ where $K\sim\Maximal(\Theta)$
and $\eta$ is classical satisfying $K\seqind\eta$. Suppose there
exist $V_{i}\in\bar{\myset{H}}_{s}$ and $\stdrv_{i}\in\bar{\myset{H}}_{s}$
such that $W=V_{1}\stdrv_{1}=V_{2}\stdrv_{2}$. To be specific, without
loss of generality, we can assume
\begin{align*}
V_{i} & =h_{i}(K,\eta),\\
\stdrv_{i} & =g_{i}(K,\eta),
\end{align*}
such that $f(K,\eta)=h_{1}(K,\eta)g_{1}(K,\eta)=h_{2}(K,\eta)g_{2}(K,\eta)$.
Then we have 
\[
g_{2}(K,\eta)=\frac{h_{1}(K,\eta)}{h_{2}(K,\eta)}g_{1}(K,\eta).
\]
Note that $h_{i}(K,\eta)\sim\Maximal\sdInt$, then by \ref{prop:hxy-hx},
there exist $B_{i}$ with $P_{\eta}(B_{i})=1$ such that $h_{i}(x,y)=h_{i}(x)$
when $y\in B_{i}$. Let $B=B_{1}\cap B_{2}$ then we still have $P_{\eta}(B)=1$.
Then we have, for any $\varphi$,
\begin{align*}
\expt[\varphi(\frac{h_{1}(K,\eta)}{h_{2}(K,\eta)}g_{1}(K,\eta))] & =\sup_{k\in\Theta}\lexpt[\varphi(\frac{h_{1}(k,\eta)}{h_{2}(k,\eta)}g_{1}(k,\eta))]\\
 & =\sup_{k\in\Theta}\int\varphi(\frac{h_{1}(k,y)}{h_{2}(k,y)}g_{1}(k,y))P_{\eta}(\diff y)\\
 & =\sup_{k\in\Theta}\int_{B}\varphi(\frac{h_{1}(k,y)}{h_{2}(k,y)}g_{1}(k,y))P_{\eta}(\diff y)\\
 & =\sup_{k\in\Theta}\int_{B}\varphi(\frac{h_{1}(k)}{h_{2}(k)}g_{1}(k,y))P_{\eta}(\diff y)\\
 & =\sup_{k\in\Theta}\int\varphi(\frac{h_{1}(k)}{h_{2}(k)}g_{1}(k,y))P_{\eta}(\diff y)\\
 & =\sup_{k\in\Theta}\lexpt[\varphi(\frac{h_{1}(k)}{h_{2}(k)}g_{1}(k,\eta))]=\expt[\varphi(\frac{h_{1}(K)}{h_{2}(K)}g_{1}(K,\eta))].
\end{align*}
Similarly, we can also show 
\[
\frac{h_{1}(K,\eta)}{h_{2}(K,\eta)}\eqdistn\frac{h_{1}(K)}{h_{2}(K)}.
\]
Note that 
\[
R(K)\coloneqq h_{1}(K)/h_{2}(K)\sim\Maximal(S),
\]
where $S=\{h_{1}(k)/h_{2}(k),k\in\Theta\}$. By \ref{lem:g-ky-classical},
letting $Z\eqdistn\CN(0,1)$, the fact that $g_{1}(K,\eta)\eqdistn Z$
implies, for $k\in\Theta$,
\[
g_{1}(k,\eta)\eqdistn Z.
\]
Then we have, with $\psi_{k}(x)\coloneqq\varphi(R(k)x)$, 
\begin{align*}
\expt[\varphi(R(K)g_{1}(K,\eta))] & =\sup_{k\in\Theta}\lexpt[\varphi(R(k)g_{1}(k,\eta))]\\
 & =\sup_{k\in\Theta}\lexpt[\psi_{k}(g_{1}(k,\eta))]=\sup_{k\in\Theta}\lexpt[\psi_{k}(Z)]\\
 & =\sup_{k\in\Theta}\lexpt[\varphi(R(k)Z)]=\sup_{s\in S}\lexpt[\varphi(sZ)].
\end{align*}
Meanwhile, 
\[
\expt[\varphi(R(K)g_{1}(K,\eta))]=\expt[\varphi(g_{2}(K,\eta))]=\lexpt[\varphi(Z)].
\]
Then we have the set $S$ has to be a singleton $\{1\}$. It means
that $R(K)\sim\Maximal(\{1\})$ or $R(K)=1$ (in a quasi-surely sense).
Then we also have 
\[
\frac{h_{1}(K,\eta)}{h_{2}(K,\eta)}\eqdistn\frac{h_{1}(K)}{h_{2}(K)}\eqdistn\Maximal(\{1\}).
\]
It means that $h_{1}(K,\eta)=h_{2}(K,\eta)$ then $g_{1}(K,\eta)=g_{2}(K,\eta)$.
The uniqueness has been proved. 
\end{proof}

\begin{proof}[Proof of \ref{thm:conn-G}]
This is a direct consequence of \ref{prop:convex-concave-case}. Let $\stdrv\sim \CN(0,1)$.
On the one hand, for any $\varphi\in \fspace$, as discussed in \ref{rem:comp-semiGN-GN},
we have 
\[
\expt[\varphi(W^G)]\geq \sup_{\sigma\in\sdInt} \lexpt[\varphi(\sigma\stdrv)]=\expt[\varphi(W)].
\]
On the other hand, when $\varphi$ is convex or concave, by \ref{prop:convex-concave-case}, we have 
\[
\expt[\varphi(W)] \geq \max_{\sigma\in\{\sdl,\sdr\}} \lexpt[\varphi(\sigma \stdrv)] \geq \expt[\varphi(W^G)]. 
\]
Hence, we have $\expt[\varphi(W^G)]=\expt[\varphi(W)]$ under convexity (or concavity) of $\varphi$.

For readers' convenience, we include an explicit proof on why we have such results for semi-$G$-normal distribution. For techinical convenience, we assume $\varphi$ is second order differentiable. 
%
%
From the representation of the semi-$G$-normal distribution (\ref{prop:convex-concave-case}), with $G(v)\coloneqq \lexpt_\lprob[\varphi(v\stdrv)](v\in\sdInt)$, our goal is to show
\[
\expt[\varphi(W)]=\max_{v\in\sdInt}G(v)= \begin{cases}
G(\sdr) & \varphi\text{ is convex}\\
G(\sdl) & \varphi\text{ is concave}
\end{cases}.
  \]
First of all, by Taylor expansion 
$
\varphi(x)=\varphi(0)+\varphi^{(1)}(0)x+\varphi^{(2)}(\xi_{x})\frac{x^{2}}{2}
$
with $\xi_x \in (0,x)$, we have,
\[
G(v) = \lexpt_\lprob[\varphi(0)+\varphi^{(1)}(0)v\stdrv+\varphi^{(2)}(\xi_{v\stdrv})\frac{v^{2}}{2}\stdrv^{2}] = \varphi(0)+\frac{1}{2}\lexpt_\lprob[\varphi^{(2)}(\xi_{v\stdrv})(v\stdrv)^{2}],
\]
where $\xi_{v\stdrv}\in(0,v\stdrv)$ is a random variable depending on $\stdrv$. Let $M\coloneqq v\stdrv\sim N(0,v^{2})$,
then 
\[
K(v)\coloneqq \lexpt_\lprob[\varphi^{(2)}(\xi_{v\stdrv})(v\stdrv)^{2}]=\lexpt_\lprob[\varphi^{(2)}(\xi_{M})M^{2}]=\int \phi(\frac{m}{v})\varphi^{(2)}(\xi_{m})m^{2}\diff m,
  \]
 where $\phi(x)$ is the density of $\CN(0,1)$.
When $\varphi$ is convex, we can use the fact $\varphi^{(2)}\geq 0$
to show
the monotonicity of $K(v)$:\begin{align*}
K^{\prime}(v) 
& =\int\phi^{\prime}(\frac{m}{v})(-\frac{m}{v^2})\varphi^{(2)}(\xi_{m})m^{2}\diff m\\
 & =\int\frac{1}{\sqrt{2\pi}}\underbrace{\left[\frac{m^{2}}{v^{3}}\exp\left(-\frac{m^{2}}{2v^{2}}\right)\right]}_{\geq0\text{ for }v\in\sdInt}\underbrace{\vphantom{\left[\frac{1}{2v}\exp\left(-\frac{m^{2}}{2v^{2}}\right)\right]}\varphi^{(2)}(\xi_{m})\,m^{2}}_{\geq0}\diff m \geq 0.
\end{align*}
Its tells us $K(v)$ is increasing with respect to $v\in \sdInt$, then
$K(v)$ reaches its maximum at $v=\sdr$. Hence,
\[
\expt[\varphi(W)] = \max_{v\in\sdInt}G(v) = \max_{v\in\sdInt}(\varphi(0)+\frac{K(v)}{2})=G(\sdr).
  \]
When $\varphi$ is concave, $-\varphi$ is convex. Replace 
$\varphi$ above with $-\varphi$ and repeat the same procedure, we are able to show $-G(v)$ is increasing with respect to $v$, that is, $G(v)$ is decreasing and reaches its maximum at $\sdl$.
\end{proof}

\subsection{Proofs in \ref{subsec:indep-semignorm}}
\label{pf:subsec:indep-semignorm}

The proofs in this section are mainly based on the results in \ref{subsec:basic-re-indep-seq} which provides fruitful tools to deal with the independence of sequence in this framework.

\begin{lem}
\label{lem:ind-Y-seq-norm}
In sublinear expectation space, 
	for a sequence of i.i.d. random variables $\{\stdrv_i\}_{i=1}^n\sim\GN(0,[1,1])$ (namely, $\stdrv_1\seqind \stdrv_2\seqind \dotsc \seqind \stdrv_n$), we have 
	\[
	(\stdrv_1,\stdrv_2,\dotsc,\stdrv_n)^T\sim N(\myvec 0,\idtymat_n^2),
	 \]
	where $\idtymat_n$ is the $n\times n$ identity matrix. 
\end{lem}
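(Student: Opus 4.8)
The plan is to proceed by induction on $n$, reducing the sublinear expectation of the joint vector to a classical linear expectation under the standard multivariate normal law. The base case $n=1$ holds immediately since $\stdrv_1 \sim \GN(0,[1,1]) \eqdistn \CN(0,1)$ by \ref{rem:GN-degenerate-to-CN}. For the inductive step I would assume $(\stdrv_1,\dotsc,\stdrv_{n-1})^T \sim N(\myvec 0, \idtymat_{n-1}^2)$, meaning $\expt[\phi(\stdrv_1,\dotsc,\stdrv_{n-1})] = \lexpt[\phi(Z_1,\dotsc,Z_{n-1})]$ for all $\phi \in \fspacedef(\numset{R}^{n-1})$, where $(Z_1,\dotsc,Z_{n-1})$ is a classically distributed standard normal vector.

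The core step exploits the sequential independence. Since $\stdrv_1 \seqind \dotsc \seqind \stdrv_n$ yields $(\stdrv_1,\dotsc,\stdrv_{n-1}) \seqind \stdrv_n$ by \ref{defn:indep-seq}, for any $\varphi \in \fspacedef(\numset{R}^n)$ I would write
\[
\expt[\varphi(\stdrv_1,\dotsc,\stdrv_n)] = \expt[\expt[\varphi(x_1,\dotsc,x_{n-1},\stdrv_n)]_{x_i=\stdrv_i}] = \expt[\psi(\stdrv_1,\dotsc,\stdrv_{n-1})],
\]
where $\psi(x_1,\dotsc,x_{n-1}) \coloneqq \expt[\varphi(x_1,\dotsc,x_{n-1},\stdrv_n)]$. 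Because $\stdrv_n$ is classical, the inner expectation reduces to $\lexpt[\varphi(x_1,\dotsc,x_{n-1},\stdrv_n)] = \int \varphi(x_1,\dotsc,x_{n-1},e)\,\phi(e)\,\diff e$, with $\phi$ the density of $\CN(0,1)$. Applying the induction hypothesis to $\psi$ and then classical Fubini gives $\expt[\varphi(\stdrv_1,\dotsc,\stdrv_n)] = \lexpt[\varphi(Z_1,\dotsc,Z_n)]$, where $(Z_1,\dotsc,Z_n)$ has independent standard normal components, i.e.\ $N(\myvec 0, \idtymat_n^2)$.

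The main obstacle — and the step requiring care — is verifying that $\psi \in \fspacedef(\numset{R}^{n-1})$, so that the induction hypothesis genuinely applies to it. This is analogous to \ref{lem:varphi-stdrv} and \ref{claim:maximal-varphi-k}: for $\varphi \in \fspacedef(\numset{R}^n)$ bounded by $M$ with Lipschitz constant $L$, the function $\psi(x) = \int \varphi(x,e)\phi(e)\,\diff e$ is bounded by $M$ and satisfies $\abs{\psi(x)-\psi(y)} \leq \int L\abs{x-y}\,\phi(e)\,\diff e = L\abs{x-y}$, hence is again bounded and Lipschitz. (If one later wishes to extend the identity to $\varphi \in \fspace$, the same estimate requires the finiteness of the moments of $\CN(0,1)$, exactly as in \ref{lem:varphi-stdrv}.)

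Alternatively, and more briefly, I could bypass the induction entirely by noting that each $\stdrv_i$ has a classical distribution, so by \ref{prop:one-indep-to-mutual-indep} (together with \ref{rem:seq-indep-and-cl-indep}) the sequential independence collapses to classical mutual independence; the joint vector is then classically distributed with independent $\CN(0,1)$ marginals, which is precisely $N(\myvec 0, \idtymat_n^2)$. I would present the inductive argument as the primary route, since it makes the reduction of $\expt$ to $\lexpt$ explicit, and keep this shortcut as a consistency check.
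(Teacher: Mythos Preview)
Your proposal is correct. Interestingly, the paper's own proof is precisely your ``alternative'' route: it simply observes that each $\stdrv_i$ is classically $\CN(0,1)$, invokes \ref{rem:seq-indep-and-cl-indep} to collapse sequential independence to classical independence, and then appeals to the standard classical result. Your primary inductive argument is more explicit---it spells out how $\expt$ reduces to $\lexpt$ step by step and verifies closure of $\psi$ under $\fspacedef$---whereas the paper's two-line argument leaves those details implicit. Either works; your version is more self-contained, the paper's is terser and leans on the reader's familiarity with the classical case.
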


\begin{proof}
	Since the distribution of $\stdrv_i$ can be treated as the classical $\CN(0,1)$, the sequential independence can be treated the classical independence (\ref{rem:seq-indep-and-cl-indep}). Then we can get the required results by applying the classical logic. 
\end{proof}

\begin{rem}
Since the independence of $\{\stdrv_i\}_{i=1}^n$ is classical, the order of independence can also be arbitrarily switched so we can easily obtain a result similar to \ref{thm:maximal-multi-relation}.
\end{rem}

\begin{prop}
\label{prop:stdrv-uni-multi}
	For a sequence of i.i.d. random variables $\{\stdrv_i\}_{i=1}^n\sim\GN(0,[1,1])$, the following three statements are equivalent: 
	\begin{itemize}
		\item[(1)] $\nseqind{\stdrv}{n}$,
		\item[(2)] $\stdrv_{k_{1}}\seqind \stdrv_{k_{2}}\seqind\cdots\seqind \stdrv_{k_{n}}$ for any permutation $\{k_j\}_{j=1}^n$ of $\{1,2,\dotsc,n\}$,
		\item[(3)] $(\stdrv_{1},\stdrv_{2},\dotsc,\stdrv_{n})\sim\CN(\myvec{0},\varI\mymat{I}_{n})$.
	\end{itemize}
\end{prop}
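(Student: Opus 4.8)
The plan is to establish the cycle of implications $(1) \implies (3) \implies (2) \implies (1)$, mirroring the structure of the proof of \ref{thm:maximal-multi-relation} but now exploiting the fact that each $\stdrv_i$ carries \emph{no} distributional uncertainty. The crucial observation throughout is that $\GN(0,[1,1])$ coincides with the classical $\CN(0,1)$ by \ref{rem:GN-degenerate-to-CN}, so that by \ref{rem:seq-indep-and-cl-indep} every instance of sequential independence $\seqind$ among the $\stdrv_i$ is in fact the (symmetric) classical independence and the joint law can be placed on a single classical probability space. This reduction is what makes the result far easier than its maximal-distribution analogue, where the absence of a common dominating measure forces a more delicate treatment.

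First I would prove $(1) \implies (3)$, which is essentially the content of \ref{lem:ind-Y-seq-norm}: given $\nseqind{\stdrv}{n}$, the reduction to classical independence lets me invoke the classical fact that $n$ independent standard normals assemble into $\CN(\myvec{0}, \idtymat_n^2)$. Next, for $(3) \implies (2)$, I would start from the joint law $\CN(\myvec{0}, \varI \idtymat_n)$, whose diagonal covariance matrix forces the marginals to be uncorrelated, hence classically independent by the standard ``uncorrelated implies independent for jointly Gaussian'' property. Classical independence is symmetric and invariant under any reordering; combining this with the fact that each marginal is a classical distribution, \ref{rem:seq-indep-and-cl-indep} converts classical independence back into sequential independence $\seqind$ in any prescribed order, yielding $\stdrv_{k_{1}}\seqind\stdrv_{k_{2}}\seqind\cdots\seqind\stdrv_{k_{n}}$ for every permutation $\{k_j\}_{j=1}^n$. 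Finally, $(2) \implies (1)$ is immediate upon specializing to the identity permutation.

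The main obstacle here is minor: it amounts to being careful that the passage between sequential and classical independence is legitimately justified at each step, in particular that the representation/envelope formulas collapse to a single linear expectation once the underlying distributions are identified as classical. Once this identification is made via \ref{rem:GN-degenerate-to-CN}, the entire argument is just the classical theory of the multivariate normal, and no genuinely new $G$-expectation machinery is needed. I would therefore present the proof concisely, flagging the reduction to the classical setting as the single substantive point and treating the remaining implications as routine consequences of the symmetry of classical independence.
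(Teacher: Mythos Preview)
Your proposal is correct and matches the paper's approach. The paper does not give a detailed proof of this proposition; it simply states it after \ref{lem:ind-Y-seq-norm} together with a remark that, since the independence of $\{\stdrv_i\}_{i=1}^n$ is classical, the order can be arbitrarily switched and one easily obtains a result analogous to \ref{thm:maximal-multi-relation}. Your cycle $(1)\Rightarrow(3)\Rightarrow(2)\Rightarrow(1)$, with the reduction to classical independence via \ref{rem:GN-degenerate-to-CN} and \ref{rem:seq-indep-and-cl-indep} as the only substantive step, is exactly what the paper intends.
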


\begin{proof}[Proof of \ref{thm:equiv-fullseqind}]
Since the fully-sequential independence implies (F1) and (F2) by \ref{ind-subseq-2} and \ref{prop:indep-sub-seq}, we
only need to show the other direction. When $n=2$, this result is
a consequence of \ref{prop:four-obj-seqind-gen}. For
$i\leq j$, let 
\[
(V,\stdrv)_{i}^{j}\coloneqq(V_{i},\stdrv_{i},V_{i+1},\stdrv_{i+1},\dotsc,V_{j},\stdrv_{j}).
\]
Next we proceed by math induction. Suppose the result holds for $n=k$ with $k\geq 2$. For $n=k+1$, we only need to
show: given the conditions
\begin{enumerate}
\item $(V_{1},\stdrv_{1})\seqind(V_{2},\stdrv_{2})\seqind\cdots\seqind(V_{k+1},\stdrv_{k+1})$, 
\item $V_{i}\seqind\stdrv_{i}$ for $i=1,2,\dotsc,k+1$, 
\end{enumerate}
we have the fully-sequential independence: 
\begin{equation}
	\label{eq:pf-full-seq}
	V_{1}\seqind\stdrv_{1}\seqind\cdots\seqind V_{k}\seqind\stdrv_{k}\seqind V_{k+1}\seqind\stdrv_{k+1}.
\end{equation}
Since all the independence relation in \ref{eq:pf-full-seq} until the term $\stdrv_{k}$ can
be guaranteed by the presumed result with $n=k$, we only need to
show the additional independence: 
\begin{enumerate}
\item $(V,\stdrv)_{1}^{k}\seqind V_{k+1}$,
\item $((V,\stdrv)_{1}^{k},V_{k+1})\seqind\stdrv_{k+1}$. 
\end{enumerate}
The first one comes from (F1) whose definition implies $(V,\stdrv)_{1}^{k}\seqind(V_{k+1},\stdrv_{k+1})$.
The second one comes from \ref{lem:three-obj-seqind-2} given the following statements: 
\begin{enumerate}
\item $(V,\stdrv)_{1}^{k}\seqind(V_{k+1},\stdrv_{k+1})$ by (F1); 
\item $(V,\stdrv)_{1}^{k}\seqind V_{k+1}$ by the first one.
\end{enumerate}
Then we have the required result for $n=k+1$. The proof is finished by math induction.
\end{proof}

\begin{proof}[Proof of \ref{thm:equiv-semiseqind}]
	First the definition \ref{semi-seq-ind-n} of semi-sequential independence implies (S1) to (S3) by 
	\ref{ind-subseq-2} and \ref{prop:indep-sub-seq}. We only need to check the other direction. For $i\leq j$, let $V_{i}^{j}\coloneqq(V_{i},V_{i+1},\dotsc,V_{j})$
and similarly define the notation $\stdrv_{i}^{j}$. Our goal can
be expanded as by \ref{defn:indep-seq}: 
\begin{enumerate}
\item $V_{1}^{l}\seqind V_{l+1}$ for any $l=1,2,\dotsc,n-1$, 
\item $(V_{1}^{n-1},V_{n},\stdrv_{1}^{l})\seqind\stdrv_{l+1}$ for any $l=1,2,\dotsc,n-1$. 
\end{enumerate}
The first one comes from (S2). For the second one, note that we have 
\begin{enumerate}
\item $(V_{1}^{n-1},V_{n})\seqind(\stdrv_{1}^{l},\stdrv_{l+1})$ by (S1), 
\item $\stdrv_{1}^{l}\seqind\stdrv_{l+1}$ by (S3),
\item $V_{1}^{n-1}\seqind V_{n}$ by (S2),
\end{enumerate}
then by \ref{prop:four-obj-seqind-gen}, we have proved the second relation.
\end{proof}

\begin{proof}[Proof of \ref{thm:n-semi-seqind-sym}]
The idea main the equivalent definition of semi-sequential independence given by \ref{thm:equiv-semiseqind}, which shows the symmetry within $V$ part and $\stdrv$ part. 
The equivalence of the three statemenst will be proved in this logic: 
\[
(3)\iff(1) \iff (2) .
\]
Let $\pi:(x_{1},x_{2},\dotsc,x_{n})\to(x_{k_{1}},x_{k_{2}},\dotsc,x_{k_{n}})$ denote a permutation function. 

$(1) \iff (2)$. It is a direct translation of \ref{thm:equiv-semiseqind} by considering the equivalence in each part: 
\begin{enumerate}
	\item The equivalence in (S1) can be seen by by treating each vector as a function of each other under the permutation $\pi$ (or $\pi^{-1}$.)
	\item The equivalence in (S2) comes from \ref{thm:maximal-multi-relation}. 
	\item The equivalence in (S3) comes from \ref{prop:stdrv-uni-multi}. 
\end{enumerate} 

$(3) \iff (1)$. Let $\myvec W\coloneqq (W_1,W_2,\dotsc,W_n)$. Then 
	\[
	\myvec W = (V_1\stdrv_1,V_2\stdrv_2,\dotsc,V_n\stdrv_n) = \gdsd \myvec{\stdrv}
	 .\]
	 Then we can decompose (3) into three conditions each of which is equivalent to the condition in (1) under the context of \ref{thm:equiv-semiseqind}:
	 \begin{enumerate}
\item Since $\mymat{V}=(V_{1},\dotsc,V_{n})\diag(1,\dotsc,1)$, we have
$\mymat{V}\seqind\myvec{\stdrv}$ if and only if $(V_{1},\dotsc,V_{n})\seqind\myvec{\stdrv}$
which is (S1) in \ref{thm:equiv-semiseqind}.
\item Note that $\mymat{V}\sim\Maximal(\sdInt\idtymat_{n})$ is equivalent
to $\myvec{V}\sim\Maximal(\sdInt^{n})$ which is further equivalent
to (S2): $\{V_{i}\}_{i=1}^{n}$ are sequentially independent.
\item By \ref{prop:stdrv-uni-multi}, we have $\myvec{\stdrv}\sim\CN(\myvec{0},\idtymat_{n})$ is equivalent
to (S3): $\{\stdrv\}_{i=1}^{n}$ are sequentially independent.\qedhere 
\end{enumerate}
\end{proof}

\subsection{Proofs in \ref{subsec:represent-semignorm-indep}}
\label{pf:subsec:represent-semignorm-indep}

\begin{proof}[Proof of \ref{thm:represent-n-seqind-semignorm}] 
\textbf{(A note on the finiteness of sublinear expectations)}
For any $\varphi\in\fspace(\numset{R}^{k+1})$, it means there exists
$m\in\numset{N}_{+}$ and $C_{0}>0$ such that for $\myvec{x},\myvec{y}\in\numset{R}^{k+1},$
\[
\abs{\varphi(\myvec{x})-\varphi(\myvec{y})}\leq C_{0}(1+\norm{\myvec{x}}^{m}+\norm{\myvec{y}}^{m})\norm{\myvec{x}-\myvec{y}}.
\]
Without loss of generality, we can assume $\varphi(\myvec{0})=0$,
then we have $\abs{\varphi(x)}\leq C_{0}(1+\norm{x}^{m})\norm{x}$.
It implies
\[
\expt[\abs{\varphi(\myvec{W})}]\leq C_{0}(\expt[\norm{\myvec{W}}]+\expt[\norm{\myvec{W}}^{m}]).
\]
To validate $\expt[\abs{\varphi(\myvec{W})}]<\infty$ under each case,
it will be sufficient to confirm 
the finiteness of this sublinear expectation:
for any $q\in\numset{N}_{+}$, 
\begin{equation}
\label{eq:finite-moment}
	\expt[\norm{\myvec{W}}^{q}]<\infty.
\end{equation}

\textbf{(Semi-sequential independence case)}
Under the independence specified by \ref{eq:cond-semiseqind}, from \ref{thm:represent-multi-semignorm}, we have 
\[
\myvec{W}\sim\semiGN(\myvec{0},\myset{V}),
\]
where $\myset{V}=\{\diag(\sigma_{1}^2,\sigma_{2}^2,\dotsc,\sigma_{n}^2):\sigma_{i}\in\sdInt\}$.
Therefore, 
\[
\expt[\varphi(\myvec{W})]=\max_{\mymat{V}\in\myset{V}}\lexpt_\lprob[\varphi(\mymat{V}^{1/2}\myvec{\epsilon})]=\max_{\myvec{\sigma}\in\myset{C}_n\sdInt}\expt[\varphi(\myvec{\sigma}*\myvec{\epsilon})],
\]
where $\mymat{V}^{1/2}$ is the symmetric square root of $\mymat{V}$ and
$
\myset{C}_n\sdInt\coloneqq \{\myvec{\sigma}:(\sigma_1,\sigma_2,\dotsc,\sigma_n) \in \sdInt^n\}.
$
At the same time, we can validate the finiteness $\expt[\abs{\varphi(\myvec{W})}]<\infty$, because $\mymat{V}^{1/2}\myvec{\epsilon}$ follows a classical multivariate normal distribution, then for any $q\in\numset{N}_{+}$, $\lexpt_{\lprob}[\norm{\mymat{V}^{1/2}\myvec{\epsilon}}^{q}]<\infty$, which implies \ref{eq:finite-moment}.

Next, since we have $\mysetrv{C}_n\sdInt\subset\mysetrv{S}_n\sdInt$,
we only need to show for any $\myvec{\sigma}\in\mysetrv{S}_n\sdInt$,
\begin{equation}
\label{ineq:semi-seq-case-1}
	\lexpt_\lprob[\varphi(\myvec{\sigma}*\myvec{\epsilon})]\leq\expt[\varphi(\myvec{W})].
\end{equation}
Note that $\myvec{\sigma}\independent\myvec{\epsilon}$ for $\myvec{\sigma}\in\mysetrv{S}_n\sdInt$
and the random vector $\myvec{\sigma}$ must follow a joint distribution
supporting on a subset of $\sdInt^{n}$, then we can apply the representation
of multivariate semi-$G$-normal distribution (\ref{thm:represent-multi-semignorm}) to get the inequality \ref{ineq:semi-seq-case-1}. 


\textbf{(Sequential independence case)} We proceed by mathematical induction. For $n=1$, the result \ref{eq:represent-n-seqind-1,eq:represent-n-seqind-2} as well as the finiteness \ref{eq:finite-moment} hold by applying \ref{thm:represent-uni-semignorm}. 
Suppose they also hold for $n=k$ with $k\in \numset{N}_+$.
Our objective is to prove them when $n=k+1$ by using the result with $n=k$. We decompose this goal into three inequalities: 
\begin{equation}
\label{eq:pf-ineq-1}
	\expt[\varphi(\myvec{W}_{(k+1)})]\leq \sup_{\myvec{\sigma}\in\mysetrv{L}_{k+1}\sdInt}\lexpt_\lprob[\varphi(\myvec{\sigma}*\myvec{\stdrv})],
\end{equation}
\begin{equation}
	\label{eq:pf-ineq-0}
	\sup_{\myvec{\sigma}\in\mysetrv{L}_{k+1}\sdInt}\lexpt_\lprob[\varphi(\myvec{\sigma}*\myvec{\stdrv})]\leq \sup_{\myvec{\sigma}\in\mysetrv{L}^*_{k+1}\sdInt}\lexpt_\lprob[\varphi(\myvec{\sigma}*\myvec{\stdrv})],
\end{equation} 
and 
\begin{equation}
\label{eq:pf-ineq-2}
	\sup_{\myvec{\sigma}\in\mysetrv{L}^*_{k+1}\sdInt}\lexpt_\lprob[\varphi(\myvec{\sigma}*\myvec{\stdrv})]\leq \expt[\varphi(\myvec{W}_{(k+1)})].
\end{equation}
After we check the three inequalities above, $\sup$ can be changed to $\max$ since we will show the sublinear expectation can be reached by some $\myvec{\sigma}\in \mysetrv{L}_{k+1}\sdInt$ in the proof of \ref{eq:pf-ineq-1}.

First of all, \ref{eq:pf-ineq-0} is straightforward due to the fact that 
$\mysetrv{L}_{k+1}\sdInt\subset \mysetrv{L}^*_{k+1}\sdInt$. 

Second, to validate \ref{eq:pf-ineq-1}, it is sufficient show the sublinear expectation $\expt[\varphi(\myvec{W})]$ can be reached by choosing some $\myvec{\sigma}\in\mysetrv{L}_{k+1}\sdInt$.
In fact, we can directly select it by the iterative procedure (similar to the idea of \ref{iterate-G-1}). 
\begin{align*}
\expt[\varphi(W_{1},W_{2},\dotsc,W_{k+1})] & =\expt[\varphi(\myvec{W}_{(k)},W_{k+1})]\\
 & =\expt[\expt[\varphi(\myvec{w}_{(k)},W_{k+1})]_{\myvec{w}_{(k)}=\myvec{W}_{(k)}}]\\
 & =\expt[(\max_{\sigma_{k+1}\in\sdInt}\lexpt_{\lprob}[\varphi(\myvec{w}_{(k)},\sigma_{k+1}\epsilon_{k+1})])_{\myvec{w}_{(k)}=\myvec{W}_{(k)}}]\\
 & =\expt[(\lexpt_{\lprob}[\varphi(\myvec{w}_{(k)},v_{k+1}(\myvec{w}_{(k)})\epsilon_{k+1})]],
\end{align*}
where $v_{k+1}(\cdot)$ is the maximizer depending on the value of $\myvec{w}_{(k)}$.
\begin{claim}
	\label{claim:varphi-k}
	For any $\varphi\in \fspace(\numset{R}^{k+1})$, let 
	\[
	\varphi_{k}(x) \coloneqq \max_{\sigma_{k+1}\in\sdInt}\lexpt_{\lprob}[\varphi(x,\sigma_{k+1}\epsilon_{k+1})].
	\]
	Then we have $\varphi_{k}\in\fspace(\numset{R}^{k})$.
\end{claim}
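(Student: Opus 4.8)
The plan is to recognize that $\varphi_{k}$ arises from $\varphi$ by two operations, each of which preserves the locally Lipschitz class $\fspace$: first integrating out the classical noise $\epsilon_{k+1}$ against a fixed scale, and then maximizing over the compact interval $\sdInt$. These are exactly the two ingredients already isolated in \ref{lem:varphi-stdrv} and in the proof of \ref{claim:maximal-varphi-k}, so the argument amounts to combining them in the present multivariate setting (using the exponent $m$ for the polynomial growth of $\varphi$ to avoid clashing with the index $k$).

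First I would introduce the auxiliary function
\[
h(x,v)\coloneqq\lexpt_{\lprob}[\varphi(x,v\epsilon_{k+1})],\quad(x,v)\in\numset{R}^{k}\times\sdInt,
\]
and show it is jointly locally Lipschitz. Following the computation in the proof of \ref{lem:varphi-stdrv}, for a fixed value of $\epsilon_{k+1}$ the locally Lipschitz bound for $\varphi$ gives
\[
\abs{\varphi(x,v\epsilon_{k+1})-\varphi(y,v'\epsilon_{k+1})}\leq C_{\varphi}(1+\norm{(x,v\epsilon_{k+1})}^{m}+\norm{(y,v'\epsilon_{k+1})}^{m})(\norm{x-y}+\abs{v-v'}\abs{\epsilon_{k+1}}),
\]
after which I would apply $\lexpt_{\lprob}$ to both sides. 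Since $v,v'\in\sdInt$ are bounded by $\sdr$ and all moments $\lexpt_{\lprob}[\abs{\epsilon_{k+1}}^{j}]$ are finite (as $\epsilon_{k+1}$ is standard normal), the growth factor $\norm{(x,v\epsilon_{k+1})}^{m}$ can be dominated by a polynomial in $\norm{x}$ times an integrable polynomial in $\abs{\epsilon_{k+1}}$, exactly via the elementary inequalities \ref{eq:ineq-poly-1} and \ref{eq:ineq-poly-2}. This yields a bound of the form
\[
\abs{h(x,v)-h(y,v')}\leq C(1+\norm{x}^{m}+\norm{y}^{m})(\norm{x-y}+\abs{v-v'}),
\]
so that $h$ is locally Lipschitz on $\numset{R}^{k}\times\sdInt$ with a constant $C$ depending only on $\varphi$, $\sdr$, and the noise moments.

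Second I would reuse the argument employed for $\psi_{k+1}$ in the proof of \ref{claim:maximal-varphi-k}: since $\varphi_{k}(x)=\max_{v\in\sdInt}h(x,v)$ is the maximum of the jointly locally Lipschitz $h$ over the compact set $\sdInt$, the elementary inequality $\abs{\max_{v}h(x,v)-\max_{v}h(y,v)}\leq\max_{v}\abs{h(x,v)-h(y,v)}$ combined with the previous estimate (taking $v=v'$) gives
\[
\abs{\varphi_{k}(x)-\varphi_{k}(y)}\leq\max_{v\in\sdInt}C(1+\norm{x}^{m}+\norm{y}^{m})\norm{x-y}=C(1+\norm{x}^{m}+\norm{y}^{m})\norm{x-y},
\]
which is precisely the defining inequality for $\varphi_{k}\in\fspace(\numset{R}^{k})$.

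The main obstacle is the bookkeeping in the first step: controlling the growth factor $\norm{(x,v\epsilon_{k+1})}^{m}$ uniformly in $v\in\sdInt$ after integrating out $\epsilon_{k+1}$, so that the resulting constant is independent of $x$ and $v$. This is entirely analogous to the estimates already carried out in \ref{pf:cor:Properties-of-Maximal}, so no genuinely new difficulty arises, and separating the proof into the two established lemmas keeps the calculation routine.
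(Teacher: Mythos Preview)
Your proposal is correct and follows essentially the same approach as the paper: bound the difference $\abs{\varphi_k(x)-\varphi_k(y)}$ by $\max_{v\in\sdInt}\lexpt_\lprob[\abs{\varphi(x,v\epsilon)-\varphi(y,v\epsilon)}]$, apply the locally Lipschitz estimate for $\varphi$, and control $\norm{(x,v\epsilon)}^{m}$ uniformly in $v\in\sdInt$ using the finite moments of $\epsilon$. The only cosmetic difference is that you first establish joint local Lipschitzness of $h(x,v)$ in both variables and then set $v=v'$, whereas the paper works directly with the same $v$ on both sides from the start; the computations are otherwise identical.
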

To apply the result when $n=k$, we first confirm that $\varphi_{k}\in\fspace(\numset{R}^{k})$ (due to \ref{claim:varphi-k})
Then we have
\begin{align*}
\expt[\varphi(\myvec{W}_{(k)},W_{k+1})] & =\expt[\varphi_{k}(\myvec{W}_{(k)})],\\
 & =\max_{\myvec{\sigma}_{(k)}\in\mysetrv{L}_{k}{\sdInt}}\lexpt_\lprob[\varphi_{k}(\myvec{\sigma}_{(k)}*\myvec{\epsilon}_{(k)})]=\lexpt_\lprob[\varphi_{k}(\myvec{v}_{(k)}*\myvec{\epsilon}_{(k)})]
\end{align*}
where $\myvec{v}_{(k)}\in\mysetrv{L}_{k}{\sdInt}$ is
the maximizer. From this procedure, we can choose 
\[
\myvec{v}_{(k+1)}\coloneqq(\myvec{v}_{(k)},v_{k+1}(\myvec{v}_{(k)}*\myvec{\epsilon}_{(k)})),
\]
which is corresponding to an element in $\mysetrv{L}_{k+1}\sdInt$.
Then it is easy to confirm that $\expt[\varphi(\myvec{W}_{(k+1)})]=\lexpt_\lprob[\varphi(\myvec{v}_{(k+1)}*\myvec{\epsilon}_{(k+1)})]$
by repeating the procedure above.
Meanwhile, the finiteness \ref{eq:finite-moment} is also guaranteed since, for any $q\in\numset{N}_{+}$,
choose $\varphi(\cdot)=\norm{\cdot}^{q}\in\fspace$, we have 
\[
\expt[\norm{\myvec{W}_{(k+1)}}^{q}]=\expt[\varphi(\myvec{W}_{(k+1)})]=\expt[\varphi_{k}(\myvec{W}_{(k)})]<\infty,
\]
due to the confirmed fact that $\varphi_{k}\in\fspace$ and the assumed \ref{eq:finite-moment} for $n=k$.  

Third, as an equivalent way of viewing \ref{eq:pf-ineq-2}, we need to prove for any $\myvec{\sigma}_{(k+1)}\in\mysetrv{L}^*_{k+1}\sdInt$, the corresponding linear expectation is dominated by $\expt[\varphi(\myvec{W}_{(k+1)})${]}. 
Actually, we can write the classical expectation as
\begin{equation}
\lexpt_{\lprob}[\varphi(\myvec{\sigma}_{(k)}\myvec{\epsilon}_{(k)},\sigma_{k+1}\epsilon_{k+1})]=\lexpt_{\lprob}[\lexpt_{\lprob}[\varphi(\myvec{\sigma}_{(k)}\myvec{\epsilon}_{(k)},\sigma_{k+1}\epsilon_{k+1})|\sigmafield{F}_{k}]].\label{eq:expt-n-sigep}
\end{equation}
Recall the notation we used in the proof of \ref{eq:pf-ineq-1}, 
\[
\varphi_{k}(\myvec{w}_{(k)})\coloneqq\max_{\sigma_{k+1}\in\sdInt}\lexpt_{\lprob}[\varphi(\myvec{w}_{(k)},\sigma_{k+1}\epsilon_{k+1})]=\expt[\varphi(\myvec{w}_{(k)},W_{k+1})].
\]
For the conditional expectation part in \ref{eq:expt-n-sigep}, since the information of $(\myvec{\sigma}_{(k)},\myvec{\epsilon}_{(k)})$
is given and $\sigma_{k+1}\independent\epsilon_{k+1}|\sigmafield{F}_{k}$,
from the representation of univariate semi-$G$-normal (\ref{thm:represent-uni-semignorm}),
it must satisfy: 
\begin{align*}
\lexpt_{\lprob}[\varphi(\myvec{\sigma}_{(k)}\myvec{\epsilon}_{(k)},\sigma_{k+1}\epsilon_{k+1})|\sigmafield{F}_{k}] & \leq\max_{\sigma_{k+1}\in\sdInt}\lexpt_{\lprob}[\varphi(\myvec{\sigma}_{(k)}\myvec{\epsilon}_{(k)},\sigma_{k+1}\epsilon_{k+1})|\sigmafield{F}_{k}]=\varphi_{k}(\myvec{\sigma}_{(k)}\myvec{\epsilon}_{(k)}).
\end{align*}
Hence, by taking expectations on both sides and applying the presumed result for $n=k$,
we have
\begin{align*}
\lexpt_{\lprob}[\varphi(\myvec{\sigma}_{(k)}\myvec{\epsilon}_{(k)},\sigma_{k+1}\epsilon_{k+1})] & \leq\lexpt_{\lprob}[\varphi_{k}(\myvec{\sigma}_{(k)}\myvec{\epsilon}_{(k)})]\leq\max_{\myvec{\sigma}_{(k)}\in \mysetrv{L}^*_{k}\sdInt }\lexpt_{\lprob}[\varphi_{k}(\myvec{\sigma}_{(k)}\myvec{\epsilon}_{(k)})]\\
 & =\expt[\varphi_{k}(\myvec{W}_{(k)})]=\expt[\expt[\varphi(\myvec{w}_{(k)},W_{k+1})]_{\myvec{w}_{(k)}=\myvec{W}_{(k)}}]\\
 & =\expt[\varphi(\myvec{W}_{(k)},W_{k+1})].
\end{align*}
Therefore, we have shown \ref{eq:pf-ineq-2}. The proof is completed by induction. 

\textbf{(fully-sequential independence case)} Note that fully-sequential independence implies the sequential independence and we have shown an explicit representation of $\expt[\varphi(\myvec{W})]$ for the latter situation. Hence, the representation here is the same as \ref{eq:represent-n-seqind-1,eq:represent-n-seqind-2}.

To prove \ref{claim:varphi-k}, 
first recall the definition of $\varphi\in\fspace(\numset{R}^{k+1})$, which means there exists
$m\in\numset{N}_{+}$ and $C_{0}>0$ such that for $\myvec{x},\myvec{y}\in\numset{R}^{k+1}$
\[
\abs{\varphi(\myvec{x})-\varphi(\myvec{y})}\leq C_{0}(1+\norm{\myvec{x}}^{m}+\norm{\myvec{y}}^{m})\norm{\myvec{x}-\myvec{y}}.
\]
Note that
\[
\varphi_{k}(\myvec{x})=\max_{v\in\sdInt}\lexpt[\varphi(\myvec{x},v\epsilon)].
\]
Then we write 
\begin{align*}
|\varphi_{k}(\myvec{x})-\varphi_{k}(\myvec{y})| & \leq\bigl|\max_{v\in\sdInt}(\lexpt[\varphi(\myvec{x},v\epsilon)-\varphi(\myvec{y},v\epsilon)])\bigr|\\
 & \leq\max_{v\in\sdInt}\lexpt[\abs{\varphi(\myvec{x},v\epsilon)-\varphi(\myvec{y},y\epsilon)}]\\
 & \leq\max_{v\in\sdInt}\lexpt[C_{0}(1+\norm{(\myvec{x},v\epsilon)}^{m}+\norm{(\myvec{y},v\epsilon)}^{m})\norm{\myvec{x}-\myvec{y}}],
\end{align*}
where we adapt the norm to lower dimension in the sense that $\norm{\myvec{a}_{(k+1)}}\coloneqq\norm{(\myvec{a}_{(k)},0)}$.
By triangle inequality, for any $v\in\sdInt$,
\[
\norm{(\myvec{x},v\epsilon)}\leq\norm{\myvec{x}}+\abs{v\epsilon}\leq\norm{\myvec{x}}+\sdr\abs{\epsilon},
\]
then 
\[
\norm{(x,v\epsilon)}^{m}\leq(\norm{x}+\sdr\abs{\epsilon})^{m}\leq\max\{1,2^{m-1}\}(\norm{x}^{m}+\sdr^{m}\abs{\epsilon}^{m}).
\]
Hence, with $C_{1}=C_{0}\max\{1,2^{m-1}\}$ and $C_{2}=C_{1}\max\{1,2\sdr^{m}\lexpt[\abs{\epsilon}^{m}]\}$,
\begin{align*}
|\varphi_{k}(\myvec{x})-\varphi_{k}(\myvec{y})| & \leq C_{0}\max_{v\in\sdInt}(1+\expt[\norm{(x,v\epsilon)}^{m}]+\expt[\norm{(y,v\epsilon)}^{m}])\norm{x-y}\\
 & \leq C_{1}(1+\norm{x}^{m}+\norm{y}^{m}+2\sdr^{m}\lexpt[\abs{\epsilon}^{m}])\norm{x-y}\\
 & \leq C_{2}(1+\norm{x}^{m}+\norm{y}^{m})\norm{x-y}.\qedhere
\end{align*}
\end{proof}



\begin{proof}[Proof of \ref{cor:convex-case-same}]
Under semi-sequential independence, note that 
\[
(W_{1},W_{2},\dotsc,W_{n})\sim\semiGN(\myvec{0},\myset{C}),
\]
with $\myset{C}=\{\diag(\sigma^2_{1},\sigma^2_{2},\dotsc,\sigma^2_{n}):\sigma_{i}\in\sdInt\}$.
It has the representation (\ref{cor:Properties-of-Maximal}) that, 
\[
\expt[\varphi(\myvec{W}_{(n)})]=\max_{\sigma_{i}\in\sdInt}\lexpt[\varphi(\diag(\sigma_{1},\sigma_{2},\dotsc,\sigma_{n})\myvec{\stdrv}_{(n)})],
\]
where $\myvec{\stdrv}_{(n)}$ follows a standard multivariate normal.
When $\varphi$ is convex, by simply repeating the idea of \ref{thm:conn-G}
in multivariate case, we can show 
\[
\expt[\varphi(\myvec{W}_{(n)})]=\lexpt[\varphi(\diag(\sdr,\sdr,\dotsc,\sdr)\myvec{\stdrv}_{(n)}).
\]
Accordingly, when $\varphi$ is concave, we can get similar result
with $\sdr$ replaced by $\sdl$. 

Under sequential independence, based on the idea of showing 
\[
\expt[\varphi(\myvec{W})]=\max_{\myvec{\sigma}\in\mysetrv{L}_{n}\sdInt}\lexpt[\varphi(\myvec{\sigma}*\myvec{\stdrv})],
\]
in \ref{thm:represent-n-seqind-semignorm}. The maximizer can be obtained by implementing
the iterative algorithm: with $\varphi_{0}\coloneqq\varphi$, $i=1,2,\dotsc,n$,
\begin{equation}
\label{eq:pf-iterate-varphi}
	\varphi_{i}(\myvec{x}_{(n-i)})=\max_{\sigma_{n-i+1}\in\sdInt}\lexpt[\varphi_{i-1}(\myvec{x}_{(n-i)},\sigma_{n-i+1}\epsilon_{n-i+1})].
\end{equation}
Then we only need to record the optimizer $\sigma_{n-i+1}(\cdot)$
which is a function of $\myvec{x}_{(n-i)}$ to get the maximizer $\myvec{\sigma}^{*}\in\mysetrv{L}_{n}\sdInt$.
First we can show that, for $i=1,2,\dotsc,n$,
\begin{equation}
\label{rel:varphi-convex}
	\varphi_{i-1}\text{ is convex (concave)}\implies\varphi_{i}\text{ is convex (concave)},
\end{equation}
namely, the convexity (or concavity) of $\varphi_{i-1}$ can be carried
over to $\varphi_{i}$. Actually, if $\varphi_{i-1}$ is convex (in
$\numset{R}^{n-i+1})$, it must be convex with respect to each subvector of arguments. Then by applying \ref{thm:conn-G}, we have 
\begin{equation}
\label{eq:pf-iterate-varphi-2}
	\varphi_{i}(\myvec{x}_{(n-i)})=\lexpt[\varphi_{i-1}(\myvec{x}_{(n-i)},\sdr\epsilon_{n-i+1})],
\end{equation}
which also gives us the choice of $\sigma_{n-i+1}$. Then we can validate
the convexity of $\varphi_{i}$ by definition: with $\lambda\in[0,1]$,
$e\coloneqq\sdr\epsilon_{n-i+1}$,
\begin{align*}
\varphi_{i}(\lambda\myvec{x}_{(n-i)}+(1-\lambda)\myvec{y}_{(n-i)}) & =\lexpt[\varphi_{i-1}(\lambda\myvec{x}_{(n-i)}+(1-\lambda)\myvec{y}_{(n-i)},e)]\\
 & =\lexpt[\varphi_{i-1}(\lambda(\myvec{x}_{(n-i)},e)+(1-\lambda)(\myvec{y}_{(n-i)},e))]\\
 & \leq\lambda\lexpt[\varphi_{i-1}(\myvec{x}_{(n-i)},e)]+(1-\lambda)\lexpt[\varphi_{i-1}(\myvec{y}_{(n-i)},e)]\\
 & =\lambda\varphi_{i}(\myvec{x}_{(n-i)})+(1-\lambda)\varphi_{i}(\myvec{y}_{(n-i)}).
\end{align*}
We can follow the same arguments to show the concave case. 
Finally, we can
start from the convexity (concavity, respectively) of $\varphi_{0}$ to show the convexity
of all $\varphi_{i}$ and along the way, we get each of the optimal $\sigma_{n-i+1}$
is equal to $\sdr$ ($\sdl$, respectively). 
\end{proof}

\begin{proof}[Proof of \ref{cor:convex-case-same-2}]
	The idea is similar to the proof of \ref{cor:convex-case-same} by replacing \ref{eq:pf-iterate-varphi} by 
	\[
	\varphi_{i}(\myvec{x}_{(n-i)})=\expt[\varphi_{i-1}(\myvec{x}_{(n-i)},W^G_{n-i+1})].
	\]
	In order to check the statement \ref{rel:varphi-convex}, due to the convexity of $\varphi$, we can use \ref{prop:convex-concave-case} to show \ref{eq:pf-iterate-varphi-2}. The remaining part is the same as the proof of \ref{cor:convex-case-same}.
\end{proof}

\subsection{Proofs in \ref{subsec:interpret-asymmetry-indep}}

The goal of this section is to prove \ref{prop:represent-two-semignorm}, which is a simple representation for $\expt[\varphi(W_{1},W_{2})]$
for $\varphi\in C_{\text{s.poly}}$. Throughout this section, without further notice, we consider $W_{1}\eqdistn W_{2}\eqdistn\semiGN(0,\varI)$ imposed with sequential independce $W_{1}\seqind W_{2}$. We also have the expression $W_{i}\coloneqq V_{i}\stdrv_{i}$ with $V_{i}\sim\Maximal\meanInt$,
$\stdrv_{i}\sim\CN(0,1)$ and $V_{i}\seqind\stdrv_{i}$. 

\begin{lem}
\label{lem:poly-omit}
For $p,q\in\numset{N}_{+}$,
if $q$ is odd, 
\[
\expt[W_{1}^{p}W_{2}^{q}]=-\expt[-W_{1}^{p}W_{2}^{q}]=0,
\]
that is, it has certain zero mean. 
\end{lem}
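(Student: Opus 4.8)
The plan is to exploit the sequential independence $W_1 \seqind W_2$, which by \ref{ind-subseq} gives $V_1 \seqind V_2 \seqind \stdrv_1 \seqind \stdrv_2$ under the fully-sequential case (or more precisely the relevant sub-relations), to peel off the expectation one layer at a time, conditioning on the outer objects and integrating out the innermost classical variable first. The key observation is that $\stdrv_2 \sim \CN(0,1)$ is classical and symmetric, so its odd moments vanish. Since $W_1^p W_2^q = V_1^p \stdrv_1^p \, V_2^q \stdrv_2^q$, when $q$ is odd the factor $\lexpt[\stdrv_2^q] = 0$ should force the whole expectation to collapse to zero.

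First I would use the definition of sequential independence $W_1 \seqind W_2$ to write
\[
\expt[W_1^p W_2^q] = \expt[\expt[w^p \, W_2^q]_{w = W_1}] = \expt[W_1^p \cdot \expt[W_2^q]],
\]
provided $\expt[W_2^q]$ is a constant. So the crux reduces to evaluating $\expt[W_2^q] = \expt[(V_2 \stdrv_2)^q]$ for odd $q$. Here I would invoke the representation of the univariate semi-$G$-normal (\ref{thm:represent-uni-semignorm}): since $W_2 = V_2\stdrv_2$ with $V_2 \seqind \stdrv_2$,
\[
\expt[W_2^q] = \max_{\sigma \in \sdInt} \lexpt_\lprob[(\sigma \stdrv_2)^q] = \max_{\sigma \in \sdInt} \sigma^q \, \lexpt_\lprob[\stdrv_2^q] = 0,
\]
because $\lexpt_\lprob[\stdrv_2^q] = 0$ for odd $q$ (standard normal is symmetric). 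By the same token $-\expt[-W_2^q] = \min_{\sigma} \sigma^q \lexpt[\stdrv_2^q] = 0$, so $W_2^q$ has certain zero mean. Then I would conclude $\expt[W_1^p W_2^q] = \expt[W_1^p \cdot 0] = 0$, and analogously $-\expt[-W_1^p W_2^q] = 0$ by replacing $\varphi$ with $-\varphi$ throughout.

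One subtlety I must handle carefully is the polynomial $\varphi(w_1,w_2) = w_1^p w_2^q$ being neither bounded nor globally Lipschitz; it lives in $\fspace$ rather than $\fspacedef$. I would justify applying \ref{defn:G-indep} and the representation theorem on $\fspace$ by appealing to the finite-moment condition: since $W_i \sim \semiGN(0,\varI)$ has all finite moments (as established via the convex-function bound, e.g. in the proof of \ref{prop:gen-conn-semiGN-GN}), the relevant sublinear expectations are finite and the extension of the independence identity to $\fspace$ is valid. The function $\varphi_w(w_2) \coloneqq w^p \expt[\,\cdot\,]$ and the map $w \mapsto \expt[w^p W_2^q]$ must be checked to lie in $\fspace$ so that the outer representation applies — this is the same type of locally-Lipschitz bookkeeping carried out in \ref{lem:varphi-stdrv} and \ref{claim:varphi-k}.

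The main obstacle I anticipate is \textbf{not} the core symmetry argument (which is essentially the vanishing of odd normal moments) but rather the technical justification that the conditioning/peeling step is legitimate for these unbounded polynomial test functions under the precise order of independence at hand. In particular I must be sure which direction of independence I am using: the statement assumes $W_1 \seqind W_2$, so $W_2$ is the ``later'' variable and I integrate it out in the inner expectation $\expt[w^p W_2^q]_{w=W_1}$ — this is exactly the direction that makes $\expt[W_2^q]$ a genuine constant that can be pulled out. Had the order been reversed, the inner expectation would involve $V_2$'s maximization interacting with the sign of $\stdrv_1$, which is precisely the asymmetry phenomenon illustrated in \ref{eg:indep-ex-org}; so I would emphasize that the clean vanishing here relies on $q$ (the exponent of the inner/later variable) being odd, and I would note that the companion claim $\expt[W_1^p W_2^q]=0$ need not hold if instead the \emph{outer} exponent were the odd one (cf. $\expt[W_1 W_2^2] > 0$ versus $\expt[W_1^2 W_2] = 0$ in \ref{eg:indep-ex-org}).
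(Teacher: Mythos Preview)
Your approach is correct and essentially coincides with the paper's: both use $W_1\seqind W_2$ to write $\expt[W_1^pW_2^q]=\expt[\expt[w^pW_2^q]_{w=W_1}]$, then kill the inner expectation using the vanishing odd moments of the classical $\stdrv_2$. The only cosmetic difference is that the paper handles the sign issue explicitly via positive homogeneity, writing $K(x)\coloneqq\expt[xW_2^q]=x^+\expt[W_2^q]+x^-\expt[-W_2^q]=0$, whereas your intermediate display $\expt[W_1^p\cdot\expt[W_2^q]]$ is not literally valid for sublinear $\expt$ when $w^p<0$; your argument is rescued precisely because you then establish that $W_2^q$ has \emph{certain} zero mean (both $\expt[W_2^q]=0$ and $\expt[-W_2^q]=0$), which makes $\expt[w^pW_2^q]\equiv 0$ regardless of sign. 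Your closing remarks on the function-space bookkeeping and on why the odd exponent must sit on the \emph{inner} variable are accurate and more detailed than the paper's treatment.
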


\begin{proof}
Directly work on the sublinear expectation by imposing the sequential
independence. Let $K(x)\coloneqq\expt[xW_{2}^{q}],$ then we have
\[
K(x)=x^{+}\expt[W_{2}^{q}]+x^{-}\expt[-W_{2}^{q}]=0,
\]
because we are essentially working on the odd-moment of $\sigma\epsilon$
with $\sigma\in\sdInt$. Then we have 
\begin{align*}
\expt[W_{1}^{p}W_{2}^{q}] & =\expt[\expt[w_{1}^{p}W_{2}^{q}]_{w_{1}=W_{1}}]\\
 & =\expt[K(W_{1}^{p})]=0.
\end{align*}
Similarly, we have $-\expt[-W_{1}^{p}W_{2}^{q}]=0$. 
\end{proof}

\begin{lem}
\label{lem:even-moment-two-semignorm}
For $\varphi(x_{1},x_{2})=(x_{1}+x_{2})^{n}$ with \textbf{even} positive
integer $n$, 
\[
\expt[\varphi(W_{1},W_{2})]=\lexpt[\varphi(\sdr\stdrv_{1},\sdr\stdrv_{2})].
\]
Furthermore, we have the even moments of $(W_{1}+W_{2})$:
\[
\expt[(W_{1}+W_{2})^{n}]=(n-1)!!\sdr^{n}2^{n/2}.
\]
\end{lem}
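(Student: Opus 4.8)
The plan is to recognize that $\varphi(x_1,x_2)=(x_1+x_2)^n$ is a \emph{convex} function whenever $n$ is even, since it is the composition of the convex map $t\mapsto t^n$ (convex on $\numset{R}$ for even $n$) with the linear map $(x_1,x_2)\mapsto x_1+x_2$. This places us exactly in the convex regime of \ref{cor:convex-case-same} (taking the number of variables there to be two). That corollary tells us that the sublinear expectation $\expt[\varphi(W_1,W_2)]$ is the same under sequential and semi-sequential independence, and that its common value is obtained by evaluating $\varphi$ at the \emph{upper} endpoint $\sdr$ of the volatility interval:
\[
\expt[\varphi(W_1,W_2)]=\lexpt[\varphi(\sdr\stdrv_1,\sdr\stdrv_2)],
\]
where $\stdrv_1,\stdrv_2\sim\CN(0,1)$ are classically independent. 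This is precisely the first claimed identity, so the first half of the lemma requires no work beyond verifying convexity and citing the corollary.

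For the explicit moment formula, I would reduce the right-hand side to a single Gaussian moment. Since $\stdrv_1$ and $\stdrv_2$ are independent standard normals, their sum satisfies $\stdrv_1+\stdrv_2\sim\CN(0,2)$, and hence
\[
\sdr\stdrv_1+\sdr\stdrv_2=\sdr(\stdrv_1+\stdrv_2)\sim\CN(0,2\sdr^2).
\]
Applying the classical formula for the $n$-th moment of a centered normal with variance $\tau^2$, namely $\lexpt[\CN(0,\tau^2)^n]=(n-1)!!\,\tau^n$ for even $n$, with $\tau^2=2\sdr^2$ yields
\[
\expt[(W_1+W_2)^n]=(n-1)!!\,(2\sdr^2)^{n/2}=(n-1)!!\,\sdr^n\,2^{n/2},
\]
which is the second claimed identity.

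There is essentially no hard analytic obstacle here: once convexity is noted, the sublinear-to-linear reduction is handed to us by \ref{cor:convex-case-same}, and the remainder is a routine Gaussian computation. The only point deserving care is conceptual rather than technical — one must confirm that it is exactly the convexity of $\varphi$ that collapses the two notions of independence, so that the skewness-generating asymmetry discussed around \ref{eg:two-semiGnorm-skewness} does \emph{not} appear for these even-power test functions. This is consistent with the fact that $(W_1+W_2)^n$ with $n$ even probes only the even moments, which carry genuine variance uncertainty but no third-moment (skewness) uncertainty.
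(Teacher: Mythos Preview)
Your proposal is correct and matches the paper's own proof essentially line for line: the paper also observes that $\varphi(x_1,x_2)=(x_1+x_2)^n$ is convex for even $n$ (it mentions the Hessian, whereas you use the composition-with-linear argument), invokes the convex case to reduce to $\lexpt[\varphi(\sdr\stdrv_1,\sdr\stdrv_2)]$, and then computes the Gaussian moment via $\stdrv_1+\stdrv_2\eqdistn\sqrt{2}\stdrv_1$. There is no substantive difference between the two arguments.
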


\begin{proof}
This result directly comes from due to the convexity of $\varphi$, which can be validated by considering its Hessian matrix.
Then we can check that 
\[
\expt[(W_{1}+W_{2})^{n}]=\lexpt[\sdr^{n}(\epsilon_{1}+\epsilon_{2})^{n}]=\lexpt[\sdr^{n}(\sqrt{2}\epsilon_{1})^{n}]=(n-1)!!\sdr^{n}2^{n/2}.
\]
\end{proof}

\begin{lem}
\label{lem:odd-moment-two-semignorm}
For $\varphi(x_{1},x_{2})=(x_{1}+x_{2})^{n}$ with \textbf{odd} positive
integer $n$, 
\begin{equation}
\expt[\varphi(W_{1},W_{2})]=\lexpt_{\lprob}[\varphi(\sigma_{1}\stdrv_{1},\sigma_{2}\stdrv_{2})],\label{eq:odd-moment-represent}
\end{equation}
where $(\sigma_{1},\sigma_{2})$ satisfies: 
\begin{align}
\sigma_{1} & =\sdr,\nonumber \\
\sigma_{2} & =\sigma_{2}(\sigma_{1}\epsilon_{1})\nonumber \\
 & =\sdr(\sigma_{1}\epsilon_{1})^{+}-\sdl(\sigma_{1}\stdrv_{1})^{-}.\label{eq:choice-2-sigma}
\end{align}
Furthermore, for odd $n\geq3$, we have the moments of $(W_{1}+W_{2})$:
\[
\expt[(W_{1}+W_{2})^{n}]=\sqrt{\frac{2}{\pi}}\bigl(\sum_{k=0}^{(n-3)/2}C_{k}\sdr^{2k+1}2^{k-1}k!\bigr),
\]
where $C_{k}=\binom{n}{2k+1}(n-2k-1)!!(\sdr^{n-2k-1}-\sdl^{n-2k-1}).$
\end{lem}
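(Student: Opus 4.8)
The plan is to evaluate $\expt[(W_1+W_2)^n]$ by peeling off the two layers of independence one at a time, exactly as in the iterative scheme behind \ref{thm:represent-n-seqind-semignorm}, and then to reduce everything to two elementary Gaussian integrals. First I would use $W_1\seqind W_2$ together with \ref{defn:G-indep} to write $\expt[(W_1+W_2)^n]=\expt[h(W_1)]$, where $h(w_1)\coloneqq\expt[(w_1+W_2)^n]$. For the inner functional I would peel off the second layer $V_2\seqind\stdrv_2$: since $\stdrv_2$ is classical $\CN(0,1)$ (\ref{rem:GN-degenerate-to-CN}), conditioning on $V_2=v$ turns the inner expectation into the linear one $g_{w_1}(v)\coloneqq\lexpt_\lprob[(w_1+v\stdrv_2)^n]$, a polynomial in $v$ lying in $\fspace(\numset{R})$ (cf.\ \ref{lem:varphi-stdrv}), so the maximal representation \ref{def:maximal} gives $h(w_1)=\max_{v\in\sdInt}g_{w_1}(v)$.

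The next step is to identify the inner maximizer, which produces \ref{eq:choice-2-sigma}. Expanding by the binomial theorem and discarding the vanishing odd moments of $\stdrv_2$, only even powers of $v$ survive: $g_{w_1}(v)=\sum_{k}\binom{n}{2k}(2k-1)!!\,w_1^{\,n-2k}v^{2k}$. Because $n$ is odd, every exponent $n-2k$ is odd, so each coefficient $w_1^{\,n-2k}$ carries the sign of $w_1$; hence $g_{w_1}$ increases in $v^2$ when $w_1>0$ and decreases when $w_1<0$ (the case $w_1=0$ is irrelevant, being supported on the $\lprob$-null event $\{\stdrv_1=0\}$). The maximizer over $\sdInt$ is therefore $v=\sdr$ for $w_1>0$ and $v=\sdl$ for $w_1<0$, which is precisely the switching rule \ref{eq:choice-2-sigma}; substituting it back gives $h(w_1)=\lexpt_\lprob[(w_1+\sigma_2(w_1)\stdrv_2)^n]$.

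For the outer layer I would again peel off $V_1\seqind\stdrv_1$ to get $\expt[h(W_1)]=\max_{v_1\in\sdInt}\lexpt_\lprob[h(v_1\stdrv_1)]$, where $\lexpt_\lprob[h(v_1\stdrv_1)]=\lexpt_\lprob[(v_1\stdrv_1+\sigma_2(v_1\stdrv_1)\stdrv_2)^n]$. Here I expect the main obstacle: one cannot argue $\sigma_1=\sdr$ by convexity, since $h$ is genuinely non-convex (concave on $w_1<0$). Instead I would symmetrize: applying the measure-preserving map $(\stdrv_1,\stdrv_2)\mapsto(-\stdrv_1,-\stdrv_2)$ to the $\{\stdrv_1<0\}$ piece and using that $n$ is odd collapses the two branches into $\lexpt_\lprob[\ind{\stdrv_1>0}\big((v_1\stdrv_1+\sdr\stdrv_2)^n-(v_1\stdrv_1+\sdl\stdrv_2)^n\big)]$. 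Expanding binomially, the $\stdrv_2^{\,i}$ with odd $i$ vanish, while each surviving term is a nonnegative coefficient (the factor $\sdr^i-\sdl^i\geq0$, a positive even moment of $\stdrv_2$, and the strictly positive half-Gaussian moment $\lexpt_\lprob[\ind{\stdrv_1>0}\stdrv_1^{\,n-i}]$ with $n-i$ odd) times $v_1^{\,n-i}$ with $n-i$ an odd positive power. Thus the whole expression is increasing in $v_1\geq0$ and is maximized at $v_1=\sdr$, establishing $\sigma_1=\sdr$ and \ref{eq:odd-moment-represent} at once.

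Finally, for the explicit moment I would read off the closed form from this same expansion evaluated at $v_1=\sdr$. Writing the surviving even index as $i=n-2k-1$ with $0\leq k\leq(n-3)/2$, the two ingredients are the standard even Gaussian moment $\lexpt_\lprob[\stdrv_2^{\,i}]$ and the half-Gaussian odd moment $\lexpt_\lprob[\ind{\stdrv_1>0}\stdrv_1^{\,2k+1}]=2^{k}k!/\sqrt{2\pi}=\sqrt{2/\pi}\,2^{k-1}k!$, the latter obtained by the substitution $u=s^2/2$ reducing the integral to $\Gamma(k+1)$. Collecting $\binom{n}{2k+1}\sdr^{\,2k+1}(\sdr^{\,n-2k-1}-\sdl^{\,n-2k-1})$ against these factors and summing over $k$ yields the stated formula (one may cross-check the leading case $n=3$, which reproduces $\expt[(W_1+W_2)^3]=3(\sdr^2-\sdl^2)\sdr/\sqrt{2\pi}$ from \ref{skewness-positive}). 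The genuinely delicate points are the non-convexity of $h$, handled by the symmetrization above, the membership of the intermediate functions in $\fspace(\numset{R})$ needed to invoke the maximal representation, and bookkeeping of which Gaussian moments survive.
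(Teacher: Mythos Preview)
Your proof is correct and follows the same two-layer iterative scheme as the paper: peel off $W_2$ to obtain the switching rule for $\sigma_2$, then handle the outer maximization over $\sigma_1$. The only substantive difference is in how you establish $\sigma_1=\sdr$. You fold the $\{\stdrv_1<0\}$ branch onto $\{\stdrv_1>0\}$ via the reflection $(\stdrv_1,\stdrv_2)\mapsto(-\stdrv_1,-\stdrv_2)$ and then read off monotonicity in $v_1$ term by term. The paper instead first strips away the certain-mean-zero odd polynomial $H(\sdl;\cdot)$ from $\varphi_1$ (your $h$), notes that what remains, $K(x)=\ind{x\geq0}\sum_k C_k x^{2k+1}$ with $C_k\geq0$, \emph{is} convex (zero on $x<0$, convex increasing on $x\geq0$, with nonnegative right derivative at the origin), and then applies \ref{thm:conn-G} to get $\expt[K(W_1)]=\lexpt[K(\sdr\stdrv_1)]$. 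So your remark that ``one cannot argue $\sigma_1=\sdr$ by convexity'' overstates the obstacle: convexity does work, just not on $h$ itself but on $h$ minus its mean-zero tail, and that is exactly the paper's route. Your symmetrization is a clean alternative that sidesteps having to identify the right piece to subtract; the paper's approach has the advantage of plugging directly into the ready-made \ref{thm:conn-G}. The closing half-Gaussian computation is identical in both.
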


\begin{proof}
We can directly check the sublinear expectation 
\begin{align*}
\expt[(W_{1}+W_{2})^{n}] & =\expt[\sum_{i=0}^{n}\binom{n}{i}W_{1}^{i}W_{2}^{n-i}].
\end{align*}
Since the terms in the form of $W_{1}^{n}$, $W_{2}^{n}$ or $W_{1}^{i}W_{2}^{n-i}$
with even $i$ (then $n-i$ is odd) all have zero mean (with no ambiguity),
they can be omitted in the computation by \ref{lem:poly-omit}. Hence,
\begin{align*}
\expt[(W_{1}+W_{2})^{n}] & =\expt[\sum_{i\text{ odd}}\binom{n}{i}W_{1}^{i}W_{2}^{n-i}]\\
 & =\expt[\underbrace{\expt[\sum_{i\text{ odd}}\binom{n}{i}w_{1}^{i}W_{2}^{n-i}}_{\eqqcolon\varphi_{1}(w_{1})}]_{w_{1}=W_{1}}].
\end{align*}
The inner part can be expressed as 
\begin{align*}
\varphi_{1}(w_{1}) & =\expt[\sum_{i\text{ odd}}\binom{n}{i}w_{1}^{i}W_{2}^{n-i}]\\
 & =\max_{\sigma_{2}\in\sdInt}\lexpt[\sum_{i\text{ odd}}\binom{n}{i}w_{1}^{i}(\sigma_{2}\stdrv_{2})^{n-i}]\\
 & =\max_{\sigma_{2}\in\sdInt}\sum_{i\text{ odd}}\binom{n}{i}w_{1}^{i}\sigma_{2}^{n-i}\lexpt[\stdrv_{2}^{n-i}]\\
 & =\max_{\sigma_{2}\in\sdInt}\sum_{k=0}^{(n-3)/2}\binom{n}{2k+1}w_{1}^{2k+1}\sigma_{2}^{n-2k-1}(n-2k-1)!!\eqqcolon\max_{\sigma_{2}\in\sdInt}H(\sigma_{2};w_{1}).
\end{align*}
Notice that the monotonicity of $H(\sigma_{2};w_{1})$ with respect
to $\sigma_{2}$ depends on the sign of $w_{1}$. Hence 
\begin{align*}
\varphi_{1}(w_{1}) & =\begin{cases}
H(\sdr;w_{1}) & \text{ if }w_{1}\geq0\\
H(\sdl;w_{1}) & \text{ if }w_{1}<0
\end{cases}\\
 & =\ind{w_{1}\geq0}(H(\sdr;w_{1})-H(\sdl;w_{1}))+H(\sdl;w_{1})
\end{align*}
Then we have 
\begin{align*}
\expt[(W_{1}+W_{2})^{n}] & =\expt[\varphi_{1}(W_{1})]\\
 & =\expt[\ind{W_{1}\geq0}(H(\sdr;W_{1})-H(\sdl;W_{1}))+H(\sdl;W_{1})].
\end{align*}
Here we have 
\[
\expt[H(\sdl;W_{1})]=\expt[\sum_{k=0}^{(n-3)/2}\binom{n}{2k+1}\sigma_{2}^{n-2k-1}(n-2k-1)!!W_{1}^{2k+1}],
\]
with each $W_{1}^{2k+1}$ has certain mean zero, so $\expt[H(\sdl;W_{1})]=-\expt[-H(\sdl;W_{1})]=0$.
Therefore, 
\begin{align*}
\expt[(W_{1}+W_{2})^{n}] & =\expt[\ind{W_{1}\geq0}(H(\sdr;W_{1})-H(\sdl;W_{1}))]\\
 & =\expt[\ind{W_{1}\geq0}\sum_{k=0}^{(n-3)/2}C_{k}W_{1}^{2k+1}]\eqqcolon\expt[K(W_{1})]
\end{align*}
with $C_{k}=\binom{n}{2k+1}(n-2k-1)!!(\sdr^{n-2k-1}-\sdl^{n-2k-1})\geq0$.
Since $K(x)$ is a convex function by noting that it stays at
$0$ on $x<0$ and increases when $x\geq0$, 
we have 
\[
\expt[(W_{1}+W_{2})^{n}]=\lexpt[K(\sdr \stdrv_{1})].
\]
Therefore, we obtain the optimal of $(\sigma_{1},\sigma_{2})$ in
the form \ref{eq:choice-2-sigma}, which can be doubly checked by
plugging it back to the right handside of \ref{eq:odd-moment-represent}
to show the equality. We can further get the exact value of $\expt[(W_{1}+W_{2})^{n}]$
by continuing the procedure above, 
\begin{align*}
\expt[\ind{W_{1}\geq0}\sum_{k=0}^{(n-3)/2}C_{k}W_{1}^{2k+1}] & =\max_{\sigma_{1}\in\sdInt}\text{\lexpt}[\ind{\sigma_{1}\stdrv_{1}\geq0}\sum_{k=0}^{(n-3)/2}C_{k}(\sigma_{1}\stdrv_{1})^{2k+1}]\\
 & =\text{\lexpt}[\ind{\stdrv_{1}\geq0}\sum_{k=0}^{(n-3)/2}C_{k}(\sdr \stdrv_{1})^{2k+1}]\\
 & =\sum_{k=0}^{(n-3)/2}C_{k}\sdr^{2k+1}\lexpt[\ind{\stdrv_{1}\geq0}\stdrv_{1}^{2k+1}].
\end{align*}
Here we need to use the property of the classical half-normal distribution:
\begin{align*}
\lexpt[|\stdrv|^{2k+1}] & =\lexpt[|\stdrv^{2k+1}|]\\
 & =\lexpt[\ind{\stdrv_{1}\geq0}\stdrv_{1}^{2k+1}]+\lexpt[\ind{\stdrv_{1}<0}(-\stdrv_{1})^{2k+1}].
\end{align*}
Since $\stdrv$ and $-\stdrv$ have the same distribution, 
\begin{align*}
\lexpt[\ind{\stdrv_{1}<0}(-\stdrv_{1})^{2k+1}] & =\lexpt[\ind{\stdrv_{1}\leq0}(-\stdrv_{1})^{2k+1}]\\
 & =\lexpt[\ind{-\stdrv_{1}\geq0}(-\stdrv_{1})^{2k+1}]\\
 & =\lexpt[\ind{\stdrv_{1}\geq0}\stdrv_{1}^{2k+1}],
\end{align*}
then $\lexpt[|\stdrv|^{2k+1}]=2\lexpt[\ind{\stdrv_{1}\geq0}\stdrv_{1}^{2k+1}].$
Hence,
\[
\lexpt[\ind{\stdrv_{1}\geq0}\stdrv_{1}^{2k+1}]=\frac{1}{2}\lexpt[|\stdrv|^{2k+1}].
\]
Also notice that $\stdrv\sim\CN(0,1)$, then $|\stdrv|$ follows a half-normal
distribution or $|\stdrv|=|\stdrv|/1$ follows a $\chi_{1}$-distribution with
raw moments: 
\[
\lexpt[|\stdrv|^{n}]=2^{n/2}\frac{\Gamma((n+1)/2)}{\Gamma(1/2)},
\]
Then for $n=2k+1$, with $k\in\numset{N}$
\[
\lexpt[\abs{\stdrv}^{2k+1}]=2^{k}\sqrt{\frac{2}{\pi}}k!.
\]
Therefore, 
\begin{align*}
\expt[(W_{1}+W_{2})^{n}] & =\lexpt[K(\sdr \stdrv_{1})]\\
 & =\sum_{k=0}^{(n-3)/2}C_{k}\sdr^{2k+1}\frac{1}{2}\lexpt[|\stdrv|^{2k+1}]\\
 & =\sqrt{\frac{2}{\pi}}\bigl(\sum_{k=0}^{(n-3)/2}C_{k}\sdr^{2k+1}2^{k-1}k!\bigr).\qedhere
\end{align*}
\end{proof}

\begin{proof}[Proof of \ref{prop:represent-two-semignorm}]
	The representation under semi-sequential independence can be directly
checked based on \ref{thm:represent-n-seqind-semignorm} and \ref{rem:vision-represent}. In the following context, we only consider the case of sequential independence $W_1\seqind W_2$, because $W_1\fullseqind W_2$ will induce the same result by a similar logic to the proof of \ref{thm:represent-n-seqind-semignorm}. 
The basic idea is we need to show that $\expt[\varphi(W_{1},W_{2})]$
can be reached by the linear expectation on the right handside for
some $\myvec{\sigma}\in\myset{L}_{2}^{0}\sdInt$. Then we have 
\[
\expt[\varphi(W_{1},W_{2})]\leq\max_{\myvec{\sigma}\in\myset{L}_{2}^{0}\sdInt}\lexpt_{\lprob}[\varphi(\sigma_{1}\stdrv_{1},\sigma_{2}\stdrv_{2})].
\]
The reverse direction of inequality comes from the fact that $\myset{L}_{2}^{0}\sdInt\subset\myset{L}_{2}\sdInt$
and \ref{thm:represent-n-seqind-semignorm}. 
The logic here is similar to the proof of \ref{thm:represent-n-seqind-semignorm} in sequential-independence case, that is, we only need to record the optimal choice of $(\sigma_1,\sigma_2)$ when evaluating the sublinear expectation in an iterative way. 

For instance, 
when $\varphi(x_{1},x_{2})=(x_1+x_2)^n$, the sublinear expectation can be reached by some $\myvec{\sigma}\in \myset{L}_{2}^{0}\sdInt$ as illustrated in \ref{lem:even-moment-two-semignorm} and \ref{lem:odd-moment-two-semignorm}.
For $\varphi(x_{1},x_{2})=x_{1}^{p}x_{2}^{q}$ with $p,q\in \numset{N}_+$,
\begin{equation}
\label{eq:expt-W1W2-pq}
	\expt[W_{1}^{p}W_{2}^{q}]=\expt[(\max_{\sigma_{2}\in\sdInt}w_{1}^{p}\lexpt[(\sigma_{2}\stdrv_{2})^{q}])].
\end{equation}

Meanwhile, for any $(\sigma_{1},\sigma_{2})\in\myset{L}_{2}^{0}\sdInt$,
let $Y_{i}=\sigma_{i}\stdrv_{i},i=1,2$ and $\lexpt=\lexpt_{\lprob}$
denote the linear expectation. Note that $\sigma_1\in\{\sdl,\sdr\}$,
\[
\sigma_{2}=\sigma_{2}(Y_{1})=(\sigma_{22}-\sigma_{21})\ind{Y_{1}>0}+\sigma_{21},
\]
with $(\sigma_{21},\sigma_{22})\in\{\sdl,\sdr\}^2$.
We also have $Y_{1}\independent\epsilon_{2}$ due to the setup which
is the same as \ref{subsec:represent-semignorm-indep}. Then 
\begin{align*}
\lexpt[Y_{1}^{p}Y_{2}^{q}] & =\lexpt\bigl[Y_{1}^{p}\lexpt[Y_{2}^{q}|Y_{1}]\bigr]\\
 & =\lexpt\bigl[Y_{1}^{p}\lexpt[(\sigma_{2}(Y_{1}))^{q}\stdrv_{2}^{q}|Y_{1}]\bigr]\\
 & =\lexpt[(\sigma_{1}\stdrv_{1})^{p}(\sigma_{2}(Y_{1}))^{q}]\lexpt[\stdrv_{2}^{q}]\\
 & =\sigma_{1}^{p}\lexpt[(\sigma_{22}-\sigma_{21})\ind{Y_{1}>0}+\sigma_{21})^{q}\stdrv_{1}^{p}]\lexpt[\stdrv_{2}^{q}]\\
 & =\sigma_{1}^{p}\lexpt[(\sigma_{22}^{q}-\sigma_{21}^{q})\ind{\stdrv_{1}>0}+\sigma_{21}^{q})\stdrv_{1}^{p}]\lexpt[\stdrv_{2}^{q}].
\end{align*}
Hence, 
\begin{equation}
\lexpt[Y_{1}^{p}Y_{2}^{q}]=\sigma_{1}^{p}\bigl((\sigma_{22}^{q}-\sigma_{21}^{q})\lexpt[\ind{\stdrv_{1}>0}\stdrv_{1}^{p}]+\sigma_{21}^{q}\lexpt[\stdrv_{1}^{p}]\bigr)\lexpt[\stdrv_{2}^{q}].\label{eq:lexpt-w1w2-pq}
\end{equation}

Then we divide our discussions into three cases: a) $q$ is odd, b)
$q$ is even and $p$ is even, c) $q$ is even and $p$ is odd.
When $q$ is odd, the expectation in \ref{eq:expt-W1W2-pq} is equal to $0$ by \ref{lem:poly-omit}.
It can be obviously reached the linear expectation on the right handside
by choosing any $\myvec{\sigma}\in\myset{L}_{2}^{0}\sdInt$ by \ref{eq:lexpt-w1w2-pq}. When
$q$ is even, we can see that the choice of $\sigma_{2}$ depends
on the sign of $w_{1}^{p}$ which further depends on the sign of $w_{1}$
if $p$ is odd (otherwise it is always non-negative). To be specific,
when both $q$ and $p$ are even, 
\begin{align*}
\expt[W_{1}^{p}W_{2}^{q}] & =\lexpt[\stdrv_{2}^{q}]\sdr^{q}\expt[W_{1}^{P}]=\sdr^{p}\sdr^{q}\lexpt[\stdrv_{1}^{p}]\lexpt[\stdrv_{2}^{q}],
\end{align*}
which can be reached by choosing $\sigma_{1}=\sigma_{2}=\sdr$, namely,
$(\sigma_{21},\sigma_{22})=(\sdr,\sdr)$. When $q$ is even and $p$
is odd, we have 
\begin{align*}
\expt[W_{1}^{p}W_{2}^{q}] & =\lexpt[\stdrv_{2}^{q}]\expt[\sdr^{q}(W_{1}^{p})^{+}-\sdl^{q}(W_{1}^{p})^{-}]\\
 & =\lexpt[\stdrv_{2}^{q}]\expt[(\sdr^{p}-\sdl^{q})(W_{1}^{p})^{+}+\sdl^{q}W_{1}^{p}]\\
 & =\lexpt[\stdrv_{2}^{q}]\expt[(\sdr^{p}-\sdl^{q})(W_{1}^{p})^{+}].
\end{align*}
Hence, by \ref{thm:conn-G}, we have 
\[
\expt[W_{1}^{p}W_{2}^{q}]=\begin{cases}
\lexpt[\stdrv_{2}^{q}](\sdr^{p}-\sdl^{q})\lexpt[(\sdr^{p}\stdrv_{1}^{p}))^{+}] & \text{if  }\sdr^{p}\geq\sdl^{q}\\
\lexpt[\stdrv_{2}^{q}](\sdl^{p}-\sdr^{q})\lexpt[-(\sdl^{p}\stdrv_{1}^{p}))^{+}] & \text{if }\sdr^{p}<\sdl^{q}
\end{cases}
\]
It can be reached by choosing $\sigma_{1}$ and $(\sigma_{21},\sigma_{22})$
accordingly in \ref{eq:lexpt-w1w2-pq}. 
Similar logic can be applied to $\varphi(x_{1},x_{2})=cx_{1}^{p}x_{2}^{q}$ 
and also $\varphi(x_{1},x_{2})=(ax_1+bx_2)^n$ where the scaling does not have effects on the form of the optimal choice of $\myvec{\sigma}$.
\end{proof}

\subsection{Proofs in \ref{subsec:semi-G-clt}}
\label{ap:pf-semi-G-clt}


To prepare for the proof, we consider the following function space: 
\begin{itemize}
    \item $C^{k}(\numset{R})$: the space of $k$-times continuously differentiable
functions on $\numset{R}$
	\item $C_{b}(\numset{R})$: the space of bounded and continuous functions on $\numset{R}$, 
	\item $C^{*}(\numset{R})=\{\varphi\in C^{2}(\numset{R}):\varphi''\text{ is bounded and uniformly continuous}\}$.
\end{itemize}
For any $\varphi\in C^{*}(\numset{R})$, since $\varphi''$ is bounded, we have
\[
M\coloneqq\sup_{x\in\numset{R}}\abs{\varphi''(x)}<\infty.
\]

The following \ref{lem:clt-fspace-to-check} shows that we only need to check those $\varphi \in C^{*}(\numset{R})$ to prove \ref{thm:semi-G-CLT}.
 
\begin{lem}
\label{lem:clt-fspace-to-check}
Assume $\expt[\abs{Z_{n}}]<\infty$ and $\expt[\abs{Z}]<\infty$.
Suppose the convergence 
\begin{equation}
\expt[\varphi(Z_{n})]\to\expt[\varphi(Z)],\label{eq:converge-dist}
\end{equation}
holds for any $\varphi\in C^{*}(\numset{R})$. It also holds for $\varphi\in C_{b}(\numset{R})$.
\end{lem}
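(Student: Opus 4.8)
The plan is to prove the statement by a standard approximation argument: given a test function $\varphi\in C_b(\numset{R})$, I would sandwich it between functions in $C^{*}(\numset{R})$, for which the convergence is already assumed, while controlling the approximation error uniformly in $n$. The only genuine difficulty is that a bounded continuous $\varphi$ need not be \emph{uniformly} continuous, so a direct mollification of $\varphi$ converges to it only locally uniformly, not globally on $\numset{R}$. Hence I must first cut off the tails of $\varphi$ and, to justify that this is harmless, quantify how much mass $Z_{n}$ and $Z$ can place far from the origin.

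First I would establish a uniform tightness bound. Consider $\psi(x)\coloneqq\sqrt{1+x^{2}}$, which lies in $C^{*}(\numset{R})$ because $\psi''(x)=(1+x^{2})^{-3/2}$ is bounded and vanishes at $\pm\infty$, hence is uniformly continuous. Since the assumed convergence $\expt[\psi(Z_{n})]\to\expt[\psi(Z)]$ makes $\{\expt[\psi(Z_{n})]\}_{n}$ a convergent, hence bounded, sequence, I may set $C_{0}\coloneqq\sup_{n}\expt[\psi(Z_{n})]<\infty$; the pointwise bound $\abs{x}\le\psi(x)$ and monotonicity of $\expt$ then give $\sup_{n}\expt[\abs{Z_{n}}]\le C_{0}$ and $\expt[\abs{Z}]\le\expt[\psi(Z)]=\lim_{n}\expt[\psi(Z_{n})]\le C_{0}$. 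Combining this with the elementary bound $\ind{\abs{x}>R}\le\abs{x}/R$ yields the uniform tail estimate $\expt[\ind{\abs{Z_{n}}>R}]\le C_{0}/R$ for every $n$, and likewise $\expt[\ind{\abs{Z}>R}]\le C_{0}/R$.

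Next, for a fixed $\varphi\in C_b(\numset{R})$ with $M_{\varphi}\coloneqq\sup_{x}\abs{\varphi(x)}$, I would build a $C^{*}$ approximant in two moves. Take a cutoff $\chi_{R}$ equal to $1$ on $[-R,R]$ and supported in $[-2R,2R]$; then $\varphi\chi_{R}\in C_{c}(\numset{R})$ is uniformly continuous and $\abs{\varphi-\varphi\chi_{R}}\le M_{\varphi}\ind{\abs{x}>R}$. Mollifying, $g_{\delta}\coloneqq(\varphi\chi_{R})*\rho_{\delta}$ lies in $C_{c}^{\infty}(\numset{R})\subset C^{*}(\numset{R})$ — its second derivative $(\varphi\chi_{R})*\rho_{\delta}''$ is bounded and compactly supported, hence uniformly continuous — and $\norm{g_{\delta}-\varphi\chi_{R}}_{\infty}\to 0$ as $\delta\to 0$ by uniform continuity of $\varphi\chi_{R}$. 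Using $\abs{\expt[X]-\expt[Y]}\le\expt[\abs{X-Y}]$ (a consequence of sub-additivity), each replacement error is controlled by $\abs{\expt[\varphi(Z_{n})]-\expt[g_{\delta}(Z_{n})]}\le M_{\varphi}C_{0}/R+\norm{g_{\delta}-\varphi\chi_{R}}_{\infty}$, and the identical bound holds with $Z_{n}$ replaced by $Z$.

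Finally I would combine these through the triangle inequality
\[
\abs{\expt[\varphi(Z_{n})]-\expt[\varphi(Z)]}\le\abs{\expt[\varphi(Z_{n})]-\expt[g_{\delta}(Z_{n})]}+\abs{\expt[g_{\delta}(Z_{n})]-\expt[g_{\delta}(Z)]}+\abs{\expt[g_{\delta}(Z)]-\expt[\varphi(Z)]}.
\]
The middle term tends to $0$ as $n\to\infty$ precisely because $g_{\delta}\in C^{*}(\numset{R})$, while the two outer terms are each at most $M_{\varphi}C_{0}/R+\norm{g_{\delta}-\varphi\chi_{R}}_{\infty}$, uniformly in $n$. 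Choosing $R$ large and then $\delta$ small makes the outer bound arbitrarily small, so letting $n\to\infty$ first and then sending $R\to\infty$, $\delta\to 0$ gives the desired convergence for all $\varphi\in C_b(\numset{R})$. As anticipated, the main obstacle is the non-uniform continuity of a general $\varphi\in C_b$, which the tightness estimate obtained from the dominating $C^{*}$ function $\psi$ is precisely designed to neutralize.
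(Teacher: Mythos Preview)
Your proof is correct and follows essentially the same two–stage strategy as the paper: first handle compactly supported $\varphi$ by smooth approximation, then reduce general $\varphi\in C_b$ to the compactly supported case via a tail bound coming from the first moment. The paper cites a uniform $C^3$ approximation result (Pursell) where you use a mollifier, and it decomposes $\varphi=\varphi_1+\varphi_2$ with $\varphi_1$ compactly supported, bounding $\abs{\varphi_2(x)}\le B\abs{x}/K$, which is functionally the same as your cutoff $\chi_R$.

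The one genuine difference is your tightness step. The paper controls the tail contribution by $\tfrac{B}{K}(\expt[\abs{Z_n}]+\expt[\abs{Z}])$ and then lets $K\to\infty$, implicitly needing $\sup_n\expt[\abs{Z_n}]<\infty$, which is not part of the stated hypotheses. You close this gap by observing that $\psi(x)=\sqrt{1+x^2}$ lies in $C^{*}(\numset{R})$, so the assumed convergence $\expt[\psi(Z_n)]\to\expt[\psi(Z)]$ already forces $\sup_n\expt[\abs{Z_n}]\le\sup_n\expt[\psi(Z_n)]<\infty$. That is a neat self-contained improvement over the paper's argument.
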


\begin{proof}[Proof of \ref{lem:clt-fspace-to-check}]
We first consider $\varphi\in C_{b}(\numset{R})$ with a compact support
$S$. 
By the uniform approximation provided by \cite{pursell1967uniform},
for any $a>0$, there exists $\varphi_{a}\in C^{3}(\numset{R})$ with support $S$ such that 
\[
\sup_{x\in \numset{R}} \abs{\varphi(x)-\varphi_{a}(x)}<\frac{a}{2}.
\]
For $k=1,2,3$, since $\varphi_{a}^{(k)}$ is continuous and it is
on a compact support, it must be bounded by $M_{k}$. By mean-value
theorem, for $\delta>0$ and some $\beta\in[0,1]$, we have 
\[
\abs{\varphi_{a}^{(2)}(x)-\varphi_{a}^{(2)}(x+\delta)}\leq\abs{\varphi^{(3)}(x+\beta\delta)}\delta\leq M_{3}\delta.
\]
Thus $\varphi_{a}^{(2)}$ is uniformly continuous and bounded, implying
$\varphi_{a}\in C^{*}(\numset{R})$. In this way, we have 
\begin{align*}
\abs{\expt[\varphi(Z_n)]-\expt[\varphi(Z)]} & \leq\abs{\expt[\varphi(Z_n)]-\expt[\varphi_a(Z_n)]}+\abs{\expt[\varphi(Z)]-\expt[\varphi_a(Z)]}\\
 & +\abs{\expt[\varphi_a(Z_n)]-\expt[\varphi_a(Z)]}\\
 & \leq a+\abs{\expt[\varphi_a(Z_n)]-\expt[\varphi_a(Z)]}.
\end{align*}
Hence, $\limsup_{n\to\infty}\abs{\expt[\varphi(Z_n)]-\expt[\varphi(Z)]}\leq a$. It means that 
\[
0\leq \liminf_{n\to\infty}\abs{\expt[\varphi(Z_n)]-\expt[\varphi(Z)]}\leq 
\limsup_{n\to\infty}\abs{\expt[\varphi(Z_n)]-\expt[\varphi(Z)]}\leq a.
\]
Since $a$ can be arbitrarily small, we have the convergence \ref{eq:converge-dist}
holds. 

Next consider any $\varphi\in C_{b}(\numset{R})$ which is bounded
by $B$. For any $K>0$, it can be decomposed into $\varphi=\varphi_{1}+\varphi_{2}$
where $\varphi_{1}$ has compact support $[-K,K]$ and $\varphi_{2}$
satisfies $\varphi_{2}(x)=0$ if $\abs{x}\leq K$ and for $\abs{x}>K$,
\[
\abs{\varphi_{2}(x)}\leq B\leq\frac{B\abs{x}}{K},
\]
Then we have 
\[
\abs{\expt[\varphi(Z_n)]-\expt[\varphi(Z)]}\leq\abs{\expt[\varphi_{1}(Z_n)]-\expt[\varphi_{1}(Z)]}+\abs{\expt[\varphi_{2}(Z_n)]-\expt[\varphi_{2}(Z)]},
\]
where the first term must converge by our previous argument. Then we
only need to work on the second term that satisfies: 
\[
\abs{\expt[\varphi_{2}(Z_n)]-\expt[\varphi_{2}(Z)]}\leq\frac{B}{K}(\expt[\abs{Z_{n}}]+\expt[\abs{Z}]).
\]
Note that $L\coloneqq \expt[\abs{Z_{n}}]+\expt[\abs{Z}]<\infty$. Then we have $\limsup_{n\to\infty}\abs{\expt[\varphi(Z_n)]-\expt[\varphi(Z)]}\leq \frac{BL}{K}$. 
Since $K$ can be arbitrarily large, we obtain the convergence \ref{eq:converge-dist}. 
\end{proof}

\begin{lem}
\label{lem:delta}
For any $\varphi\in C^*(\numset{R})$, the function $\delta:\numset{R}_{+}\to\numset{R}_{+}$,
defined as
\[
\delta(a)\coloneqq\sup_{\abs{x-y}\leq a}\abs{\varphi''(x)-\varphi''(y)},
\]
must be a bounded and increasing one. It also satisfies $\lim_{a\downarrow0}\delta(a)=0$.
\end{lem}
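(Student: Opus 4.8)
The plan is to verify the three asserted properties of $\delta$ in turn, all of which flow directly from the two defining features of membership in $C^{*}(\numset{R})$: the boundedness and the uniform continuity of $\varphi''$. In effect $\delta$ is nothing but the modulus of continuity of $\varphi''$, so no genuine difficulty should arise; the work is simply organizing the three elementary observations.

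First I would establish boundedness. Writing $M\coloneqq\sup_{x\in\numset{R}}\abs{\varphi''(x)}<\infty$, for any pair $x,y$ we have $\abs{\varphi''(x)-\varphi''(y)}\leq\abs{\varphi''(x)}+\abs{\varphi''(y)}\leq 2M$. Taking the supremum over all $(x,y)$ with $\abs{x-y}\leq a$ gives $0\leq\delta(a)\leq 2M$ for every $a\in\numset{R}_{+}$, so $\delta$ is a well-defined, nonnegative, bounded function. Monotonicity is then a pure set-inclusion argument: if $0\leq a_{1}\leq a_{2}$, then $\{(x,y):\abs{x-y}\leq a_{1}\}\subseteq\{(x,y):\abs{x-y}\leq a_{2}\}$, and the supremum of $\abs{\varphi''(x)-\varphi''(y)}$ over the larger index set dominates that over the smaller one, whence $\delta(a_{1})\leq\delta(a_{2})$. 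Thus $\delta$ is increasing.

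Finally, for the limit I would appeal to the uniform continuity of $\varphi''$. Since $\delta$ is increasing and bounded below by $0$, the one-sided limit $\lim_{a\downarrow 0}\delta(a)=\inf_{a>0}\delta(a)$ automatically exists, so it only remains to identify this infimum as $0$. Fixing $\epsilon>0$, uniform continuity supplies an $\eta>0$ such that $\abs{x-y}\leq\eta$ implies $\abs{\varphi''(x)-\varphi''(y)}\leq\epsilon$; taking the supremum over such pairs yields $\delta(\eta)\leq\epsilon$, and by monotonicity $\delta(a)\leq\epsilon$ for all $0<a\leq\eta$. As $\epsilon$ was arbitrary, the infimum equals $0$, giving $\lim_{a\downarrow 0}\delta(a)=0$. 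The only point that warrants a moment's care is the logical order of the last step: one should invoke monotonicity (together with boundedness below) to guarantee that the limit exists \emph{before} using uniform continuity to pin down its value; otherwise the argument is entirely routine.
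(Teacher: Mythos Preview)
Your proof is correct and follows essentially the same approach as the paper: boundedness from the bound on $\varphi''$, monotonicity from set inclusion, and the limit from uniform continuity. The paper's proof is a one-line sketch of exactly these three observations, so your version is simply a more fully written-out rendition of the same argument.
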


\begin{proof}[Proofs of \ref{lem:delta}]
The boundedness (and the limit property) can be directly derive from
the boundedness (and uniform continuity) of $\varphi''$. For the
monotone property, for any $0<a\leq b$, since $\{(x,y):\abs{x-y}\leq a\}\subset\{(x,y):\abs{x-y}\leq b\}$,
we must have $\delta(a)\leq\delta(b)$.
\end{proof}


\begin{proof}[Proof of \ref{thm:semi-G-CLT}]
We adapt the the idea of the Linderberg method in a ``leave-one-out''
manner to
the sublinear context. One of the reason that we are able to do such adaptation is the symmetry in semi-$G$-independence: $X_i$ is semi-$G$-independent from $\{X_j,j\neq i\}$. 

Note that $X_{i}=V_{i}\eta_{i}$ with the semi-$G$-independence
then we have 
\[
(V_{1},\dots,V_{n})\seqind(\eta_{1},\dotsc,\eta_{n}).
\]
Then we consider a sequence of classically i.i.d. $\{\stdrv_{i}\}_{i=1}^{n}$
satisfying $\stdrv_{1}\sim\CN(0,1)$ and 
\[
(V_{1},\dots,V_{n})\seqind(\eta_{1},\dotsc,\eta_{n})\seqind(\stdrv_{1},\dotsc,\stdrv_{n}).
\]
For each $n$, consider a triangle array, 
\[
e_{i,n}=\frac{X_{i}}{\sqrt{n}},
\]
and 
\[
S_{n}\coloneqq e_{1,n}+\cdots+e_{n,n}.
\]
For this $n$, consider another triangle array $\{W_{i,n}\}_{i=1}^{n}\coloneqq\{(V_{i}\stdrv_{i})/\sqrt{n}\}_{i=1}^{n}$
which are semi-$G$-version i.i.d. following semi-$G$-normal and
satisfy 
\[
W_{i,n}\eqdistn W_{1,n}\eqdistn\frac{W}{\sqrt{n}}.
\]
Note that here we use the same $V_{i}$ sequence in $W_{i}$. This
setup is important for our proof to overcome the difficulty brought
by the sublinear property of $\expt$ (it also gives some insight about
the role of $\sigma^2$ in the classical central limit theorem compared with $V^2$ in sublinear context). Let
\[
W_{n}\coloneqq W_{1,n}+\cdots+W_{n,n},
\]
then we must have $W_{n}\sim\semiGN(0,\varI)$ (by the stability of
semi-$G$-normal as shown in \ref{semi-GN-eqdistn}); 

Our goal is to show the difference, for any $\varphi \in C^*(\numset{R})$ (recall \ref{lem:clt-fspace-to-check}), as $n\to\infty$,
\begin{equation}
\abs{\expt[\varphi(S_{n})]-\expt[\varphi(W)]}=\abs{\expt[\varphi(S_{n})]-\expt[\varphi(W_{n})]}\to0.\label{eq:diff-goal}
\end{equation}
Consider the following summations: 
\begin{equation}
M_{i,n}=\sum_{j=1}^{i}e_{j,n}+\sum_{j=i+1}^{n}W_{j,n},\label{eq:sum-M}
\end{equation}
and 
\begin{equation}
U_{i,n}=\sum_{j=1}^{i-1}e_{j,n}+\sum_{j=i+1}^{n}W_{j,n},\label{eq:sum-U}
\end{equation}
with the common convention that an empty sum is defined as zero. Note
that $M_{0,n}=W_{n}$ and $M_{n,n}=S_{n}$, then we can transform
the difference in \ref{eq:diff-goal} to the telescoping sum
\begin{align}
\expt[\varphi(S_{n})]-\expt[\varphi(W_{n})] & \leq\expt[\varphi(S_{n})-\varphi(W_{n})]\nonumber \\
 & =\expt\sum_{i=1}^{n}(\varphi(M_{i,n})-\varphi(M_{i-1,n}))\nonumber \\
 & \leq\sum_{i=1}^{n}\expt[\varphi(M_{i,n})-\varphi(M_{i-1,n})].\label{eq:telescope-1}
\end{align}
and 
\begin{equation}
\expt[\varphi(W_{n})]-\expt[\varphi(S_{n})]\leq\sum_{j=1}^{n}\expt[\varphi(M_{n-j,n})-\varphi(M_{n-j+1,n})].\label{eq:telescope-2}
\end{equation}
Then we only need to work on the summand $\expt[\varphi(M_{i,n})-\varphi(M_{i-1,n})]$.
By a Taylor expansion, 
\begin{align*}
\varphi(M_{i,n})-\varphi(M_{i-1,n}) & =\varphi(U_{i,n}+e_{i,n})-\varphi(U_{i,n}+W_{i,n})\\
 & =(e_{i,n}-W_{i,n})\varphi'(U_{i,n})\\
 & +[\frac{1}{2}e_{i,n}^{2}\varphi''(U_{i,n}+\alpha e_{i,n})-\frac{1}{2}W_{i,n}^{2}\varphi''(U_{i,n}+\beta W_{i,n})],\\
 & \eqqcolon(a)+(b)
\end{align*}
for some $\alpha,\beta\in[0,1]$. 

For the first term $(a)$, its sublinear expectation must exist because
the growth of $\varphi'$ is at most linear due to the boundedness
of $\varphi''$. Note that $U_{i,n}$ is the inner product of 
\[
V_{u}=(V_{1},\dotsc,V_{i-1},V_{i+1},\dotsc,V_{n}),
\]
and 
\[
\xi_{u}=(\eta_{1},\dotsc,\eta_{i-1},\stdrv_{i+1},\dotsc,\stdrv_{n}),
\]
with the independence $V_{u}\seqind\xi_{u}$, so we have $e_{i,n}-W_{i,n}(=n^{-1/2}V_{i}(\eta_{i}-\stdrv_{i}))$
and $U_{i,n}$ are semi-$G$-independent. Then we can compute
\begin{align*}
\expt[(e_{i,n}-W_{i,n})\varphi'(U_{i,n})] & =\max_{(v_{i},v_{u})}\lexpt[n^{-1/2}v_{i}(\eta_{i}-\stdrv_{i})\varphi'(v_{u}^{T}\xi_{u})]\\
(\text{classical indep.}) & =\max_{(v_{i},v_{u})}n^{-1/2}v_{i}\underbrace{\lexpt[\eta_{i}-\stdrv_{i}]}_{=0}\lexpt[\varphi'(v_{u}^{T}\xi_{u})]=0.
\end{align*}
Similarly, we have $-\expt[-(e_{i,n}-W_{i,n})\varphi'(U_{i,n})]=0$.
Hence, $(a)$ has certain mean zero. Then we have 
\[
\expt[\varphi(M_{i,n})-\varphi(M_{i-1,n})]=\expt[(b)].
\]

For the second term $(b)$, note that 
\begin{align*}
 & 2\times(b)\\
= & e_{i,n}^{2}[\varphi''(U_{i,n}+\alpha e_{i,n})-\varphi''(U_{i,n})]-W_{i,n}^{2}[\varphi''(U_{i,n}+\beta W_{i,n})-\varphi''(U_{i,n})]+(e_{i,n}^{2}-W_{i,n}^{2})\varphi''(U_{i,n})\\
\eqqcolon & (b)_{1}-(b)_{2}+(b)_{3}.
\end{align*}
For $(b)_{1}$, since $\abs{\alpha e_{i,n}}\leq\abs{e_{i,n}}$, by
recalling the property of $\delta(\cdot)$ (\ref{lem:delta}), we have 
\[
\expt[\abs{(b)_{1}}]\leq\expt[e_{i,n}^{2}\delta(\abs{e_{i,n}})]=\frac{1}{n}\expt[X_{1}^{2}\delta(n^{-1/2}\abs{X_{1}})],
\]
where we use the setup $e_{i,n}=\frac{X_{i}}{\sqrt{n}}$ and $X_{i}\eqdistn X_{1}$.
Similarly, we have 
\[
\expt[\abs{(b)_{2}}]\leq\expt[W_{i,n}^{2}\delta(\abs{W_{i,n}})]=\frac{1}{n}\expt[W^{2}\delta(n^{-1/2}\abs{W})],
\]
where we use the setup $W_{i,n}\eqdistn\frac{W}{\sqrt{n}}$. For $(b)_{3}$,
since $(e_{i,n},W_{i,n})$ and $U_{i,n}$ are semi-$G$-independent, (noting that $e_{i,n}$ and $W_{i,n}$ depend on the same $V_i$,)
we have 
\begin{align*}
\expt[(b)_{3}] & =\max_{(v_{i},v_{u})}\lexpt[n^{-1} v_{i}^{2}(\eta_{i}^{2}-\stdrv_{i}^{2})\varphi''(v_{u}^{T}\xi_{u})]\\
(\text{classical indep.}) & =\max_{(v_{i},v_{u})}n^{-1} v_{i}^{2}\underbrace{\lexpt[\eta_{i}^{2}-\stdrv_{i}^{2}]}_{=0}\lexpt[\varphi''(v_{u}^{T}\xi_{u})]=0,
\end{align*}
where we use the fact that $\lexpt[\eta_{i}^{2}]=\lexpt[\stdrv_{i}^{2}]=1$.
Similarly we have $-\expt[-(b)_{3}]=0$ so $(b)_{3}$ has certain
mean zero. Therefore, we have 
\begin{align*}
\expt[\varphi(M_{i,n})-\varphi(M_{i-1,n})] & =\frac{1}{2}\expt[(b)_{1}-(b)_{2}]\\
 & \leq\frac{1}{2}(\expt[\abs{b}_{1}]+\expt[\abs{b}_{2}])\\
 & =\frac{1}{2n}(\expt[X_{1}^{2}\delta(n^{-1/2}\abs{X_{1}})]+\expt[W^{2}\delta(n^{-1/2}\abs{W})]).
\end{align*}
Meanwhile, if we reverse the role of $\varphi(M_{i,n})$ and $\varphi(M_{i-1,n})$
and let $i=n-j+1$ with $j=1,2,\dotsc,n$, we get
\begin{align*}
\expt[\varphi(M_{n-j,n})-\varphi(M_{n-j+1,n})] & =\expt[\varphi(M_{i-1,n})-\varphi(M_{i,n})]\\
 & =\frac{1}{2}\expt[(b)_{2}-(b)_{1}]\\
 & \leq\frac{1}{2}(\expt[\abs{b}_{2}]+\expt[\abs{b}_{1}])\\
 & =\frac{1}{2n}(\expt[X_{1}^{2}\delta(n^{-1/2}\abs{X_{1}})]+\expt[W^{2}\delta(n^{-1/2}\abs{W})]).
\end{align*}
Hence, by \ref{eq:telescope-1} and \ref{eq:telescope-2}, we
have 
\begin{align*}
\abs{\expt[\varphi(S_{n})]-\expt[\varphi(W)]} & =\abs{\expt[\varphi(S_{n})]-\expt[\varphi(W_{n})]}\\
 & =\max\{\expt[\varphi(S_{n})]-\expt[\varphi(W_{n})],\expt[\varphi(W_{n})]-\expt[\varphi(S_{n})]\}\\
 & \leq\max\{\sum_{i=1}^{n}\expt[\varphi(M_{i,n})-\varphi(M_{i-1,n})],\sum_{j=1}^{n}\expt[\varphi(M_{n-j,n})-\varphi(M_{n-j+1,n})].\}\\
 & \leq\sum_{i=1}^{n}\frac{1}{2n}(\expt[X_{1}^{2}\delta(n^{-1/2}\abs{X_{1}})]+\expt[W^{2}\delta(n^{-1/2}\abs{W})])\\
 & =\frac{1}{2}(\expt[X_{1}^{2}\delta(n^{-1/2}\abs{X_{1}})]+\expt[W^{2}\delta(n^{-1/2}\abs{W})]).
\end{align*}
Note that for any $v_{1}\in\sdInt$, $\abs{v_{1}\eta_{1}}\leq\sdr\abs{\eta}_{1}$,
we have
\begin{align*}
\expt[X_{1}^{2}\delta(n^{-1/2}\abs{X_{1}})] & =\max_{v_{1}\in\sdInt}\lexpt[v_{1}^{2}\eta_{1}^{2}\delta(n^{-1/2}\abs{v_{1}\eta_{1}})]\\
(\text{monotonicity of }\delta) & \leq\max_{v_{1}\in\sdInt}v_{1}^{2}\lexpt[\eta_{1}^{2}\delta(n^{-1/2}\sdr\abs{\eta_{1}})]\\
 & =\sdr^2\lexpt[\eta_{1}^2\delta(n^{-1/2}\sdr\abs{\eta_{1}})].
\end{align*}
By \ref{lem:delta}, we have $\delta(a)\leq2M$ for all $a\in\numset{R}^{+}$
so $\eta_{1}^{2}\delta(n^{-1/2}\sdr\abs{\eta_{1}})\leq2M\eta_{1}^{2}$. 
Meanwhile, we have $\eta_{1}^2\delta(n^{-1/2}\sdr\abs{\eta_{1}})\to0$
(classically) almost surely as $n\to \infty$, then by classical dominance convergence theorem, we have $\lexpt[\eta_{1}^2\delta(n^{-1/2}\sdr\abs{\eta_{1}})]\to0$
implying $\expt[X_{1}^{2}\delta(n^{-1/2}\abs{X_{1}})]\to0$. Similarly
we can show $\expt[W^{2}\delta(n^{-1/2}\abs{W})]\to0$. Finally,
we have 
\[
\abs{\expt[\varphi(S_{n})]-\expt[\varphi(W)]}\to0,
\]
or 
\[
\lim_{n\to\infty}\expt[\varphi(\frac{1}{\sqrt{n}}\sum_{i=1}^{n}X_{i})]=\expt[\varphi(W)].\qedhere
\]
\end{proof}



\bibliographystyle{apalike}
\bibliography{Refs}

\end{document}